\def\today{\number\day\space\ifcase\month\or   January\or February\or
   March\or April\or May\or June\or   July\or August\or September\or
   October\or November\or December\fi\   \number\year}
\theoremstyle{definition}
\newtheorem{lma}{Lemma}[section]
\newaliascnt{thmCt}{lma}
\newtheorem{thm}[thmCt]{Theorem}
\newaliascnt{corCt}{lma}
\newtheorem{cor}[corCt]{Corollary}
\newaliascnt{propCt}{lma}
\newtheorem{prop}[propCt]{Proposition}
\newtheorem*{thm*}{Theorem}
\newtheorem*{qst*}{Question}
\newtheorem*{cor*}{Corollary}
\newtheorem*{prop*}{Proposition}
\newcounter{theoremintro}
\newtheorem{thmintro}[theoremintro]{Theorem}
\newaliascnt{pgrCt}{lma}
\newaliascnt{dfCt}{lma}
\newtheorem{df}[dfCt]{Definition}
\newaliascnt{remCt}{lma}
\newtheorem{rem}[remCt]{Remark}
\newaliascnt{remsCt}{lma}
\newaliascnt{egCt}{lma}
\newtheorem{eg}[egCt]{Example}
\newaliascnt{egsCt}{lma}
\newaliascnt{qstCt}{lma}
\newtheorem{qst}[qstCt]{Question}
\newaliascnt{pbmCt}{lma}
\newtheorem{pbm}[pbmCt]{Problem}
\newaliascnt{notaCt}{lma}
\newtheorem{nota}[notaCt]{Notation}
\newaliascnt{cnjCt}{lma}
\newtheorem{cnj}[cnjCt]{Conjecture}
\newcommand{\beq}{\begin{equation}}
\newcommand{\eeq}{\end{equation}}
\newcommand{\beqa}{\begin{eqnarray*}}
\newcommand{\eeqa}{\end{eqnarray*}}
\newcommand{\bal}{\begin{align*}}
\newcommand{\eal}{\end{align*}}
\newcommand{\bi}{\begin{itemize}}
\newcommand{\ei}{\end{itemize}}
\newcommand{\be}{\begin{enumerate}}
\newcommand{\ee}{\end{enumerate}}
\newcommand{\ep}{\varepsilon}
\newcommand{\Q}{{\mathbb{Q}}}
\newcommand{\Z}{{\mathbb{Z}}}
\newcommand{\R}{{\mathbb{R}}}
\newcommand{\C}{{\mathbb{C}}}
\newcommand{\N}{{\mathbb{N}}}
\newcommand{\K}{{\mathcal{K}}}
\newcommand{\U}{{\mathcal{U}}}
\newcommand{\T}{{\mathbb{T}}}
\newcommand{\KK}{\mathrm{K}}
\newcommand{\Lat}{\mathrm{Lat}}
\newcommand{\QT}{\mathrm{QT}}
\newcommand{\CatCu}{\mathbf{Cu}}
\newcommand{\V}{\mathrm{V}}
\newcommand{\Cu}{{\mathrm{Cu}}}
\newcommand{\id}{{\mathrm{id}}}
\newcommand{\spec}{{\mathrm{sp}}}
\newcommand{\supp}{{\mathrm{supp}}}
\newcommand{\Ell}{{\mathrm{Ell}}}
\newcommand{\Lsc}{\mathrm{Lsc}}
\newcommand{\NN}{\overline{\mathbb{N}}}
\newcommand{\W}{\mathrm{W}}
\newcommand{\Paths}{\mathrm{Paths}}
\newcommand{\CatPom}{\mathrm{\textbf{PoM}}}
\newcommand{\vect}[1]{\textbf{#1}}
\newcommand{\filter}{\mathcal{U}}
\newcommand{\ihom}[1]{\llbracket #1 \rrbracket}
\newcommand{\ca}{C$^*$-algebra}
\newcommand{\cas}{$C^*$-algebras}
\newcommand{\uca}{unital $C^*$-algebra}
\newcommand{\I}{\infty}
\date{\today}
\title[]{The modern theory of Cuntz semigroups of C$^*$-algebras}
\thanks{The first named author was supported by a starting grant of 
the Swedish Research Council. The second named author was partially supported by MINECO (grant No. PID2020-113047GB-I00).}
\author{Eusebio Gardella}
\address[Eusebio Gardella]
{Department of Mathematical Sciences, Chalmers University of
Technology and University of Gothenburg, Gothenburg SE-412 96, Sweden.}
\email{gardella@chalmers.se}
\urladdr{www.math.chalmers.se/~gardella}
\author{Francesc Perera}
\address[Francesc Perera]{Departament de Matem\`{a}tiques,
Universitat Aut\`{o}noma de Barcelona,
\linebreak 08193 Bellaterra, Barcelona, Spain, and
Centre de Recerca Matem\`atica, Edifici Cc, Campus de Bellaterra,  08193 Cerdanyola del Vall\`es, Barcelona, Spain}
\email[]{perera@mat.uab.cat}
\urladdr{https://mat.uab.cat/web/perera}
\begin{document}

\begin{abstract}
We give a detailed introduction to the theory of Cuntz semigroups
for C$^*$-algebras. Beginning with the most basic definitions and 
technical lemmas, 
we present several results of historical importance, 
such as Cuntz's theorem on the existence of quasitraces, 
R{\o}rdam's proof that $\mathcal{Z}$-stability implies 
strict comparison, and Toms' example of a non 
$\mathcal{Z}$-stable simple, nuclear C$^*$-algebra.
We also give the reader an extensive overview of the state of 
the art and the modern approach to the theory, including the recent results for C$^*$-algebras of stable rank one (for example, the Blackadar-Handelman conjecture and the 
realization of ranks), as well as the abstract
study of the Cuntz category \textbf{Cu}. 
\end{abstract}

\maketitle

\begin{center}\emph{Dedicated to the memory of Eberhard Kirchberg.}
\end{center}
\vspace{.5cm}

\tableofcontents

\renewcommand*{\thetheoremintro}{\Alph{theoremintro}}

\section{Introduction}

The Cuntz semigroup is an invariant for \ca s whose origin 
can be traced back to the seminal work of Joachim Cuntz \cite{Cun_dimension_1978} on the existence of quasitraces on
simple, stably finite C$^*$-algebras. 
The Cuntz semigroup $\Cu(A)$ of a \ca\ $A$ resembles the Murray-von Neumann semigroup $\V(A)$, but 
is constructed using positive elements instead of projections
(and a suitable equivalence relation).  
The comparison between the Cuntz semigroup and the K-groups
(particularly K$_0$) puts in perspective the advantages and 
disadvantages of these invariants. 
Arguably one of the biggest advantages of K-theory is its
computability, as there are for instance several 6-term exact sequences that are very useful in
a number of situations. On the other hand, many C$^*$-algebras
do not contain any nontrivial projections, and thus $\V(A)$
and K$_0(A)$ may in general contain very little information 
about $A$ outside the class of real rank zero C$^*$-algebras, which is a class where projections abound. For example,
the complex numbers $\C$, continuous functions on the Hilbert cube $[0,1]^\N$, the Jiang-Su algebra $\mathcal{Z}$,
and the suspension of the Calkin algebra $S\mathcal{Q}$ 
all have the same 
K-theory. Moreover, the K-groups of a C$^*$-algebra do not contain any information
about its ideal structure, which helps explain why virtually all classification results that only use the 
K-theory of the algebra must assume simplicity.
The Cuntz semigroup, on the other hand, always
contains plenty of information about the C$^*$-algebra, essentially
because every
C$^*$-algebra contains a great deal of positive elements. 
(Exactly what
kind of information about $A$ is encoded in $\Cu(A)$ is not 
completely clear, but we give many concrete instances in the 
theorems below.)
The Cuntz semigroup is unfortunately rather difficult to compute,
which makes its rich information sometimes difficult to access.
This is perhaps not entirely surprising, given how intrincate 
its structure is. Interestingly, in certain instances enough information suffices to distinguish algebras without the need of a full computation. For example, the Cuntz semigroup distinguishes
the algebras $\C$, $C([0,1]^\N)$, $\mathcal{Z}$ and 
$S\mathcal{Q}$ mentioned above. 
(We do not have an explicit computation of
either $\Cu(C([0,1]^\N))$ or $\Cu(S\mathcal{Q})$, but enough about them is known to claim that they are different.)

The Cuntz semigroup is intimately related with classification, particularly with the classification program of simple, nuclear C$^*$-algebras initiated by George Elliott. The original conjecture aimed at classifying all simple, separable, unital, nuclear C$^*$-algebras using K-theoretical data, conveniently encoded in the Elliott invariant $\Ell$ that, loosely speaking, consists of the 
$\KK_0$-group, the topological $\KK_1$-group, the trace simplex, and the pairing between projections and traces. 
More precisely then, it was asked whether an isomorphism of invariants $\Ell(A)\cong \Ell(B)$ could be lifted to an isomorphism of the algebras $A\cong B$. 
The relevance of the Cuntz semigroup in the modern theory of C$^*$-algebras was made evident in the celebrated work of 
Toms \cite{Tom_classification_2008}, where he constructed two 
simple, separable, nuclear, unital \ca s $A$ and $B$ which 
satisfy $\mathrm{Ell}(A)\cong \Ell(B)$ and $A\ncong B$. These
algebras constitute a counterexample to Elliott's conjecture, 
and were distinguished using the Cuntz semigroup; see 
Section~\ref{sec:Toms} for an exposition of Toms' examples and more details of the classification program.\footnote{Toms' counterexample to the Elliott conjecture was not the first one. Indeed, R{\o}rdam 
had earlier constructed \cite{Ror_simple_2003} a simple, nuclear 
C$^*$-algebra $A$ containing a finite and an infinite projection.
It follows that $A$ and $A\otimes\mathcal{Z}$ have the same Elliott
invariant, but are not isomorphic. 
Prior to R{\o}rdam's construction, Villadsen \cite{Vil_range_1995} had constructed 
examples of simple, separable, nuclear \cas{} which agreed on the Elliott invariant but were not isomorphic.
The relevance of Toms' example
in the context of these notes 
stems from the fact that he used the Cuntz semigroup
to distinguish C$^*$-algebras with isomorphic Elliott invariants.}
The work of Toms motivated the systematic 
study of the Cuntz semigroup, which was initiated by
Coward, Elliott and Ivanescu in 
\cite{CowEllIva_Cuntz_2008}. 
Since then, 
numerous papers have been written about the Cuntz semigroup
developing a categorical framework for their study, thus 
unveiling fascinating features of this invariant.

The Cuntz semigroup has been successfully used to classify 
certain classes of C$^*$-algebras, as well as maps between them.
Ciuperca and Elliott's classification \cite{CiuEll_remark_2008} of AI-algebras was ultimately greatly generalized by Robert \cite{Rob_classification_2012}, 
who classified certain direct limits of
one-dimensional NCCW-complexes. These classification results 
are obtained as consequences (via an intertwining argument) 
of general theorems classifying
homomorphisms from said algebras into arbitrary \ca s of stable
rank one. A basic form of these classification results for maps
is the work \cite{RobSan_classification_2010} 
of Robert and Santiago, where they 
show that two homomorphisms from $C_0((0,1])$ into a \ca\ of
stable rank one are approximately unitarily equivalent if and only if they are equal at the level of the Cuntz semigroup.
It should be noted that none of these results requires either
the domain or codomain algebra to be simple; this is 
perhaps not so surprising considering the fact that the Cuntz semigroup encodes the ideal lattice of the algebra (see below) as well as other structural aspects (see, for example, \cite{OrtPerRor_corona_2012} and also \cite{RobRor_divisibility_2013}).

The goal of this survey is to introduce the reader to this 
rich theory, beginning with a detailed exposition of the basics,
and proving some of the most celebrated results. We will also 
give an overview of the modern theory of Cuntz semigroups, 
particularly the spectacular recent developments for 
C$^*$-algebras of stable rank one.

In the rest of this introduction, we give a summary of the main 
results discussed in this work. As it turns out,
the Cuntz semigroup of a C$^*$-algebra is 
quite a special kind of ordered semigroup: 
for example, suprema of increasing sequences always exist, and 
addition is compatible with suprema and with 
the so-called compact containment relation $\ll$; see \autoref{sec:waybelow}. Thus, the Cuntz semigroup $\Cu(A)$ naturally
belongs to a subcategory of positively ordered monoids:

\begin{thmintro}\label{thmintro:CatCu}(Coward-Elliott-Ivanescu
\cite{CowEllIva_Cuntz_2008}).
There is a category \textbf{Cu} of positively ordered monoids
to which $\Cu(A)$ belongs for every C$^*$-algebra $A$. 
Moreover,
the Cuntz semigroup determines a functor 
$\Cu\colon \mathbf{C^*}\to \CatCu$ which respects 
(countable) direct limits.
\end{thmintro}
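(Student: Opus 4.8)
The plan is to establish Theorem~A in three movements: first define the category $\CatCu$ intrinsically, then show each $\Cu(A)$ lands in it, and finally verify functoriality and continuity under direct limits.

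First I would fix the ambient category. The objects of $\CatCu$ are positively ordered monoids $(S, +, \leq, 0)$ (with $0$ the least element) that satisfy the four order-theoretic axioms isolated by Coward, Elliott and Ivanescu: (O1) every increasing sequence $(x_n)$ has a supremum $\sup_n x_n \in S$; (O2) every element $x$ is the supremum of a $\ll$-increasing sequence $(x_n)$ with $x_n \ll x_{n+1}$, where $y \ll x$ means that whenever $x \leq \sup_n z_n$ for an increasing sequence, then $y \leq z_N$ for some $N$; (O3) addition is compatible with $\ll$, so $x_1 \ll y_1$ and $x_2 \ll y_2$ imply $x_1 + x_2 \ll y_1 + y_2$; and (O4) addition is compatible with suprema of increasing sequences, $\sup_n (x_n + y_n) = \sup_n x_n + \sup_n y_n$. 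A morphism in $\CatCu$ is a monoid map preserving $0$, the order, the relation $\ll$, and suprema of increasing sequences. These axioms were recalled in \autoref{sec:waybelow}, so I would simply cite them and move on.

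Next I would prove $\Cu(A)\in\CatCu$. Recall $\Cu(A)$ is built from positive elements of $A\otimes\K$ under Cuntz subequivalence. The monoid structure comes from $\langle a\rangle + \langle b\rangle = \langle a\oplus b\rangle$ via an isomorphism $M_2(A\otimes\K)\cong A\otimes\K$, and $0 = \langle 0\rangle$ is least. The core technical input is the standard Cuntz-comparison machinery (the Rørdam lemmas relating $\langle (a-\ep)_+\rangle$ to $\langle a\rangle$), which I would take as already available from the earlier sections. Axiom (O2) follows because $\langle (a-\ep)_+\rangle \ll \langle a\rangle$ and $\langle a\rangle = \sup_{\ep>0}\langle(a-\ep)_+\rangle$; axiom (O1) follows from the fact that an increasing sequence of Cuntz classes can be lifted to a suitable positive element whose class is the supremum (using $\sigma$-unitality/separability of $A\otimes\K$ and a telescoping construction). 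Axioms (O3) and (O4) are then verified by direct manipulation of orthogonal sums and the continuity of the $\ep$-cutdowns. I expect the main obstacle to be the careful construction in (O1): producing an actual positive element realizing the supremum of a given increasing sequence of classes requires a concrete summation/rescaling argument inside $A\otimes\K$, and this is the step where one genuinely uses the internal structure of the algebra rather than formal order theory.

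Finally, for functoriality, a $*$-homomorphism $\ph\colon A\to B$ induces $\ph\otimes\id_\K$, which preserves positivity and Cuntz subequivalence, hence descends to a well-defined monoid and order map $\Cu(\ph)\colon\Cu(A)\to\Cu(B)$; that it preserves $\ll$ and suprema follows because $\Cu(\ph)$ commutes with the $\ep$-cutdowns ($\ph((a-\ep)_+) = (\ph(a)-\ep)_+$), so it sends the canonical $\ll$-increasing approximating sequences of $\langle a\rangle$ to those of $\langle\ph(a)\rangle$. Functoriality ($\Cu(\ps\circ\ph)=\Cu(\ps)\circ\Cu(\ph)$ and $\Cu(\id)=\id$) is then immediate. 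For continuity under countable direct limits $A=\varinjlim A_n$, I would show $\Cu(A)\cong\varinjlim\Cu(A_n)$ (the colimit taken in $\CatCu$, which exists by the earlier categorical discussion): the natural map from the colimit is surjective because every positive element of $A\otimes\K$ is a limit of positive elements coming from the $A_n\otimes\K$, and injectivity follows from the same cutdown/semicontinuity estimates, which allow a Cuntz comparison holding in $A$ to be pushed back into some $A_n$ up to an arbitrarily small $\ep$. This limit step, while not as delicate as (O1), is where separability is essential and must be invoked to control the approximations.
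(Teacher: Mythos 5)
Your proposal follows the paper's own route essentially verbatim: the axioms (O1)--(O4) are \autoref{df:CatCu}, the verification that $\Cu(A)$ satisfies them is \autoref{thm:suprema} together with \autoref{thm:CuAinCu} (your ``telescoping construction'' for (O1) is precisely the paper's scaled sum $b=\sum_k a_k/(\|a_k\|2^k)$ combined with the diagonal cut-down/unitary-implementation argument of Cases 2--4, which rests on \autoref{thm:StableUnitImpl}), and the direct-limit statement is \autoref{thm:Culimits}, proved there too by completing the positively-ordered-monoid limit via $\ll$-increasing sequences. One small correction: neither (O1) nor continuity under countable direct limits actually requires separability or $\sigma$-unitality --- the paper's arguments work for arbitrary $A$ (the unitary step needs only weak stable rank one of stable C$^*$-algebras), and in fact even arbitrary direct limits are preserved by \cite{AntPerThi_tensor_2018}.
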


In fact, considering the Cuntz semigroup as an ordered set, it becomes an \emph{$\omega$-domain}, that is, a sequentially complete partially ordered set which is also \emph{$\omega$-continuous}; see \cite{Kei_cuntz_2017}, and also \cite{GieHofKeiLawMisSco_continuous_2003,Thi_notes}. While the work \cite{CowEllIva_Cuntz_2008} only considered
\emph{countable}
direct limits, it was later shown in \cite{AntPerThi_tensor_2018} 
that this assumption is not necessary: $\CatCu$ posseses arbitrary
direct limits, and $\Cu$ preserves them. 
More properties of the category \textbf{Cu} and the functor Cu 
will be addressed in Theorem~\ref{thmintro:CatCu2}. 

The objects in the category \textbf{Cu} are partially
ordered semigroups satisfying certain axioms (see 
\autoref{df:CatCu}) which are enjoyed by $\Cu(A)$ for all 
\ca s $A$. The goal of describing precisely which partially
ordered semigroups arise from \ca s has led to the discovery of
five additional axioms, but even these do not entirely
describe the range of the invariant. Obtaining a complete
exlicit description is an
extremely complicated task, 
and we are very far from achieving it.
This should be compared to the situation with K-theory,
where it is not so hard to show that every pair of abelian groups arises as the K-groups of a \ca.

Theorem~\ref{thmintro:CatCu} is important
since it grants us access
to categorical methods in the study of (abstract) 
Cuntz semigroups. This perspective has been extremely fruitful,
and some of the most recent applications are discussed in Section~\ref{sec:Axioms}; see also Theorem~\ref{thmintro:CatCu2} below.

As mentioned before, the ideal structure of $A$ can be read 
off of $\Cu(A)$: 

\begin{thmintro}(Ciuperca-Robert-Santiago \cite{CiuRobSan_ideals_2010}).
The Cuntz semigroup of a C$^*$-algebra encodes its
ideal-lattice structure, as well as the Cuntz semigroups
of every ideal and every quotient. More explicitly, the
assignment $I\mapsto \Cu(I)$ defines a lattice isomorphism between
the ideals of $A$ and the ideals of $\Cu(A)$, and 
there is a canonical isomorphism $\Cu(A)/\Cu(I)\cong \Cu(A/I)$.
\end{thmintro}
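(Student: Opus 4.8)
The plan is to establish the correspondence at the level of the stabilized algebra, so throughout I set $B = A \otimes \K$ and use that $\Cu$ is stable, i.e.\ $\Cu(A)\cong\Cu(B)$ is computed from the positive elements of $B$, with $[a]$ denoting the Cuntz class of $a\in B_+$ and $\precsim$ Cuntz subequivalence; I also use the bijection $I\mapsto I\otimes\K$ between ideals of $A$ and of $B$. Recall that an \emph{ideal} of a $\CatCu$-semigroup is a submonoid that is downward hereditary for $\le$ and closed under suprema of increasing sequences. I would define two assignments: to an ideal $I\triangleleft A$ I associate $\Cu(I)\subseteq\Cu(A)$, the image of the map induced by $I\hookrightarrow A$ via functoriality (\autoref{thmintro:CatCu}); conversely, to an ideal $J\triangleleft\Cu(A)$ I associate $A_J=\{\,a\in B : [a^*a]\in J\,\}$, using that $[a^*a]=[aa^*]$. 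The bulk of the work is to show that these are well defined, mutually inverse, and inclusion preserving.

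First I would verify that $\Cu(I)$ is an ideal of $\Cu(A)$ and that the induced map is an order embedding. The decisive tool is the R{\o}rdam-type characterization of $\precsim$: if $c\precsim a$ then for every $\ep>0$ one has $(c-\ep)_+ = r(a-\dt)_+ r^*$ for suitable $\dt>0$ and $r\in B$. Writing $s=r(a-\dt)_+^{1/2}$, one gets $(c-\ep)_+=ss^*$ with $s\in B\cdot(I\otimes\K)\subseteq I\otimes\K$ whenever $a\in(I\otimes\K)_+$, whence $s^*s$ lies in the hereditary subalgebra generated by $(a-\dt)_+$ inside $I\otimes\K$; since $ss^*\sim s^*s$, this single computation shows simultaneously that $(c-\ep)_+\in I\otimes\K$ (giving downward heredity of $\Cu(I)$, for arbitrary $c\in B_+$) and, when also $c\in(I\otimes\K)_+$, that $c\precsim a$ already holds inside $I\otimes\K$ (giving the order embedding), after passing to the supremum over $\ep$. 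Closure of $\Cu(I)$ under increasing suprema is then formal: an order embedding that preserves sequential suprema has image closed under them.

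Next I would check that $A_J$ is a closed two-sided ideal of $B$; here heredity of $J$ does all the work, since $(ax)^*(ax)=x^*a^*ax\sim axx^*a^*\precsim aa^*\sim a^*a$ controls products, $(a+b)^*(a+b)\precsim a^*a+b^*b$ controls sums, and the inequality $(a^*a-\ep)_+\precsim a_n^*a_n$ for $\|a-a_n\|$ small, together with closure of $J$ under suprema, controls norm limits. That the two assignments are mutually inverse is then a short computation: for $I\triangleleft A$ one has $a\in A_{\Cu(I)}$ iff $[a^*a]\in\Cu(I)$ iff $a^*a\in I\otimes\K$ iff $a\in I\otimes\K$; and for $J\triangleleft\Cu(A)$ one has $[a]\in J$ iff $a\in A_J$ iff $[a]\in\Cu(A_J)$. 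Both assignments visibly preserve inclusions, so they constitute an isomorphism of lattices.

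Finally, for the quotient statement I would recall that $\Cu(A)/\Cu(I)$ is the $\CatCu$-semigroup obtained from the congruence $x\sim y$ iff $x\le y+z$ and $y\le x+z'$ for some $z,z'\in\Cu(I)$. The quotient map $\pi\colon A\to A/I$ induces $\Cu(\pi)\colon\Cu(A)\to\Cu(A/I)$, which is surjective because positive elements lift along the surjection $\pi\otimes\id$, and which kills $\Cu(I)$, so it descends to $\overline{\Cu(\pi)}\colon\Cu(A)/\Cu(I)\to\Cu(A/I)$. The crux---and the main obstacle---is to show this descended map is an order embedding, i.e.\ that $[\pi(a)]\le[\pi(b)]$ forces $[a]\le[b]+z$ for some $z\in\Cu(I)$. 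This is a lifting of Cuntz comparison modulo $I$: from $(\pi(a)-\ep)_+=s(\pi(b)-\dt)_+s^*$ I lift $s$ to $\tilde s\in B$, so that $(a-\ep)_+=\tilde s(b-\dt)_+\tilde s^*+d$ with $d=d^*\in I\otimes\K$; discarding the negative part gives $(a-\ep)_+\le\tilde s(b-\dt)_+\tilde s^*+d_+$ with $\tilde s(b-\dt)_+\tilde s^*\precsim b$ and $d_+\in(I\otimes\K)_+$, hence $[(a-\ep)_+]\le[b]+z_\ep$ with $z_\ep\in\Cu(I)$. Assembling is then clean: taking $\ep=1/n$, replacing the $z_{1/n}$ by their increasing partial sums and setting $z$ equal to the supremum of these (an element of $\Cu(I)$ by closure under sums and suprema), compatibility of addition and order with suprema yields $[a]=\sup_n[(a-1/n)_+]\le[b]+z$. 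Injectivity and order-reflection of $\overline{\Cu(\pi)}$ follow, and since the map is also surjective and additive it is a $\CatCu$-isomorphism, giving the canonical identification $\Cu(A)/\Cu(I)\cong\Cu(A/I)$.
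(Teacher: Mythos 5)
Your proposal is correct, and it follows the same global architecture as the paper's treatment (\autoref{lma:IdealAIdealCu}, \autoref{thm:CuIdeals}, \autoref{thm:quotients}): the two assignments $I\mapsto \Cu(I)$ and $J\mapsto \{x : [x^*x]\in J\}$, shown to be mutually inverse, plus a descended map on the quotient. The differences lie in how two components are proved. For the embedding/heredity lemma, the paper uses an approximate-identity trick (given $x\precsim y$ in $A$ with $x,y\in J$, replace the implementing element $r$ by $re$ with $e\in J_+$ almost a unit for $y$, to witness $x\precsim y$ inside $J$) and a separate norm-limit argument for heredity; you instead extract, from a single R{\o}rdam-type identity $(c-\ep)_+=r(a-\dt)_+r^*$, the factorization $s=r(a-\dt)_+^{1/2}\in I\otimes\K$ which simultaneously yields heredity and the order embedding. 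Your version is marginally longer but more economical conceptually, and it localizes all the analysis in \autoref{thm:Rordam}. More significantly, for the quotient theorem the paper proves only the easy direction and explicitly omits the converse (``the converse is somewhat more involved''); you supply a complete and correct proof of it: lifting $s$ along the surjection $B\to B/(I\otimes\K)$, writing $(a-\ep)_+=\tilde s(b-\dt)_+\tilde s^*+d$ with $d=d^*\in I\otimes\K$, passing to $d_+$ to get $[(a-\ep)_+]\leq[b]+z_\ep$ with $z_\ep\in\Cu(I)$ (via \autoref{prop:InHerSubalg} and \autoref{lma:orth}), and then assembling a single $z\in\Cu(I)$ as the supremum of the increasing partial sums of the $z_{1/n}$. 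This is in the spirit of the original argument of Ciuperca--Robert--Santiago and genuinely completes what the survey leaves out; note also that an additive order isomorphism between Cu-semigroups automatically preserves suprema and $\ll$, as you implicitly use, since both are order-theoretic notions.

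One cosmetic repair: in your heredity step, the phrase ``after passing to the supremum over $\ep$'' should be replaced, for arbitrary $c\in B_+$, by the observation that $c=\lim_n (c-\tfrac{1}{n})_+$ in norm and $I\otimes\K$ is norm-closed, so $c\in I\otimes\K$ outright; you cannot yet invoke closure of the image of $\Cu(I)$ under suprema, since you establish that only afterwards (and correctly note it is formal once the embedding is in place).
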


The above theorem should also be compared to analogous 
statements in K-theory:  
in general, the K-groups of $A$ do 
not contain any information
about the ideal structure of $A$. 

Another major part of the structure of a C$^*$-algebra
which is encoded in its Cuntz semigroup is its (quasi)tracial 
state space.

\begin{thmintro}\label{thmintro:QT} (Blackadar-Handelman \cite{BlaHan_dimension_1982}).
Let $A$ be a \uca. Then there is a natural affine bijection 
between the set of all
quasitracial states on $A$ and the set of all normalized
functionals on $\Cu(A)$. Given a quasitrace $\tau\in \QT(A)$, 
the corresponding functional is
\[d_\tau([a])=\lim_{n\to\I} \tau\big( a^{\frac{1}{n}}\big)\]
for all $a\in A_+$ (and extended naturally to positive
elements in $A\otimes\K$).
\end{thmintro}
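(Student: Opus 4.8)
The plan is to construct the two maps explicitly and verify that they are mutually inverse and affine. First I would fix a quasitracial state $\tau \in \QT(A)$ and check that $d_\tau([a]) = \lim_{n} \tau(a^{1/n})$ is a well-defined normalized functional on $\Cu(A)$. After rescaling we may take $\|a\| \le 1$, in which case $(a^{1/n})_n$ is an increasing sequence of positive contractions, so $(\tau(a^{1/n}))_n$ is increasing and bounded and $d_\tau([a]) = \sup_n \tau(a^{1/n})$. To see that $d_\tau$ descends to $\Cu(A)$ and is order-preserving I would argue directly that $d_\tau$ is monotone for Cuntz subequivalence, using the characterization that $[a] \le [b]$ means that for every $\ep > 0$ there is $v$ with $(a-\ep)_+ = v b v^*$; since $v b v^*$ lies in the hereditary subalgebra generated by $b$, one has $d_\tau(v b v^*) \le d_\tau(b)$, and combined with $d_\tau((a-\ep)_+) \nearrow d_\tau([a])$ as $\ep \downarrow 0$ this yields $d_\tau([a]) \le d_\tau([b])$. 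Additivity on orthogonal (hence, via $M_2$, on all) representatives follows from linearity of $\tau$ on the relevant commutative subalgebra, and preservation of suprema of increasing sequences follows from the lower semicontinuity of $a \mapsto \sup_n \tau(a^{1/n})$. Normalization is immediate, since $d_\tau([1_A]) = \tau(1_A) = 1$, and the extension to $A \otimes \K$ uses that $\tau$, being a quasitrace, is compatible with amplifications.

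For the inverse map, given a normalized functional $\lambda$ on $\Cu(A)$ I would define a candidate on positive elements by the Blackadar--Handelman integral
\[
\tau_\lambda(a) = \int_0^\infty \lambda\bigl([(a-t)_+]\bigr) \, dt, \qquad a \in A_+,
\]
and extend to self-adjoint, then arbitrary, elements by (real and imaginary) linearity. On a commutative $C^*$-subalgebra $B \cong C_0(Y)$ the functional $\lambda$ restricts to a lower semicontinuous dimension function, i.e.\ a finite Radon measure $\mu_B$ on $Y$, and a layer-cake computation identifies $\tau_\lambda|_B$ with integration against $\mu_B$; these measures are mutually compatible because they are all restrictions of the single functional $\lambda$ on $\Cu(A)$. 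This gives at once positivity and additivity on every commutative subalgebra. The trace identity $\tau_\lambda(x^*x) = \tau_\lambda(xx^*)$ follows because $(x^*x - t)_+$ and $(xx^* - t)_+$ are Murray--von Neumann (hence Cuntz) equivalent for each $t>0$, so $\lambda$ takes equal values on their classes and the two integrals agree; the required matricial compatibility making $\tau_\lambda$ a genuine $2$-quasitrace is inherited from the identification $\Cu(A) = \Cu(A \otimes \K)$.

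It remains to check that the two constructions are mutually inverse and affine. For $\tau \in \QT(A)$, the functional-calculus identity $a = \int_0^\infty \chi_{(t,\infty)}(a)\,dt$ together with $d_\tau((a-t)_+) = \tau(\chi_{(t,\infty)}(a))$ yields $\tau_{d_\tau}(a) = \tau(a)$ for all $a \in A_+$, hence $\tau_{d_\tau} = \tau$. Conversely $d_{\tau_\lambda}$ agrees with $\lambda$ on all classes $[(a-\ep)_+]$, and since these are compactly contained in $[a]$ and their supremum is $[a]$, preservation of suprema forces $d_{\tau_\lambda} = \lambda$ on all of $\Cu(A)$. Affinity is transparent from the explicit formulas, as $\tau \mapsto d_\tau$ is linear in $\tau$ and $\lambda \mapsto \tau_\lambda$ is linear in $\lambda$.

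The hard part will be the surjectivity step: verifying that the integral $\tau_\lambda$ genuinely satisfies all the quasitrace axioms. Positivity and additivity on a single commutative subalgebra are easy once $\lambda|_B$ is identified with a measure, but proving that these local pieces glue into a globally defined function that is additive on \emph{every} commutative subalgebra simultaneously, and that this function respects the matricial compatibility required of a $2$-quasitrace, rests on the comparison theory of the cut-downs $(a-t)_+$ and the behaviour of $\lambda$ under $\ll$. This is precisely where the distinction between quasitraces and honest traces becomes delicate, and where the original Blackadar--Handelman argument requires the most care.
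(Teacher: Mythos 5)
Your proposal follows essentially the same route as the paper's proof of \autoref{thm:QTfunctionals}: you invert via the Blackadar--Handelman integral $\tau_\lambda(a)=\int_0^\infty\lambda\big([(a-t)_+]\big)\,dt$, obtain additivity on commutative subalgebras by identifying $\lambda$ with a Radon measure (the paper makes your ``lower semicontinuous dimension function gives a measure'' step explicit via Carath\'eodory's theorem), derive the tracial identity from the cut-down symmetry $(x^*x-\ep)_+\sim(xx^*-\ep)_+$ of \autoref{lma:CutDwnSymm}, and reduce the mutual-inverse check to commutative subalgebras via \autoref{prop:dtauonComm}. The only divergence is cosmetic: the paper omits the verification that $d_\tau$ is a well-defined functional (\autoref{prop:dtauFunctional}), so your first paragraph is, if anything, more detailed than the source at that point.
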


By the work of Elliott, Robert and Santiago,
the natural bijection described in the theorem above extends
to a bijection between the set of all lower-semicontinuous quasitraces on $A$
and the set of all functionals on $\Cu(A)$; see
\cite{EllRobSan_cone_2011}.

The fact that a finite von Neumann factor admits a 
faithful trace is a fundamental result in the study of 
factors. In the C$^*$-algebra setting, Cuntz used a precursor of
Theorem~\ref{thmintro:QT} to show the following version of that
result:

\begin{thmintro}(Cuntz \cite{Cun_dimension_1978}).
Let $A$ be a simple, unital \ca. Then $A$ is stably finite 
if and only if it admits a faithful quasitrace.
\end{thmintro}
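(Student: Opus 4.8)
The plan is to prove the two implications separately, with the forward direction (stable finiteness $\Rightarrow$ faithful quasitrace) being the substantial one, and to use the machinery of functionals on $\Cu(A)$ together with \autoref{thmintro:QT}.

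The reverse implication is direct. Suppose $A$ admits a faithful quasitrace $\tau$. Since a quasitrace extends canonically to every matrix amplification $M_n(A)$ and the extension remains faithful, it suffices to show that a faithful quasitrace on a unital \ca\ forces finiteness. If $v \in M_n(A)$ satisfies $v^*v = 1$, then the trace-like identity $\tau(vv^*) = \tau(v^*v) = \tau(1)$ gives $\tau(1 - vv^*) = 0$, and faithfulness yields $vv^* = 1$. Hence every $M_n(A)$ is finite, that is, $A$ is stably finite; note that simplicity is not needed here.

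For the forward implication I would pass to the Cuntz semigroup. By \autoref{thmintro:QT} it is enough to produce a single nonzero functional $\lambda$ on $\Cu(A)$ with $\lambda([1]) = 1$, for this corresponds to a quasitracial state $\tau$ with $\tau(1) = 1$. The existence of such a $\lambda$ is governed by the Blackadar--R{\o}rdam state-extension theorem for ordered abelian semigroups with an order unit: applied to the order-unit subsemigroup of $\Cu(A)$ generated by $[1]$ (and then extended to all of $\Cu(A)$ by allowing the value $\I$), it produces a normalized state precisely when $[1]$ fails to be an infinite element, i.e. when there is no $n$ with $(n+1)[1] \leq n[1]$. Stable finiteness is exactly what rules this out: if $(n+1)[1] \leq n[1]$ held, then, since Cuntz comparison of projections agrees with Murray--von Neumann subequivalence, $1_{M_{n+1}(A)}$ would be Murray--von Neumann subequivalent to $1_{M_n(A)}$, hence equivalent to a proper subprojection of itself inside $M_{n+1}(A)$ (the discarded copy of $[1]$ being a nonzero projection), contradicting finiteness of $M_{n+1}(A)$. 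Thus $[1]$ is not infinite and $\lambda$ exists.

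It remains to upgrade $\lambda$ (equivalently $\tau$) to a faithful functional, and here simplicity enters. The support $\{x \in \Cu(A) : \lambda(x) = 0\}$ is an ideal of $\Cu(A)$, which by the Ciuperca--Robert--Santiago theorem stated above corresponds to a closed two-sided ideal $I$ of $A$. Since $\lambda([1]) = 1 \neq 0$, the class $[1]$ does not lie in this support ideal, so $I \neq A$; as $A$ is simple, $I = 0$, which says exactly that $\lambda$, and hence $\tau$, is faithful. The main obstacle is the state-existence step: the Blackadar--R{\o}rdam argument is a Hahn--Banach-type extension carried out purely at the level of ordered semigroups, and it is the technical heart of the proof. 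A secondary point requiring care is the precise dictionary between stable finiteness and the semigroup condition on $[1]$, namely the coincidence of Cuntz and Murray--von Neumann comparison for projections and the fact that the quasitrace produced genuinely extends to all matrix amplifications.
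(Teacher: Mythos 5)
Your proposal follows essentially the same route as the paper's proof of \autoref{thm:Cuntz}. The reverse direction is the paper's argument (the identity $\tau(vv^*)=\tau(v^*v)$ applied to an isometry), with the cosmetic difference that you use the given faithfulness where the paper uses simplicity to force $1-vv^*$ into the kernel ideal; your observation that the matrix extension of a faithful quasitrace stays faithful is correct, since its kernel is an ideal of $M_n(A)$, hence of the form $M_n(I)$, and $I\neq 0$ would contradict faithfulness of $\tau$. For the forward direction, the paper defines $\lambda_0(n[1_A])=n$ on $S_0=\{n[1_A]\colon n\in\N\}$, shows via \autoref{lma:CtzCompPjns} and stable finiteness that $n[1_A]\leq m[1_A]$ implies $n\leq m$ (equivalent to your condition that $(n+1)[1_A]\nleq n[1_A]$ for all $n$), and extends $\lambda_0$ using injectivity of $[0,\infty]$ in the category of positively ordered monoids; this is the same Hahn--Banach-type engine as the Blackadar--R{\o}rdam state-extension theorem you cite. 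Your faithfulness step via \autoref{thm:CuIdeals} is valid, though heavier than needed: the kernel of any quasitrace is already an ideal of $A$, so simplicity alone gives faithfulness.

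There is one step missing in your outline. The map produced by the Blackadar--R{\o}rdam theorem (or by the injectivity extension) is only additive, order-preserving and normalized; it need not preserve suprema of increasing sequences, whereas \autoref{thmintro:QT} (that is, \autoref{thm:QTfunctionals}) is a bijection between quasitracial states and \emph{functionals} in the sense of \autoref{df:functionals}. As written, your appeal to the quasitrace correspondence does not go through. The paper repairs exactly this by regularizing: setting $\lambda(a)=\sup\{\widetilde{\lambda}(a')\colon a'\ll a\}$ yields a map preserving suprema of increasing sequences, and the normalization survives because $[1_A]$ is compact, so $\lambda([1_A])=\widetilde{\lambda}([1_A])=1$. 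With this regularization inserted before invoking the correspondence, your argument is complete.
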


For the most part of the last two decades, classification has 
revolved around the Jiang-Su algebra $\mathcal{Z}$ and 
C$^*$-algebras that absorb it tensorially; such algebras
are called $\mathcal{Z}$-\emph{stable}. The main 
reason for this is the fact (see \autoref{rem:ElltensorZ}) that
the Elliott invariant cannot distinguish between $A$ and 
$A\otimes\mathcal{Z}$. Thus, only $\mathcal{Z}$-stable C$^*$-algebras
can be expected to be classified using Ell.
The computation of the Cuntz semigroup of
$\mathcal{Z}$ was used by R{\o}rdam to show that Cuntz 
semigroups of $\mathcal{Z}$-stable \ca s are well-behaved:

\begin{thmintro}\label{thmintro:Rordam}(R{\o}rdam \cite{Ror_stable_2004}).
Let $A$ be a separable, unital $\mathcal{Z}$-stable C$^*$-algebra. Then $\Cu(A)$ is almost
unperforated; equivalently, $A$ has strict comparison.
\end{thmintro}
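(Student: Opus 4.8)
The statement bundles an algebraic property, that $\Cu(A)$ is \emph{almost unperforated} (for all $x,y$, if $(n+1)x \le ny$ for some $n \ge 1$ then $x \le y$), with the analytic property of \emph{strict comparison} (if $a,b \in (A \otimes \K)_+$ satisfy $d_\tau(a) < d_\tau(b)$ for all $\tau \in \QT(A)$, then $a \precsim b$). The plan is to prove almost unperforation and read off strict comparison from it. For the equivalence I would invoke Theorem~\ref{thmintro:QT}, which identifies quasitraces with functionals on $\Cu(A)$ and each $d_\tau$ with the evaluation of the corresponding functional; under this dictionary strict comparison becomes the assertion that $f(x) < f(y)$ for every functional $f$ forces $x \le y$. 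One direction is elementary: applying a functional $f$ to $(n+1)x \le ny$ gives $f(x) \le \tfrac{n}{n+1} f(y)$, so that $f(x) < f(y)$ whenever $0 < f(y) < \I$, the remaining values of $f(y)$ being covered by the standard conventions; strict comparison then yields $x \le y$. The converse is where almost unperforation is genuinely used, via a Hahn--Banach type separation on the cone of functionals proving that, in an almost unperforated Cu-semigroup, functionals detect the order. I would cite this separation result and concentrate on establishing almost unperforation.

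It thus remains to show that $\mathcal{Z}$-stability forces almost unperforation. Replacing $A$ by the isomorphic algebra $A \otimes \mathcal{Z}$, and recalling that $\mathcal{Z}$ is an inductive limit of prime dimension-drop algebras $Z_{p,q}$ (with $p,q$ coprime) and that $\Cu$ respects such limits by Theorem~\ref{thmintro:CatCu}, I would localize the problem to a single mediating block. The relevant block is $Z_{n,n+1}$: it interpolates along $[0,1]$ between a representation factoring through $M_n$ at one endpoint and one factoring through $M_{n+1}$ at the other, and is thus precisely the algebra engineered to compare ``$n$ copies'' with ``$n{+}1$ copies'' of an element. Concretely, given $a,b \in A_+$ with $(n+1)\langle a\rangle \le n\langle b\rangle$, and fixing a unital $*$-homomorphism $Z_{n,n+1} \to \mathcal{Z}$, I would construct a positive element of $A \otimes Z_{n,n+1}$ that near one endpoint is Cuntz-dominated by $n$ copies of $b$ (using the hypothesis) and near the other dominates $n+1$ copies of $a$, interpolated continuously over $[0,1]$ so that, back inside $A \otimes \mathcal{Z} \cong A$, it witnesses $\langle a\rangle \le \langle b\rangle$.

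The main obstacle is exactly this construction of the interpolating witness: one has to patch the two endpoint subequivalences into a single global Cuntz comparison over the interval, uniformly in the parameter $t \in [0,1]$, while respecting the collapse of the fibers to $M_n$ and $M_{n+1}$ at $t=0$ and $t=1$. This is where the technical apparatus for $\precsim$ developed earlier enters --- cutting down by elements $(a-\ep)_+$, approximating subequivalences by explicit contractions, and keeping the estimates continuous in $t$. I expect the cleanest route to isolate first the divisibility content of $\mathcal{Z}$, namely that for each $k$ and each $\ep>0$ there is a positive element of $\mathcal{Z}$ whose rank lies within $\ep$ of $\tfrac1k$ and that comparison in $\mathcal{Z}$ is dictated by its unique trace; the gap $\tfrac{n}{n+1}<1$ between the ranks can then be absorbed into an honest subequivalence on the $\mathcal{Z}$-tensor factor. (From a more modern standpoint this is repackaged through the tensor product in $\CatCu$, but the analytic heart is the same.) Finally, I would note that the argument uses neither simplicity nor nuclearity of $A$: the only inputs are $\mathcal{Z}$-stability and Theorem~\ref{thmintro:QT}, which is why the conclusion holds in this generality.
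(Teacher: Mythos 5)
_There is one genuine gap, and it sits exactly at the step you dispatch with ``back inside $A \otimes \mathcal{Z} \cong A$''. After the tensor argument you have $a\otimes 1_{\mathcal{Z}}\precsim b\otimes 1_{\mathcal{Z}}$ in $A\otimes\mathcal{Z}$, and you need $a\precsim b$ in $A$; for this the first-factor embedding $\iota(x)=x\otimes 1_{\mathcal{Z}}$ must induce an \emph{order-embedding} at the level of $\Cu$, and a bare isomorphism $A\cong A\otimes\mathcal{Z}$ gives no such thing, since an arbitrary isomorphism has no reason to carry $[a]$ to $[a\otimes 1_{\mathcal{Z}}]$. Nor does your opening move of replacing $A$ by $A\otimes\mathcal{Z}$ discharge this: after the replacement the witnessing subequivalence lives in $(A\otimes\mathcal{Z})\otimes\mathcal{Z}$ and the identical descent problem recurs one floor up. The missing input is \autoref{thm:Zssa} (Jiang--Su): \emph{some} isomorphism $\varphi\colon A\to A\otimes\mathcal{Z}$ is approximately unitarily equivalent to $\iota$, whence $\Cu(\iota)=\Cu(\varphi)$ is an isomorphism and the conclusion follows. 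This is precisely where separability enters, and it also falsifies your closing inventory: the indispensable inputs are \autoref{thm:Zssa} together with the divisibility element in $\Cu(\mathcal{Z})$, while Theorem~\ref{thmintro:QT} plays no role in proving almost unperforation --- it only serves to rephrase it as strict comparison.

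Otherwise your ``cleanest route'' is essentially the paper's proof (\autoref{thm:RordamStrComp}): from $\Cu(\mathcal{Z})\cong\N\sqcup(0,\I]$ (\autoref{thm:CuZ}) one extracts $z\in(\mathcal{Z}\otimes\K)_+$ with $n[z]\leq[1_{\mathcal{Z}}]\leq(n+1)[z]$ (\autoref{rem:DividUnitZ}), and the entire argument is then the chain
\[a\otimes 1_{\mathcal{Z}}\precsim a\otimes(z\otimes 1_{n+1})\sim(a\otimes 1_{n+1})\otimes z\precsim(b\otimes 1_n)\otimes z\precsim b\otimes 1_{\mathcal{Z}}.\]
In particular your primary plan --- a positive element of $A\otimes Z_{n,n+1}$ interpolating fiberwise over $[0,1]$ between endpoint subequivalences --- is an unnecessary detour: the hypothesis $(n+1)[a]\leq n[b]$ enters exactly once, through the flip of tensor factors, and no patching of comparisons along the interval is needed (the dimension-drop algebra is where R{\o}rdam originally found $z$; here it is read off from $\Cu(\mathcal{Z})$ directly). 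Your treatment of the equivalence with strict comparison matches the paper's, which likewise cites the Hahn--Banach-type separation; your elementary direction is correct except that the case $f(y)=0$ is not covered by any ``standard conventions'' --- it is handled by working with functionals normalized at $y$, as in R{\o}rdam's formulation.
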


Perhaps surprisingly, it is conjectured that the converse of Theorem~\ref{thmintro:Rordam} is true in the simple, nuclear setting: 
this is the only implication that remains
open in the Toms-Winter conjecture. The conditions of
$\mathcal{Z}$-stability and strict comparison 
should be regarded as
C$^*$-algebraic counterparts of McDuffness and the fact that in a
II$_1$-factor, the order on projections is determined by the 
unique trace. While those conditions are always satisfied for 
hyperfinite factors, $\mathcal{Z}$-stability and strict comparison are not automatic for simple, nuclear C$^*$-algebras,
as Toms showed:

\begin{thmintro}(Toms \cite{Tom_classification_2008}).
There exists a simple, separable, nuclear, unital C$^*$-algebra
$A$ which satisfies $\Ell(A)\cong \Ell(A\otimes\mathcal{Z})$ and
$\Cu(A)\ncong \Cu(A\otimes\mathcal{Z})$, so in particular $A\ncong A\otimes\mathcal{Z}$. 
\end{thmintro}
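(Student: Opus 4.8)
The plan is to realize $A$ as a Villadsen-type \AHa{} following Toms \cite{Tom_classification_2008}, that is, an inductive limit $A=\varinjlim (M_{k_n}(C(X_n)),\phi_n)$ in which the $X_n$ are high-dimensional compact connected spaces (for instance finite products of $2$-spheres, or of complex projective spaces) whose covering dimension grows linearly in the matrix size $k_n$, and the connecting maps $\phi_n$ are assembled from point evaluations together with ``diagonal'' coordinate embeddings in the spirit of Villadsen \cite{Vil_range_1995}. First I would dispatch the structural properties, all standard for such limits: each building block $M_{k_n}(C(X_n))$ is separable and subhomogeneous, hence nuclear, so $A$ is separable and nuclear; unitality is clear; and simplicity is forced by inserting into each $\phi_n$ point evaluations at a sufficiently rich (eventually dense) set of points of $X_{n+1}$, so that no proper nonzero ideal can survive in the limit.

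The second step is to match the Elliott invariants. Here I would read off $K_*(A)$ from the inductive system, choosing the $\phi_n$ so that the ordered group $K_0(A)$ is weakly unperforated. By the K\"unneth formula $K_*(A\otimes\mathcal{Z})\cong K_*(A)$ as groups, and since $\mathcal{Z}$ has a unique trace the map $\tau\mapsto\tau\otimes\tau_{\mathcal{Z}}$ is an affine homeomorphism of tracial state spaces; crucially, once $K_0(A)$ is already weakly unperforated the canonical homomorphism $K_0(A)\to K_0(A\otimes\mathcal{Z})$ is an isomorphism of ordered groups. Assembling these identifications (and the induced pairing) yields $\Ell(A)\cong\Ell(A\otimes\mathcal{Z})$; this is the concrete instance of the general phenomenon recorded in \autoref{rem:ElltensorZ}.

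The heart of the matter is the Cuntz-semigroup distinction, and the strategy is to show that $\Cu(A)$ is \emph{not} almost unperforated while $\Cu(A\otimes\mathcal{Z})$ \emph{is}. The latter is immediate from Theorem~\ref{thmintro:Rordam}: $A\otimes\mathcal{Z}$ is separable, unital and $\mathcal{Z}$-stable, hence has strict comparison and almost unperforated Cuntz semigroup. For the former I would exploit the topology of the $X_n$. Over a block $M_k(C(X))$ the Cuntz classes of projections are (after stabilization) complex vector bundles over $X$, and subequivalence of two such classes amounts to an \emph{unstable} subbundle embedding, which is governed not only by rank---i.e.\ by the values of the dimension functions $d_\tau$---but also by characteristic-class obstructions. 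Choosing $X$ to carry bundles of nonvanishing Euler/top Chern class, one produces projections $p,q$ whose classes satisfy a perforation inequality $(k+1)[p]\le k[q]$ in $\Cu$ (this is a pure rank, hence $d_\tau$, computation) while $[p]\not\le[q]$, because the required subbundle of $E_q$ is obstructed by the nonvanishing characteristic class even though the dimension count would permit it. It is worth stressing that this failure is invisible to $K_0(A)$---consistently with its weak unperforation---precisely because $K_0$ discards the unstable subbundle information that $\Cu$ retains; this is exactly why Ell cannot see the difference that Cu detects. The delicate point is to propagate the obstruction through all the $\phi_n$ into the limit: one must arrange that $\dim(X_n)$ grows fast enough relative to $k_n$ (equivalently, that a suitable radius-of-comparison quantity stays bounded away from $0$) so that cancellation never sets in and the inequality survives at the inductive-limit stage.

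Finally I would conclude. Almost unperforation is an intrinsic property of an ordered semigroup in $\CatCu$, hence is preserved by $\CatCu$-isomorphisms; since $\Cu(A)$ is not almost unperforated whereas $\Cu(A\otimes\mathcal{Z})$ is, no such isomorphism can exist, so $\Cu(A)\ncong\Cu(A\otimes\mathcal{Z})$ and therefore $A\ncong A\otimes\mathcal{Z}$. The main obstacle is unquestionably the third step: the bundle-theoretic computation \`a la Villadsen, which requires simultaneously a delicate choice of spaces and connecting maps guaranteeing that the characteristic-class obstruction is not washed out under the $\phi_n$, and a verification that the resulting perforation genuinely persists in $\Cu(A)$ rather than being an artifact of a single finite stage.
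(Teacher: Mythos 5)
Your overall architecture coincides with the paper's (and Toms'): a Villadsen-type AH limit made simple by point evaluations, Elliott invariants matched, and the two algebras separated by almost unperforation of the Cuntz semigroup, with \autoref{thm:RordamStrComp} handling the $\mathcal{Z}$-stable side and a characteristic-class obstruction handling the other. But there is a genuine gap in your third step: you cannot carry the perforation on \emph{projections} while simultaneously insisting, as your second step does, that $K_0(A)$ be weakly unperforated. Over your proposed base spaces (products of $2$-spheres), the Villadsen inequality $11[\theta_1]\leq 10[\xi\times\xi]$ together with $[\theta_1]\not\leq[\xi\times\xi]$ is already perforation of the \emph{ordered group} $K_0$ of the block: the obstruction $e(\xi)\times e(\xi)\neq 0$ is the top Chern class and is stable, so $[\xi\times\xi]-[\theta_1]$ fails to be positive in $K_0(C(S^2\times S^2))$ even after adding trivial bundles --- this was precisely Villadsen's point, and it is incompatible with weak unperforation surviving into the limit unless one argues very carefully. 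Worse, the algebras produced by this construction have stable rank one (\autoref{prop:Asimple}), hence cancellation of compact Cuntz classes (\autoref{rmk:wc}); combining \autoref{lma:CtzCompPjns}, \autoref{cor:VACuA} and \autoref{prop:CpctSr1}, in a simple C$^*$-algebra of stable rank one with weakly unperforated $K_0$, the inequality $(k+1)[p]\leq k[q]$ for projections forces $[p]\leq[q]$: every state is strictly positive on nonzero positive classes (each is an order unit by simplicity), so Goodearl--Handelman gives $[q]-[p]>0$ in $K_0$, and cancellation converts this into Murray--von Neumann, hence Cuntz, subequivalence. So projection witnesses are impossible in exactly the situation you set up.

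Toms' fix --- the paper's \autoref{prop:Anotaunp} --- is the idea your sketch is missing: take \emph{contractible} base spaces $[0,1]^{6N_i}$, so each block has $K_0=\Z$ and the limit has $(\mathrm{K}_0,[1],\mathrm{K}_1)\cong(\Q,1,\{0\})$, trivially weakly unperforated (and, as a bonus, $A\otimes\mathcal{Z}$ is then an AI algebra by classification), and implement the Euler-class obstruction with \emph{positive elements not equivalent to projections}: scalar cut-downs $f\theta_1$ and $f(\rho^*(\xi)\times\rho^*(\xi))$ supported on an open spherical shell $S\times S\subseteq[0,1]^3\times[0,1]^3$ with $S$ homotopy equivalent to $S^2$, sitting in the hereditary subalgebra $M_4(C_0(S\times S))$. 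These classes are non-compact in $\Cu(A)$, hence invisible to $K_0$ and $\V(A)$, which is why the perforation can coexist with weakly unperforated $K_0$ and stable rank one. Two smaller remarks: for the statement as formulated here the Elliott matching is automatic from \autoref{rem:ElltensorZ}, since this $\Ell$ omits the positive cone --- your ordered-$K_0$ argument is only needed for the stronger claim $\widetilde{\mathrm{Ell}}(A)\cong\widetilde{\mathrm{Ell}}(A\otimes\mathcal{Z})$ recorded in the paper's footnote; and your closing reduction (almost unperforation is an isomorphism invariant of Cu-semigroups) is exactly how the paper concludes in \autoref{cor:ABnotiso}. You do correctly identify the remaining hard analytic point, namely propagating the obstruction through the connecting maps $\phi_{1,i}$, which the paper itself only carries out for $i=1$, deferring the general case to Toms' article.
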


The way that Toms distinguished $\Cu(A)$ from $\Cu(A\otimes\mathcal{Z})$
was using the property of almost unperforation via Theorem~\ref{thmintro:Rordam}.
The explicit computation of $\Cu(A)$, for the C$^*$-algebra $A$
constructed by Toms, is still unknown; see \autoref{pbm:Cu(Toms)}.
On the other hand, the difficult task of computing Cuntz semigroups becomes much simpler for $\mathcal{Z}$-stable C$^*$-algebras:

\begin{thmintro}(Brown-Toms \cite{BroTom_three_2007}, Brown-Perera-Toms \cite{BroPerTom_cuntz_2008}).
\label{thm:BPT}
Let $A$ be a simple, separable, unital, stably finite $\mathcal{Z}$-stable 
C$^*$-algebra. Then  
\[\Cu(A)\cong \mathrm{V}(A) \sqcup 
\mathrm{LAff}(\QT(A))_{++}.\]
In particular, the pair $(\Cu(A),\mathrm{K}_1(A))$ 
is equivalent to $\Ell(A)$.
\end{thmintro}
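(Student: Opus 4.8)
The plan is to build an explicit order-isomorphism
$\Phi\colon \Cu(A)\to \V(A)\sqcup \mathrm{LAff}(\QT(A))_{++}$ by splitting $\Cu(A)$ into its compact and non-compact parts and matching each with the corresponding summand. First I would record that a simple, unital, stably finite, $\mathcal{Z}$-stable C$^*$-algebra has stable rank one (R{\o}rdam); in stable rank one the compact elements of $\Cu(A)$ — those $x$ with $x\ll x$ — are exactly the classes $[p]$ of projections, and cancellation identifies this compact part with $\V(A)$. Simplicity forces every non-compact element to be \emph{soft}, and for such $[a]$ the rank function $\tau\mapsto d_\tau([a])$, which by \autoref{thmintro:QT} is a functional evaluated along $\QT(A)$, is affine, lower semicontinuous, and, because every nonzero quasitrace on a simple algebra is faithful, strictly positive. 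I would thus let $\Phi$ send a projection class to its image in $\V(A)$ and a soft class $[a]$ to $\widehat{[a]}\in \mathrm{LAff}(\QT(A))_{++}$.

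For injectivity and the order-embedding property I would invoke strict comparison, equivalently almost unperforation of $\Cu(A)$, from \autoref{thmintro:Rordam}, together with \autoref{thmintro:QT}. The decisive point is that a soft $[a]$ is the supremum of an increasing sequence $[a_n]\ll [a]$ whose ranks are, by softness, \emph{strictly} dominated by $\widehat{[a]}$; hence if $\widehat{[a]}\le \widehat{[b]}$ pointwise then $d_\tau([a_n])<d_\tau([b])$ for all $\tau$, so strict comparison gives $[a_n]\le [b]$ and therefore $[a]\le [b]$. Applying this in both directions shows that soft elements are determined by their rank functions and that $\Phi$ both reflects and preserves order among soft elements; an order relation between a compact class and a soft class reduces to the same inequality, where the failure of equality is exactly what keeps the two summands disjoint (a projection and a soft element of equal rank remain distinct). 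Additivity of $\Phi$ is immediate from additivity of $d_\tau$.

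The main obstacle is surjectivity onto $\mathrm{LAff}(\QT(A))_{++}$, the \emph{realization of ranks}: every strictly positive lower semicontinuous affine function must be the rank of a soft element. Here $\mathcal{Z}$-stability enters through almost divisibility, which lets one subdivide the rank of a given element into arbitrarily small pieces and so realize a rich family of continuous strictly positive affine functions (constants first, via rescaled copies of the unit inside the $\mathcal{Z}$-factor, then general continuous affine functions by interpolation combined with strict comparison). For arbitrary $f\in \mathrm{LAff}(\QT(A))_{++}$ I would use that $\QT(A)$ is a metrizable Choquet simplex, so $f$ is the pointwise supremum of a strictly increasing sequence of continuous affine functions $f_n$; realizing each $f_n$ by a soft element $[a_n]$ and using strict comparison to arrange $[a_n]\le [a_{n+1}]$, the supremum $[a]=\sup_n [a_n]$ exists in $\Cu(A)$ and satisfies $d_\tau([a])=\sup_n f_n(\tau)=f(\tau)$, since functionals preserve suprema of increasing sequences. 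Verifying that $[a]$ is genuinely soft, which amounts to checking that the supremum is not attained, completes this step.

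Finally, for the statement about the Elliott invariant I would read $(\mathrm{K}_0(A),\mathrm{K}_0(A)_+,[1_A])$ off the compact part $\V(A)$, using that in stable rank one the Grothendieck map $\V(A)\to \mathrm{K}_0(A)$ is an order-embedding onto the positive cone with order unit $[1_A]$, while \autoref{thmintro:QT} recovers $\QT(A)$ as the functionals on $\Cu(A)$ and the pairing between traces and projections as $\tau\mapsto \widehat{[p]}(\tau)=\tau(p)$. Adjoining $\mathrm{K}_1(A)$ as an independent summand then reconstitutes $\Ell(A)$ (with $\QT(A)=T(A)$ whenever quasitraces are traces, for instance in the exact case), and conversely $\Ell(A)$ determines $\V(A)$, $\QT(A)$, and hence $\mathrm{LAff}(\QT(A))_{++}$, yielding the claimed equivalence.
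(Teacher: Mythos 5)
The paper itself does not prove this theorem: it restates it as \autoref{thm:CuAtimesZ} and cites Brown--Toms and Brown--Perera--Toms, so your proposal has to be measured against the cited literature and the toolkit the survey assembles. Your outline is the standard route and correctly identifies the pillars: stable rank one via R{\o}rdam, the identification of the compact part with $\V(A)$ (\autoref{prop:CpctSr1} plus weak cancellation, \autoref{wc}), the quasitrace--functional correspondence (\autoref{thm:QTfunctionals}), and strict comparison from $\mathcal{Z}$-stability (\autoref{thm:RordamStrComp}) to make rank functions an order-embedding on non-compact classes. One assertion you use is stated without justification: that simplicity makes every non-compact class soft, i.e.\ that $d_\tau([a_n])<d_\tau([a])$ strictly for the approximants. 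This is true but needs an argument: in stable rank one, compact classes are exactly projection classes, so if $[a]$ is non-compact then $0$ is an accumulation point of $\spec(a)$; choosing a nonzero $g\in C_0((0,\varepsilon))$ with $g(a)\perp (a-\varepsilon)_+$ gives $d_\tau((a-\varepsilon)_+)+d_\tau(g(a))\le d_\tau(a)$, and faithfulness of quasitraces on a simple algebra yields the strict drop. (You should also be more careful with the mixed order relations: compact $\le$ soft requires \emph{strict} pointwise inequality of ranks, while soft $\le$ compact is non-strict, as in parts (ii) and (iii) of \autoref{thm:CuAtimesZ}.)

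The substantive gap is surjectivity, the realization of ranks. ``Constants first, then general continuous affine functions by interpolation combined with strict comparison'' does not work as stated: the functions you can generate from $\V(A)$-classes and rescaled copies of the unit are combinations of the $\widehat{x}$ with $x\in\V(A)$ and constants, and these need not be dense in $\Aff(\QT(A))$. For instance, there exist simple, separable, unital, nuclear, stably finite $\mathcal{Z}$-stable algebras with $\KK_0(A)=\Z[1_A]$ but infinite-dimensional trace simplex; there your family consists of constants only, so no interpolation or comparison argument can reach a non-constant continuous affine function. The genuinely nontrivial ingredient, which your sketch omits, is the realization of $\tau\mapsto\tau(h)$ for $h\in A_+$: in the Brown--Perera--Toms argument this comes from the layer-cake identity $\tau(h)=\int_0^\infty d_\tau((h-t)_+)\,dt$, realizing the Riemann sums $\sum_k \Delta_k\, d_\tau((h-t_k)_+)$ as ranks of orthogonal sums $(h-t_k)_+\otimes z_{\Delta_k}$ built from pairwise orthogonal soft elements of $\mathcal{Z}$, and then passing to suprema; general strictly positive continuous affine functions are reached from such ranks (via a Cuntz--Pedersen-type density in $\Aff(\QT(A))$), after which your lower semicontinuous supremum step is fine. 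Alternatively, within this survey the gap is closed by quoting the realization-of-ranks theorem for stable rank one, part (i) of \autoref{thmintro:CatCu2} (see \autoref{thm:realization}), which applies here since $A$ is separable, of stable rank one, and nowhere scattered, being simple and infinite-dimensional. Your derivation of the final ``in particular'' clause agrees with the remark the paper makes after \autoref{thm:CuAtimesZ}.
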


In the theorem above, LAff(QT$(A))_{++}$ denotes the set of all
lower semicontinuous affine functions QT$(A)\to (0,\infty]$.
For the class of algebras in Theorem \ref{thm:BPT}, the group $\KK_1(A)$ can also be recovered from Cuntz semigroup data,
namely from $\Cu(A\otimes C(\mathbb{T}))$;
see \autoref{thm:zstable}.

A recurrent theme in these notes is the fact that C$^*$-algebras
of stable rank one have particularly well behaved Cuntz semigroups. 
This is made already apparent in Subsection~2.2, where we show that 
Cuntz comparison in the stable rank one case takes a form which is
very close to Murray-von Neumann subequivalence for projections;
see \autoref{prop:CtzCompsr1}. We also report on some of the most
recent results on Cuntz semigroups of C$^*$-algebras of stable rank one,
including the following: 

\begin{thmintro}\label{thmintro:CatCu2}(Antoine-Perera-Robert-Thiel
\cite{AntPerRobThi_stable_2022}).
Let $A$ be a separable, unital \ca\ of stable 
rank one without finite-dimensional quotients. 
Then $\Cu(A)$ has the Riesz
interpolation property and it is an inf-semillatice ordered
semigroup. Moreover:
\be[{\rm (i)}]\item (Realization of ranks). For every 
$f\in \mbox{LAff}(\QT(A))_{++}$, there exists $a\in (A\otimes\K)_+$ such that
$d_\tau([a])=f(\tau)$ for all $\tau\in \QT(A)$.
\item (Blackadar-Handelman conjecture). The space $\mathrm{DF}(A)$ of 
dimension functions on $A$ is a Choquet simplex.
\ee
\end{thmintro}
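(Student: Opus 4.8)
The plan is to prove the two assertions of \autoref{thmintro:CatCu2} by exploiting the structural properties that stable rank one forces on $\Cu(A)$, and the bridge between functionals and quasitraces provided by \autoref{thmintro:QT}. First I would establish the two \emph{lattice-theoretic} claims—that $\Cu(A)$ has Riesz interpolation and is an inf-semilattice—since everything downstream rests on them. The key input is that in the stable rank one case Cuntz comparison is very close to Murray--von Neumann subequivalence for projections (\autoref{prop:CtzCompsr1}): concretely, $[a]\leq[b]$ holds if and only if $a$ is Cuntz-below $b$ via a genuine partial isometry, and one has a well-behaved notion of ``taking infima'' of positive elements up to Cuntz equivalence. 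I would use this to construct, for any two elements $x,y\in\Cu(A)$, an explicit candidate infimum $x\wedge y$ built from a common subelement, and then verify the universal property. The hypothesis of \emph{no finite-dimensional quotients} enters to rule out the pathological compact-element behavior that would otherwise obstruct interpolation; I expect this is where one invokes a weak divisibility or corner-comparison statement.

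With the order structure in hand, the realization of ranks (part (i)) becomes a problem of hitting a prescribed lower-semicontinuous affine target function by the rank function $\tau\mapsto d_\tau([a])$ of a single positive element. The strategy I would follow is approximation from below: write the target $f\in\mathrm{LAff}(\QT(A))_{++}$ as a supremum of an increasing sequence of continuous (or ``nice'') affine functions $f_n$, realize each $f_n$ by an element $a_n$ whose rank approximates $f_n$ uniformly, arrange the $a_n$ into a Cuntz-increasing sequence, and then pass to the supremum $a=\sup_n a_n$ inside $\Cu(A\otimes\K)$ (which exists because $\Cu$ is closed under suprema of increasing sequences, by \autoref{thmintro:CatCu}). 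The supremum of ranks is the rank of the supremum precisely because functionals on $\Cu$ are, by definition, suprema-preserving; so $d_\tau([a])=\sup_n d_\tau([a_n])=f(\tau)$. The genuinely hard step is the \emph{uniform} realization of each continuous $f_n$: one must produce elements whose rank functions are simultaneously controlled over the whole compact convex set $\QT(A)$, and this is exactly where stable rank one is indispensable—it guarantees enough elements in each corner (via the interpolation and inf-semilattice structure just established) to interpolate ranks without perforation defects.

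For part (ii), the Blackadar--Handelman conjecture, I would deduce that $\mathrm{DF}(A)$ is a Choquet simplex from the realization of ranks together with the affine bijection of \autoref{thmintro:QT}. The point is that dimension functions on $A$ correspond to states on an appropriate ordered group / cone associated to $\Cu(A)$, and a classical Choquet-theoretic criterion says that the state space of a partially ordered abelian group is a simplex if and only if the group satisfies Riesz interpolation (interpolation is the finite-dimensional shadow of the lattice condition). Since I will already have shown that $\Cu(A)$ has the Riesz interpolation property, the induced cone of functionals is a lattice in its own order, and hence its base—which is affinely identified with $\mathrm{DF}(A)$ via the correspondence $\tau\mapsto d_\tau$—is a Choquet simplex. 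I would phrase this last deduction through the realization result of part (i) to ensure that $\mathrm{DF}(A)$ genuinely equals the full functional space rather than a proper subset, closing the gap between lower-semicontinuous dimension functions and all dimension functions.

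The main obstacle, in my estimation, is the uniform rank-realization in part (i): converting the pointwise, per-quasitrace control of dimension functions into a single positive element with the correct global rank profile requires a careful inductive construction that keeps the Cuntz classes increasing while steering their ranks toward $f$ uniformly on $\QT(A)$. The lattice structure is the tool that makes this possible, but assembling the approximants and controlling the error terms at each stage—especially near the boundary where $f$ may take the value $+\infty$—is where the real work lies; the Choquet-simplex conclusion in part (ii) is then comparatively formal once interpolation and realization are secured.
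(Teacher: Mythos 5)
Your lattice-theoretic step is where the first genuine gap lies. The plan to build an explicit infimum $x\wedge y$ ``from a common subelement'' via \autoref{prop:CtzCompsr1} has no known implementation: there is no direct construction of infima at the level of positive elements, and the actual proof does not attempt one. What the paper proves is that the set of lower bounds $\{z\in\Cu(A)\colon z\leq x,y\}$ is \emph{upward directed}, and even this is only directly accessible when one of the two elements is compact (\autoref{lma:ht}), using three inputs your outline never invokes: weak cancellation (\autoref{wc}), axiom (O5), and Thiel's strengthened axiom (O6+) (\autoref{thm:O6+}). The reduction of a general $x=[a]$ to the compact case is the heart of the matter and is accomplished by \autoref{prop:key}: via Kasparov stabilization one embeds $A$ as a closed two-sided ideal in a stable rank one algebra $B=C^*(p_a,A)$ containing a projection $p_a$ with $z\leq[a]$ in $\Cu(A)$ if and only if $z\leq[p_a]$ in $\Cu(B)$; then separability (countably-based-ness) upgrades directedness to existence of infima, and distributivity over addition again runs through (O5), weak cancellation and (O6+). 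You also misplace the hypothesis of no finite-dimensional quotients: interpolation and the inf-semilattice-ordered structure hold for \emph{every} (separable) stable rank one C$^*$-algebra (\autoref{thm:CuARiesz}, \autoref{Cstarinf-semilattice}); that hypothesis (nowhere scatteredness) is needed only in part (i), not to ``rule out compact-element obstructions to interpolation''.

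Your scheme for part (i) is circular: it reduces realizing $f$ to ``uniformly realizing'' each continuous approximant $f_n$, which is exactly the same open problem, and no mechanism for it is offered. The paper instead defines a single candidate element $\alpha(f)=\sup\{x\in\Cu(A)\colon\widehat{x}\ll f\}$ (\autoref{alpha}) --- where already the existence of this supremum needs stable rank one --- and then proves $\widehat{\alpha(f)}=f$ (\autoref{thm:realization}) using the inf-semilattice order, the identification of the realification $L(F(\Cu(A)))=\Cu(A)_R$, nowhere scatteredness (to control the ideal generated by the target $f$, which is precisely where the no-finite-dimensional-quotients hypothesis acts), and Edwards' condition. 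For part (ii) your overall route (interpolation group plus Goodearl's criterion) is indeed the paper's, but your proposed detour through part (i) ``to ensure that $\mathrm{DF}(A)$ equals the full functional space'' is both unnecessary and incorrect: $\mathrm{DF}(A)$ contains dimension functions that are \emph{not} lower semicontinuous, so it is not identified with $F(\Cu(A))$, and the realization theorem says nothing about it. The correct bridge is \autoref{lma:WAhereditary}: stable rank one makes $\W(A)$ hereditary in $\Cu(A)$, so $\W(A)$ inherits Riesz interpolation, the Grothendieck group $\KK_0^*(A)$ is then an interpolation group (\autoref{lma:interpolation}), and Goodearl's Theorem 10.17 gives that its state space, which is $\mathrm{DF}(A)$, is a Choquet simplex (\autoref{thm:BH}); no realization of ranks enters this argument.
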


The developments that led to 
Theorem~\ref{thmintro:CatCu2} ran parallel to, and also 
benefited from, the advances made in the systematic study of the category $\CatCu$. Indeed,
Theorem~\ref{thmintro:CatCu} opened the doors for an abstract 
study of the category $\CatCu$ and of the functor $\Cu$. 
In this setting, it is particularly important to establish 
the existence of some standard categorical 
constructions in $\CatCu$
such as direct limits, tensor products or products. This naturally
leads one to consider two larger auxiliary categories 
\textbf{W} and \textbf{Q}, which contain \textbf{Cu} as a 
full subcategory. These larger categories have the advantage that 
the constructions that we are interested in exist in them (for example, 
it is not so difficult to 
prove that \textbf{W} has arbitrary direct limits
and tensor products).
One can show (see \autoref{thm:completion} and \autoref{prp:CuCoreflQ}) that there exist
functors $\gamma\colon \mathrm{\textbf{W}} \to \CatCu$ and 
$\tau\colon  \mathrm{\textbf{Q}} \to \CatCu$ which are,
respectively, a reflector and coreflector to
the canonical inclusions. Using these, it is possible
to transfer constructions from \textbf{W} and \textbf{Q} back to 
$\CatCu$. This is an essential ingredient in the following:

\begin{thmintro}(Antoine-Perera-Thiel
\cite{AntPerThi_tensor_2018}).
The category \textbf{Cu} is closed, symmetric, monoidal, and 
bicomplete. The Cuntz semigroup functor preserves directed limits and coproducts, and, suitably interpreted, also products and ultraproducts.
\end{thmintro}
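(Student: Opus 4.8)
The plan is to establish each of the four claimed properties of $\CatCu$---closedness, symmetry, monoidality, and bicompleteness---together with the preservation statements for $\Cu$, largely by transferring constructions from the larger auxiliary categories $\mathrm{\textbf{W}}$ and $\mathrm{\textbf{Q}}$ using the reflector $\gamma\colon \mathrm{\textbf{W}} \to \CatCu$ and coreflector $\tau\colon \mathrm{\textbf{Q}} \to \CatCu$ (see \autoref{thm:completion} and \autoref{prp:CuCoreflQ}). The guiding principle is that colimit-type constructions (direct limits, coproducts, tensor products) are easy to perform in $\mathrm{\textbf{W}}$ and then pushed into $\CatCu$ via the left adjoint $\gamma$, while limit-type constructions (products, ultraproducts) are performed in $\mathrm{\textbf{Q}}$ and pulled into $\CatCu$ via the right adjoint $\tau$. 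Since $\gamma$ is a reflector, it preserves colimits, and since $\tau$ is a coreflector, it preserves limits; this is what makes the transfer formally legitimate once the constructions are in place in $\mathrm{\textbf{W}}$ and $\mathrm{\textbf{Q}}$.

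First I would treat bicompleteness. For cocompleteness, I would invoke the existence of arbitrary direct limits in $\CatCu$ (already recorded after \autoref{thmintro:CatCu}, following \cite{AntPerThi_tensor_2018}) and construct coproducts in $\mathrm{\textbf{W}}$ as the natural monoid coproduct with the coordinatewise order, then apply $\gamma$ to land in $\CatCu$; general colimits then follow from coproducts and coequalizers by the usual categorical reduction. For completeness, the key is to build products: I would form the product in $\mathrm{\textbf{Q}}$ (where the order-theoretic axioms are more forgiving and the set-theoretic product carries the natural structure) and then apply the coreflector $\tau$ to obtain the product in $\CatCu$, using that right adjoints preserve products; equalizers are handled directly, so general limits follow. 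The monoidal structure is then obtained by constructing the tensor product $\otimes$ in $\mathrm{\textbf{W}}$ via a universal bimorphism property---the bifunctor classifying $\CatCu$-bimorphisms out of a pair---and applying $\gamma$; symmetry and the associativity/unit coherences are inherited from the corresponding coherences in $\mathrm{\textbf{W}}$, with the unit object being (the image under $\gamma$ of) the Cuntz semigroup of $\C$, namely $\NN$.

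Closedness requires producing an internal hom $[\,\cdot\,,\,\cdot\,]$ in $\CatCu$ right adjoint to $-\otimes X$ for each object $X$, that is, a natural bijection $\CatCu(X\otimes Y,Z)\cong \CatCu(X,[Y,Z])$. I would realize $[Y,Z]$ as a suitable set of generalized $\CatCu$-morphisms $Y\to Z$ equipped with the pointwise order, verify that it satisfies the $\CatCu$ axioms (sequential completeness and $\omega$-continuity, i.e. compatibility with suprema of increasing sequences and the relation $\ll$; see \autoref{df:CatCu}), and check the tensor-hom adjunction against the universal property defining $\otimes$. The preservation assertions for $\Cu$ then split into two parts: preservation of directed limits is the statement already contained in \autoref{thmintro:CatCu}, and preservation of coproducts reduces, via the compatibility of $\Cu$ with the relevant C$^*$-algebraic constructions, to identifying $\Cu(A)\otimes\Cu(B)$ or the coproduct in $\CatCu$ with the Cuntz semigroup of the appropriate C$^*$-algebra; the products and ultraproducts require the ``suitable interpretation'' caveat because the set-theoretic product of C$^*$-algebras and the categorical product in $\CatCu$ need not coincide on the nose, so one must identify $\Cu$ of a (C$^*$-algebraic) product or ultraproduct with the corresponding $\CatCu$-product or -ultraproduct under a natural comparison map.

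I expect the main obstacle to be the construction and verification of the internal hom, and relatedly of the tensor product, at the level of the auxiliary categories. Defining $\otimes$ in $\mathrm{\textbf{W}}$ through a universal bimorphism is conceptually clean, but checking that the resulting object genuinely satisfies the $\CatCu$ axioms after applying $\gamma$---in particular that the compact-containment relation behaves correctly under the tensor and internal-hom constructions, and that suprema of increasing sequences are computed as expected---is technically delicate, since these axioms are precisely the properties that fail for naive set-theoretic constructions and force one into the auxiliary categories in the first place. The ultraproduct preservation is the other subtle point, as it hinges on commuting an ultrafilter limit past the $\Cu$ functor, which is not a colimit and so does not come for free from the reflector formalism; this is why the statement is hedged with ``suitably interpreted.''
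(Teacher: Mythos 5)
Your overall architecture is exactly the paper's: colimit-type constructions (direct limits, coproducts, tensor products) are built in $\mathrm{\textbf{W}}$ and pushed into $\CatCu$ by the reflector $\gamma$, while limit-type constructions are built in $\mathrm{\textbf{Q}}$ (or $\CatPom$ with a pointwise auxiliary relation) and pulled back by the coreflector $\tau$; your tensor-product and bicompleteness sketches track \autoref{monoidal}, \autoref{thm:Culimits} and \autoref{thm:Cucomplete} faithfully. However, your closedness step as stated would fail. You propose to realize $\ihom{Y,Z}$ as the set of generalized $\Cu$-morphisms with the pointwise order and to ``verify that it satisfies the $\CatCu$ axioms''---but that verification is precisely what cannot be done: $\CatCu[Y,Z]$ with the relation $f\prec g$ (meaning $f(s')\ll g(s)$ whenever $s'\ll s$) satisfies (O1) and (O4) but in general not (O2), so it is a $\mathcal{Q}$-semigroup and not a $\Cu$-semigroup. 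The paper makes this concrete: $\CatCu(\NN,\NN)\cong\N$ is not a $\Cu$-semigroup, and for $\mathbb{P}=[0,\infty]$ one has $(\CatCu[\mathbb{P},\mathbb{P}],\prec)\cong(\mathbb{P},\prec_1)$, whose $\tau$-completion $[0,\infty)\sqcup(0,\infty]$ is strictly larger than the naive hom-set. The correct definition is $\ihom{Y,Z}=\tau\big(\CatCu[Y,Z],\prec\big)$ (\autoref{thm:closed}); that is, the coreflector must be applied here too---consistent with your own $\mathrm{\textbf{Q}}$-side principle, but missing from your closedness paragraph, where it is essential rather than cosmetic.

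The second gap concerns what ``suitably interpreted'' means for products and ultraproducts. It is not merely that one must compare along a natural map: the comparison $\Phi\colon\Cu\big(\prod_j A_j\big)\to\prod_j\Cu(A_j)$ is always an order-embedding (\autoref{prp:PhiInjective}) and is an isomorphism when all $A_j$ are stable (\autoref{prop:stable}), but it fails in general---for $\ell^\infty(\N)=\prod_{j}\C$ the image is a proper ideal of $\prod_j\NN$. The actual fix is the \emph{scaled} Cuntz semigroup $\Cu_{\mathrm{sc}}(A)=(\Cu(A),\Sigma_A)$: the category $\CatCu_{\mathrm{sc}}$ is complete (\autoref{pgr:scaledcu}) and $\Cu_{\mathrm{sc}}$ preserves products on the nose (\autoref{thm:Cupreserves}); this scale, which records the position of $A$ inside its stabilization, is the content of the hedge in the statement. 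Relatedly, your diagnosis that ultraproduct preservation ``does not come for free from the reflector formalism'' has it backwards: in the paper the categorical ultraproduct is \emph{defined} as the inductive limit, over the ultrafilter ordered by reverse inclusion, of the products $\prod_{j\in K}X_j$ (\autoref{pgr:ultraprCat}), so once (scaled) products are preserved, preservation of ultraproducts is purely formal from preservation of inductive limits (\autoref{prp:CuUltraprodStable}, \autoref{thm:CuUltraprod}). All the analytic difficulty sits in the product case, not in commuting the ultrafilter limit past the functor.
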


We almost exclusively work with the picture of the Cuntz
semigroup of a \ca\ $A$ which uses positive elements in
$A\otimes\K$. We should, however, point out that there are at 
least two alternative presentations of $\Cu(A)$
which may be more convenient in some contexts: 
the one using Hilbert modules, which was first considered in 
\cite{CowEllIva_Cuntz_2008} and is 
only very briefly presented here in the comments before
\autoref{prop:key}, and the one using open projections,
which was given in \cite{OrtRorThi_cuntz_2011}, and is not 
covered here. 

We have made these notes as self-contained as possible, assuming only
a very elementary background in C$^*$-algebras and functional calculus.
As such, we hope that this survey will be useful to young mathematicians
who wish to learn the theory of Cuntz semigroups, as well as to those
researchers who want to see a streamlined presentation of the latest
results in the structure theory of Cuntz semigroups.
Even though we made an effort to cover a large variety of 
topics on Cuntz semigroups,
space constraints and the magnitude of the 
existent literature make it impossible to 
be exhaustive, and it was inevitable to make some 
omissions. On the other hand, 
there are two other introductions to the subject 
\cite{AraPerTom_survey_2011,Thi_notes}, with somewhat different approaches. 
Indeed, \cite{AraPerTom_survey_2011}, which was 
written over a decade ago, focuses primarily
on the classical (or uncompleted) Cuntz semigroup W, 
and makes extensive use of the Hilbert module picture. 
On the other hand, \cite{Thi_notes}, which
is more recent, regards Cu-semigroups as domains with additional
structure, and uses methods from lattice and category theory. 
Both references are good complements to this survey.

\vspace{0.2cm}

This manuscript is an expanded version of the lectures notes 
which were prepared as supporting material for the minicourse 
\emph{An introduction to the modern theory of Cuntz semigroups},
which took place at the University of Kiel in September 2022,
as part of the workshop \emph{Cuntz semigroups}. The authors would
like to thank Hannes Thiel for the invitation to deliver this 
course, and to both Ramon Antoine and Hannes Thiel for several discussions on these topics. Thanks are also extended to Leonel Robert and Andrew Toms for useful comments after a first version was circulated.

\section{Comparison of positive elements}

The following is the definition on which the theory of Cuntz
semigroups is based.

\begin{df}\label{df:CuSubeq}
Let $A$ be a C$^*$-algebra and let $a,b\in A_+$. We say that $a$ is
\emph{Cuntz subequivalent} to $b$ (in $A$), denoted $a\precsim b$ 
(or $a\precsim_A b$ if there is a need to specify the ambient 
C$^*$-algebra), if there exists a sequence $(r_n)_{n\in\N}$ in $A$
such that $\lim\limits_{n\to\I} \|r_nbr_n^*-a\|=0$. 
Equivalently, for every $\ep>0$ there exists $r\in A$ such that
$\|rbr^*-a\|<\ep$.

We say that $a$ and $b$ are \emph{Cuntz equivalent}, written 
$a\sim b$, if $a\precsim b$ and $b\precsim a$.
\end{df}

It is easy to see that 
if $a\precsim b$ and $b\precsim c$, then $a\precsim c$.

In order to give a feeling for the relation $\precsim$, we 
first look at the case of commutative \ca s. For $f\in C(X)$,
we denote its \emph{open support} by 
\[\supp_{\mathrm{o}}(f)=\{x\in X\colon f(x)\neq 0\}.\]

\begin{prop}\label{prop:CtzCompCX}
Let $X$ be a locally compact, Hausdorff space, and let $f,g\in C_0(X)$
be positive functions. Then $f\precsim g$ if and only if 
\[\supp_{\mathrm{o}}(f)\subseteq \supp_{\mathrm{o}}(g).\]
Equivalently, $g(x)=0$ implies $f(x)=0$, for all $x\in X$.
\end{prop}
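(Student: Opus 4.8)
The plan is to exploit commutativity to rewrite $f \precsim g$ in a transparent analytic form, and then reduce the nontrivial (backward) implication to an explicit construction of a quotient-type function. First I would observe that in the commutative algebra $A = C_0(X)$ one has $r g r^* = |r|^2 g$ for every $r \in C_0(X)$, and that every nonnegative $h \in C_0(X)$ arises as $|r|^2$ (take $r = h^{1/2}$). Consequently $f \precsim g$ holds if and only if for every $\varepsilon > 0$ there is a nonnegative $h \in C_0(X)$ with $\|hg - f\| < \varepsilon$. I would then prove both implications against this reformulation.

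The forward implication is immediate: if $g(x_0) = 0$ for some $x_0$, then $(hg)(x_0) = 0$ for every $h$, so $|f(x_0)| \le \|hg - f\| < \varepsilon$ for all $\varepsilon > 0$, forcing $f(x_0) = 0$. This gives the implication $g(x_0) = 0 \Rightarrow f(x_0) = 0$, that is, $\supp_{\mathrm{o}}(f) \subseteq \supp_{\mathrm{o}}(g)$.

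For the backward implication, assume $\supp_{\mathrm{o}}(f) \subseteq \supp_{\mathrm{o}}(g)$ and fix $\varepsilon > 0$. The key idea is to approximate $f$ by its truncation $f_\varepsilon := (f - \varepsilon)_+$, which satisfies $\|f - f_\varepsilon\| \le \varepsilon$, and to realize $f_\varepsilon$ \emph{exactly} as $hg$. Since $f \in C_0(X)$, the set $L := \{x : f(x) \ge \varepsilon\}$ is compact; on $L$ the hypothesis forces $g > 0$, so by compactness $g \ge \delta$ on $L$ for some $\delta > 0$. I would then define $h := f_\varepsilon / g$ on $\{g > 0\}$ and $h := 0$ on $\{g = 0\}$, and set $r = h^{1/2}$. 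By construction $r g r^* = hg = f_\varepsilon$, whence $\|r g r^* - f\| \le \varepsilon$, and letting $\varepsilon \to 0$ yields $f \precsim g$.

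The main obstacle is verifying that this $h$ is a genuine element of $C_0(X)$, i.e.\ continuous and vanishing at infinity. Continuity on the open set $\{g > 0\}$ is clear. At a point $x_0$ with $g(x_0) = 0$ one must argue that $h$ vanishes on a whole neighborhood: since $g(x_0) = 0 < \delta$ we have $x_0 \notin L$, so $f(x_0) < \varepsilon$, and by continuity $f < \varepsilon$ on a neighborhood $U$ of $x_0$; there $f_\varepsilon \equiv 0$, hence $h \equiv 0$ on $U$. This simultaneously shows $\supp(h) \subseteq L$, so $h$ has compact support and lies in $C_0(X)$. The only slightly delicate point is thus the interplay between the uniform lower bound $\delta$ on the compact set $L$ and the fact that $f_\varepsilon$ is supported inside $L$; once this is pinned down the construction goes through, and the identity $hg = f_\varepsilon$ (using $f_\varepsilon = 0 = hg$ wherever $g = 0$, by the support hypothesis) is routine.
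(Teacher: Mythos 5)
Your proof is correct, and while its skeleton matches the paper's (the forward direction is the same evaluation argument; the backward direction rests on the compactness of the level set $L=\{x: f(x)\geq \varepsilon\}$ followed by dividing by $g$), the construction differs in a way worth noting. The paper secures continuity of the quotient by multiplying $f/g$ with a Urysohn function that is $1$ on $L$ and vanishes outside $\{g>\delta/2\}$, obtaining $\|rgr-f\|<\varepsilon$ only approximately; you instead place the cut-down $(f-\varepsilon)_+$ in the numerator, which vanishes on a neighborhood of every zero of $g$ (indeed $g(x_0)=0$ forces $f(x_0)=0<\varepsilon$ by the support hypothesis, hence $f<\varepsilon$ nearby), so that $h=(f-\varepsilon)_+/g$ is continuous and compactly supported with no appeal to Urysohn's lemma. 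This buys you the \emph{exact} identity $rgr^*=(f-\varepsilon)_+$, which is precisely the exact-realization phenomenon that reappears later in part (v) of R{\o}rdam's lemma and in the first assertion of \autoref{lma:CutDownDistance}; it also makes your uniform lower bound $\delta$ on $L$ superfluous, since $x_0\notin L$ already follows from $g(x_0)=0$ and the hypothesis. One cosmetic point: your estimate yields $\|rgr^*-f\|\leq\varepsilon$ rather than the strict inequality in \autoref{df:CuSubeq}, but running the construction with $\varepsilon/2$ (as your limiting remark implicitly does) settles this.
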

\begin{proof}
Assuming that $f\precsim g$, choose a sequence $(r_n)_{n\in\N}$ in
$C_0(X)$ such that $\lim\limits_{n\to\I} r_ngr_n^*=f$ uniformly on $X$.
For $x\in X$, it is then 
easy to see that $g(x)=0$ implies $f(x)=0$.

Conversely, suppose that 
$\supp_{\mathrm{o}}(f)\subseteq \supp_{\mathrm{o}}(g)$
and let $\ep>0$. Set $K=\{x\in X\colon f(x)\geq \ep\}$, which is a 
compact set on which $g$ is strictly positive. Find $\delta>0$ such
that $g(x)\geq \delta$ for all $x\in K$, and set 
\[U=\Big\{x\in X\colon g(x)>\tfrac{\delta}{2}\Big\}.\]
Then $U$ is open in $X$ and $K\subseteq U$. By Urysohn's lemma, 
there is a positive function $h\in C_0(X)$ which is identically 
equal to 1 on $K$ and
vanishes precisely outside of $U$. Define $s\in C_0(X)$
by 
\[s(x)=\begin{cases}
        \frac{f(x)}{g(x)} h(x), &\mbox{ if } x\in U;\\
        0, &\mbox{ else}.
       \end{cases}
\]
Setting $r=s^{\frac{1}{2}}$, we get $r=r^*$ and $\|f-rgr\|<\ep$, as desired. 
\end{proof}

For an element $a$ in a \ca\ $A$, we denote by $\spec(a)$ its spectrum. We now extract some very useful consequences of \autoref{prop:CtzCompCX}:

\begin{cor}\label{cor:FuncCalc}
Let $A$ be a \ca, let $a\in A_+$ and let $f\colon [0,\I)\to [0,\I)$
be continuous and satisfy $f(0)=0$. Then:
\be[{\rm (i)}]
\item We have $f(a)\precsim a$. 
\item If $f(t)>0$ for $t>0$, then $f(a)\sim a$.
\item We have $a\sim a^\lambda$ and $a\sim \lambda a$ for $\lambda\in (0,\infty)$.
\item For $x\in A$, we have $x^*x\sim xx^*$.
\ee
\end{cor}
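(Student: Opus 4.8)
The plan is to dispatch parts (i)--(iii) by reducing to the commutative situation already settled in \autoref{prop:CtzCompCX}, and to treat (iv) separately by an explicit construction. The organizing principle for the first three parts is that Cuntz subequivalence can only become easier in a larger algebra: if $c\precsim d$ holds inside a C$^*$-subalgebra $B\subseteq A$, witnessed by elements $r_n\in B$, then those same $r_n$ witness $c\precsim d$ in $A$. Hence it suffices to verify (i)--(iii) inside the commutative \ca\ $C^*(a)\cong C_0(\spec(a)\SM\{0\})$ generated by $a$, under which $a$ corresponds to the function $t\mapsto t$ and, because $f(0)=0$, the element $f(a)$ corresponds to the restriction of $f$; both are honest elements of $C_0(\spec(a)\SM\{0\})$.

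With this dictionary, \autoref{prop:CtzCompCX} reduces everything to a comparison of open supports. For (i), the open support of $a$ is all of $\spec(a)\SM\{0\}$ (the function $t\mapsto t$ never vanishes there), so it trivially contains $\supp_{\mathrm o}(f(a))=\{t\in\spec(a)\SM\{0\}\colon f(t)\neq 0\}$, giving $f(a)\precsim a$. For (ii), the extra hypothesis $f(t)>0$ for $t>0$ forces $\supp_{\mathrm o}(f(a))=\spec(a)\SM\{0\}$ as well, so the two open supports coincide and $f(a)\sim a$. Part (iii) is then immediate from (ii): the functions $f(t)=t^\lambda$ and $f(t)=\lambda t$ are continuous on $[0,\I)$, vanish at $0$, and are strictly positive for $t>0$ whenever $\lambda\in(0,\I)$.

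The genuinely different statement is (iv), and this is where I expect the real work to lie, since $x^*x$ and $xx^*$ need not sit in a common commutative subalgebra, so \autoref{prop:CtzCompCX} does not apply directly and one must exhibit explicit elements. To prove $x^*x\precsim xx^*$, fix $\ep>0$ and let $f_\ep\colon[0,\I)\to[0,\I)$ be the continuous bounded function equal to $t^{-1/2}$ for $t\geq\ep$ and constantly $\ep^{-1/2}$ on $[0,\ep)$. Setting $r=f_\ep(x^*x)\,x^*\in A$ and using only the identity $x^*(xx^*)x=(x^*x)^2$ together with the fact that $f_\ep(x^*x)$ commutes with $x^*x$, I compute
\[
r(xx^*)r^* = f_\ep(x^*x)\,(x^*x)^2\,f_\ep(x^*x) = (x^*x)^2 f_\ep(x^*x)^2,
\]
which by continuous functional calculus differs from $x^*x$ in norm by $\sup\{\,|t^2 f_\ep(t)^2-t|\colon t\in\spec(x^*x)\,\}$. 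This quantity vanishes for $t\geq\ep$ and equals $t-t^2/\ep\leq\ep$ for $0\le t<\ep$, so $\|r(xx^*)r^*-x^*x\|\leq\ep$; letting $\ep\to 0$ yields $x^*x\precsim xx^*$, and applying the same argument with $x$ replaced by $x^*$ gives $xx^*\precsim x^*x$, hence $x^*x\sim xx^*$. The single point demanding care is the truncation of $t^{-1/2}$ at the origin: the constant value $\ep^{-1/2}$ is chosen precisely so that $t^2 f_\ep(t)^2=t^2/\ep$ stays below $\ep$ on $[0,\ep)$, keeping the total error within $\ep$.
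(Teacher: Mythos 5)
Your proposal is correct. For parts (i)--(iii) you follow essentially the same route as the paper: both reduce to the commutative algebra $C^*(a)$ and invoke \autoref{prop:CtzCompCX}. One small divergence there is actually in your favor: for (ii) the paper says ``follows from (i) by taking $f^{-1}$,'' which literally presumes $f$ invertible, whereas the hypotheses ($f$ continuous, $f(0)=0$, $f(t)>0$ for $t>0$) do not grant injectivity; your direct comparison of open supports, $\supp_{\mathrm{o}}(f(a))=\spec(a)\SM\{0\}=\supp_{\mathrm{o}}(a)$, handles the general case cleanly. The genuine difference is in (iv). The paper's argument is a two-line trick that reuses (iii): $xx^*\sim (xx^*)^2=x(x^*x)x^*\precsim x^*x$, where the last step is immediate from \autoref{df:CuSubeq} with $r=x$, and then one symmetrizes. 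Your argument instead constructs an explicit witness $r=f_\ep(x^*x)x^*$ with $f_\ep$ the truncation of $t^{-1/2}$, and your computation checks out: $r(xx^*)r^*=(x^*x)^2f_\ep(x^*x)^2$, and the error $\sup_t|t^2f_\ep(t)^2-t|\leq\ep$ since $t-t^2/\ep\leq t<\ep$ on $[0,\ep)$. This is the classical approximate-polar-decomposition proof; it buys an explicit, quantitative witness (and independence from (iii)), at the cost of a functional-calculus estimate. One pedantic point you should make explicit: since $f_\ep(0)=\ep^{-1/2}\neq 0$, the element $f_\ep(x^*x)$ a priori lives only in the unitization $\widetilde{A}$ when $A$ is nonunital; your assertion $r\in A$ is nonetheless correct because $A$ is an ideal in $\widetilde{A}$ and $x^*\in A$, but this deserves a word, as \autoref{df:CuSubeq} requires the witnessing elements to lie in $A$ itself.
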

\begin{proof} 
(i) Note that $a$ and $f(a)$ belong to $C^*(a)\cong C_0(\spec(a))$,
and under this identification these elements correspond to 
$\id_{\spec(a)}$ and $f|_{\spec(a)}$, respectively. The conclusion
then follows from \autoref{prop:CtzCompCX} since $x=0$ implies $f(x)=0$. 

(ii) Follows from (i) by taking $f^{-1}$.

(iii) Follows from (ii) by taking either $f(t)=t^\lambda$
or $f(t)=\lambda t$, respectively.

(iv) We have
\[xx^*\stackrel{\mathrm{(iii)}}{\sim} (xx^*)^2 = x(x^*x) x^*\precsim x^*x.\]
Analogously, we get $x^*x\precsim xx^*$, as desired.
\end{proof}

Recall that a subalgebra $B$ of a \ca\ $A$ is said to be 
\emph{hereditary} if whenever $b\in B_+$ and $a\in A_+$ satisfy $a\leq b$, then $a\in B$.
Given $b\in A_+$, the smallest hereditary subalgebra of $A$ containing
$b$ is $A_b:=\overline{bAb}$. We will use, without proof, the fact that
$(b^{\frac{1}{n}})_{n\in \N}$ is an approximate identity for $A_b$. For arbitrary elements $a,b$ in a C$^*$-algebra $A$, we shall as customary write $a\approx_\varepsilon b$ to mean that $\Vert a-b\Vert<\varepsilon$.

\begin{prop}\label{prop:InHerSubalg}
Let $A$ be a \ca\ and let $a,b\in A_+$. If $a\in A_b$, then 
$a\precsim b$. In particular, if $a\leq b$ then $a\precsim b$.
\end{prop}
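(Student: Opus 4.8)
The plan is to reduce the final ``in particular'' clause to the main assertion and then prove the latter by an explicit functional-calculus construction. First I would observe that the last sentence is immediate once the first is known: if $a,b\in A_+$ satisfy $a\le b$, then since $A_b=\ov{bAb}$ is a hereditary subalgebra containing $b$, the very definition of a hereditary subalgebra forces $a\in A_b$. So everything reduces to showing that $a\in A_b$ implies $a\precsim b$, that is, to producing, for each $\ep>0$, an element $r\in A$ with $\|rbr^*-a\|<\ep$.

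The construction I would use is a bounded substitute for the (nonexistent) element $b^{-1/2}$. For $\dt>0$ let $h_\dt$ be the continuous function on $[0,\I)$ given by $h_\dt(t)=t^{-1/2}$ for $t\ge\dt$ and $h_\dt(t)=\dt^{-1/2}$ for $0\le t\le\dt$, and set $r=a^{1/2}h_\dt(b)$ (which lies in $A$ because $a^{1/2}\in A$ and $A$ is an ideal in its unitization). Since $h_\dt(b)$ commutes with $b$, one computes $rbr^*=a^{1/2}\,h_\dt(b)^2b\,a^{1/2}=a^{1/2}\,\ph_\dt(b)\,a^{1/2}$, where $\ph_\dt(t)=h_\dt(t)^2t=\min(t/\dt,1)$ satisfies $0\le\ph_\dt\le1$, $\ph_\dt(0)=0$, and, most importantly, $|t-\ph_\dt(t)t|\le\dt$ on $[0,\|b\|]$, so that $\|b-\ph_\dt(b)b\|\le\dt$.

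It then remains to prove that $\ph_\dt(b)a^{1/2}\to a^{1/2}$ as $\dt\to0$, and this is exactly the step that uses the hypothesis $a\in A_b$; I expect it to be the main obstacle. Because $A_b$ is a C$^*$-subalgebra containing $a$, we have $a^{1/2}\in C^*(a)\S A_b=\ov{bAb}\S\ov{bA}$, so given $\ep>0$ there is $y\in A$ with $a^{1/2}\approx_\ep by$. Writing $\ph_\dt(b)by=(\ph_\dt(b)b)y$ and using $\|\ph_\dt(b)b-b\|\le\dt$ yields $\|\ph_\dt(b)a^{1/2}-a^{1/2}\|\le 2\ep+\dt\|y\|$, which is below $3\ep$ once $\dt<\ep/(\|y\|+1)$; alternatively one may invoke directly that $(b^{1/n})_n$ is an approximate identity for $A_b$. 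Feeding this back gives $\|rbr^*-a\|=\|a^{1/2}(\ph_\dt(b)a^{1/2}-a^{1/2})\|\le 3\ep\,\|a^{1/2}\|$, which can be made arbitrarily small, so $a\precsim b$. The conceptual heart of the argument is precisely that $b$ need not be invertible at $0$: one cannot take $r=a^{1/2}b^{-1/2}$ literally, and the whole point is that the truncations $\ph_\dt(b)$ behave like an approximate unit on $a^{1/2}$, which is guaranteed only because $a$ lives in the hereditary subalgebra generated by $b$.
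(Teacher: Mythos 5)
Your proof is correct, and it takes a genuinely different route from the paper's. The paper argues softly: it invokes, without proof, that $(b^{\frac{1}{n}})_{n\in\N}$ is an approximate identity for $A_b$ to get $a\approx_{\varepsilon/2}b^{\frac{1}{n}}ab^{\frac{1}{n}}$, and then chains previously established Cuntz relations --- part~(iv) of \autoref{cor:FuncCalc} applied to $x=a^{\frac{1}{2}}b^{\frac{1}{n}}$ to get $b^{\frac{1}{n}}ab^{\frac{1}{n}}\sim a^{\frac{1}{2}}b^{\frac{2}{n}}a^{\frac{1}{2}}$, the tautological subequivalence $a^{\frac{1}{2}}b^{\frac{2}{n}}a^{\frac{1}{2}}\precsim b^{\frac{2}{n}}$ (witnessed by $r=a^{\frac{1}{2}}$ in the very definition), and $b^{\frac{2}{n}}\sim b$ from part~(iii) --- concluding by transitivity. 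You instead construct the witnessing element explicitly: $r=a^{\frac{1}{2}}h_\delta(b)$ with $h_\delta$ a truncated inverse square root, so that $rbr^*=a^{\frac{1}{2}}\varphi_\delta(b)a^{\frac{1}{2}}$ with $\varphi_\delta(t)=\min(t/\delta,1)$, and you control the error by showing that $\varphi_\delta(b)$ acts as an approximate unit on $a^{\frac{1}{2}}$, which you derive from scratch via $a^{\frac{1}{2}}\in C^*(a)\subseteq A_b\subseteq\overline{bA}$; all the estimates you give check out (in fact $\|b-\varphi_\delta(b)b\|\le\delta/4$). What your approach buys: it is self-contained, using neither \autoref{cor:FuncCalc} nor the unproved approximate-identity fact (which your $a^{\frac{1}{2}}\approx_\varepsilon by$ argument in effect reproves inline), and it yields an explicit $r$ with quantitative control, close in spirit to what the harder implications of R{\o}rdam's lemma (\autoref{thm:Rordam}) require. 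What the paper's approach buys: brevity, and a demonstration of how the calculus of $\sim$ and $\precsim$ set up earlier is meant to be deployed. One cosmetic point you handle correctly but is worth keeping in mind: since $h_\delta(0)=\delta^{-1/2}\neq 0$, the element $h_\delta(b)$ lies only in $\widetilde{A}$ when $A$ is nonunital, but $r=a^{\frac{1}{2}}h_\delta(b)\in A$ because $A$ is an ideal in $\widetilde{A}$, so nothing is lost.
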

\begin{proof}
Suppose that $a\in A_b$, and let $\ep>0$. 
Choose $n\in\N$ such that $a\approx_{\frac{\ep}{2}} b^{\frac{1}{n}}ab^{\frac{1}{n}}$.
Using part~(iv) of \autoref{cor:FuncCalc} at the second step, and 
part~(iii) of at the fourth step, we get
\[b^{\frac{1}{n}}ab^{\frac{1}{n}}=(b^{\frac{1}{n}}a^{\frac{1}{2}})(a^{\frac{1}{2}}b^{\frac{1}{n}})\sim a^{\frac{1}{2}}b^{\frac{2}{n}}a^{\frac{1}{2}}\precsim b^{\frac{2}{n}}\sim b.\]
Choose $r\in A$ with $rbr^*\approx_{\frac{\ep}{2}} b^{\frac{1}{n}}ab^{\frac{1}{n}}$.
Then 
\[a\approx_{\frac{\ep}{2}} b^{\frac{1}{n}}ab^{\frac{1}{n}} \approx_{\frac{\ep}{2}} rbr^*,                                      \]
and hence $a\precsim b$, as desired.
\end{proof}


Given $\ep>0$, let $f_\ep\colon [0,\infty)\to [0,\I)$ be given
by $f_\ep(t)=\max\{t-\ep,0\}$. Given $a\in A_+$, we write
$(a-\ep)_+$ for $f_\ep(a)$. The element $(a-\ep)_+$ is usually
referred to as the \emph{$\ep$ cut-down} of $a$. Since $(a-\varepsilon)_+\leq a$ for any $\varepsilon>0$, we get from \autoref{prop:InHerSubalg} that $(a-\varepsilon)_+\precsim a$.
The following lemma is a generalization of this fact.

\begin{lma}\label{lma:CutDownDistance}
Let $A$ be a \ca, let $\ep>0$, and let $a,b\in A_+$ with 
$\|a-b\|<\ep$. Then there exists a contraction 
$r\in A$ with $rbr^*=(a-\ep)_+$.
In particular, $(a-\ep)_+\precsim b$.
\end{lma}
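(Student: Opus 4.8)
The plan is to turn the metric hypothesis $\|a-b\|<\ep$ into an order relation and then solve a factorization ("division") problem for the compression $b\mapsto rbr^*$. First I would pass to the unitization $\widetilde A$ and note that $\|a-b\|<\ep$ forces $a-b\le \|a-b\|\,1<\ep\,1$, so that
\[
b\ \ge\ a-\ep\,1 \quad\text{in } \widetilde A.
\]
This is the only way the hypothesis enters. Writing $c:=(a-\ep)_+$, the next step is the functional-calculus identity $c^{1/2}(a-\ep\,1)c^{1/2}=c^{2}$, which holds because the functions $\big((t-\ep)_+\big)^{1/2}(t-\ep)\big((t-\ep)_+\big)^{1/2}$ and $\big((t-\ep)_+\big)^{2}$ agree on $\spec(a)$. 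Conjugating the displayed inequality by $c^{1/2}$ yields the estimate that I regard as the heart of the matter,
\[
u\ :=\ c^{1/2}bc^{1/2}\ \ge\ c^{2},
\]
which replaces the hypothesis by a clean comparison inside the hereditary subalgebra $A_c$, so that $(a-\ep)_+$ must now be extracted from $b$ by "dividing by $u^{1/2}$''.

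To build the contraction I would exploit operator monotonicity of $t\mapsto t^{1/2}$: from $c^{2}\le u$ one gets $c\le u^{1/2}\le \big(u+\tfrac1n\big)^{1/2}$ for every $n$. Setting $r_n:=c^{1/2}\big(u+\tfrac1n\big)^{-1/2}c^{1/2}\in A$, the inequality $c\le \big(u+\tfrac1n\big)^{1/2}$ gives $\big(u+\tfrac1n\big)^{-1/4}c\big(u+\tfrac1n\big)^{-1/4}\le 1$, hence $\|r_n\|\le 1$; and a direct computation shows
\[
r_n b r_n^{*}\ =\ c^{1/2}\,u\big(u+\tfrac1n\big)^{-1}c^{1/2}\ =\ c-\tfrac1n\,c^{1/2}\big(u+\tfrac1n\big)^{-1}c^{1/2},
\]
whose error term has norm at most $n^{-1/2}$. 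Thus $r_nbr_n^{*}\to (a-\ep)_+$ in norm, which already establishes the final clause $(a-\ep)_+\precsim b$.

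The hard part is upgrading this to the \emph{exact} equality $rbr^{*}=(a-\ep)_+$ for a single contraction $r\in A$. The natural candidate is the formal limit $r_0:=c^{1/2}u^{-1/2}c^{1/2}$, the generalized inverse of $u$: using $c\le u^{1/2}$ one checks $\|r_0\|\le 1$, and $r_0br_0^{*}=c^{1/2}p_u c^{1/2}=c$ exactly, where $p_u$ is the support projection of $u$ and $p_uc^{1/2}=c^{1/2}$ since $c^2\le u$. The obstacle is that $r_0$ a priori lives only in $A^{**}$ — indeed the $r_n$ need not converge in norm — so one must produce an honest solution inside $A$. I expect to resolve this either by a support-projection argument showing the bidual solution can be pushed back into $A$, or by writing $c=ss^{*}$ with $s=r_0b^{1/2}$ and observing that $s^{*}s=b^{1/2}r_0^{2}b^{1/2}\le b$, so that both $ss^{*}$ and $s^{*}s$ lie in $A$; recovering $s$, and hence a contraction $r$, within $A$ is the delicate point on which the exact (as opposed to approximate) statement hinges.
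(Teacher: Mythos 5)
Your proof of the final clause $(a-\ep)_+\precsim b$ is complete and correct, and it is worth noting that this is the \emph{only} part of the lemma the paper proves: for the exact factorization the paper explicitly defers to \cite[Lemma 2.2]{KirRor_infinite_2002} as ``rather involved''. Your route to the subequivalence differs from the paper's. Both start from $a-\ep 1\leq b$, but the paper conjugates by $c:=(a-\ep)_+$ itself and finishes qualitatively, via $c\sim c^3=c(a-\ep)c\leq cbc\precsim b$ (using \autoref{cor:FuncCalc} and \autoref{prop:InHerSubalg}); you conjugate by $c^{1/2}$ to get $c^2\leq u:=c^{1/2}bc^{1/2}$ and then build explicit contractions. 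Your estimates all check out: with $x_n:=c^{1/2}\bigl(u+\tfrac1n\bigr)^{-1/4}$ one has $r_n=x_nx_n^*$ and $\|x_n\|\leq 1$, and the error term is bounded by $\tfrac1n\|x_n\|^2\cdot$(nothing sharper needed) via $\bigl\|\bigl(u+\tfrac1n\bigr)^{-1/2}c\bigl(u+\tfrac1n\bigr)^{-1/2}\bigr\|\leq\sqrt{n}$, giving the rate $n^{-1/2}$. This buys a quantitative, constructive form of the subequivalence that the paper's two-line argument does not provide, and it is the natural approximate shadow of the Kirchberg--R{\o}rdam statement.

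For the exact clause, however, there is a genuine structural gap, and it is sharper than the convergence issue you flag. After the step you call the heart of the matter, your argument retains \emph{only} the inequality $c^2\leq u$, discarding the slack $\delta=\ep-\|a-b\|>0$ hidden in the strict hypothesis. But $c^2\leq u$ alone does not imply the conclusion: take $A=C([0,1])$, $b(t)=t$, and $c(t)=t\sin^2(1/t)$ (with $c(0)=0$). Then $c\leq b$, hence $c^2\leq c^{1/2}bc^{1/2}$, and your $r_0$ is precisely the bounded Borel function $|\sin(1/t)|$, which does solve $r_0br_0^*=c$ in $A^{**}$; yet any $r\in A$ with $rbr^*=c$ would have to satisfy $|r(t)|=|\sin(1/t)|$ on $(0,1]$, which has no continuous extension to $0$. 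So no element of $A$ --- contraction or not --- works, and no amount of massaging $r_0$, the $r_n$, or your $s$ can succeed from the displayed inequality. Any correct proof must re-inject the strict inequality: $\|a-b\|<\ep$ forces $b\geq\delta$ (morally) on the support of $(a-\ep)_+$, which in the commutative model restores continuity of $\sqrt{c/b}$ --- this is exactly the content of the exercise stated after the lemma, and it is why the genuine proof is delicate. Two incidental flaws in your fallback strategies: from $0\leq s^*s\leq b$ with $s^*s\in A^{**}$ one cannot conclude $s^*s\in A$ (positive elements of the bidual dominated by $b$ need not lie in $A$); and even $ss^*,s^*s\in A$ would not place $s$ in $A$, as the unimodular Borel function $t\mapsto e^{i/t}$ over $C([0,1])$ shows. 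In summary: you have correctly proved the portion the survey proves, by a sharper route, and honestly flagged the remainder; but the plan for the exact factorization, as structured, cannot be completed without a new idea, which is precisely what the cited Kirchberg--R{\o}rdam lemma supplies.
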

\begin{proof} 
We only prove the last assertion; the first one is rather
involved, and a proof can be found in \cite[Lemma 2.2]{KirRor_infinite_2002}.

Note that $\|a-b\|<\ep$ implies 
that $a-\ep\leq b$. Multiplying on both sides
by $(a-\ep)_+$, we get
\begin{align*}\tag{2.1}(a-\ep)_+(a-\ep)(a-\ep)_+\leq (a-\ep)_+b(a-\ep)_+.
\end{align*}
Using part~(iii) of \autoref{cor:FuncCalc} at the first step, we get 
\[(a-\ep)_+\sim (a-\ep)_+^3=
(a-\ep)_+(a-\ep)(a-\ep)_+\stackrel{(2.1)}{\leq} (a-\ep)_+b(a-\ep)_+
\precsim b,
\]
as desired.
\end{proof}

The reader is encouraged to prove the first assertion in
\autoref{lma:CutDownDistance} in the case where $a$ and $b$ commute. 


We will sometimes need the following strengthening of 
part~(iv) of \autoref{cor:FuncCalc}. We omit its proof, which
can be found in \cite[Corollary 2.5]{Thi_notes}.

\begin{lma}\label{lma:CutDwnSymm} 
Let $A$ be a \ca, let $x\in A$ and let $\ep>0$. Then 
\[(x^*x-\ep)_+\sim (xx^*-\ep)_+.\]
\end{lma}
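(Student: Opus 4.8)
The plan is to reduce everything to part~(iv) of \autoref{cor:FuncCalc} by producing a single element $r\in A$ that simultaneously realizes $(x^*x-\ep)_+$ as $r^*r$ and $(xx^*-\ep)_+$ as $rr^*$. Once such an $r$ is found, part~(iv) of \autoref{cor:FuncCalc} applied to $r$ gives $r^*r\sim rr^*$, which is exactly the asserted equivalence. So the whole proof hinges on writing down the right $r$.

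To build it, I would introduce the continuous function $h\colon[0,\I)\to[0,\I)$ given by $h(t)=(t-\ep)_+/t$ for $t>0$ and $h(0)=0$. One checks directly that $h$ is continuous, bounded by $1$, vanishes at $0$, and satisfies the key identity $t\,h(t)=(t-\ep)_+$ for all $t\ge 0$. Setting $r=x\,h(x^*x)^{1/2}$, which lies in $A$ precisely because $h(0)=0$, I would compute, using that $x^*x$ and $h(x^*x)$ commute inside $C^*(x^*x)$,
\[
r^*r=h(x^*x)^{1/2}\,x^*x\,h(x^*x)^{1/2}=x^*x\,h(x^*x)=(x^*x-\ep)_+ .
\]
For $rr^*=x\,h(x^*x)\,x^*$ I would invoke the intertwining identity $x\,f(x^*x)=f(xx^*)\,x$, valid for every continuous $f$, to move the functional calculus to the other side: this yields $rr^*=h(xx^*)\,xx^*=(xx^*-\ep)_+$, completing the construction.

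The only ingredient requiring an argument is this intertwining identity. I would first establish it for monomials $f(t)=t^n$ via the elementary manipulation $x\,(x^*x)^n=(xx^*)^n\,x$, extend it by linearity to all polynomials, and then pass to an arbitrary continuous $f$ by approximating uniformly on $[0,\|x\|^2]$, a compact interval containing both $\spec(x^*x)$ and $\spec(xx^*)$, using norm-continuity of the functional calculus. The main obstacle, modest as it is, is the bookkeeping around the point $0$: one must ensure that $h(x^*x)$ and hence $r$ genuinely lie in $A$ rather than in a unitization, which is exactly why $h(0)=0$ is built into the definition of $h$, and one must take the limit in the intertwining identity carefully so that it transfers to $h$ even when $0$ belongs to the spectrum of only one of $x^*x$ and $xx^*$.
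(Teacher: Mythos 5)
Your proof is correct: the element $r=x\,h(x^*x)^{1/2}$ with $h(t)=(t-\ep)_+/t$ (and $h(0)=0$) does satisfy $r^*r=(x^*x-\ep)_+$ and $rr^*=(xx^*-\ep)_+$, the intertwining identity $x\,f(x^*x)=f(xx^*)\,x$ passes from monomials to continuous $f$ vanishing at $0$ exactly as you describe (functional calculus is contractive for the sup norm on $[0,\|x\|^2]$, which contains both spectra, so the point $0$ causes no trouble), and part~(iv) of \autoref{cor:FuncCalc} then yields the claim. The paper itself omits the proof, deferring to \cite[Corollary 2.5]{Thi_notes}, and your argument is essentially the standard one found there; if anything, you have proved the stronger exact statement that a single $r\in A$ realizes both cut-downs simultaneously, which is precisely what makes this lemma a genuine strengthening of part~(iv) of \autoref{cor:FuncCalc}, as the paper remarks.
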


The following is one of the most used technical results 
about Cuntz comparison; see \cite[Proposition 2.4]{Ror_structure_1992}.

\begin{thm}\label{thm:Rordam} (R{\o}rdam's lemma.)
Let $A$ be a \ca\ and let $a,b\in A_+$. Then the following are equivalent:
\be[{\rm (i)}]\item $a\precsim b$;
\item For every $\ep>0$ we have $(a-\ep)_+\precsim b$;
\item For every $\ep>0$ there exists $\delta>0$ such that
$(a-\ep)_+\precsim (b-\delta)_+$;
\item For every $\ep>0$ there exist $\delta>0$ and $x\in A$ such that
$(a-\ep)_+=x^*x$ and $xx^*\in A_{(b-\delta)_+}$;
\item For every $\ep>0$ there exist $\delta>0$ and $r\in A$ such that
$(a-\ep)_+=r (b-\delta)_+r^*$.
\ee
\end{thm}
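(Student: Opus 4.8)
The plan is to run the cyclic chain of implications
$(i)\Rightarrow(ii)\Rightarrow(iii)\Rightarrow(v)\Rightarrow(iv)\Rightarrow(i)$.
Throughout I would lean on \autoref{lma:CutDownDistance} as the workhorse, in both of its guises: its \emph{exact} form (a contraction conjugating onto a cut-down, giving honest equalities) and its \emph{subequivalence} form (the ``in particular''). The two other small ingredients are the cut-down semigroup identity $\big((a-s)_+-t\big)_+=(a-s-t)_+$, which is just $f_t\circ f_s=f_{s+t}$ on $[0,\I)$, and the norm bound $\|b-(b-\delta)_+\|\le\delta$. I would also use freely that $rcr^*\precsim c$ for any $c\in A_+$ and $r\in A$, directly from \autoref{df:CuSubeq}.

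The three ``soft'' steps are quick. For $(i)\Rightarrow(ii)$, fix $\ep>0$, choose $r$ with $\|rbr^*-a\|<\ep$, and apply \autoref{lma:CutDownDistance} to get $(a-\ep)_+\precsim rbr^*\precsim b$. For $(v)\Rightarrow(iv)$, given $(a-\ep)_+=r(b-\delta)_+r^*$, set $x=(b-\delta)_+^{1/2}r^*$; then $x^*x=r(b-\delta)_+r^*=(a-\ep)_+$, while $xx^*=(b-\delta)_+^{1/2}(r^*r)(b-\delta)_+^{1/2}\le\|r\|^2(b-\delta)_+$, so $xx^*$ lies in $A_{(b-\delta)_+}$ by heredity. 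Finally, for $(iv)\Rightarrow(i)$: since $(a-\ep)_+=x^*x\sim xx^*$ by \autoref{cor:FuncCalc}(iv) and $xx^*\in A_{(b-\delta)_+}$, \autoref{prop:InHerSubalg} yields $(a-\ep)_+\precsim(b-\delta)_+\precsim b$; as this holds for every $\ep$ and $\|a-(a-\ep)_+\|\le\ep$, a one-line triangle-inequality estimate upgrades it to $a\precsim b$.

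The content sits in $(ii)\Rightarrow(iii)$ and $(iii)\Rightarrow(v)$. For $(ii)\Rightarrow(iii)$, fix $\ep$ and apply $(ii)$ to $\ep/2$ to obtain $r$ with $\|rbr^*-(a-\tfrac\ep2)_+\|<\ep/4$; I would then choose $\delta>0$ small enough that $\|r\|^2\delta<\ep/4$, so that passing from $b$ to $(b-\delta)_+$ perturbs the conjugate by at most $\ep/4$, giving $\|r(b-\delta)_+r^*-(a-\tfrac\ep2)_+\|<\ep/2$. Feeding this into \autoref{lma:CutDownDistance} yields $(a-\ep)_+=\big((a-\tfrac\ep2)_+-\tfrac\ep2\big)_+\precsim r(b-\delta)_+r^*\precsim(b-\delta)_+$. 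For $(iii)\Rightarrow(v)$, apply $(iii)$ to $\ep/2$ to get $\delta$ and $s$ with $\|s(b-\delta)_+s^*-(a-\tfrac\ep2)_+\|<\ep/2$, and then invoke the \emph{exact} form of \autoref{lma:CutDownDistance} to produce a contraction $t$ with $t\,s(b-\delta)_+s^*\,t^*=\big((a-\tfrac\ep2)_+-\tfrac\ep2\big)_+=(a-\ep)_+$; setting $r=ts$ gives $(a-\ep)_+=r(b-\delta)_+r^*$.

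The main obstacle is $(ii)\Rightarrow(iii)$: this is precisely where ``$\ep$-room on the $a$ side'' must be traded for ``$\delta$-room on the $b$ side,'' and the delicate point is that $\delta$ can only be chosen \emph{after} $r$, and small relative to $\|r\|^2$, because the perturbation from $b$ to $(b-\delta)_+$ is amplified under conjugation by $r$. The other genuinely nontrivial input is the exact equality supplied by \autoref{lma:CutDownDistance} in $(iii)\Rightarrow(v)$, whose proof the text defers to \cite{KirRor_infinite_2002}; it is exactly this rigidity that lets us reach the equality-type conditions $(iv)$ and $(v)$ rather than mere subequivalences.
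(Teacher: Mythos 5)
Your proof is correct, and it runs on exactly the paper's toolkit---\autoref{lma:CutDownDistance} in both its exact and its subequivalence forms, the estimate $\|b-(b-\delta)_+\|\leq\delta$, and the identity $\big((a-s)_+-t\big)_+=(a-(s+t))_+$---but it organizes the cycle differently. The paper proves (ii)$\Rightarrow$(i) by the same triangle-inequality trick you use, and then makes (i)$\Rightarrow$(v) the single substantial step: from $cbc^*\approx_{\ep/2}a$ it invokes continuity of the functional calculus to choose $\delta$ with $cbc^*\approx_{\ep/2}c(b-\delta)_+c^*$, and then applies the exact form of the lemma; the remaining implications (iii)$\Rightarrow$(ii), (iv)$\Rightarrow$(iii), (v)$\Rightarrow$(iv) are one-liners (the paper's $x=(b-\delta)_+^{1/2}r^*$ in (v)$\Rightarrow$(iv) is literally your choice). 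You instead split the hard step in two: (ii)$\Rightarrow$(iii) via the explicit quantitative trade---choosing $\delta$ \emph{after} $r$ with $\|r\|^2\delta<\ep/4$, which is precisely the quantitative content behind the paper's appeal to continuity of functional calculus---and then (iii)$\Rightarrow$(v) via the exact lemma; your closing (iv)$\Rightarrow$(i) simply concatenates the paper's (iv)$\Rightarrow$(iii)$\Rightarrow$(ii)$\Rightarrow$(i). What your ordering buys is a direct, self-contained proof of (ii)$\Rightarrow$(iii), which the paper only obtains by going around the whole cycle; what the paper's ordering buys is that every implication except (i)$\Rightarrow$(v) is essentially trivial. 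Your flagged delicate point---that $\delta$ may only be chosen once $r$ is fixed, since conjugation amplifies the cut-down perturbation by $\|r\|^2$---is indeed the crux, and your handling of it is right.
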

\begin{proof}
(i) implies (ii) since $(a-\varepsilon)_+\precsim a$, as observed above. Conversely, let $\ep>0$.
Since $(a-\tfrac{\ep}{2})_+\precsim b$, there is $r\in A$ with
\[rbr^*\approx_{\frac{\ep}{2}} \big(a-\tfrac{\ep}{2}\big)_+
\approx_{\frac{\ep}{2}} a,
\]
so (ii) implies (i). 
That (iii) implies (ii) follows from $(b-\delta)_+\leq b$.

We now show that (iv) implies (iii). Let $\ep>0$ and find $\delta>0$ and 
$x\in A$ with $(a-\ep)_+=x^*x$ and $xx^*\in A_{(b-\delta)_+}$.
Using part~(iv) of \autoref{cor:FuncCalc} at the second step and 
using \autoref{prop:InHerSubalg} at the third step, we get
\[(a-\ep)_+=x^*x\sim xx^*\precsim (b-\delta)_+,\]
as desired. To show that (v) implies (iv), let $\ep>0$ and find 
$\delta>0$ and $r\in A$ with $(a-\ep)_+=r (b-\delta)_+r^*$.
Set $x=(b-\delta)_+^{\frac{1}{2}}r^*$. Then 
\[(a-\ep)_+=x^*x \ \ \mbox{ and } \ \ xx^*=(b-\delta)^{\frac{1}{2}}r^*r(b-\delta)^{\frac{1}{2}}\in A_{(b-\delta)_+},\]
as desired.
Finally, we show that (i) implies (v). Let $\ep>0$ and choose $c\in A$
with $cbc^*\approx_{\frac{\ep}{2}} a$. By continuity of functional calculus, there
is $\delta>0$ such that 
$cbc^*\approx_{\frac{\ep}{2}} c(b-\delta)_+c^*$. By
\autoref{lma:CutDownDistance}, there is $d\in A$ such that 
$(a-\ep)_+=dc(b-\delta)_+c^*d^*$. Then $r=dc$ satisfies 
$(a-\ep)_+=r(b-\delta)_+r^*$.
\end{proof}

We will later want to compute the Cuntz semigroup of $\C$, for which we will
need the following useful lemma. Recall that projections $p,q$ in a C$^*$-algebra $A$ are said to be \emph{Murray-von Neumann equivalent}, in symbols $p\sim_\mathrm{MvN}q$, provided there is a partial isometry $v\in A$ such that $p=v^*v$ and $q=vv^*$. We also say that $p$ is \emph{Murray-von Neumann subequivalent} to $q$ if there is a projection $p'\leq q$ such that $p\sim_\mathrm{MvN}p'$.

\begin{lma}\label{lma:CtzCompPjns}
Let $A$ be a \ca\ and let $p,q\in A$ be projections. Then
$p\precsim q$ if and only if $p\precsim_{\mathrm{MvN}} q$. 
\end{lma}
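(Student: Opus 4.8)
The plan is to prove both implications, the backward one being immediate and the forward one resting on R{\o}rdam's lemma together with the special behaviour of cut-downs of projections. First I would dispatch $p\precsim_{\mathrm{MvN}}q\Rightarrow p\precsim q$. If $v\in A$ is a partial isometry with $p=v^*v$ and $p':=vv^*\leq q$, then part~(iv) of \autoref{cor:FuncCalc} gives $p=v^*v\sim vv^*=p'$, while \autoref{prop:InHerSubalg} gives $p'\precsim q$ from $p'\leq q$; composing these yields $p\precsim q$.

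For the converse, the key observation is that cut-downs of a projection collapse to scalar multiples: since $\spec(p)\subseteq\{0,1\}$, one has $(p-\ep)_+=(1-\ep)p$ for every $\ep\in(0,1)$, and likewise $(q-\delta)_+=(1-\delta)q$ for $\delta\in(0,1)$. Assuming $p\precsim q$, I would apply part~(iv) of \autoref{thm:Rordam} with $\ep=\tfrac12$ to obtain $\delta\in(0,1)$ and $x\in A$ with $(p-\tfrac12)_+=x^*x$ and $xx^*\in A_{(q-\delta)_+}$. Since $(p-\tfrac12)_+=\tfrac12 p$, rescaling to $y:=\sqrt2\,x$ gives $y^*y=p$ and $yy^*=2xx^*\in A_{(q-\delta)_+}$.

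Now $y^*y=p$ is a projection, which forces $y$ to be a partial isometry: a one-line computation shows $\|y-yy^*y\|^2=\|(1-y^*y)y^*y(1-y^*y)\|=\|(1-p)p(1-p)\|=0$, whence $y=yy^*y$ and so $q':=yy^*$ is a projection Murray--von Neumann equivalent to $p$. Finally, because $(q-\delta)_+=(1-\delta)q$ generates the same hereditary subalgebra as $q$, we have $A_{(q-\delta)_+}=\overline{qAq}=qAq$; hence $q'$ is a projection lying in the corner $qAq$, so $q'=qq'q$ and therefore $qq'=q'=q'q$, i.e.\ $q'\leq q$. This exhibits $p\sim_{\mathrm{MvN}}q'\leq q$, that is, $p\precsim_{\mathrm{MvN}}q$.

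The routine parts are the functional-calculus identities $(p-\ep)_+=(1-\ep)p$ and $(q-\delta)_+=(1-\delta)q$, which I would state without fuss. The one step deserving care is the passage from $y^*y=p$ being a projection to $y$ being a partial isometry with $yy^*$ an \emph{honest} subprojection of $q$; the whole point is that using a projection makes the cut-downs appearing in R{\o}rdam's lemma degenerate into scalar multiples, so that the a priori merely Cuntz-theoretic witness $x$ can be renormalized into a genuine Murray--von Neumann partial isometry living in the corner $qAq$.
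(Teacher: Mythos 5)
Your proof is correct and takes essentially the same route as the paper's: part~(iv) of R{\o}rdam's lemma with $\ep=\tfrac12$, the observation that cut-downs of projections collapse to scalar multiples so that $A_{(q-\delta)_+}=qAq$, and the rescaling $v=\sqrt{2}\,x$ to produce the Murray--von Neumann witness. The only difference is that you explicitly verify that $y$ is a partial isometry and that a projection lying in the corner $qAq$ satisfies $q'\leq q$, details the paper leaves implicit.
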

\begin{proof} 
It is clear that $p\precsim_{\mathrm{MvN}} q$ implies $p\precsim q$.
Conversely, assume that $p\precsim q$. For $\ep=\frac{1}{2}$, use 
part~(iv) of R{\o}rdam's lemma (\autoref{thm:Rordam}) to find 
$\delta>0$ and $x\in A$ such that $x^*x=(p-\frac{1}{2})_+$ and 
$xx^*\in A_{(q-\delta)_+}$. Since $p$ and $q$ are projections, we
have $(p-\frac{1}{2})_+=\frac{1}{2}p$ and similarly 
$(q-\delta)_+=(1-\delta)q\sim q$. In particular, 
$A_{(q-\delta)_+}=A_q=qAq$.
It follows that 
$v=\sqrt{2}x$ is a partial isometry satisfying $v^*v=p$
and $vv^*\in qAq$, so $vv^*\leq q$ as desired.
\end{proof}

Note, however, that the previous lemma does not imply that 
$p\sim q$ if and only if $p\sim_{\mathrm{MvN}} q$, since in general
$p\sim_{\mathrm{MvN}} q$ is not equivalent to 
$p\precsim_{\mathrm{MvN}} q\precsim_{\mathrm{MvN}} p$.
This \emph{is} the case if $A$ is stably finite\footnote{Recall that a \ca\ $A$ is said to be \emph{finite} if
every partial isometry in $A$ is a unitary. Moreover, $A$ is said to 
be \emph{stably finite} if $M_n(A)$ is finite for all $n\in\N$.
Equivalently, no projection in $M_n(A)$ is equivalent to a proper subprojection of itself. Now, $p$ and $q$ are projections in some 
matrix algebra over $A$ satisfying $p\precsim_{\mathrm{MvN}} q\precsim_{\mathrm{MvN}} p$, implemented say as $v^*v=p$ 
and $vv^*\leq q$, and $w^*w=q$ and $ww^*\leq p$, then
it follows that $w^*v^*vw=p$ and $vww^*v^*\leq p$. By stable finiteness, we must have $vww^*v^*=p$,
which implies that $vv^*=q$ and $ww^*=q$, so $p\sim_{\mathrm{MvN}}q$.}.

\subsection{Cuntz comparison and stable rank one}
C$^*$-algebras of stable rank one play an important role in these notes, since their Cuntz semigroups are particularly
well-behaved. In this subsection, we show that Cuntz
comparison in C$^*$-algebras of stable rank one takes a form 
which is very similar to Murray-von Neumann subequivalence
for projections; see \autoref{prop:CtzCompsr1}.

We begin with a general discussion for the reader to develop
some intuition around the notion of stable rank one.
For a C$^*$-algebra $A$, its \emph{minimal unitization} $\widetilde{A}$ is defined to be $A$ when $A$ is unital, and 
$A\oplus\C$ otherwise. In the latter case, addition is defined componentwise whilst the product is given by $(a,\lambda)(b,\mu)=(ab+\mu a+\lambda b, \lambda\mu)$, so that $(0,1)$ becomes the unit and $A$ sits inside $\widetilde{A}$ as a closed, two-sided ideal.

\begin{df}\label{df:sr1}
A unital C$^*$-algebra $A$ is said to have \emph{stable rank one} if the set $\mathrm{GL}(A)$ of invertible elements in $A$ is dense in $A$. Moreover, a nonunital
C$^*$-algebra is said to have stable rank one if its minimal unitization does.
\end{df} 

The class of C$^*$-algebras with stable rank one is pleasantly large. It was shown by R\o rdam in \cite{Ror_stable_2004} that a unital, simple, stably finite and $\mathcal{Z}$-stable C$^*$-algebra has stable rank one, hence this applies to all classifiable stably finite C$^*$-algebras. 
Stable rank one also holds outside of the classifiable class, 
for example for crossed products of the form $C(X)\rtimes\Z^n$ for 
a free, minimal action of $\Z^n$ on a compact Hausdorff space $X$, 
regardless of whether $C(X)\rtimes\Z^n$ is $\mathcal{Z}$-stable or 
not; see \cite{GuaNiu_stable_2020}. (See also \autoref{sec:JS} below for more information on the C$^*$-algebra $\mathcal{Z}$.)

As the name indicates, the stable rank is an integer valued number that is associated to any C$^*$-algebra and can be with advantage thought of as a noncommutative dimension theory. In fact, one has that $\mathrm{sr}(C(X))=\big[\frac{\dim(X)}{2}\big]+1$; see \cite{Rie_dimension_1983}. The notion of stable rank in \cite{Rie_dimension_1983} agrees with that of Bass, as shown in \cite{HerVas_stable_1984}.

We record here some general facts about stable rank one
that will be needed throughout.

\begin{prop}\label{prop:sr1}
Let $A$ be a C$^*$-algebra of stable rank one.
\be
\item[(i)] $A$ is automatically stably finite.
\item[(ii)] If $B\subseteq A$ is a hereditary subalgebra,
then $B$ has stable rank one.
\item[(iii)] $A\otimes\K$ has stable rank one.
\item[(iv)] If $p,q\in A$ satisfy $p\sim_{\mathrm{MvN}} q$, then there is a unitary $u\in \widetilde{A}$
satisfying $p=uqu^*$.
\ee
\end{prop}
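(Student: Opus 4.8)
The plan is to treat the four assertions in increasing order of difficulty, reserving most of the effort for (iv), which is the part that genuinely exploits \autoref{df:sr1} and which underlies the good behaviour of Cuntz comparison in the stable rank one case.

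For (i), I would first reduce to showing that a \emph{unital} algebra of stable rank one is finite, using the standard fact (Rieffel \cite{Rie_dimension_1983}) that stable rank one passes to matrix amplifications, so that $\sr(M_n(A))=1$ for every $n$. It then suffices to rule out, in a unital sr1 algebra, a non-unitary isometry $v$, i.e. one with $v^*v=1$ and $p:=vv^*\neq 1$. Here I would approximate $v$ by an invertible $x$ (density of $\mathrm{GL}$), and pass to its polar part $u=x(x^*x)^{-1/2}$, which is a genuine \emph{unitary} lying close to $v$: since $x^*x\to v^*v=1$, the element $(x^*x)^{-1/2}\to 1$, whence $u\to v$. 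Consequently $\|1-p\|=\|uu^*-vv^*\|\le 2\|u-v\|$ is small; as $1-p$ is a projection, its norm is $0$ or $1$, forcing $1-p=0$, a contradiction. This yields finiteness of every $M_n(A)$, hence stable finiteness.

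Assertions (ii) and (iii) are structural facts about the stable rank that I would quote rather than reprove. For (iii) I would use $A\otimes\K\cong\varinjlim_n M_n(A)$ together with the matrix fact above and the semicontinuity of the stable rank under inductive limits, $\sr(\varinjlim A_i)\le\liminf \sr(A_i)$ (Rieffel \cite{Rie_dimension_1983}), which forces $\sr(A\otimes\K)=1$. For (ii), the statement that hereditary subalgebras of an sr1 algebra again have stable rank one is a known result to be cited; the point is that it is special to stable rank \emph{one}, since the naive inequality $\sr(B)\le\sr(A)$ for hereditary $B$ fails in general.

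The heart is (iv). Passing to $B:=\widetilde{A}$, which is unital and again has stable rank one, I am given $v\in A$ with $v^*v=p$ and $vv^*=q$, and I seek a unitary $u\in B$ with $p=uqu^*$. The reduction is purely algebraic: if I can produce a partial isometry $w\in B$ with $w^*w=1-q$ and $ww^*=1-p$, then $u:=v^*+w$ does the job. Indeed $vw=0$ and $wv=0$ (and their adjoints), so that $u^*u=q+(1-q)=1$, $uu^*=p+(1-p)=1$, while $uqu^*=v^*qv=v^*v=p$, using $qv=v$ and $wq=0$. Thus everything reduces to showing that the complements $1-p$ and $1-q$ are Murray--von Neumann equivalent inside $B$. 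This is where stable rank one enters through its cancellation property (Rieffel \cite{Rie_dimension_1983}): from $p\sim_{\mathrm{MvN}}q$ and $1=p+(1-p)=q+(1-q)$ one obtains, in $M_2(B)$, the equivalences $(1-p)\oplus p\sim 1_B\sim(1-q)\oplus q\sim(1-q)\oplus p$, and cancelling the common summand $p$ gives $1-p\sim_{\mathrm{MvN}}1-q$. I expect the main obstacle to be exactly this last step: upgrading the cancellation, which is naturally phrased in $M_\infty(B)$, to an honest equivalence realized by a single partial isometry $w$ living in $B$ itself. Controlling this descent from matrices back to $B$ is the delicate point, and it is what truly requires stable rank one rather than mere stable finiteness.
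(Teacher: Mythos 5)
The paper records this proposition without proof --- it is quoted as a package of standard facts, with the references (essentially Rieffel \cite{Rie_dimension_1983}) implicit --- so there is no in-paper argument to compare against; judged on its own terms, your proposal is correct and follows the classical route. Your argument for (i) (unitize, use that $\sr(M_n(A))=1$, approximate an isometry $v$ by an invertible $x$, pass to the unitary polar part $u=x(x^*x)^{-1/2}$, and use that a projection has norm $0$ or $1$) is complete, and quoting Rieffel for (ii) and (iii) is exactly what the paper itself does. In (iv), the reduction $u=v^*+w$ and the cancellation chain $(1-p)\oplus p\sim 1\sim (1-q)\oplus q\sim(1-q)\oplus p$ are the standard argument. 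The one point to correct: the step you flag as the ``delicate point'' --- descending the equivalence $1-p\sim_{\mathrm{MvN}}1-q$ from matrices over $B=\widetilde{A}$ back to $B$ --- is in fact automatic and uses no stable rank hypothesis at all. If $W\in M_n(B)$ satisfies $W^*W=(1-p)\oplus 0$ and $WW^*=(1-q)\oplus 0$, then
\[
W=(WW^*)\,W\,(W^*W)=\big((1-q)\oplus 0\big)\,W\,\big((1-p)\oplus 0\big),
\]
so every matrix entry of $W$ vanishes except the $(1,1)$ entry $w$, which then satisfies $w^*w=1-p$ and $ww^*=1-q$ inside $B$ itself. So what genuinely requires stable rank one in (iv) is Rieffel's cancellation theorem, which you already invoke --- not the descent --- and with the displayed observation your proof is complete.
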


We need a technical lemma; see 
\cite[Lemma~2.4]{CiuEllSan_inductive_2011} for the proof.

\begin{lma}\label{lma:CES}
Let $A$ be a \ca\ and let $B\subseteq A$ be a hereditary
subalgebra of stable rank one. Let $\delta>0$ 
and let $x,y\in A$ satisfy 
\[xx^*,yy^*\in B, \ \ \ x^*x\in A_{y^*y} \ \ \ 
\mbox{ and } \ \ \ \|x^*x-y^*y\|<\delta.\]
Then there is a unitary $u\in \widetilde{B}$ such that
$\|x-uy\|<\sqrt{\delta}$.
\end{lma}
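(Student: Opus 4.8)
The plan is to realize $x$ as (approximately) a unitary of $\widetilde B$ applied to $y$, by matching up the polar decompositions of $x$ and $y$ and using stable rank one of $B$ to turn the resulting ``phase'' into a genuine unitary of $\widetilde B$. Writing $|x|=(x^*x)^{1/2}$ and $|y|=(y^*y)^{1/2}$, the first reduction is the operator estimate $\||x|-|y|\|\le\|x^*x-y^*y\|^{1/2}<\sqrt\delta$ (H\"older continuity of $t\mapsto t^{1/2}$ on positive elements), which shows that the target constant $\sqrt\delta$ is precisely the distance between the two absolute values. The second reduction is that $x^*x\in A_{y^*y}$ gives $x^*x\precsim y^*y$ by \autoref{prop:InHerSubalg}, and more precisely that in $A^{**}$ the support projection $P_x$ of $x^*x$ is dominated by the support projection $P_y$ of $y^*y$; writing $x=u_x|x|$ and $y=u_y|y|$ for the polar decompositions in $A^{**}$, this makes $u_x|y|$ the natural candidate for the image $uy$, since $\|u_x|y|-x\|=\|u_x(|y|-|x|)\|\le\||y|-|x|\|<\sqrt\delta$.

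Next I would produce a concrete approximation, living inside $B$, to the transition $v:=u_xu_y^*$. For $\eta>0$ set $a_\eta=x(x^*x+\eta)^{-1/2}$ and $b_\eta=y(y^*y+\eta)^{-1/2}$; these are contractions with $a_\eta a_\eta^*=xx^*(xx^*+\eta)^{-1}\in B$ and $b_\eta b_\eta^*=yy^*(yy^*+\eta)^{-1}\in B$, using $xx^*,yy^*\in B$ and heredity. Hence $w_\eta:=a_\eta b_\eta^*$ satisfies $w_\eta w_\eta^*\le a_\eta a_\eta^*\in B$ and $w_\eta^*w_\eta\le b_\eta b_\eta^*\in B$, so that $w_\eta\in B$ (for a hereditary $B$, $z^*z,zz^*\in B$ forces $z\in B$, as one sees from the open-projection description of $B$). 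Since $b_\eta^*y=(y^*y+\eta)^{-1/2}y^*y\to|y|$ and $a_\eta\to u_x$ strictly as $\eta\downarrow0$, a routine functional-calculus estimate gives $\limsup_{\eta\downarrow0}\|w_\eta y-x\|\le\||y|-|x|\|<\sqrt\delta$, so that $\|w_\eta y-x\|<\sqrt\delta$ for all small $\eta$.

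The crux is then to replace the contraction $w_\eta$ by a genuine unitary $u\in\widetilde B$. Here the absence of a spectral gap at $0$ for $x^*x$ is a real difficulty: an \emph{exact} implementation $x=uy$ need not exist (already for $B=C_0(0,1]$ the forced phase may fail to extend continuously), which is exactly why the statement is necessarily approximate and $u$ is allowed to depend on $\delta$. To handle this I would fix $\epsilon$ with $\|x^*x-y^*y\|<\epsilon<\delta$ and pass to a cut-down, applying R{\o}rdam's lemma (\autoref{thm:Rordam}) to $x^*x\precsim y^*y$: on the region where $|y|\gtrsim\sqrt\epsilon$ the phase $v$ is honestly defined and yields, after perturbing the relevant data to be invertible inside a corner of $B$ (using that hereditary subalgebras inherit stable rank one, \autoref{prop:sr1}(ii)), an honest partial isometry $s\in B$ with $s^*s\sim_{\mathrm{MvN}}ss^*$. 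By stable finiteness and cancellation, \autoref{prop:sr1}(iv) then extends $s$ to a unitary $u\in\widetilde B$. The discarded region is exactly where $|y|$ is small (of size $<\sqrt\delta$), and unitarity carries the corresponding defect isometrically into a subspace orthogonal to the range of $x$; choosing the extension with this in mind, the two error contributions sit over orthogonal spectral regions of $y^*y$ and do not accumulate, so that $\|x-uy\|<\sqrt\delta$.

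The main obstacle is this last step: producing an honest unitary of $\widetilde B$ (rather than merely a partial isometry in $A^{**}$) while preserving the sharp constant $\sqrt\delta$. This is where stable rank one is indispensable, through \autoref{prop:sr1}, and it is the technical heart for which the authors cite \cite[Lemma~2.4]{CiuEllSan_inductive_2011}. The strict inequality $\|x^*x-y^*y\|<\delta$ is used precisely to supply the slack needed to absorb the errors coming from the parameter $\eta$ and from the cut-down at scale $\epsilon<\delta$.
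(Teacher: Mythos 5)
The paper does not actually prove this lemma: it explicitly defers to \cite[Lemma~2.4]{CiuEllSan_inductive_2011}, so your attempt must be measured against that argument. Your first two steps are sound and do match the natural opening of the proof: the inequality $\||x|-|y|\|\leq\|x^*x-y^*y\|^{1/2}<\sqrt{\delta}$ is correct and preserves the strict inequality, and your elements $w_\eta=a_\eta b_\eta^*$ do lie in $B$ (your heredity argument, via $w_\eta w_\eta^*\leq a_\eta a_\eta^*\in B$, $w_\eta^*w_\eta\leq b_\eta b_\eta^*\in B$, is valid), with $\limsup_{\eta\to 0}\|w_\eta y-x\|\leq\||x|-|y|\|$ as you claim. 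The problem is the third step, which you yourself flag as the technical heart, and which as written would fail.

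Concretely: a C$^*$-algebra of stable rank one may contain no projections at all (take $B=C_0((0,1])$, which is hereditary in itself and has stable rank one). In such a $B$ there are no nonzero partial isometries --- a partial isometry $s\in B$ yields projections $s^*s,ss^*\in B$ --- and no unital corners, so your ``honest partial isometry $s\in B$ with $s^*s\sim_{\mathrm{MvN}}ss^*$'' is forced to be zero, ``perturbing the relevant data to be invertible inside a corner of $B$'' has no meaning, and part~(iv) of \autoref{prop:sr1} is inapplicable since it concerns projections. The actual mechanism in \cite{CiuEllSan_inductive_2011} runs instead through invertibles in the unitization: one approximates a suitable element of $B$ by $z\in\mathrm{GL}(\widetilde{B})$ and uses the unitary $u=z|z|^{-1}$ from the \emph{exact} polar decomposition of the invertible $z$; no partial isometry inside $B$ is ever produced. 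Moreover, R{\o}rdam's lemma gives conclusions of the form $(a-\varepsilon)_+=r(b-\delta)_+r^*$, not partial isometries, so citing it does not supply your $s$. Finally, the closing estimate (``the two error contributions sit over orthogonal spectral regions of $y^*y$ and do not accumulate'') is asserted rather than proved, and it is exactly where the sharp constant $\sqrt{\delta}$ is at stake: spectral projections of $y^*y$ are not in $A$, any continuous partition $f+g=1$ with $fg=0$ is impossible without a spectral gap, and for $z_1=(x-uy)f(y^*y)$, $z_2=(x-uy)g(y^*y)$ the cross terms $z_1^*z_2=f\,(x-uy)^*(x-uy)\,g$ need not vanish even when $fg=0$, so one must argue via $\|z_1+z_2\|^2=\|z_1z_1^*+z_2z_2^*\|\leq\|z_1\|^2+\|z_2\|^2$ and carry an overlap band through the estimates. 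Likewise, ``choosing the extension'' so that the unitary carries the defect into a subspace orthogonal to the range of $x$ is not something one can simply choose --- controlling where a unitary of $\widetilde{B}$ sends the small-spectrum part is nontrivial precisely in the projectionless situation. In short: correct reductions, but the decisive step is both unproved and, in the form proposed, built on objects (projections, corners, partial isometries in $B$) that need not exist.
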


The following is one of the basic technical results
that makes the study of Cuntz comparison in \ca s of 
stable rank one particularly accessible.
(Another one will be given in \autoref{thm:StableUnitImpl}.)

\begin{prop}\label{prop:CtzCompsr1}
(\cite[Proposition~2.5]{CiuEllSan_inductive_2011}).
Let $A$ be a \ca\ of stable rank one, and let 
$a,b\in A_+$. Then $a\precsim b$ if and only if 
there exists $x\in A$ satisfying $x^*x=a$ and $xx^*\in A_b$.
\end{prop}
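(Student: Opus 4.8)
The plan is to prove the nontrivial ``only if'' implication by upgrading the cut-down comparisons supplied by R\o rdam's lemma to an exact statement about $a$, manufacturing a convergent sequence via \autoref{lma:CES}. The reverse implication is immediate: if $x^*x=a$ and $xx^*\in A_b$, then part~(iv) of \autoref{cor:FuncCalc} gives $a=x^*x\sim xx^*$, while \autoref{prop:InHerSubalg} gives $xx^*\precsim b$, so that $a\precsim b$.

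For the forward direction, I would assume $a\precsim b$ and set $B=A_b$, which is a hereditary subalgebra of $A$ and hence has stable rank one by part~(ii) of \autoref{prop:sr1}. First I would fix the decreasing sequence $\ep_n=2^{-n}$ and apply part~(iv) of R\o rdam's lemma (\autoref{thm:Rordam}) at each level to obtain $x_n\in A$ with
\[(a-\ep_n)_+=x_n^*x_n\andeqn x_nx_n^*\in A_{(b-\delta_n)_+}\subseteq B\]
for suitable $\delta_n>0$; here $A_{(b-\delta_n)_+}\subseteq A_b$ because $(b-\delta_n)_+$ already lies in the hereditary subalgebra $A_b$, which is thus the smaller hereditary subalgebra's ambient. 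The key point is then to compare consecutive $x_n$. Since $\ep_n>\ep_{n+1}$, one has the functional-calculus identity $(a-\ep_n)_+=\big((a-\ep_{n+1})_+-(\ep_n-\ep_{n+1})\big)_+$, which exhibits $x_n^*x_n\in A_{x_{n+1}^*x_{n+1}}$; moreover $\|x_n^*x_n-x_{n+1}^*x_{n+1}\|\le \ep_n-\ep_{n+1}<2^{-n}$. Together with $x_nx_n^*,x_{n+1}x_{n+1}^*\in B$, all hypotheses of \autoref{lma:CES} hold with $\delta=2^{-n}$, yielding a unitary $u_n\in\widetilde{B}$ with $\|x_n-u_nx_{n+1}\|<2^{-n/2}$.

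It then remains to assemble these into a Cauchy sequence. Setting $w_n=u_1\cdots u_{n-1}$ (a unitary in $\widetilde B$, with $w_1=1$) and $y_n=w_nx_n$, invariance of the norm under multiplication by $w_n$ gives $\|y_n-y_{n+1}\|=\|x_n-u_nx_{n+1}\|<2^{-n/2}$, so that $(y_n)$ is Cauchy and converges to some $x\in A$. Since $y_n^*y_n=x_n^*x_n=(a-\ep_n)_+\to a$, I obtain $x^*x=a$; and since $y_ny_n^*=w_n(x_nx_n^*)w_n^*\in B$ (as $B$ is an ideal of $\widetilde B$) and $B$ is closed, I obtain $xx^*\in B=A_b$, as desired.

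The main obstacle is the forward direction, and specifically the correct orchestration of \autoref{lma:CES}: its containment hypothesis $x^*x\in A_{y^*y}$ forces the element with the \emph{larger} cut-down parameter to play the role of $x$ (its hereditary subalgebra being the smaller of the two), and it is exactly this ordering that lets the telescoping product of the $u_n$ produce a genuinely Cauchy sequence whose limit simultaneously inherits the equality $x^*x=a$ and the membership $xx^*\in A_b$. Establishing \autoref{lma:CES} itself---the stable-rank-one approximate unitary equivalence of near-isometries---is the substantive analytic input, but it is quoted here.
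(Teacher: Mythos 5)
Your proof is correct and takes essentially the same route as the paper's: Rørdam's lemma part~(iv) yields $x_n$ with $x_n^*x_n=(a-\ep_n)_+$ and $x_nx_n^*\in A_b$, \autoref{lma:CES} supplies unitaries in $\widetilde{A_b}$, and the telescoped products form a Cauchy sequence whose limit $x$ satisfies $x^*x=a$ and $xx^*\in A_b$. The only cosmetic differences are your choice $\ep_n=2^{-n}$ (giving the still-summable bound $2^{-n/2}$, where the paper's $\ep_n=2^{-2n}$ gives $2^{-n}$) and your explicit verification of the containment hypothesis $x_n^*x_n\in A_{x_{n+1}^*x_{n+1}}$ of \autoref{lma:CES}, which the paper leaves implicit.
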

\begin{proof}
If there exists $x\in A$ as in the statement, then
part~(iv) of \autoref{cor:FuncCalc} gives
\[a=x^*x\sim xx^*\in A_b,\]
and thus $a\precsim b$ by \autoref{prop:InHerSubalg}.

Conversely, assume that $a\precsim b$.
By R{\o}rdam's lemma (\autoref{thm:Rordam}), 
for every $n\in\N$ there exists $y_n\in A$ such that
\[\big(a-\tfrac{1}{2^{2n}}\big)_+=y_n^*y_n \ \ \mbox{ and } \ \ 
 y_ny_n^*\in A_b.
\]
Given $n\in\N$, apply \autoref{lma:CES} to $y_n$ and 
$y_{n+1}$ to obtain a unitary $u_n$ satisfying
$\|y_n-u_ny_{n+1}\|<\tfrac{1}{2^n}$.
Set $x_n=u_1\cdots u_{n-1}y_n$. Then $\|x_n-x_{n+1}\|<\tfrac{1}{2^n}$, and thus the sequence $(x_n)_{n\in\N}$
has a limit $x\in A$. Moreover,
\[x^*x=\lim_{n\to\I}x_n^*x_n=\lim_{n\to\I}\big(a-\tfrac{1}{2^{2n}}\big)_+=a,\]
and similarly $xx^*=\lim\limits_{n\to\I} x_nx_n^*\in A_b$, as desired.
\end{proof}

\section{The Cuntz semigroup}
The following is the object we will study in these notes.

\begin{df}\label{df:CuA}
Let $A$ be a \ca. The \emph{Cuntz semigroup} of $A$ is defined as 
\[\Cu(A)=(A\otimes\K)_+/\!\sim,\]
where $\sim$ stands for the Cuntz equivalence relation. For a positive element $a\in (A\otimes\K)_+$, we denote by $[a]$ its Cuntz equivalence class, hence $\Cu(A)=\big\{[a]\colon a\in (A\otimes\K)_+\big\}$.
There is a natural partial order defined on $\Cu(A)$, namely $[a]\leq [b]$ if $a\precsim b$. (The element $0:=[0]$ is the minimal element in $\Cu(A)$.) We define an addition on $\Cu(A)$ by setting\footnote{Here, just like it is done in K-theory, 
we are implicitly fixing an isomorphism $M_2\otimes\K\cong \K$ and
using it to identify $\Cu(A)$ with $\Cu(M_2(A))$; one can check that
this identification does not depend on the isomorphism we fixed.}
\[[a]+[b]=\big[\begin{psmallmatrix}a & 0\\0 & b\end{psmallmatrix}\big]
.\]
\end{df}

We also denote $\begin{psmallmatrix}a & 0\\0 & b\end{psmallmatrix}$ by $a\oplus b$. 

\begin{lma}
\label{lma:orth} Let $A$ be a C$^*$-algebra, and let $a,b\in A_+$. Then $a+b\precsim a\oplus b$. If $ab=0$, then $a+b\sim a\oplus b$.
\end{lma}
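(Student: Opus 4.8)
The plan is to realize both $a+b$ and $a\oplus b$ through a single auxiliary element and then let part~(iv) of \autoref{cor:FuncCalc} do all the work. Set
\[
x=\begin{psmallmatrix} a^{\frac{1}{2}} & b^{\frac{1}{2}}\\ 0 & 0\end{psmallmatrix}\in M_2(A),
\]
and compute directly that $xx^*=\begin{psmallmatrix} a+b & 0\\ 0 & 0\end{psmallmatrix}$, which is precisely the image of $a+b$ under the top-left corner embedding $A\hookrightarrow M_2(A)$, while $x^*x=\begin{psmallmatrix} a & a^{\frac{1}{2}}b^{\frac{1}{2}}\\ b^{\frac{1}{2}}a^{\frac{1}{2}} & b\end{psmallmatrix}$. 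By part~(iv) of \autoref{cor:FuncCalc} we have $a+b\sim xx^*\sim x^*x$, so the whole problem reduces to comparing $x^*x$ with $a\oplus b$.

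For the first assertion I would bound $x^*x$ above by a multiple of $a\oplus b$. Putting $y=\begin{psmallmatrix} a^{\frac{1}{2}} & -b^{\frac{1}{2}}\\ 0 & 0\end{psmallmatrix}$, one has $y^*y\geq 0$ together with the identity $x^*x+y^*y=2(a\oplus b)$, whence $x^*x\leq 2(a\oplus b)$. \autoref{prop:InHerSubalg} then gives $x^*x\precsim 2(a\oplus b)$, and since $2(a\oplus b)\sim a\oplus b$ by part~(iii) of \autoref{cor:FuncCalc}, we conclude $x^*x\precsim a\oplus b$. Combined with $a+b\sim x^*x$ this yields $a+b\precsim a\oplus b$.

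For the second assertion, observe that the hypothesis $ab=0$ forces the off-diagonal entries of $x^*x$ to vanish: taking adjoints gives $ba=0$ as well, so every monomial $a^m b^n$ with $m,n\geq 1$ is zero, and approximating $a^{\frac{1}{2}}$ and $b^{\frac{1}{2}}$ by such monomials via functional calculus yields $a^{\frac{1}{2}}b^{\frac{1}{2}}=0$. Hence $x^*x=a\oplus b$ exactly, and part~(iv) of \autoref{cor:FuncCalc} gives $a\oplus b=x^*x\sim xx^*=a+b$ at once, which is the desired Cuntz equivalence.

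The only genuinely delicate point here is bookkeeping rather than mathematics: the two sides of the claimed relations live a priori in different algebras, since $a+b\in A$ while $a\oplus b\in M_2(A)$, so one must fix the convention that Cuntz comparison is taken inside $M_2(A)$ with $a+b$ identified with its image $\begin{psmallmatrix} a+b & 0\\ 0 & 0\end{psmallmatrix}$ under the corner embedding. Once this identification is made explicit, the argument needs neither an approximate-unit nor a unitization, because passing through $x^*x$ and $xx^*$ keeps every element inside $M_2(A)$ throughout.
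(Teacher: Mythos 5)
Your proof is correct, and for the first assertion it takes a genuinely different route from the paper. The paper proves $a+b\precsim a\oplus b$ by verifying the definition of $\precsim$ directly with an explicit approximating sequence: it conjugates $a\oplus b$ by the matrices $r_n=\begin{psmallmatrix} a^{2/n} & b^{2/n}\\0 & 0\end{psmallmatrix}$ and checks that $r_n(a\oplus b)r_n^*\to \begin{psmallmatrix}a+b & 0\\0 & 0\end{psmallmatrix}$, so no auxiliary lemmas beyond \autoref{df:CuSubeq} are needed (and one even sees the concrete form of the implementing elements). You instead pass through $a+b\sim xx^*\sim x^*x$ and then dominate $x^*x$ by $2(a\oplus b)$ via the polarization-type identity $x^*x+y^*y=2(a\oplus b)$ with $y=\begin{psmallmatrix} a^{\frac{1}{2}} & -b^{\frac{1}{2}}\\ 0 & 0\end{psmallmatrix}$, finishing with \autoref{prop:InHerSubalg} and parts (iii)--(iv) of \autoref{cor:FuncCalc}. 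This is limit-free and runs entirely on the order-theoretic toolkit already established; indeed your inequality is exactly the trick the paper later isolates as \autoref{rem:x+y}, so your argument has the pleasant feature of anticipating a lemma the paper will need anyway. For the second assertion your proof is essentially the paper's: the paper takes the row $x=\begin{psmallmatrix}a^{\frac{1}{2}} & b^{\frac{1}{2}}\end{psmallmatrix}$ with $xx^*=a+b$ and $x^*x=a\oplus b$, while you realize the same element as a $2\times 2$ matrix with zero second row, which keeps everything inside $M_2(A)$; you also spell out the step $ab=0\Rightarrow a^{\frac{1}{2}}b^{\frac{1}{2}}=0$ (via $ba=0$ and polynomial approximation of $t\mapsto t^{1/2}$ with vanishing constant term), which the paper leaves implicit. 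Your closing remark about fixing the corner-embedding convention is exactly the identification the paper makes tacitly in \autoref{df:CuA}.
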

\begin{proof}
Note that 
\[
\begin{psmallmatrix} (aa^*)^{\frac{1}{n}} & (bb^*)^{\frac{1}{n}}\\0 & 0\end{psmallmatrix}\Big(\begin{smallmatrix}a & 0\\0 & b\end{smallmatrix}\Big)\begin{psmallmatrix} (a^*a)^{\frac{1}{n}} & 0\\(b^*b)^{\frac{1}{n}} & 0\end{psmallmatrix}\to \begin{psmallmatrix}a+b & 0\\0 & 0\end{psmallmatrix},
\] 
hence $a+b\sim \begin{psmallmatrix}a+b & 0\\0 & 0\end{psmallmatrix}\precsim a\oplus b$. On the other hand, if $ab=0$ and we let $x=\begin{psmallmatrix}a^{\frac{1}{2}} & b^{\frac{1}{2}}\end{psmallmatrix}$, then $xx^*=a+b$ and $x^*x=\begin{psmallmatrix}a & 0\\0 & b\end{psmallmatrix}$.
\end{proof}

 It is not difficult to check that $[a]+[b]=[b]+[a]$ and 
that $[a]+0=[a]$ in $\Cu(A)$ for all $[a], [b]\in \Cu(A)$. More interestingly, addition and order
are compatible in $\Cu(A)$, in the sense that $[a_1]\leq [b_1]$
and $[a_2]\leq [b_2]$ imply $[a_1]+[a_2]\leq [b_1]+[b_2]$.
This gives $\Cu(A)$ the structure of a positively ordered monoid\footnote{A \emph{monoid} is a semigroup with a neutral element, and it is said to be \emph{positively ordered} if every
element dominates the neutral element. The reasons for having termed $\Cu(A)$ a semigroup are only historical.}.

We turn to the first computation of a Cuntz semigroup.

\begin{eg}\label{eg:CuC}
Let us compute $\Cu(\C)\cong \Cu(M_n)\cong \Cu(\K)$.
We will show that the rank map 
$\mathrm{rk}\colon \K_+\to \{0,1,\ldots,\infty\}=:\overline{\N}$
induces an ordered semigroup isomorphism 
\[\mathrm{rk}\colon \Cu(\K)\to \overline{\N}.\]
To show this, given $a,b\in \K_+$ we will prove that
$\mathrm{rk}(a)\leq \mathrm{rk}(b)$ if and only if 
$a\precsim b$. 

Let $a\in \K_+$. By the Spectral Theorem, there are scalars $\lambda_n\geq 0$ and finite rank projections $p_n\in\K$ for 
$n\in\N$, such that 
\[a=\sum_{n=0}^\infty \lambda_np_n.\]
Moreover, $\mathrm{rk}(a)<\infty$ if and only if there is $m$ such that $\lambda_n=0$
for all $n\geq m$. In this case, part~(ii) of
\autoref{cor:FuncCalc} implies that $a\sim \sum_{n=1}^m p_n=:p_a$.
Note that $\mathrm{rk}(p_a)=\mathrm{rk}(a)$, and recall that Murray-von
Neumann subequivalence for projections in $\K$ is determined by 
the rank. 
Using \autoref{lma:CtzCompPjns} at the second step, for finite rank elements $a,b\in \K_+$
we have
\[a\precsim b  \ \ \Leftrightarrow \ \ p_a\precsim p_b \ \
\Leftrightarrow \ \ p_a\precsim_{\mathrm{MvN}} p_b  \ \ 
\Leftrightarrow \ \ \mathrm{rk}(p_a)\precsim \mathrm{rk}(p_b) \ \ 
\Leftrightarrow \ \ \mathrm{rk}(a)\precsim \mathrm{rk}(b),\]
as desired. 
Now, if $\mathrm{rk}(b)=\infty$ and $a\in\K_+$ is arbitrary, we will
show that $a\precsim b$. By part~(ii) of R{\o}rdam's lemma
(\autoref{thm:Rordam}), it suffices to show that $(a-\ep)_+\precsim b$
for all $\ep>0$. Note that $(a-\ep)_+$ has finite rank and is therefore
Cuntz equivalent to a finite-rank projection $p\in\K$, by
the previous part.
Since $A_b$ contains projections of arbitrary large rank, there is 
$q\in A_b$ with $p\sim_{\mathrm{MvN}}q$. Then 
\[(a-\ep)_+ \sim p \precsim_{\mathrm{MvN}}q \precsim b,\]
which implies the result by \autoref{lma:CtzCompPjns}.
\end{eg}

We isolate the following convenient corollary. Recall that an \emph{order-embedding}\footnote{This is stronger than being order-preserving and injective.} $\phi\colon S\to T$
between ordered sets is a map satisfying $\phi(s)\leq \phi(s')$
if and only if $s\leq s'$.

\begin{cor}\label{cor:VACuA}
Let $A$ be a \ca. Then there is a natural semigroup
map $\iota\colon \V(A)\to \Cu(A)$. If $A$ is stably finite, 
then $\iota$ is an order embedding.
\end{cor}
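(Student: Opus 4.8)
The plan is to construct $\iota$ on representatives, verify it respects the semigroup and order structure, and then isolate exactly where stable finiteness is needed. Recall that $\V(A)$ consists of the Murray--von Neumann classes $[p]_{\mathrm{MvN}}$ of projections $p\in A\otimes\K$, with addition given by the orthogonal direct sum $\oplus$ and order $[p]\leq[q] \Leftrightarrow p\precsim_{\mathrm{MvN}}q$.

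First I would define $\iota\big([p]_{\mathrm{MvN}}\big)=[p]\in\Cu(A)$, sending a projection to its Cuntz class. This is well defined: if $p\sim_{\mathrm{MvN}}q$ via a partial isometry $v$ with $v^*v=p$ and $vv^*=q$, then part~(iv) of \autoref{cor:FuncCalc} gives $p=v^*v\sim vv^*=q$, so $[p]=[q]$ in $\Cu(A)$. Since addition in both $\V(A)$ and $\Cu(A)$ is implemented by $\oplus$, the map $\iota$ is a semigroup homomorphism. Naturality is immediate, since a $*$-homomorphism sends projections to projections and intertwines both equivalence relations, so $\iota$ commutes with the maps induced on $\V$ and $\Cu$.

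For the order-embedding assertion, the essential input is a single application of \autoref{lma:CtzCompPjns}, applied in the C$^*$-algebra $A\otimes\K$: for projections $p,q\in A\otimes\K$ one has $p\precsim q$ (Cuntz) if and only if $p\precsim_{\mathrm{MvN}}q$. Unwinding the definition of the order in $\Cu(A)$, this yields, for all $x=[p]_{\mathrm{MvN}}$ and $y=[q]_{\mathrm{MvN}}$,
\[
\iota(x)\leq\iota(y)\iff p\precsim q\iff p\precsim_{\mathrm{MvN}}q\iff x\leq y,
\]
which is precisely the order-embedding property. Note that both implications in this chain are valid for an arbitrary C$^*$-algebra $A$.

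I expect the only delicate point to be identifying the genuine role of stable finiteness, which enters not through the displayed equivalence but through the requirement that $\iota$ be a map of \emph{ordered sets}. For the preorder on $\V(A)$ to be a partial order, hence for $\V(A)$ to be a poset and for $\iota$ to be injective, one needs antisymmetry: $p\precsim_{\mathrm{MvN}}q\precsim_{\mathrm{MvN}}p\Rightarrow p\sim_{\mathrm{MvN}}q$. This is the Cantor--Schr\"oder--Bernstein property for projections, which is exactly what stable finiteness of all matrix amplifications of $A$ guarantees, as spelled out in the footnote following \autoref{lma:CtzCompPjns}. Without this hypothesis, the relation $p\sim_\mathrm{Cu}q$ forces only $p\precsim_{\mathrm{MvN}}q\precsim_{\mathrm{MvN}}p$ and $\iota$ may collapse distinct classes; stable finiteness is thus the precise ingredient that upgrades the order-preserving, order-reflecting map $\iota$ into an embedding.
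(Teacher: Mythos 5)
Your proof is correct and follows essentially the same route as the paper: well-definedness of $\iota$ is immediate since projections are positive elements, the order-embedding equivalence comes from a single application of \autoref{lma:CtzCompPjns}, and stable finiteness enters only to make $\V(A)$ genuinely partially ordered (antisymmetry of $\precsim_{\mathrm{MvN}}$), exactly as in the footnote the paper invokes. Your version simply spells out the details the paper leaves implicit; no gaps.
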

\begin{proof} The first part is obvious since every projection is 
a positive element. If $A$ is stably finite, then 
the comments above show that $\V(A)$ is an ordered semigroup and 
the last claim follows immediately from 
\autoref{lma:CtzCompPjns}.
\end{proof}

\begin{rem} If $A$ is stably finite, then the 
 image of $\iota$ in $\Cu(A)$ is precisely the set of the so-called \emph{compact
 elements} of $\Cu(A)$; see \autoref{df:CompCont} and \autoref{rem:CuCAR}.
\end{rem}

\begin{nota} Given $a,b\in A_+$, we introduce the following notation:
\bi\item $a\sim_u b$ if there is a unitary $u\in \widetilde{A}$ with $uau^*=b$;
\item $a\subseteq b$ if $a\in A_b$;
\item $a\subseteq_u b$ if there is a unitary $u\in \widetilde{A}$ such that 
$uau^*\in A_b$.
\ei
\end{nota}

The following theorem shows that Cuntz comparison in stable 
\ca s is \emph{unitarily implemented}. The result holds more
generally for \ca s of \emph{weak stable rank one}: by definition, a C$^*$-algebra $A$ has weak stable rank one if $A\subseteq \overline{\mathrm{GL}(\widetilde{A})}$. By 
\cite[Lemma 4.3.2]{BlaRobTikTomWin_algebraic_2012} every stable \ca\ has weak stable rank one. 

\begin{thm}\label{thm:StableUnitImpl}
Let $A$ be a \ca\ with weak stable rank one 
and let $a,b\in A_+$. Then
$a\precsim b$ if and only if for every $\ep>0$ 
we have $(a-\ep)_+\subseteq_u b$.
\end{thm}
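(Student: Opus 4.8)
The plan is to prove the two implications separately; the reverse implication is routine, and the forward one carries all the difficulty.

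For the reverse direction, suppose $(a-\ep)_+\subseteq_u b$ for every $\ep>0$, and fix $\ep>0$. Choosing a unitary $u\in\widetilde{A}$ with $u(a-\ep)_+u^*\in A_b$, \autoref{prop:InHerSubalg} gives $u(a-\ep)_+u^*\precsim b$; since $(a-\ep)_+$ and $u(a-\ep)_+u^*$ are unitarily equivalent they are Cuntz equivalent, so $(a-\ep)_+\precsim b$. As this holds for all $\ep>0$, part~(ii) of R{\o}rdam's lemma (\autoref{thm:Rordam}) yields $a\precsim b$.

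For the forward direction, fix $\ep>0$ and first pass to a convenient normal form. Since $a\precsim b$, part~(iv) of \autoref{thm:Rordam}, applied with tolerance $\tfrac{\ep}{2}$, produces $\delta>0$ and $x\in A$ with $x^*x=(a-\tfrac{\ep}{2})_+$ and $xx^*\in A_{(b-\delta)_+}\subseteq A_b$. The guiding heuristic is that if $x$ admitted a genuine polar decomposition $x=w|x|$ with $w\in\widetilde{A}$ unitary, then $w(x^*x)w^*=xx^*\in A_b$, and cutting down by $\tfrac{\ep}{2}$ would give $w(a-\ep)_+w^*=(xx^*-\tfrac{\ep}{2})_+\in A_b$, exactly the conclusion we want. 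The obstruction is that $x$ need not be invertible, so its polar part is only a partial isometry, typically living outside $A$.

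This is where weak stable rank one enters: by definition $A\subseteq\overline{\mathrm{GL}(\widetilde{A})}$, so I would choose invertibles $g_k\to x$ in $\widetilde{A}$ with honest polar decompositions $g_k=u_k|g_k|$, $u_k\in\widetilde{A}$ unitary. The exact identity $u_k(g_k^*g_k)u_k^*=g_kg_k^*$ transports cut-downs through $u_k$, so $u_k(g_k^*g_k-\tfrac{\ep}{2})_+u_k^*=(g_kg_k^*-\tfrac{\ep}{2})_+$; combining $g_k^*g_k\to x^*x$ and $g_kg_k^*\to xx^*$ with the norm-continuity of continuous functional calculus gives $\|u_k(a-\ep)_+u_k^*-(xx^*-\tfrac{\ep}{2})_+\|\to 0$, where $(xx^*-\tfrac{\ep}{2})_+\in A_b$. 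Thus for every $\nu>0$ there is a unitary $u\in\widetilde{A}$ conjugating $(a-\ep)_+$ to within $\nu$ of $A_b$.

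The main obstacle is the upgrade from this \emph{approximate} membership to the \emph{exact} membership demanded by $\subseteq_u$: the unitaries realizing smaller and smaller $\nu$ need not converge, and unitary orbits are not norm-closed in general, so one cannot merely take a limit of the $u_k$. I would resolve this by an intertwining argument producing a \emph{Cauchy} sequence of unitaries: inductively construct $w_k\in\widetilde{A}$ and $d_k\in (A_b)_+$ with $\|w_k(a-\ep)_+w_k^*-d_k\|\le 2^{-k}$ and $\|w_{k+1}-w_k\|\le 2^{-k}$, where at each step the correction $w_{k+1}w_k^*$ is forced to be close to the identity precisely because the element is already nearly in position; here the polar-decomposition mechanism above replaces the appeal to \autoref{lma:CES}, which is unavailable since it requires a hereditary subalgebra of stable rank one rather than merely weak stable rank one of $A$. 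Then $u:=\lim_k w_k$ exists and $u(a-\ep)_+u^*=\lim_k d_k\in\overline{A_b}=A_b$. The slack gained by passing from the cut-off $\tfrac{\ep}{2}$ to $\ep$ is exactly what absorbs the accumulated perturbations, and verifying that the inductive correction can be carried out with all error terms controlled (via, e.g., a Powers--St{\o}rmer type estimate on square roots together with \autoref{lma:CutDownDistance}) is the technical heart of the argument.
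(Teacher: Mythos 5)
Your reverse implication is correct and is the same as the paper's. In the forward direction, your normal form via part~(iv) of \autoref{thm:Rordam} and the polar-decomposition heuristic are exactly the paper's starting point, and your derivation of the \emph{approximate} statement is sound: since $u_k(g_k^*g_k-\tfrac{\ep}{2})_+u_k^*=(g_kg_k^*-\tfrac{\ep}{2})_+$ exactly and $(x^*x-\tfrac{\ep}{2})_+=(a-\ep)_+$, one indeed gets unitaries moving $(a-\ep)_+$ within any $\nu>0$ of $A_b$. The gap is the final upgrade from approximate to exact membership, which you correctly identify as the crux but do not carry out. Your inductive scheme needs that, given $w_k$ with $w_k(a-\ep)_+w_k^*$ within $2^{-k}$ of $A_b$, one can choose $w_{k+1}$ doing better with $\|w_{k+1}-w_k\|\leq 2^{-k}$; you assert the correction $w_{k+1}w_k^*$ is ``forced to be close to the identity'', but nothing forces this. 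Unitaries approximately conjugating $(a-\ep)_+$ into $A_b$ are massively non-unique --- one may multiply any of them by any unitary commuting with $(a-\ep)_+$, such as $e^{ith}$ with $h$ self-adjoint in the commutant --- so the phases $u_k$ of your approximating invertibles need not converge, and your proposal supplies no mechanism, analogous to \autoref{lma:CES}, for pinning consecutive unitaries close together. That mechanism is exactly what \autoref{lma:CES} provides under stable rank one, and the claim that ``the polar-decomposition mechanism replaces it'' is precisely the unproved technical heart; as stated, the inductive step would fail.

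The correct repair makes the whole limiting procedure unnecessary: under weak stable rank one one has the \emph{exact} one-shot statement --- equation~(2.2) in the paper's sketch --- that for every $x\in A$ and every $\ep>0$ there is a \emph{single} unitary $u\in\widetilde{A}$ with $u(x^*x-\ep)_+u^*=(xx^*-\ep)_+$, equality on the nose. This goes back to Pedersen's unitary extensions of polar decompositions (if $\mathrm{dist}(x,\mathrm{GL}(\widetilde{A}))<\alpha$, then the polar partial isometry $v\in A^{**}$ agrees with an honest unitary of $\widetilde{A}$ against all cut-downs of $|x|$ above level $\alpha$); a proof in the present setting is in \cite{RorWin_algebra_2010}. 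Granting this, the argument finishes in one line from your normal form: $(a-\ep)_+=(x^*x-\tfrac{\ep}{2})_+\sim_u (xx^*-\tfrac{\ep}{2})_+\in A_b$, the last membership because $A_b$ is a closed subalgebra containing $xx^*$. So your route agrees with the paper's in spirit, but at the decisive step you replace a known exact lemma by an approximate version plus an intertwining whose key estimate is unjustified.
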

\begin{proof} 
Note that the ``only if'' implication is 
 true in full generality: indeed, given $\ep>0$, find $u\in\U(\widetilde{A})$ as in the statement. With $x=u(a-\ep)_+^{\frac{1}{2}}\in A$, we use part~(iv) of \autoref{cor:FuncCalc} at the second step, and \autoref{prop:InHerSubalg} at the last one to get 
\[ (a-\ep)_+=x^*x\sim xx^*= u(a-\ep)_+u^* \precsim b.\]
Then $a\precsim b$ by R{\o}rdam's lemma (\autoref{thm:Rordam}).

We only sketch the proof of the converse, so assume that 
$A$ has weak stable rank one.
For $c,d\in A_+$, we write $c\sim_u d$ if there is a unitary 
$u\in\U(\widetilde{A})$ such that $c=udu^*$. 
From this, one shows that for every $x\in A$ and 
every $\ep>0$ we have
\begin{align*}\tag{2.2}
(x^*x-\ep)_+\sim_u(xx^*-\ep)_+.
\end{align*}
(Note that this is a strengthening of \autoref{lma:CutDwnSymm}.)
Assume now that $a\precsim b$ and let $\ep>0$. By R{\o}rdam's 
lemma (\autoref{thm:Rordam}), there exists $x\in A$ such that 
\[(a-\tfrac{\ep}{2})_+=x^*x \ \ \mbox{ and } \ \ xx^*\in A_b.\]
Then
\[(a-\ep)_+=(x^*x-\tfrac{\ep}{2})_+\stackrel{(2.2)}{\sim_u}(xx^*-\tfrac{\ep}{2})_+\in A_{b}.\]
Therefore there is $u\in\U(\widetilde{A})$ with $u(a-\ep)_+u^*\in A_b$, as desired.
\end{proof}

The following theorem, originally obtained by Coward, Elliott
and Ivanescu in \cite{CowEllIva_Cuntz_2008} using Hilbert C$^*$-modules,
was the first result about the internal structure of 
Cuntz semigroups, and shows that they are rather special
ordered semigroups.

\begin{thm}\label{thm:suprema}
Let $A$ be a \ca. Then every increasing sequence in $\Cu(A)$ has
a supremum. 
\end{thm}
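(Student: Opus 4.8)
The plan is to realize the supremum as the Cuntz class of a single positive element, built from a cofinal chain, and to use R{\o}rdam's lemma as the main workhorse. I first record the criterion that identifies the supremum. Write $a_1\precsim a_2\precsim\cdots$ for representatives of the given sequence. I claim that an element $a\in(A\otimes\K)_+$ satisfies $[a]=\sup_n[a_n]$ as soon as (1) $a_n\precsim a$ for every $n$, and (2) for every $\ep>0$ there is $n$ with $(a-\ep)_+\precsim a_n$. Indeed, (1) says $[a]$ is an upper bound; and if $[c]$ is any other upper bound, then for each $\ep$ condition (2) and transitivity give $(a-\ep)_+\precsim a_n\precsim c$, whence $a\precsim c$ by part~(ii) of \autoref{thm:Rordam}. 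Replacing each $a_n$ by $g(a_n)$ with $g(t)=\min\{t,1\}$, which is Cuntz equivalent to $a_n$ by part~(ii) of \autoref{cor:FuncCalc}, I may assume that all $a_n$ are contractions.

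The second step reduces the construction to producing a chain of positive contractions that increases for \emph{hereditary containment} and is Cuntz cofinal with $(a_n)$. Suppose we have positive contractions $b_n$ with $b_n\in A_{b_{n+1}}$ (so that $A_{b_1}\subseteq A_{b_2}\subseteq\cdots$), with $b_n\precsim a_{p(n)}$ for some indices $p(n)$, and such that for every $m$ and every $\ep>0$ one has $(a_m-\ep)_+\precsim b_n$ for some $n$. Then I set
\[
a=\sum_{n=1}^\infty 2^{-n}b_n\in(A\otimes\K)_+,
\]
which converges in norm. Since $2^{-n}b_n\le a$, parts~(iii) of \autoref{cor:FuncCalc} and \autoref{prop:InHerSubalg} give $b_n\precsim a$ and $b_n\in A_a$, so that $A_a=\overline{\bigcup_n A_{b_n}}$ with $a$ strictly positive in it. Condition (1) now follows: for each $m$ and $\ep$ we have $(a_m-\ep)_+\precsim b_n\precsim a$, hence $a_m\precsim a$ by \autoref{thm:Rordam}. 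For condition (2), given $\ep$ I use $(a-\ep)_+=\big((a-\tfrac{\ep}{2})_+-\tfrac{\ep}{2}\big)_+$; since $(a-\tfrac{\ep}{2})_+$ lies in $\overline{\bigcup_n A_{b_n}}$, I pick $n$ and a positive $y\in A_{b_n}$ with $\|(a-\tfrac{\ep}{2})_+-y\|<\tfrac{\ep}{2}$, and then \autoref{lma:CutDownDistance} together with \autoref{prop:InHerSubalg} yields $(a-\ep)_+\precsim y\precsim b_n\precsim a_{p(n)}$.

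The heart of the matter is the construction of the chain $(b_n)$, and the tool is part~(iv) of \autoref{thm:Rordam}: whenever $c\precsim d$, for each $\eta>0$ it produces $x$ with $(c-\eta)_+=x^*x$ and $xx^*\in A_d$, so the element $xx^*\sim(c-\eta)_+$ sits \emph{inside} $A_d$. I would build $(b_n)$ inductively with $b_n\sim(a_{k_n}-\ep_n)_+$ for a strictly increasing $(k_n)$ and cut levels $\ep_n\downarrow 0$, at each step placing a R{\o}rdam realization of $b_n$ inside $A_{b_{n+1}}$. The downward cofinality $b_n\precsim a_{k_n}$ is then automatic; the upward cofinality is arranged by a diagonal bookkeeping, enumerating all pairs $(m,1/j)$ and, at the stage handling $(m,1/j)$, taking $k_n$ large enough that $a_m\precsim a_{k_n}$ and $\ep_n$ below the $\delta$ that \autoref{thm:Rordam} provides for $(a_m-1/j)_+\precsim a_{k_n}$, so that $(a_m-1/j)_+\precsim(a_{k_n}-\ep_n)_+\sim b_n$.

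The main obstacle is exactly this chain construction. The difficulty is that Cuntz subequivalence $\precsim$ is far weaker than hereditary containment, and \autoref{thm:Rordam} bridges the two only after passing to a cut-down; one therefore cannot keep the elements fixed while forcing $b_n\in A_{b_{n+1}}$, and must control the cut levels $\ep_n$ so carefully that no cofinality is lost in the limit. Conceptually this is precisely the passage to the inductive limit of the Hilbert modules $\overline{a_n(A\otimes\K)}$, where the embeddings compose with no cut-downs at all; the positive-element argument above is a hands-on simulation of that colimit, and the bookkeeping is the price for remaining within the picture used in these notes. Once the chain is in place, the verification of (1) and (2) in the second step is routine given R{\o}rdam's lemma and \autoref{lma:CutDownDistance}.
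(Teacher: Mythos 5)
Your reduction steps are sound and in fact coincide with the paper's: your criterion for the supremum is exactly the paper's Case~1 (norm limits of increasing sequences represent suprema, verified via \autoref{lma:CutDownDistance} and part~(ii) of \autoref{thm:Rordam}), and your summation $a=\sum_n 2^{-n}b_n$ of a hereditarily nested chain is the paper's Case~2. The gap is precisely where you locate it, in the construction of the chain $(b_n)$ with $b_n\in A_{b_{n+1}}$ — but the tool you propose cannot close it. Part~(iv) of \autoref{thm:Rordam} applied to $b_n\precsim a_{k_{n+1}}$ produces a \emph{fresh copy} $xx^*$ of a cut-down of $b_n$ sitting inside $A_{(a_{k_{n+1}}-\delta)_+}$; it gives no control relating this copy to elements already constructed. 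If you build the chain forward, then at the stage where you would ``place a R{\o}rdam realization of $b_n$ inside $A_{b_{n+1}}$'', either $b_{n+1}$ does not yet exist, or, if you instead realize a copy $c_n$ of $b_n$ inside $A_{a_{k_{n+1}}}$ and take $c_n$ as your new $n$-th element, you have destroyed the containment $b_{n-1}\in A_{b_n}$ arranged at the previous stage, since the fresh copy $c_n$ generates a hereditary subalgebra bearing no relation to the old one. Your diagonal bookkeeping of indices $k_n$ and cut levels $\ep_n$ ensures cofinality of the classes, but it does not address this compatibility-of-placements problem, which is the actual content of the step.

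This is exactly why the paper's proof interposes \autoref{thm:StableUnitImpl}: since $A\otimes\K$ is stable, hence has weak stable rank one, the relation $a\precsim b$ upgrades to $(a-\ep)_+\subseteq_u b$, i.e.\ the containment is implemented by a \emph{unitary} $u$ with $u(a-\ep)_+u^*\in A_b$. Unitaries are what make the induction go through (the paper's Case~3): conjugation by a unitary is an automorphism, so it carries hereditary subalgebras to hereditary subalgebras, and by conjugating each tail of the sequence by the accumulated product $u_1^*\cdots u_{n-1}^*$ one moves the \emph{larger} element's hereditary algebra to accommodate the already-fixed smaller element, preserving every containment previously arranged. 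Without some such globally implemented equivalence — unitaries as here, or the colimit of Hilbert modules $\overline{a_n(A\otimes\K)}$ in the Coward--Elliott--Ivanescu picture, which you correctly identify as the conceptual background — the inductive construction you sketch stalls, and the statement you need is genuinely stronger than anything R{\o}rdam's lemma alone provides. To repair your proof, insert \autoref{thm:StableUnitImpl} to get $(a_n-\ep^{(n)}_k)_+\subseteq_u(a_{n+1}-\ep^{(n+1)}_k)_+$ along your diagonal, and then run the conjugation argument before summing; that is the paper's Case~4 reducing to Case~3 reducing to Case~2.
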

\begin{proof} Let $(a_n)_{n\in\N}$ be a sequence in 
$(A\otimes\K)_+$ satisfying $a_n\precsim a_{n+1}$ for all $n\in\N$.

\textbf{Case 1:}\emph{ $(a_n)_{n\in\N}$ is increasing in $A\otimes\K$
and has a limit $a=\lim\limits_{n\to\I} a_n$. Then $[a]$ is the supremum of $([a_n])_{n\in\N}$.} In this case
$[a]\in \Cu(A)$ is an upper bound for $([a_n])_{n\in\N}$. Let 
$[b]\in\Cu(A)$ is another upper bound; we want to show that
$[a]\leq [b]$. By R{\o}rdam's lemma, it suffices to show that
for every $\ep>0$ we have $(a-\ep)_+\precsim b$. For the $\ep>0$
given, find $n\in\N$ such that $\|a-a_n\|<\ep$, so that we
have $(a-\ep)_+\precsim a_n$ by \autoref{lma:CutDownDistance}.
Then $(a-\ep)_+\precsim a_n\precsim b$, as desired.

\textbf{Case 2:}\emph{ for every $n\in\N$ we have $a_n\subseteq a_{n+1}$ (which means $a_n\in A_{a_{n+1}}$).} In this case, for $n\in\N$ we set 
\[b_n=\sum_{k=1}^n \frac{a_k}{\|a_k\| 2^k} \ \ \mbox{ and } \ \ 
 b=\sum_{k=1}^\I \frac{a_k}{\|a_k\| 2^k}.
\]
It is clear that $b_n\leq b_{n+1}$ and that $b=\lim\limits_{n\to\I}b_n$. Note that 
\[a_n\sim \frac{a_n}{\|a_n\|2^n} \leq b_n,\]
and hence $a_n\precsim b_n$ by \autoref{prop:InHerSubalg}. 
On the other hand,
since $a_1,\ldots,a_n$ belong to the hereditary subalgebra generated by $a_n$, the same is true for $b_n$. Thus 
$b_n\precsim a_n$ by \autoref{prop:InHerSubalg} and 
therefore $a_n\sim b_n$. It follows from Case~1 that $[b]$ is 
the supremum of $([a_n])_{n\in\N}$. 

\textbf{Case 3:}\emph{ for every $n\in\N$ we have $a_n\subseteq_u a_{n+1}$.} 
This case is easy to reduce 
to the previous one.
For each $n\in\N$, choose $u_n\in\U(\widetilde{A\otimes\K})$
such that $u_na_nu_n^*\in 
A_{a_{n+1}}$. Set $b_1=a_1$ and 
\[b_n=u_1^*u_2^*\cdots u_n^* a_n u_n \cdots u_2u_1.\]
Then $b_n\sim_u a_n$, so $[b_n]=[a_n]$. One readily checks that 
$b_n\in A_{b_{n+1}}$, and hence Case~2 implies the result in this case. 

\textbf{Case 4:}\emph{ the sequence $(a_n)_{n\in\N}$ is arbitrary.}
Using R{\o}rdam's lemma repeatedly together with 
\autoref{thm:StableUnitImpl}, for every $n\in\N$ we can find a 
sequence $(\ep_k^{(n)})_{k\in\N}$ which decreases to zero and 
such that 
\[(a_n-\ep_{k}^{(n)})_+\subseteq_u (a_{n+1}-\ep_{k}^{(n+1)})_+\]
for all $k,n\in\N$. We represent this graphically as follows:
\begin{align*}\xymatrix{
a_1  \ar@{}[r]|{\precsim} & a_2 \ar@{}[r]|{\precsim} & a_3 \ar@{}[r]|{\precsim} & \cdots \\
\vdots  \ar@{}[u]|{\rotatebox[origin=c]{90}{$\leq$}}& 
\vdots  \ar@{}[u]|{\rotatebox[origin=c]{90}{$\leq$}}& 
\vdots  \ar@{}[u]|{\rotatebox[origin=c]{90}{$\leq$}}& 
\cdots\\
(a_1-\ep^{(1)}_3)_+  \ar@{}[u]|{\rotatebox[origin=c]{90}{$\leq$}}\ar@{}[r]|{\subseteq_u} & 
(a_2-\ep^{(2)}_3)_+ \ar@{}[u]|{\rotatebox[origin=c]{90}{$\leq$}} \ar@{}[r]|{\subseteq_u} & 
(a_3-\ep^{(3)}_3)_+ \ \ \  \ar@{}[u]|{\rotatebox[origin=c]{90}{$\leq$}}\ar@{}[r]|{\subseteq_u} & \cdots\\
(a_1-\ep^{(1)}_2)_+  \ar@{}[u]|{\rotatebox[origin=c]{90}{$\leq$}}\ar@{}[r]|{\subseteq_u} & 
(a_2-\ep^{(2)}_2)_+ \ar@{}[u]|{\rotatebox[origin=c]{90}{$\leq$}} \ar@{}[r]|{\subseteq_u} & 
(a_3-\ep^{(3)}_2)_+ \ \ \  \ar@{}[u]|{\rotatebox[origin=c]{90}{$\leq$}}\ar@{}[r]|{\subseteq_u} & \cdots\\
(a_1-\ep^{(1)}_1)_+  \ar@{}[u]|{\rotatebox[origin=c]{90}{$\leq$}}\ar@{}[r]|{\subseteq_u} & 
(a_2-\ep^{(2)}_1)_+ \ar@{}[u]|{\rotatebox[origin=c]{90}{$\leq$}} \ar@{}[r]|{\subseteq_u} & 
(a_3-\ep^{(3)}_1)_+ \ \ \ \ar@{}[u]|{\rotatebox[origin=c]{90}{$\leq$}}\ar@{}[r]|{\subseteq_u} & \cdots
}\end{align*}

Set $b_n=(a_n-\ep^{(n)}_n)_+$. Then $b_n\subseteq_u b_{n+1}$ for all
$n\in\N$, and by Case~3 the supremum of $([b_n])_{n\in\N}$ 
exists in $\Cu(A)$, say $[b]$. Then $[a_n]\leq [b]$, 
since 
\[[a_n]\stackrel{\textrm{\textbf{Case~1}}}{=}\sup\limits_{k\in\N} [(a_n-\ep^{(n)}_k)_+]\ \leq \ \sup\limits_{k\in\N} [(a_k-\ep^{(k)}_k)_+] \ =\ [b].
\]
(To justify the second step: one can without loss of generality assume
that $k> n$. Then $(a_n-\ep^{(n)}_k)_+\precsim (a_k-\ep^{(n)}_k)_+$
since $a_n\precsim a_k$, and $(a_k-\ep^{(n)}_k)_+\precsim (a_k-\ep^{(k)}_k)_+$ because $\ep_k^{(n)}\geq \ep_k^{(k)}$.) Thus $[b]$
is an upper bound of the sequence. To see that it is the smallest,
let $[c]$ be another upper bound. Then
\[(a_n-\ep)_+\precsim a_n \precsim c\]
for any $\varepsilon>0$, and thus $b\precsim c$. We conclude that $[b]$ is the supremum of
$([b_n])_{n\in\N}$.
\end{proof}

\begin{rem}\label{rem:CutDownsConverge}
It follows from Case~1 above that for all $a\in (A\otimes \K)_+$ we have
\[[a]=\sup\limits_{\ep>0}[(a-\ep)_+].\]
\end{rem}

\begin{rem}\label{rem:ReplaceIncrSeq}
The proof of \autoref{thm:suprema} shows that if $(a_n)_{n\in\N}$ is any sequence in $(A\otimes\K)_+$ which is increasing in 
$\Cu(A)$, then there exists an increasing sequence $(b_n)_{n\in\N}$ in $(A\otimes\K)_+$ which converges to an element representing
$\sup\limits_{n\in\N}[a_n]$, and satisfies $b_n\precsim a_n$
for all $n\in\N$. (However, one cannot in general arrange
that $a_n\sim b_n$.)
\end{rem}

\begin{prop}\label{prop:CutDownWayBelow}
Let $A$ be a \ca, let $(a_n)_{n\in\N}$ be a sequence in
$(A\otimes\K)_+$, let $a\in (A\otimes\K)_+$ and let $\ep>0$.
If 
\[[a]\leq \sup\limits_{n\in\N}[a_n],\]
then there exists $n\in\N$ with $[(a-\ep)_+]\leq [a_n]$.
\end{prop}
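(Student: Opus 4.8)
The plan is to convert the order-theoretic hypothesis into a concrete norm statement and then exploit the interplay between $\precsim$ and norm approximation provided by \autoref{lma:CutDownDistance}. Throughout I treat $(a_n)_{n\in\N}$ as increasing in $\Cu(A)$, which is what makes $\sup_n[a_n]$ meaningful by \autoref{thm:suprema}. The essential first move is to replace the given representatives by ones whose supremum is an \emph{honest} norm-limit: by \autoref{rem:ReplaceIncrSeq}, I obtain an increasing sequence $(b_n)_{n\in\N}$ in $(A\otimes\K)_+$ which converges in norm to some $b\in(A\otimes\K)_+$ with $[b]=\sup_n[a_n]$ and with $b_n\precsim a_n$ for every $n$. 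The payoff of this replacement is that the abstract inequality $[a]\leq\sup_n[a_n]$ now reads simply as $a\precsim b$.

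Next, since $a\precsim b$, I would apply R{\o}rdam's lemma (\autoref{thm:Rordam}), using the implication (i)$\Rightarrow$(iii): for the given $\ep>0$ there exists $\delta>0$ with
\[(a-\ep)_+\precsim (b-\delta)_+.\]
Because $b_n\to b$ in norm, I can choose $n\in\N$ so large that $\|b-b_n\|<\delta$. Applying \autoref{lma:CutDownDistance} to $b$ and $b_n$ with cut-off $\delta$ then yields $(b-\delta)_+\precsim b_n$. Chaining the three comparisons and using transitivity of $\precsim$ gives
\[(a-\ep)_+\precsim (b-\delta)_+\precsim b_n\precsim a_n,\]
which is exactly the desired conclusion $[(a-\ep)_+]\leq [a_n]$.

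The main obstacle is conceptual rather than computational: the way-below type conclusion cannot be extracted directly from the hypothesis $[a]\leq\sup_n[a_n]$, since at the abstract level one only controls the supremum, not any individual term. The correct device is to pass through \autoref{rem:ReplaceIncrSeq}, after which the compactness of $(a-\ep)_+$ inside $a$ is encoded precisely by the combination of R{\o}rdam's lemma (which trades the relation $a\precsim b$ for a cut-down $\delta$ on the $b$-side) and \autoref{lma:CutDownDistance} (which converts the norm estimate $\|b-b_n\|<\delta$ into a Cuntz comparison).

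The one mild point to verify carefully is that \autoref{rem:ReplaceIncrSeq} delivers the domination $b_n\precsim a_n$ for the \emph{same} index $n$; this is exactly what allows me to conclude $[(a-\ep)_+]\leq[a_n]$ for a concrete term rather than merely for some element dominated by $\sup_n[a_n]$. Everything else is a short application of transitivity of $\precsim$, so once the replacement sequence is in hand the argument is essentially three comparisons long.
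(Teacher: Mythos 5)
Your proposal is correct and follows essentially the same route as the paper's proof: replace the sequence via \autoref{rem:ReplaceIncrSeq} to get a norm-convergent increasing sequence $(b_n)_{n\in\N}$ with limit $b$ and $b_n\precsim a_n$, apply R{\o}rdam's lemma to $a\precsim b$ to obtain $\delta>0$ with $(a-\ep)_+\precsim (b-\delta)_+$, and then use \autoref{lma:CutDownDistance} with $\|b-b_n\|<\delta$ to chain $(a-\ep)_+\precsim (b-\delta)_+\precsim b_n\precsim a_n$. Your attention to the fact that $b_n\precsim a_n$ holds for the same index $n$ is exactly the point the paper's argument also relies on.
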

\begin{proof}
Use \autoref{rem:ReplaceIncrSeq} to find an increasing 
sequence $(b_n)_{n\in\N}$ in $(A\otimes\K)_+$ with limit $b$
satisfying $[b]=\sup\limits_{n\in\N}[a_n]$ and $b_n\precsim a_n$
for all $n\in\N$.
Since $a\precsim b$, by R{\o}rdam's lemma (\autoref{thm:Rordam})
there exists $\delta>0$ such that 
$(a-\ep)_+\precsim (b-\delta)_+$. 
Find $n\in\N$ such that $\|b-b_n\|<\delta$, so that 
$(b-\delta)_+\precsim b_n$ by \autoref{lma:CutDownDistance}.
Then 
\[(a-\ep)_+\precsim (b-\delta)_+\precsim b_n\precsim a_n,\]
as desired.
\end{proof}

The cut-down is necessary in \autoref{prop:CutDownWayBelow}:
for example, let $a_n\in C([0,1])$ be a positive function supported on 
$\big[\frac{1}{n}, 1-\frac{1}{n}\big]$, such that $(a_n)_{n\in\N}$
converges to a function $a$ vanishing only on $0$ and $1$. 
Then $[a]=\sup\limits_{n\in\N}[a_n]$, but there is no $n$ such that
$[a]\leq [a_n]$ by \autoref{prop:CtzCompCX}.

\section{Compact containment and the category \textbf{Cu}}
\label{sec:waybelow}

In this section, we will explore the order-theoretic aspects of 
$\Cu(A)$ in more detail. For this, some abstraction will be 
necessary, and we will often work with (partially) 
ordered semigroups $(S,\leq)$, or even just ordered sets; 
the example we will always have in mind is $(\Cu(A),\leq)$. A more general version of the following definition appears in \cite[Definition I-1.1]{GieHofKeiLawMisSco_continuous_2003}.

\begin{df}\label{df:CompCont}
Let $(S,\leq)$ be an ordered set. We define an additional 
relation $\ll$ on $S$, called \emph{(sequential) compact
containment}, as follows: for $s,t\in S$, we
set $s\ll t$ if whenever $(x_n)_{n\in\N}$ is an increasing
sequence in $S$ with supremum $x$ satisfying $t\leq x$, then 
there exists $n\in\N$ such that $s\leq x_n$. 

We say that $s\in S$ is \emph{compact} if $s\ll s$.
\end{df}

\begin{rem}\label{rem:CutDownWayBelow}
\autoref{prop:CutDownWayBelow} shows precisely
that
$[(a-\ep)_+]\ll [a]$ for all $\ep>0$ and all $a\in (A\otimes\K)_+$.
 \end{rem}

It is easy to see that $s\ll t$ implies $s\leq t$, but the 
converse is not true, even in Cuntz semigroups of (commutative) C$^*$-algebras; see \autoref{prop:CompactContCX}. 
In fact, the relation $\ll$ is an example of what is called an \emph{auxiliary relation}; see \autoref{pgr:aux}.
We now show that compact containment in C$^*$-algebras can be characterized using
cut-downs:

\begin{prop}\label{prop:WayBelowCutDown}
Let $A$ be a \ca\ and let $a,b\in (A\otimes\K)_+$. Then 
$[a]\ll [b]$ if and only if there exists $\ep>0$ such that
$[a]\leq [(b-\ep)_+]$. In particular, $[a]$ is compact if 
and only if there exists $\ep>0$ with $a\sim (a-\ep)_+$.
\end{prop}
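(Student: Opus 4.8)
The plan is to combine two facts already established for cut-downs with the elementary ``sandwich'' behaviour of the compact containment relation, so that essentially no new computation is needed. The two facts are: first, that $[(b-\ep)_+]\ll [b]$ for every $\ep>0$, which is exactly \autoref{rem:CutDownWayBelow} (i.e.\ \autoref{prop:CutDownWayBelow}); and second, that $[b]=\sup_{\ep>0}[(b-\ep)_+]$, which is \autoref{rem:CutDownsConverge}. The sandwich property I will use is the general observation that in any ordered set, $x\leq y\ll z$ implies $x\ll z$: given an increasing sequence with supremum dominating $z$, the hypothesis $y\ll z$ produces an index $n$ with $y\leq x_n$, whence $x\leq y\leq x_n$.

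For the ``if'' direction, I would suppose there is $\ep>0$ with $[a]\leq [(b-\ep)_+]$. Since $[(b-\ep)_+]\ll [b]$ by \autoref{rem:CutDownWayBelow}, the sandwich property gives $[a]\ll [b]$ at once.

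For the ``only if'' direction, assume $[a]\ll [b]$. The key move is to realise $[b]$ as the supremum of an increasing \emph{sequence} of cut-downs, so that the sequential definition of $\ll$ applies: taking $\ep_n=\tfrac1n$, the elements $(b-\tfrac1n)_+$ increase with $n$ (as $(b-\ep)_+$ increases when $\ep$ decreases), and \autoref{rem:CutDownsConverge} shows that $\sup_n[(b-\tfrac1n)_+]=\sup_{\ep>0}[(b-\ep)_+]=[b]$. Applying the definition of $[a]\ll [b]$ to this increasing sequence, whose supremum is $[b]$ and therefore trivially satisfies $[b]\leq\sup_n[(b-\tfrac1n)_+]$, yields an index $n$ with $[a]\leq [(b-\tfrac1n)_+]$; then $\ep=\tfrac1n$ is the desired number.

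Finally, for the ``in particular'' statement I specialise to $b=a$. The equivalence just proved says that $[a]$ is compact, i.e.\ $[a]\ll [a]$, exactly when $[a]\leq [(a-\ep)_+]$ for some $\ep>0$; but $(a-\ep)_+\leq a$ always forces $[(a-\ep)_+]\leq [a]$, so this inequality is equivalent to the equality $[a]=[(a-\ep)_+]$ in $\Cu(A)$, that is, $a\sim (a-\ep)_+$. I do not expect a genuine obstacle here, since all the content is carried by the two earlier results; the only point demanding a little care is to repackage the family $\{(b-\ep)_+\}_{\ep>0}$ into a single increasing \emph{sequence}, which the choice $\ep_n=\tfrac1n$ handles.
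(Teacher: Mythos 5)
Your proof is correct and follows essentially the same route as the paper: the forward direction applies the sequential definition of $\ll$ to the increasing sequence $\big((b-\tfrac1n)_+\big)_{n\in\N}$ whose supremum is $[b]$ by \autoref{rem:CutDownsConverge}, and the converse follows from \autoref{rem:CutDownWayBelow}. Your additional care with the sandwich property $x\leq y\ll z\Rightarrow x\ll z$ and the explicit treatment of the ``in particular'' clause (which the paper leaves implicit) are both sound.
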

\begin{proof} Suppose that $[a]\ll [b]$. Since 
$\sup\limits_{n\in\N}[(b-\frac{1}{n})_+]=[b]$ by \autoref{rem:CutDownsConverge}, it follows that there exists
$n\in\N$ with $[a]\leq [(b-\frac{1}{n})_+]$. The 
converse follows from \autoref{rem:CutDownWayBelow}.
\end{proof}


Using the above proposition together with \autoref{prop:CtzCompCX}, 
compact containment in commutative \ca s can be
easily characterized in terms of open supports. We
leave the proof as an exercise:

\begin{prop}\label{prop:CompactContCX}
Let $X$ be a compact Hausdorff space and let $a,b\in C(X)_+$. 
Then $a\ll b$ if and only if 
\[\overline{\supp_{\mathrm{o}}(a)}\subseteq \supp_{\mathrm{o}}(b).\]
In particular, $a\in C(X)$ is compact if and only if
$\supp_{\mathrm{o}}(a)$ is compact.
\end{prop}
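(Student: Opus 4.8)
The plan is to combine the cut-down characterization of compact containment from \autoref{prop:WayBelowCutDown} with the description of Cuntz subequivalence in the commutative case from \autoref{prop:CtzCompCX}. By \autoref{prop:WayBelowCutDown}, the relation $a\ll b$ (that is, $[a]\ll[b]$ in $\Cu(C(X))$) holds if and only if there exists $\ep>0$ with $a\precsim (b-\ep)_+$. The open support of the cut-down $(b-\ep)_+$ is exactly $\{x\in X\colon b(x)>\ep\}$, so \autoref{prop:CtzCompCX} rewrites the condition $a\precsim (b-\ep)_+$ as the inclusion $\supp_{\mathrm{o}}(a)\subseteq \{x\colon b(x)>\ep\}$. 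Thus the whole task reduces to proving the equivalence
\[
\overline{\supp_{\mathrm{o}}(a)}\subseteq \supp_{\mathrm{o}}(b)
\quad\Longleftrightarrow\quad
\exists\,\ep>0\colon\ \supp_{\mathrm{o}}(a)\subseteq \{x\colon b(x)>\ep\}.
\]

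For the direction from the existence of $\ep$ to the closed-support inclusion, I would observe that $\{x\colon b(x)\ge \ep\}$ is closed and contains $\supp_{\mathrm{o}}(a)$, hence contains its closure; since $b(x)\ge \ep>0$ forces $b(x)>0$, this set sits inside $\supp_{\mathrm{o}}(b)$, giving $\overline{\supp_{\mathrm{o}}(a)}\subseteq \supp_{\mathrm{o}}(b)$. For the converse, I would set $K=\overline{\supp_{\mathrm{o}}(a)}$, which is a closed subset of the compact space $X$ and hence compact. The hypothesis says $b$ is strictly positive on $K$, so by compactness $b$ attains a positive minimum $\delta$ on $K$; choosing any $\ep\in(0,\delta)$ then yields $\supp_{\mathrm{o}}(a)\subseteq K\subseteq\{x\colon b(x)>\ep\}$, as required. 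This compactness step — passing from pointwise positivity on a closed set to a uniform lower bound — is the only real content of the argument, and is where compactness of $\overline{\supp_{\mathrm{o}}(a)}$ genuinely enters.

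Finally, for the ``in particular'' clause, I would specialize to $b=a$: the element $a$ is compact precisely when $\overline{\supp_{\mathrm{o}}(a)}\subseteq \supp_{\mathrm{o}}(a)$. Since the reverse inclusion always holds, this says $\supp_{\mathrm{o}}(a)$ equals its own closure, i.e.\ it is closed; and a subset of the compact Hausdorff space $X$ is closed if and only if it is compact. I do not anticipate any genuine obstacle here: the argument is essentially a dictionary between the language of cut-downs and that of open supports, with the minimum-on-a-compact-set observation serving as the single nontrivial input.
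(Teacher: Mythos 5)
Your proof is correct and follows exactly the route the paper intends: it leaves this proposition as an exercise, explicitly indicating that one should combine \autoref{prop:WayBelowCutDown} with \autoref{prop:CtzCompCX}, which is precisely your reduction, and your compactness argument (a continuous function strictly positive on the compact set $\overline{\supp_{\mathrm{o}}(a)}$ attains a positive minimum) supplies the one nontrivial step. Nothing to correct.
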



\begin{df}\label{df:CatCu}
Let $(S,\leq)$ be a positively ordered monoid. We say that $S$
is an \emph{(abstract) Cuntz semigroup}, or just a Cu-\emph{semigroup}, if it satisfies the following so-called axioms:
\be
\item[(O1)] Every increasing sequence has a supremum.
\item[(O2)] For every $s\in S$, there is a sequence 
$(s_n)_{n\in\N}$ in $S$ with $s_n\ll s_{n+1}$ and 
$s=\sup\limits_{n\in\N} s_n$.
\item[(O3)] If $s\ll t$ and $s'\ll t'$, then $s+s'\ll t+t'$.
\item[(O4)] If $(s_n)_{n\in\N}$ and $(t_n)_{n\in\N}$ are increasing sequences, then \[\sup\limits_{n\in\N} (s_n+t_n)
=\sup\limits_{n\in\N} s_n+ \sup\limits_{n\in\N} t_n.\]
\ee

Given $\Cu$-semigroups $S$ and $T$, a Cu-\emph{morphism} between
them is a map $f\colon S\to T$ preserving addition, neutral element, order $\leq$, suprema of increasing sequences, and also
the compact containment relation $\ll$. Maps that preserve all
the structure except possibly for $\ll$ are called \emph{generalized $\Cu$-morphisms}. They are also relevant,
as we will see in \autoref{sec:Structure}.

We denote by \textbf{Cu} the category whose objects are Cu-semigroups and whose morphisms are Cu-morphisms. The set of $\Cu$-morphisms between two semigroups $S$ and $T$ will be denoted by $\mathbf{Cu}(S,T)$, and the set of generalized $\Cu$-morphisms will be denoted by $\mathbf{Cu}[S,T]$.
\end{df}

The following result, due to Coward, Elliott and Ivanescu
\cite{CowEllIva_Cuntz_2008},
was arguably the beginning of the systematic study of Cuntz
semigroups. 

\begin{thm}\label{thm:CuAinCu}
Let $A$ be a \ca. Then $\Cu(A)$ is a Cu-semigroup. Moreover, if 
$\varphi\colon A\to B$ is a $\ast$-homomorphism between \ca s, 
then $\varphi$ naturally induces a Cu-morphism $\Cu(\varphi)\colon \Cu(A)\to \Cu(B)$. In other words, $\Cu$ is a functor from 
the category \textbf{C$^*$} of C$^*$-algebras to \textbf{Cu}.
\end{thm}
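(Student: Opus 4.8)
The plan is to verify the four axioms of \autoref{df:CatCu} one at a time for $S=\Cu(A)$, and then to check that a $\ast$-homomorphism induces a morphism in $\CatCu$; almost all of the groundwork has been laid in the preceding results, so the argument is largely a matter of assembling them. Axiom (O1) is precisely \autoref{thm:suprema}. For (O2), given $[a]\in\Cu(A)$ with $a\in(A\otimes\K)_+$, I would fix a strictly decreasing sequence $\ep_n\downarrow 0$ (say $\ep_n=\frac1n$) and set $s_n=[(a-\ep_n)_+]$. By \autoref{rem:CutDownsConverge} we have $\sup_n s_n=[a]$, and to see $s_n\ll s_{n+1}$ I would invoke \autoref{prop:WayBelowCutDown}: using the functional-calculus identity $((a-\ep_{n+1})_+-\delta)_+=(a-\ep_{n+1}-\delta)_+$ with $\delta=\ep_n-\ep_{n+1}>0$, one gets $((a-\ep_{n+1})_+-\delta)_+=(a-\ep_n)_+$, so $[(a-\ep_n)_+]\leq[((a-\ep_{n+1})_+-\delta)_+]$ and hence $s_n\ll s_{n+1}$.

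For (O3), suppose $[a]\ll[b]$ and $[a']\ll[b']$. By \autoref{prop:WayBelowCutDown} there are $\ep,\ep'>0$ with $[a]\leq[(b-\ep)_+]$ and $[a']\leq[(b'-\ep')_+]$. Writing $\eta=\min\{\ep,\ep'\}$ and using that cut-downs act blockwise, i.e. $(b\oplus b'-\eta)_+=(b-\eta)_+\oplus(b'-\eta)_+$, I would add the two inequalities to obtain $[a]+[a']\leq[(b\oplus b'-\eta)_+]$, which by \autoref{prop:WayBelowCutDown} again means $[a]+[a']\ll[b]+[b']$.

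Axiom (O4) is where the real content lies, and I expect it to be the main obstacle. The inequality $\sup_n(s_n+t_n)\leq\sup_n s_n+\sup_n t_n$ is immediate from compatibility of addition with the order. For the reverse inequality I would use \autoref{rem:ReplaceIncrSeq} to replace the given Cuntz-increasing sequences $(a_n)$, $(b_n)$ by sequences $(a_n')$, $(b_n')$ that are genuinely increasing in $(A\otimes\K)_+$, norm-converge to elements $a,b$ with $[a]=\sup_n[a_n]$, $[b]=\sup_n[b_n]$, and satisfy $a_n'\precsim a_n$, $b_n'\precsim b_n$. Then $a_n'\oplus b_n'$ is increasing in the algebra and norm-converges to $a\oplus b$, so Case~1 of \autoref{thm:suprema} gives $\sup_n[a_n'\oplus b_n']=[a\oplus b]=[a]+[b]$. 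Since $a_n'\oplus b_n'\precsim a_n\oplus b_n$, we get $\sup_n[a_n]+\sup_n[b_n]=\sup_n[a_n'\oplus b_n']\leq\sup_n[a_n\oplus b_n]=\sup_n(s_n+t_n)$, completing (O4). The subtlety to watch here is exactly that one cannot keep the original representatives; the passage to genuinely increasing sequences is what makes Case~1 applicable.

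Finally, for the functorial statement I would work with $\varphi\otimes\id_\K\colon A\otimes\K\to B\otimes\K$, still denoted $\varphi$. Since $\varphi$ is contractive, $\|r_nbr_n^*-a\|\to0$ forces $\|\varphi(r_n)\varphi(b)\varphi(r_n)^*-\varphi(a)\|\to0$, so $\varphi$ preserves $\precsim$; hence $\Cu(\varphi)([a]):=[\varphi(a)]$ is well defined, preserves $0$ and order, and is additive because $\varphi(a\oplus b)=\varphi(a)\oplus\varphi(b)$. That $\Cu(\varphi)$ preserves $\ll$ is immediate from \autoref{prop:WayBelowCutDown} together with $\varphi((b-\ep)_+)=(\varphi(b)-\ep)_+$. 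For preservation of suprema, given $[a]=\sup_n[a_n]$ I would show $[\varphi(a)]=\sup_n[\varphi(a_n)]$: the bound $\sup_n[\varphi(a_n)]\leq[\varphi(a)]$ is clear, and for the converse, for each $\ep>0$ \autoref{prop:CutDownWayBelow} yields an $n$ with $[(a-\ep)_+]\leq[a_n]$, whence $[(\varphi(a)-\ep)_+]=[\varphi((a-\ep)_+)]\leq[\varphi(a_n)]\leq\sup_m[\varphi(a_m)]$; letting $\ep\to0$ and using \autoref{rem:CutDownsConverge} gives $[\varphi(a)]\leq\sup_m[\varphi(a_m)]$. Functoriality ($\Cu(\id)=\id$ and $\Cu(\psi\circ\varphi)=\Cu(\psi)\circ\Cu(\varphi)$) is then immediate, since both sides send $[a]$ to $[\psi(\varphi(a))]$.
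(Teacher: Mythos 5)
Your proposal is correct and follows essentially the same route as the paper: (O1) is \autoref{thm:suprema}, (O2) and (O3) are handled via cut-downs and \autoref{prop:WayBelowCutDown}, and (O4) uses exactly the paper's replacement trick from \autoref{rem:ReplaceIncrSeq} combined with Case~1 of \autoref{thm:suprema}. The only cosmetic differences are that in (O3) you stay with block-diagonal representatives and the identity $(b\oplus b'-\eta)_+=(b-\eta)_+\oplus(b'-\eta)_+$ where the paper passes to orthogonal elements and invokes \autoref{lma:orth}, and that you spell out the functoriality verification (well-definedness via contractivity, preservation of $\ll$ via $\varphi((b-\ep)_+)=(\varphi(b)-\ep)_+$, and preservation of suprema via \autoref{prop:CutDownWayBelow} and \autoref{rem:CutDownsConverge}), which the paper's proof leaves implicit.
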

\begin{proof}
Most of the work has already been done. (O1) is \autoref{thm:suprema}, while
(O2) follows from \autoref{rem:CutDownsConverge} and 
\autoref{rem:CutDownWayBelow}.
To verify (O3), let $a,a',b,b'\in (A\otimes\K)_+$ satisfy
$[a]\ll [b]$ and $[a']\ll [b']$. Upon identifying
$[a]$ with the class of $\begin{psmallmatrix}a & 0\\0 & 0\end{psmallmatrix}\in A\otimes\K\otimes M_2\cong A\otimes\K$, and 
similarly for $a',b,b'$, we
may assume that $a\perp a'$ and $b\perp b'$. 
Use \autoref{prop:WayBelowCutDown} 
to find $\ep>0$ such that $[a]\leq [(b-\ep)_+]$ and $[a']\leq 
[(b'-\ep)_+]$. Using that $b\perp b'$ at the second step, we get 
\[a+a'\precsim (b-\ep)_+ + (b'-\ep)_+ =(b+b'-\ep)_+.\]
Note that $[(b+b'-\ep)_+]\ll [b+b']$ by \autoref{prop:CutDownWayBelow}. Using this at the second step, that $a\perp a'$ 
at the first step, and that 
$b\perp b'$ at the last step (both in combination with 
\autoref{lma:orth}, we get 
\[
[a]+[a']=[a+a']\ll [b+b']=[b]+[b'],
\]
as desired.

Finally, to verify (O4), 
let $(s_n)_{n\in\N}$ and $(t_n)_{n\in\N}$ be increasing sequences 
in $\Cu(A)$.
Since $s_n+t_n\leq \sup\limits_{n\in\N} s_n + \sup\limits_{n\in\N} t_n$, we 
get 
\[\sup\limits_{n\in\N} (s_n+t_n)
\leq \sup\limits_{n\in\N} s_n+ \sup\limits_{n\in\N} t_n.\]
To show the converse inequality, use \autoref{rem:ReplaceIncrSeq}
to find increasing, norm-con\-ver\-gent sequences $(a_n)_{n\in\N}$ 
and $(b_n)_{n\in\N}$ in $(A\otimes\K)_+$, with limits $a$ and $b$,
satisfying $[a_n]\leq s_n$ and $[b_n]\leq t_n$ for all $n\in\N$, 
and such that $[a]=\sup\limits_{n\in\N} s_n$ and $[b]=\sup\limits_{n\in\N}t_n$. 
Then
\[\sup\limits_{n\in\N} s_n + \sup\limits_{n\in\N} t_n=[a\oplus b] = \sup\limits_{n\in\N}
 [a_n\oplus b_n]=\sup\limits_{n\in\N}
 \big([a_n]+[ b_n]\big)\leq \sup\limits_{n\in\N}
 \big(s_n+t_n\big),
\]
as desired. This finishes the proof.
\end{proof}

\begin{rem}\label{rem:Scott}
Although there is no topology in a Cu-semigroup, in view of (O1) one usually thinks of Cu-semigroups as being complete in
a suitable order-topology, called the \emph{Scott topology}.
A base for this topology is given by those upward-hereditary sets 
$U$ (that is, $s\in U$ and $s\leq t$ imply $t\in U$) such that for 
every $s\in U$ there exists $s'\in U$ with $s'\ll s$. 
With respect to this topology, an increasing sequence in 
$\Cu(A)$ actually \emph{converges} to its supremum. Regarding 
increasing sequences as the order-theoretic analogues of Cauchy
sequences, (O1) states that Cu-semigroups are complete. 
\end{rem}

\begin{df}[{\cite[Definition 2.3]{WinZac_completely_2009}}]
We say that a linear map $\varphi\colon A\to B$ between C$^*$-algebras $A$ and $B$ is a \emph{completely positive contractive map of order-zero}, in short a cpc$_\perp$ map, in case the natural extensions of $\varphi$ to matrices are all positive and contractive, and $\varphi$ preserves orthogonality, in the sense that $\varphi(a)\varphi(b)=0$ whenever $a,b\in A_+$ satisfy $ab=0$.
\end{df}

Just as homomorphisms are the natural models for $\Cu$-morphisms, cpc$_\perp$ maps are natural models for generalized $\Cu$-morphisms, as the result below shows (we omit its proof):

\begin{prop}[{see \cite[Corollary 4.5]{WinZac_completely_2009} and \cite[Proposition 2.2.7]{AntPerThi_tensor_2018}}]
Let $\varphi\colon A\to B$ be a cpc$_\perp$ map. Then $\varphi$ induces a generalized $\Cu$-morphism $\Cu(\varphi)\colon \Cu(A)\to \Cu(B)$, given by $\Cu(\varphi)([a])=[\varphi(a)]$ for 
all $a\in (A\otimes\K)_+$.
\end{prop}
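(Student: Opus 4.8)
The plan is to derive everything from the Winter--Zacharias structure theorem for order-zero maps. First I would stabilize: the amplification $\varphi\otimes\id_\K\colon A\otimes\K\to B\otimes\K$ is again cpc$_\perp$, so it admits a factorization $(\varphi\otimes\id_\K)(x)=h\pi(x)$, where $C:=C^*((\varphi\otimes\id_\K)(A\otimes\K))$, the element $h\in\mathcal{M}(C)_+$ is a positive contraction, and $\pi\colon A\otimes\K\to\mathcal{M}(C)$ is a $\ast$-homomorphism whose image commutes with $h$. Throughout I write $\varphi$ for $\varphi\otimes\id_\K$ and take $a,b\in(A\otimes\K)_+$. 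The additivity and normalization of $\Cu(\varphi)$ are then essentially formal: $\varphi(0)=0$ by linearity, and $\varphi(a\oplus b)=\varphi(a)\oplus\varphi(b)$ because the amplification acts entrywise on block-diagonal matrices, giving $[\varphi(a\oplus b)]=[\varphi(a)]+[\varphi(b)]$. Thus the real content is order preservation (which also yields well-definedness) together with preservation of suprema.

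The heart of the argument is the claim that $a\precsim b$ implies $\varphi(a)\precsim\varphi(b)$. I would choose $(r_n)$ in $A\otimes\K$ with $r_nbr_n^*\to a$, apply the $\ast$-homomorphism $\pi$, and then multiply both sides by $h^{1/2}$, which commutes with $\pi(r_n)$ and $\pi(b)$ since both lie in the image of $\pi$. This yields
\[
\pi(r_n)\varphi(b)\pi(r_n)^*=h^{1/2}\pi(r_n)\pi(b)\pi(r_n)^*h^{1/2}\longrightarrow h\pi(a)=\varphi(a).
\]
Setting $c_n=\pi(r_n)\varphi(b)^{1/2}$, which lies in $C$ because $C$ is an ideal in $\mathcal{M}(C)$, I get $c_nc_n^*=\pi(r_n)\varphi(b)\pi(r_n)^*\to\varphi(a)$, while $c_n^*c_n=\varphi(b)^{1/2}\pi(r_n)^*\pi(r_n)\varphi(b)^{1/2}\le\|\pi(r_n)\|^2\varphi(b)$ lies in the hereditary subalgebra of $C$ generated by $\varphi(b)$. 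By \autoref{prop:InHerSubalg} and part~(iv) of \autoref{cor:FuncCalc} this gives $c_nc_n^*\sim c_n^*c_n\precsim\varphi(b)$ in $C$. Finally, for $\ep>0$, choosing $n$ with $\|c_nc_n^*-\varphi(a)\|<\ep$ and applying \autoref{lma:CutDownDistance} yields $(\varphi(a)-\ep)_+\precsim c_nc_n^*\precsim\varphi(b)$; as $\ep$ is arbitrary, R{\o}rdam's lemma (\autoref{thm:Rordam}) gives $\varphi(a)\precsim\varphi(b)$.

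For the suprema, let $(a_n)$ be increasing in $\Cu(A)$ with supremum $[a]$. Order preservation shows $([\varphi(a_n)])$ is increasing and bounded above by $[\varphi(a)]=\Cu(\varphi)([a])$, and this supremum exists by \autoref{thm:suprema}. To see $[\varphi(a)]$ is the least upper bound, I would invoke \autoref{rem:ReplaceIncrSeq} to replace $(a_n)$ by a norm-convergent increasing sequence $(b_n)$ in $(A\otimes\K)_+$ with $b_n\precsim a_n$ and limit $b$ satisfying $[b]=[a]$. Since $\varphi$ is contractive, hence norm-continuous, $\varphi(b_n)\to\varphi(b)$, so for any upper bound $[c]$ and any $\ep>0$, \autoref{lma:CutDownDistance} gives $(\varphi(b)-\ep)_+\precsim\varphi(b_n)\precsim\varphi(a_n)\precsim c$ for large $n$, whence $[\varphi(a)]=[\varphi(b)]\le[c]$.

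The main obstacle is the order-preservation step, and specifically the need to witness $\varphi(a)\precsim\varphi(b)$ inside $B\otimes\K$ rather than merely in $\mathcal{M}(C)$: Cuntz subequivalence genuinely depends on the ambient algebra (already $\mathrm{diag}(1,0)\precsim\mathrm{diag}(0,1)$ holds in $M_2$ but not in $\C\oplus\C$), so one cannot be cavalier about where the implementing elements $\pi(r_n)$ live. The device that resolves this is the passage from $c_nc_n^*$ to $c_n^*c_n$ via part~(iv) of \autoref{cor:FuncCalc}: the latter is dominated by a multiple of $\varphi(b)$ and therefore lands in a hereditary subalgebra of $C$, which is exactly what confines the entire comparison to $C\subseteq B\otimes\K$.
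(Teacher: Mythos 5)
Your proof is correct, and it is essentially the argument of the sources the paper defers to (the paper omits the proof, citing \cite[Corollary 4.5]{WinZac_completely_2009} and \cite[Proposition 2.2.7]{AntPerThi_tensor_2018}): factor the stabilized order-zero map as $\varphi=h\pi(\cdot)$ via the Winter--Zacharias structure theorem, push the witnesses $r_n$ through $\pi$, and use $c_n=\pi(r_n)\varphi(b)^{\frac{1}{2}}$ together with $c_nc_n^*\sim c_n^*c_n$ to confine the comparison to $C=C^*\big(\varphi(A\otimes\K)\big)\subseteq B\otimes\K$ --- exactly the device needed, as you correctly point out, since the $\pi(r_n)$ themselves only live in $\mathcal{M}(C)$. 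The one step you assert without argument, that $\varphi\otimes\id_\K$ is again cpc order zero, is standard and follows from the same factorization applied to $\varphi$ before stabilizing.
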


Perhaps a natural question at this point is whether \emph{every}
Cu-semigroup is the Cuntz semigroup of a C$^*$-algebra. The answer
is unfortunately negative. The following, due to Bosa and Petzka, 
is the smallest example:

\begin{eg}(\cite[Example~5.3]{BosPet_comparison_2018}).
Set $S=\{0,1,\infty\}$ with the usual order and addition ($1+1=\infty$).
Then there is no C$^*$-algebra $A$ with $\Cu(A)=\{0,1,\infty\}$. This is
however surprisingly difficult to prove.
\end{eg}

In fact there are more axioms that $\Cu(A)$ always satisfies, 
and some of
these will be discussed in Section~\ref{sec:Axioms}. Describing precisely which 
Cu-semigroups arise as $\Cu(A)$ is extremely difficult and 
currently considered to be out of reach, although this is 
possible (and usually tedious) for some specific classes of 
C$^*$-algebras such as AF-algebras 
\cite{AntPerThi_tensor_2018}, AI-algebras 
\cite{Vil_local_2022}, certain commutative
C$^*$-algebras with 2-dimensional spectrum \cite{Rob_spaces_2013}, and simple $\mathcal{Z}$-stable
\ca s (see \autoref{thm:CuAtimesZ}). 

The functor $\Cu\colon \mathbf{C^*}\to \mathbf{Cu}$ has many 
nice properties and preserves a number of constructions, including
direct limits, short exact sequences, direct sums, direct
products, and ultraproducts. These claims have to be suitably
interpreted: there are categorical versions of the above notions,
which can be shown to always exist in $\CatCu$ (their existence
in \textbf{C$^*$} is known), and the functor $\Cu$ preserves these. 
The study of the category $\CatCu$ in itself (or some 
subcategory of it) is crucial in this setting, and allows one to
better understand the functor Cu;
this is explored in detail in \autoref{sec:Structure}.
Thus, and even if one is only
interested in $\Cu(A)$, studying abstract Cu-semigroups is often
necessary.

Knowing that the functor $\Cu$ preserves a number of constructions 
is unfortunately not very useful without understanding how 
to actually construct these objects in \textbf{Cu}. 
We will only focus on direct limits in this section:

\begin{thm}\label{thm:Culimits}
The category $\CatCu$ has direct limits, and the functor
$\Cu$ satisfies 
\[\Cu(\varinjlim A_n)\cong \varinjlim \Cu(A_n).\]
\end{thm}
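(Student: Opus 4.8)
The plan is to prove this by exhibiting a concrete model for the direct limit in $\CatCu$ and then verifying that $\Cu$ sends a C$^*$-algebraic inductive system to this model. Suppose $A=\varinjlim(A_n,\varphi_n)$ with connecting maps $\varphi_n\colon A_n\to A_{n+1}$ and canonical maps $\psi_n\colon A_n\to A$. Applying the functor $\Cu$ (which exists by \autoref{thm:CuAinCu}) yields an inductive system of Cu-semigroups $(\Cu(A_n),\Cu(\varphi_n))$ together with compatible Cu-morphisms $\Cu(\psi_n)\colon \Cu(A_n)\to\Cu(A)$. The goal is to show $\Cu(A)$ satisfies the universal property of the direct limit of this system in $\CatCu$. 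The first thing I would do is recall (or construct) the direct limit in $\CatCu$: given a system $(S_n,\sigma_n)$ of Cu-semigroups, one forms the algebraic colimit of the underlying ordered monoids, equips it with the order in which $s\leq t$ holds when suitable images can be compared after cutting down (using $\ll$), and then completes it so that axioms (O1)--(O4) hold. The cleanest route is to take the set of increasing sequences built from the $S_n$ modulo a tail-comparison equivalence; this is the standard ``$\tau$-completion'' type construction whose existence I would cite or sketch.

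Granting a description of $\varinjlim\Cu(A_n)$, the core of the argument is to build a natural Cu-isomorphism $\Phi\colon\varinjlim\Cu(A_n)\to\Cu(A)$. The map $\Phi$ is the one induced by the universal property from the family $\Cu(\psi_n)$, so it exists and is a Cu-morphism automatically; the real work is showing it is an order-embedding and surjective. For surjectivity, I would use that $\bigcup_n\psi_n(A_n)$ is dense in $A$, hence $\bigcup_n\psi_n(A_n\otimes\K)_+$ is dense in $(A\otimes\K)_+$: given $a\in(A\otimes\K)_+$ and $\ep>0$, pick $c$ in some $\psi_n(A_n\otimes\K)$ with $\|a-c^*c\|$ small, so by \autoref{lma:CutDownDistance} the cut-down $(a-\ep)_+$ is Cuntz-below an image of an element from $A_n$; combining this with \autoref{rem:CutDownsConverge}, which writes $[a]=\sup_\ep[(a-\ep)_+]$, realizes $[a]$ as a supremum of elements coming from the $\Cu(A_n)$, hence as an element in the image of $\Phi$. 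For injectivity/order-embedding, suppose $[a],[b]\in\Cu(A)$ come from classes in the limit with $\Phi$-images satisfying $[a]\leq[b]$ in $\Cu(A)$; using R{\o}rdam's lemma (\autoref{thm:Rordam}) together with the density of $\bigcup_n\psi_n(A_n)$, I would push the relation $(a-\ep)_+\precsim(b-\delta)_+$ back to a Cuntz comparison taking place inside some $A_n$, which is exactly what is needed to conclude the inequality already holds in the limit semigroup.

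The key technical tool throughout is a ``semiprojectivity-free'' approximation statement: Cuntz comparison in $A$ between elements approximately supported in the image of $A_n$ can be detected, up to an arbitrarily small cut-down, already in $A_n$. Concretely, if $a,b\in A_n\otimes\K$ (identified with their images) and $\psi_n(a)\precsim_A\psi_n(b)$, then for every $\ep>0$ there is $m\geq n$ with $(\varphi_{n,m}(a)-\ep)_+\precsim_{A_m}\varphi_{n,m}(b)$, where $\varphi_{n,m}$ is the composite connecting map. This is proved by taking an element $r\in A$ implementing $r\psi_n(b)r^*\approx a$ via R{\o}rdam's lemma, approximating $r$ by an element of some $\psi_m(A_m)$, and applying \autoref{lma:CutDownDistance} to absorb the approximation error into the cut-down.

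The main obstacle I expect is not the C$^*$-algebra side but the bookkeeping on the abstract side: one must pin down a usable model of $\varinjlim$ in $\CatCu$ and check that the induced map $\Phi$ genuinely preserves and reflects the compact containment relation $\ll$, not merely the order. Preserving $\ll$ is needed for $\Phi$ to be a Cu-\emph{morphism} and its inverse likewise; because $\ll$ is defined via suprema of increasing sequences, verifying it requires the interchange of the limit process with the approximation arguments above, and this is where \autoref{prop:CutDownWayBelow} and the characterization $[(a-\ep)_+]\ll[a]$ from \autoref{rem:CutDownWayBelow} do the heavy lifting. Once $\Phi$ is shown to be an order-isomorphism intertwining $\ll$, naturality in the system is routine, completing the identification $\Cu(\varinjlim A_n)\cong\varinjlim\Cu(A_n)$.
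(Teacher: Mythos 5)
Your proposal is correct, and for the half of the statement the paper actually proves it follows the same route: the paper's proof constructs $\varinjlim(S_n,\phi_n)$ exactly as you do, by taking the algebraic colimit $W$ in positively ordered monoids and then completing via $\ll$-increasing sequences in $W$ modulo tail comparison (the ``$\tau$-completion type construction'' you invoke). Where you genuinely diverge is on the second half, $\Cu(\varinjlim A_n)\cong\varinjlim\Cu(A_n)$: the paper never argues this at all --- it asserts ``one can show that $S=\varinjlim(S_n,\phi_n)$'' and defers the conceptual justification to the reflector $\gamma\colon\mathbf{W}\to\CatCu$ of \autoref{thm:curefl} and \autoref{thm:completion}, under which $\Cu\cong\gamma\circ\W$ and continuity becomes formal. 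You instead supply the direct C$^*$-algebraic argument (essentially the one in Coward--Elliott--Ivanescu): surjectivity of $\Phi$ from density of $\bigcup_n\psi_n(A_n\otimes\K)_+$ plus \autoref{lma:CutDownDistance} and \autoref{rem:CutDownsConverge}, and the order-embedding property from R{\o}rdam's lemma (\autoref{thm:Rordam}) together with your finite-stage detection lemma. The trade-off is the usual one: the paper's categorical route is shorter and reusable for other completions, while your hands-on argument makes visible exactly which approximation properties of Cuntz comparison drive the continuity of $\Cu$.

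One step in your key technical tool deserves to be made explicit. Approximating the implementing element $r$ by $\psi_m(s)$ and applying \autoref{lma:CutDownDistance} only yields a Cuntz comparison \emph{in $A$}, namely $(\psi_n(a)-\ep)_+\precsim \psi_m(s)\psi_n(b)\psi_m(s)^*$; since the canonical maps $\psi_m$ need not be injective, this does not yet live in any $A_m$. To land in a finite stage you must use the standard fact that $\|\psi_m(x)\|=\lim_{k\to\infty}\|\varphi_{m,k}(x)\|$: from $\|\psi_m\bigl(s\varphi_{n,m}(b)s^*-\varphi_{n,m}(a)\bigr)\|<\ep$ one extracts $k\geq m$ with $\|\varphi_{m,k}(s)\varphi_{n,k}(b)\varphi_{m,k}(s)^*-\varphi_{n,k}(a)\|<\ep$, and only then does \autoref{lma:CutDownDistance} give $(\varphi_{n,k}(a)-\ep)_+\precsim_{A_k}\varphi_{n,k}(b)$. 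With that insertion your argument is complete.
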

\begin{proof}
We only briefly describe how to show that $\CatCu$ has 
direct limits; a more conceptual approach is given in
\autoref{thm:completion} and the comments after it. Let 
\[(\phi_n\colon S_n\to S_{n+1})_{n\in\N}\] 
be an inductive sequence in $\CatCu$.
Denote by $W$ the direct limit of this sequence in the category of positively ordered monoids. (For example, take the direct limit as
semigroups, and define an order by declaring that two sequences
compare if they eventually compare.)
This will in general not be a Cu-semigroup, since increasing 
sequences may not necessarily have suprema. 
The correct object to consider is a
certain ``completion'' of $W$; 
see \autoref{rem:Scott} for a more formal 
interpretation of what completion
means, and also \ref{thm:completion} and the comments after it. Intuitively speaking, we want to add the suprema of all increasing sequences, similarly to how one adds the limits of all Cauchy 
sequences when completing a metric space. 
Since we also want (O2) to be satisfied, we will 
take sequences in $W$ which are $\ll$-increasing.

We define an order on the space of $\ll$-increasing sequences in 
$W$ by 
setting $(x_n)_{n\in\N}\precsim (y_n)_{n\in\N}$ if 
for every $n\in\N$ there is $m\in\N$ with 
$x_n\leq y_m$. Write $\sim$ for the symmetrization of this 
order: $(x_n)_{n\in\N}\sim (y_n)_{n\in\N}$ if 
$(x_n)_{n\in\N}\precsim (y_n)_{n\in\N}\precsim (x_n)_{n\in\N}$. 
The space $S$ of equivalence classes
of increasing sequences in $W$ is a Cu-semigroup, and one can show
that $S=\varinjlim (S_n,\phi_n)$.
\end{proof}

In retrospect, the construction of direct limits in \textbf{Cu}
is very similar to the construction of direct limits in
\textbf{C$^*$}. Indeed, one first considers the algebraic 
direct limit, and then suitably completes 
the resulting direct limit to get the desired object.
The completion procedure described in the proof of 
\autoref{thm:Culimits} is actually a functor from a
suitable category \textbf{W} to \textbf{Cu}, and 
this will be explored in detail in Section~12; see 
\autoref{thm:curefl} and \autoref{thm:completion}.

We will use the above to compute the Cuntz semigroup
of the CAR-algebra:

\begin{eg}\label{eg:CuCAR}
We denote by $M_{2^\infty}$ the UHF-algebra of type $2^\infty$,
which is the direct limit of $M_{2^n}$ with connecting maps 
of the form $a\mapsto \begin{psmallmatrix}a & 0\\0 & a\end{psmallmatrix}$. By \autoref{eg:CuC}, we have 
$\Cu(M_{2^n})\cong \overline{\mathbb{N}}$, and via the isomorphism
given in that example (the rank), the connecting maps are easily
seen to be multiplication by 2. The algebraic direct limit is
$W=\mathbb{N}\big[\frac{1}{2}\big]\cup \{\infty\}$, with the order
inherited from $[0,\infty]$. Note that $w\ll w$ for every 
$w\in W\setminus\{\infty\}$.
It is easy to see that $W$ is not a Cu-semigroup, since not every increasing sequence has a supremum.\footnote{Take, for example, any increasing
sequence of dyadic numbers converging to a non-dyadic number.} (Note, however, that the supremum exists in $[0,\infty]$.) 

Let $(a_n)_{n\in\N}$ be an $\ll$-increasing sequence in $W$. 
Regarding 
it as an increasing sequence in $[0,\I]$, it has a limit $x\in [0,1]$. 
We claim that the $\sim$-class of $(x_n)_{n\in\N}$ only depends
on $x\in [0,\I]$ and whether $(x_n)_{n\in\N}$ is constant 
or strictly increasing (both interpreted as eventual behaviors).
It is not difficult to see that any two strictly increasing sequences in $W$ 
whose limits in $[0,\I]$ agree are automatically equivalent, and 
similarly for constant sequences. Finally, one also checks that
a constant sequence
cannot be equivalent to a strictly increasing sequence. Since
equivalent sequences must have the same limit in $[0,\I]$, the 
claim follows.

Given a $\ll$-increasing sequence $(a_n)_{n\in\N}$
with limit $x\in [0,\I]$,
we abbreviate its $\sim$-equivalence class as 
\begin{align*}
[(a_n)_{n\in\N}]=\begin{cases}
                 c_x, & \mbox{ if } (a_n)_{n\in\N} \mbox{ is constant (in which case }x\in W\setminus\{\infty\}),\\
                 s_x, & \mbox{ if } (a_n)_{n\in\N} \mbox{ is strictly increasing (in which case }x\in (0,\I]).
                 \end{cases}
\end{align*}

It follows that 
\[\Cu(M_{2^\infty})=\{c_x\colon x\in W\setminus\{\infty\}\}\sqcup \{s_x\colon x\in (0,\I]\}\cong 
\N\big[\tfrac{1}{2}\big]\sqcup (0,\infty].\]
Addition and order work as one would expect on each component.
For mixed terms, we have:
\bi
\item $s_x\leq c_y$ if and only if $x\leq y$;
\item $c_x\leq s_y$ if and only if $x< y$; 
\item $c_x+s_y=s_{x+y}$.
\ei
In particular, $c_x\leq s_x$ but $s_x\nleq c_x$ (otherwise
they would be equal).
\end{eg}

\begin{rem}\label{rem:CuCAR}
The example above shows some phenomena that can be seen in the 
Cuntz semigroups of more general C$^*$-algebras:
\be[{\rm (i)}]\item 
The compact elements of $\Cu(M_{2^\infty})$ are $\mathbb{N}\big[\frac{1}{2}\big]\cong \V(M_{2^{\infty}})$; see \autoref{cor:VACuA} and the comments after it, and see 
also \autoref{prop:CpctSr1} below. The fact that $\V(A)$ can be identified with the compact elements in $\Cu(A)$ is a general fact about stably finite 
C$^*$-algebras, due to Brown-Ciuperca; see \cite{BroCiu_isomorphism_2009}. 
\item The component $(0,\I]$ of $\Cu(M_{2^\infty})$ 
corresponds to the values of positive 
elements, not equivalent to projections, on the unique trace.
\ee
\end{rem}

More generally, the Cuntz semigroup of a simple, stably finite,
$\mathcal{Z}$-stable \ca\ $A$ can be computed in terms of $\V(A)$
and $\mathrm{T}(A)$ (or rather, $\QT(A)$) in a similar fashion; see \autoref{thm:CuAtimesZ}. 

The first part of \autoref{rem:CuCAR} admits a somewhat more direct
proof for C$^*$-algebras of stable rank one, which is the 
case that we will need in these notes. Recall from \cite{BlaCun_structure_1982} that an element $z$ in a C$^*$-algebra $A$ is called a \emph{scaling element} provided $zz^*\neq z^*z$ and $(z^*z)(zz^*)=zz^*$. It was shown in \cite[Theorem 4.1]{BlaCun_structure_1982} and the arguments after it that if a C$^*$-algebra $A$ contains a scaling element, then $M_n(A)$ contains an infinite projection for some $n\geq 1$.

\begin{prop}\label{prop:CpctSr1}
Let $A$ be a \ca\ of stable rank one, and let $x\in
\Cu(A)$ be a compact element. Then there exists a
projection $p\in A\otimes\K$ such that $[p]=x$. 

In particular, the subsemigroup of $\Cu(A)$ consisting
of compact elements is order-isomorphic to $\V(A)$.
\end{prop}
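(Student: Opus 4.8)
The plan is to reduce to an explicit positive representative, produce a partial-isometry-like element via the stable rank one comparison of \autoref{prop:CtzCompsr1}, and then show that its spectrum must have a gap at $0$ --- the latter being forced by stable finiteness through the theory of scaling elements.

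First I would write $x=[a]$ with $a\in(A\otimes\K)_+$. Since $x$ is compact, \autoref{prop:WayBelowCutDown} gives $\ep>0$ with $a\sim(a-\ep)_+$. Because $A\otimes\K$ again has stable rank one (\autoref{prop:sr1}(iii)), I may apply \autoref{prop:CtzCompsr1} to the relation $a\precsim(a-\ep)_+$ to obtain $w\in A\otimes\K$ with $w^*w=a$ and $ww^*\in(A\otimes\K)_{(a-\ep)_+}$. Fixing a continuous $g\colon[0,\I)\to[0,1]$ that equals $1$ on $[\ep,\I)$ and $0$ on $[0,\ep/2]$, one checks that $g(a)$ acts as a unit on the hereditary subalgebra $(A\otimes\K)_{(a-\ep)_+}$; hence $g(a)ww^*=ww^*$, and a short computation with $(w-g(a)w)(w-g(a)w)^*$ shows $g(a)w=w$.

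The key step --- and the place where stable finiteness is essential --- is to prove that $0$ is isolated in $\spec(a)$. I would argue by contradiction: assuming $\spec(a)\cap(0,\ep/2)\neq\E$, choose the continuous function $m$ with $m(t)=t^{-1/2}$ for $t\geq\ep/2$ and $m(t)=(2/\ep)\sqrt{t}$ for $t\leq\ep/2$ (so that $m(0)=0$), and set $z:=w\,m(a)\in A\otimes\K$. Then $z^*z=a\,m(a)^2=\phi(a)$ with $\phi(t)=\min\{1,(2t/\ep)^2\}$, while $zz^*=w\,m(a)^2w^*$. Since $\phi\equiv 1$ on the support of $g$ and $g(a)w=w$, one gets $\phi(a)w=w$, whence $(z^*z)(zz^*)=zz^*$. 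On the other hand $\phi(a)$ is not a projection, as its spectrum meets $(0,1)$ because $\spec(a)$ meets $(0,\ep/2)$; consequently $z^*z\neq zz^*$ (otherwise $(z^*z)^2=z^*z$), so $z$ is a scaling element. By \cite{BlaCun_structure_1982}, some matrix amplification then contains an infinite projection, contradicting the stable finiteness of $A\otimes\K$ (\autoref{prop:sr1}(i)). Hence $0$ is isolated in $\spec(a)$, and $p:=\chi_{(0,\I)}(a)$ is a projection in $A\otimes\K$ with $p\sim a$ by \autoref{cor:FuncCalc}(ii); this proves the first assertion, with $[p]=x$.

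For the final statement, every class of a projection is itself compact --- since $(p-\ep)_+=(1-\ep)p\sim p$ for $0<\ep<1$, \autoref{prop:WayBelowCutDown} applies --- so the compact elements of $\Cu(A)$ are exactly the classes of projections in $A\otimes\K$. As $A$ is stably finite, \autoref{cor:VACuA} shows that $\iota\colon\V(A)\to\Cu(A)$ is an order-embedding, and its image is precisely this set of compact elements; hence the subsemigroup of compact elements is order-isomorphic to $\V(A)$. I expect the only real obstacle to be the spectral-gap step, and I believe the cleanest route is exactly this scaling-element dichotomy, rather than any attempt to realize the range projection of $w$ directly (which a priori lives only in the bidual).
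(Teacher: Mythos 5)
Your proof is correct. Every step checks: $g(a)$ is a two-sided unit for $(A\otimes\K)_{(a-\ep)_+}$ because $g(t)(t-\ep)_+=(t-\ep)_+$; the identity $\phi g=g$ yields $\phi(a)w=w$ and hence $(z^*z)(zz^*)=zz^*$; and spectral mapping shows $\phi(a)$ is not a projection whenever $\spec(a)$ meets $(0,\ep/2)$, so $z$ is then a genuine scaling element. (One cosmetic point: to invoke part~(ii) of \autoref{cor:FuncCalc} you should replace $\chi_{(0,\I)}$ by a continuous $f$ with $f(0)=0$, $f>0$ on $(0,\I)$ and $f\equiv 1$ on $[\ep/2,\I)$, which satisfies $f(a)=\chi_{(0,\I)}(a)$ once the spectral gap is established.)

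Your route shares the paper's skeleton --- stabilize via part~(iii) of \autoref{prop:sr1}, use compactness to fix $\ep$, invoke \autoref{prop:CtzCompsr1}, and exclude the bad case by the Blackadar--Cuntz scaling-element theorem together with stable finiteness --- but the middle of the argument is genuinely different. The paper applies \autoref{prop:CtzCompsr1} to the cut-downs $g_{\ep/2}(a)\precsim g_\ep(a)$, obtaining $z$ with $z^*z=g_{\ep/2}(a)$ and $zz^*\in A_{g_\ep(a)}$, so that $(z^*z)(zz^*)=zz^*$ holds automatically from the hereditary containment; it then splits into cases according to whether $zz^*=z^*z$. If not, $z$ is already a scaling element and one gets the contradiction; if so, $g_{\ep/2}(a)\in A_{g_\ep(a)}$, and an almost-idempotent estimate plus the standard perturbation lemma (\cite[Lemma 5.1.6]{Weg_ktheory_1993}) produces a projection $p$ near $g_{\ep/2}(a)$ with $a\sim p$. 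You instead apply \autoref{prop:CtzCompsr1} to $a\precsim(a-\ep)_+$ itself, getting $w$ with $w^*w=a$ exactly, and manufacture the scaling element $z=w\,m(a)$ out of any failure of a spectral gap; there is no case split on $z$, and no perturbation lemma. What your approach buys: the projection is the canonical one, $\chi_{(0,\I)}(a)\in C^*(a)$, commuting with $a$, and you prove en route the stronger structural fact that in a stable rank one algebra \emph{every} positive representative of a compact Cuntz class has $0$ isolated in its spectrum. What the paper's approach buys: the scaling-element identity comes for free from $z^*z$ acting as a unit on $A_{g_\ep(a)}$, with no bespoke functions $m$ and $\phi$, at the cost of only producing \emph{some} projection close to a cut-down rather than one inside $C^*(a)$. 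Your concluding appeal to stable finiteness of $A\otimes\K$ to rule out infinite projections is exactly the step the paper itself takes, so it rests on the same (standard) reduction of an infinite projection in $A\otimes\K$ to one in some $M_n(A)$.
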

\begin{proof}
By part~(iii) of \autoref{prop:sr1}, we may assume that $A$ is stable. Write $x=[a]$ for some $a\in A_+$. 
For each $\varepsilon>0$, define
\[
g_\varepsilon(t)=\begin{cases}
		0, & \text{if } t\leq \tfrac{\ep}{2} \\
        \text{linear}, &\text{if } t\in [\tfrac{\ep}{2},\varepsilon]\\
        1, &\text{if } t\geq\varepsilon.
       \end{cases}
\]
Note that $g_{\frac{\ep}{2}}g_\ep=g_\ep$ and 
that $ g_\varepsilon(a)\sim (a-\tfrac{\ep}{2})_+\leq a$ by \autoref{prop:CtzCompCX}. Since $x=\sup\limits_{\ep>0} [g_\varepsilon(a)]$ and $x$ is compact, there is $\varepsilon>0$ such that 
\[a\sim g_{\varepsilon}(a)\sim g_{\varepsilon'}(a)\] 
for all $\varepsilon'<\varepsilon$. 
Since $A$ has stable rank one, we can use \autoref{prop:CtzCompsr1} to find $z\in A$ such that $g_{\frac{\ep}{2}}(a)=z^*z$ and $zz^*\in A_{g_\varepsilon(a)}$. Thus $(z^*z)(zz^*)
=g_{\frac{\ep}{2}}(a)zz^*=zz^*$. 

Assume that $zz^*=z^*z$. Then $g_{\frac{\ep}{2}}(a)\in  A_{g_\varepsilon(a)}$. Let $\delta>0$ be such that $\frac{\varepsilon}{4}(1+\delta)<\frac{\varepsilon}{2}$. Then there is $w\in A$ such that $\Vert g_{\frac{\ep}{2}}(a)-g_\varepsilon(a)wg_\varepsilon(a)\Vert <\tfrac{\delta}{4}$. It follows that $\Vert g_{\frac{\ep}{2}}(a)^2-g_{\frac{\ep}{2}}(a)\Vert<\tfrac{\delta}{2}$. A standard application
of functional calculus allows us to find a projection $p\in A_{g_\varepsilon(a)}$ with $\Vert p-g_{\frac{\ep}{2}}(a)\Vert< \delta$; see, for example \cite[Lemma 5.1.6]{Weg_ktheory_1993}. 

By \autoref{lma:CutDownDistance}, we have $(g_{\frac{\ep}{2}}(a)-\delta)_+\precsim p$. By our choice of $\delta$, and since $(g_{\frac{\ep}{2}}(a)-\delta)_+\sim g_{\frac{\ep}{2(1+\delta)}}(a)$, we obtain that 
\[
g_{\frac{\ep}{2(1+\delta)}}(a)\precsim p\precsim g_\varepsilon(a)\precsim g_{\frac{\ep}{2(1+\delta)}}(a),
\]
and thus $a\sim p$.

Assume now that $zz^*\neq z^*z$, so that $z$ is a scaling element. As mentioned before this proposition above, this implies that $M_n(A)$ contains an infinite projection, in contradiction with the fact that $A$ has stable rank one.

The last statement follows from \autoref{lma:CtzCompPjns}.
\end{proof}

\section{Ideals and quotients}
The goal of this and the following sections is to show how some
very important information about $A$ is completely 
encoded in $\Cu(A)$. In this section, we will show how to recover
the ideal lattice of $A$ from its Cuntz semigroup;
as a byproduct, we will see that the Cuntz semigroup of every ideal and every quotient of $A$ can be read off of $\Cu(A)$. The 
upcoming section deals with (quasi)traces on $A$.

\begin{df}\label{df:IdealCu}
Let $S$ be a Cu-semigroup. An \emph{ideal} in $S$ is a 
submonoid $I\subseteq S$ which is closed under suprema of 
increasing sequences and is hereditary, in the sense
that $a\leq b$ and $b\in I$ imply $a\in I$. 
We denote by $\Lat(S)$ the lattice of ideals of $S$, which is
ordered by inclusion.
We say that $S$ is 
\emph{simple} if $\Lat(S)$ consists only of $\{0\}$ and $S$.
\end{df}

Next, we will show that ideals in $A$ naturally induce ideals
in $\Cu(A)$.
For Cu-morphisms which are 
order-embeddings, the ordered structure of the domain and 
the induced structure in the codomain agree.

\begin{lma}\label{lma:IdealAIdealCu}
Let $A$ be a \ca\ and let $J$ be an ideal in $A$. Denote by
$\iota\colon J\to A$ the canonical inclusion. Then 
\[\Cu(\iota)\colon \Cu(J) \to \Cu(A)\]
is an order-embedding, and its image $\{[x]\in \Cu(A)\colon x\in (J\otimes\K)_+\}$ is an ideal in $\Cu(A)$.
\end{lma}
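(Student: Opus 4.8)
The plan is to treat the two assertions in turn, first showing that $\Cu(\iota)$ is an order-embedding and then checking that its image, which I will denote $I_J:=\{[x]\in\Cu(A)\colon x\in(J\otimes\K)_+\}$, meets the three requirements in \autoref{df:IdealCu}. Throughout I will use that $J\otimes\K$ is a closed two-sided ideal of $A\otimes\K$, and that $\Cu(\iota)([a])=[a]$ for $a\in(J\otimes\K)_+$. The ``only if'' direction of the order-embedding, namely $a\precsim_J b\Rightarrow a\precsim_A b$, is immediate, since a sequence in $J\otimes\K$ implementing the subequivalence also lies in $A\otimes\K$; this already shows $\Cu(\iota)$ is well defined and order-preserving. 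The real content is the reverse implication, and the key device will be that, because $b\in J\otimes\K$, factors of $b$ can be absorbed into the ideal.

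For that reverse implication I would fix $a,b\in(J\otimes\K)_+$ with $a\precsim_A b$ and $\ep>0$, choose $r\in A\otimes\K$ with $\|rbr^*-a\|<\ep$, and set $s=rb^{\frac{1}{2}}$. Since $b^{\frac{1}{2}}\in J\otimes\K$ and $J\otimes\K$ is an ideal, $s\in J\otimes\K$; moreover $ss^*=rbr^*$, so $\|ss^*-a\|<\ep$. Applying \autoref{lma:CutDownDistance} inside $J\otimes\K$ gives $(a-\ep)_+\precsim_J ss^*$. Next, part~(iv) of \autoref{cor:FuncCalc} (in $J\otimes\K$) yields $ss^*\sim_J s^*s$, and since $s^*s=b^{\frac{1}{2}}r^*rb^{\frac{1}{2}}\le\|r\|^2 b$ with $s^*s\in(J\otimes\K)_+$, heredity places $s^*s$ in the hereditary subalgebra of $J\otimes\K$ generated by $b$, whence $s^*s\precsim_J b$ by \autoref{prop:InHerSubalg}. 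Chaining these, $(a-\ep)_+\precsim_J ss^*\sim_J s^*s\precsim_J b$, and as $\ep>0$ was arbitrary, \autoref{thm:Rordam} gives $a\precsim_J b$. (One could instead invoke part~(iv) of \autoref{thm:Rordam} to write $(a-\ep)_+=x^*x$ with $xx^*$ in a hereditary subalgebra of $(b-\delta)_+$, which is contained in $J\otimes\K$; I will reuse this exact trick below.)

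It then remains to verify that $I_J$ is an ideal. That $I_J$ is a submonoid containing $0$ is clear, since $0=[0]$ and $[x]+[y]=[x\oplus y]$ with $x\oplus y\in(J\otimes\K)_+$ whenever $x,y\in(J\otimes\K)_+$. For closure under suprema of increasing sequences, I would take $([a_n])_n$ increasing in $I_J$ with representatives $a_n\in(J\otimes\K)_+$; by the order-embedding just proved, $a_n\precsim_A a_{n+1}$ forces $a_n\precsim_J a_{n+1}$, so $([a_n]_J)_n$ is increasing in $\Cu(J)$ and has a supremum $t\in\Cu(J)$ by \autoref{thm:suprema}. Since $\Cu(\iota)$ is a $\Cu$-morphism by \autoref{thm:CuAinCu}, it preserves suprema of increasing sequences, so $\Cu(\iota)(t)=\sup_n[a_n]$; as $\Cu(\iota)(t)$ is represented by an element of $(J\otimes\K)_+$, the supremum lies in $I_J$. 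Finally, for heredity, suppose $[a]\le[b]$ in $\Cu(A)$ with $b\in(J\otimes\K)_+$. For each $\ep>0$, part~(iv) of \autoref{thm:Rordam} gives $\delta>0$ and $x\in A\otimes\K$ with $(a-\ep)_+=x^*x$ and $xx^*\in(A\otimes\K)_{(b-\delta)_+}\subseteq J\otimes\K$, the inclusion holding because $(b-\delta)_+\in J\otimes\K$ and $J\otimes\K$ is an ideal; hence $[(a-\ep)_+]=[xx^*]\in I_J$. By \autoref{rem:CutDownsConverge}, $[a]=\sup_n[(a-\tfrac{1}{n})_+]$, and the closure under suprema established above gives $[a]\in I_J$.

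I expect the main obstacle to be precisely the reverse implication of the order-embedding: transferring a Cuntz subequivalence witnessed in the large algebra $A\otimes\K$ to one witnessed inside the ideal $J\otimes\K$. Everything else is either formal or follows by feeding cut-downs through \autoref{thm:Rordam} and using repeatedly that hereditary subalgebras generated by elements of $J\otimes\K$ remain inside $J\otimes\K$.
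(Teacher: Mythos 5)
Your proof is correct, and all three verifications (well-definedness, order-embedding, ideal property) go through; the route, however, differs from the paper's in both halves. For the order-embedding, the paper stays entirely at the level of $\ep$-perturbations: from $ryr^*\approx_{\ep/2}x$ it chooses $e\in J_+$ with $eye\approx y$ (for instance $e=y^{\frac{1}{n}}$, using that $(y^{\frac{1}{n}})_n$ is an approximate identity for $A_y\subseteq J$), so that $re\in J$ implements the same subequivalence inside $J$ directly, with no appeal to R{\o}rdam's lemma. You instead absorb $b^{\frac{1}{2}}$ into the implementing element, setting $s=rb^{\frac{1}{2}}\in J\otimes\K$, and run the chain $(a-\ep)_+\precsim ss^*\sim s^*s\precsim b$ inside $J\otimes\K$ before closing with \autoref{thm:Rordam}; this trades the approximate-unit choice for the cut-down machinery, and both are equally valid. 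For heredity, the paper's argument is shorter and yields a strictly stronger conclusion: if $a\precsim b$ in $A\otimes\K$ with $b\in(J\otimes\K)_+$, then $a=\lim_n r_nbr_n^*$ lies in the closed ideal $J\otimes\K$ \emph{itself}, so heredity holds at the level of elements rather than of classes, and closure under suprema becomes essentially automatic (which is why the paper can say it ``suffices to show that it is hereditary''). Your version only produces a representative of $[(a-\ep)_+]$ in $(J\otimes\K)_+$ via part (iv) of R{\o}rdam's lemma, and must then invoke closure under suprema --- which you correctly establish \emph{first}, via the functoriality of $\Cu$ from \autoref{thm:CuAinCu} (legitimate here, since that theorem precedes the lemma and does not depend on it) --- so the logical ordering of your three steps matters, and you have it right. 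What the paper's route buys is economy: the single norm-limit observation $r_nbr_n^*\in J\otimes\K$ subsumes both your sup-closure and heredity steps; what your route buys is that it uses only the general toolkit (\autoref{lma:CutDownDistance}, \autoref{prop:InHerSubalg}, \autoref{thm:Rordam}) without needing to produce an approximate unit inside the ideal.
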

\begin{proof} Without loss of generality, assume that $A$ and $J$ are stable. Let $x,y\in J$ satisfy $x\precsim y$ in $A$. 
Given $\ep>0$ choose $r\in A$ such that 
$ryr^*\approx_{\frac{\ep}{2}} x$.
Find $e\in J_+$ such that $eye\approx_{\frac{\ep}{2\|r\|^2}}y$. Then $re\in J$ and 
\[(re)y(re)^*\approx_{\frac{\ep}{2}}ryr^*\approx_{\frac{\ep}{2}}x,\]
showing that $x\precsim y$ in $J$.
It remains to show that the image of $\Cu(\iota)$ is an ideal, 
for which it suffices to show that it is hereditary. 
Let $a\in A_+$ and $b\in J_+$ satisfy
$a\precsim b$ in $A$, and choose a sequence $(r_n)_{n\in\N}$
in $A$ satisfying $a=\lim\limits_{n\to\I} r_nbr_n^*$. Since $J$
is an ideal, it follows that $r_nbr_n^*\in J$ and thus $a\in J$.
\end{proof}

In view of the lemma above, whenever $J$ is an ideal in $A$, 
we will identify $\Cu(J)$ with an ideal in $\Cu(A)$.
We isolate the following observations for future use:

\begin{rem}\label{rem:x+y}
Let $A$ be a C$^*$-algebra and let $x,y\in A$. Using
that $(x-y)^*(x-y)\geq 0$, one gets
\[(x+y)^*(x+y)\leq 2x^*x+2y^*y.\]
Thus $x^*y+y^*x\leq x^*x+y^*y$ and also 
$[(x+y)^*(x+y)]\leq [x^*x]+[y^*y]$ in $\Cu(A)$.
\end{rem}

\begin{rem}\label{rem:ideala*a}
Let $A$ be a \ca\ and let $J$ be an ideal in $A$.
For $a\in A$, one can use the polar decomposition to 
show that $a\in J$ if and only if $a^*a\in J$.
\end{rem}

The following is the main result of this section; see \cite{CiuRobSan_ideals_2010}.

\begin{thm}\label{thm:CuIdeals}
Let $A$ be a \ca. Let $\Phi\colon \Lat(A)\to \Lat(\Cu(A))$ be 
given by $\Phi(J)=\Cu(J)$. Then $\Phi$ is an order-isomorphism.
Its inverse $\Psi\colon \Lat(\Cu(A))\to \Lat(A)$ 
is given by
\[\Psi(I)=\{x\in A\colon [x^*x]\in I\}.\]
\end{thm}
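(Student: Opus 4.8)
The plan is to verify that $\Phi$ and $\Psi$ are well-defined, order-preserving, and mutually inverse; since both are manifestly order-preserving (larger ideals of $A$ yield larger images, and $I_1\subseteq I_2$ gives $\Psi(I_1)\subseteq\Psi(I_2)$ at once), an order-isomorphism follows. That $\Phi$ lands in $\Lat(\Cu(A))$ is \autoref{lma:IdealAIdealCu}, so the first genuine task is to check that $\Psi(I)=\{x\in A\colon [x^*x]\in I\}$ is a closed, two-sided ideal of $A$. For closure under sums I would invoke \autoref{rem:x+y}, which gives $[(x+y)^*(x+y)]\leq [x^*x]+[y^*y]$, combined with the fact that $I$ is a hereditary submonoid; scalar multiples (including $-x$) are handled by $[(\lambda x)^*(\lambda x)]=[x^*x]$ via part (iii) of \autoref{cor:FuncCalc}. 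For the two-sided property, given $a\in A$ I would use $(ax)^*(ax)\leq \|a\|^2 x^*x$ and $(xa)(xa)^*\leq \|a\|^2 xx^*\sim x^*x$, which by \autoref{prop:InHerSubalg} and parts (iii),(iv) of \autoref{cor:FuncCalc} give $[(ax)^*(ax)],[(xa)^*(xa)]\leq [x^*x]\in I$; hence both lie in $I$ by hereditarity. Norm-closedness follows because if $x_n\to x$ with $x_n\in\Psi(I)$, then $\|x^*x-x_n^*x_n\|<\ep$ eventually, so $(x^*x-\ep)_+\precsim x_n^*x_n$ by \autoref{lma:CutDownDistance}, placing $[(x^*x-\ep)_+]$ in $I$; letting $\ep\to 0$ and using $[x^*x]=\sup_{\ep>0}[(x^*x-\ep)_+]$ (\autoref{rem:CutDownsConverge}) together with closure under suprema gives $[x^*x]\in I$.

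For $\Psi\circ\Phi=\id$, I would show $\Psi(\Cu(J))=J$ for an ideal $J\in\Lat(A)$. If $x\in J$ then $x^*x\in J_+$, so $[x^*x]\in\Cu(J)$ and $x\in\Psi(\Cu(J))$. Conversely, if $[x^*x]\in\Cu(J)$ then $x^*x\sim y$ for some $y\in(J\otimes\K)_+$, so $x^*x\precsim y$; choosing $r_n$ with $r_nyr_n^*\to x^*x$ and using that $J\otimes\K$ is a closed ideal of $A\otimes\K$ shows $x^*x\in J\otimes\K$, whence $x^*x\in(J\otimes\K)\cap A=J$ and finally $x\in J$ by \autoref{rem:ideala*a}. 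This is exactly the hereditarity mechanism already exploited in \autoref{lma:IdealAIdealCu}.

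The harder direction is $\Phi\circ\Psi=\id$, that is $\Cu(\Psi(I))=I$, and the main obstacle is that $\Psi(I)$ is defined using elements of $A$ whereas $\Cu(\Psi(I))$ is built from $(\Psi(I)\otimes\K)_+$. I would bridge this gap with a stabilized copy of $\Psi$: set $J:=\Psi(I)$ and $\widetilde{\Psi}(I):=\{x\in A\otimes\K\colon [x^*x]\in I\}$, which is a closed, two-sided ideal of $A\otimes\K$ by the very same argument as in the first paragraph (carried out in $A\otimes\K$, using the canonical isomorphism $\Cu(A\otimes\K)\cong\Cu(A)$). Since the standard bijection between ideals of $A$ and ideals of $A\otimes\K$ recovers each ideal from its corner, and $\widetilde{\Psi}(I)\cap A=\Psi(I)=J$, I obtain $\widetilde{\Psi}(I)=J\otimes\K$.

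To finish, note that for $a\in(A\otimes\K)_+$ one has $a^*a=a^2\sim a$ by part (iii) of \autoref{cor:FuncCalc}, so $a\in\widetilde{\Psi}(I)$ if and only if $[a]\in I$. Consequently $(J\otimes\K)_+=(\widetilde{\Psi}(I))_+=\{a\in(A\otimes\K)_+\colon [a]\in I\}$, and taking Cuntz classes yields
\[
\Cu(J)=\{[a]\colon a\in(J\otimes\K)_+\}=\{[a]\colon [a]\in I\}=I,
\]
where the last equality uses that every element of $I\subseteq\Cu(A)$ is of the form $[a]$ for some $a\in(A\otimes\K)_+$. This shows $\Phi$ and $\Psi$ are mutually inverse, completing the proof. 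The only delicate point is the stabilization step, where one must keep straight that $\Psi$ is defined on $A$ but its range statement concerns $A\otimes\K$; once $\widetilde{\Psi}(I)=\Psi(I)\otimes\K$ is established, everything else is formal.
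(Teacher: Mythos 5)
Your proof is correct and follows essentially the same route as the paper's: the same verification that $\Psi(I)$ is a closed, two-sided ideal (via \autoref{rem:x+y}, the estimates $(ax)^*(ax)\leq\|a\|^2x^*x$, and the cut-down argument with \autoref{lma:CutDownDistance} and \autoref{rem:CutDownsConverge} for norm-closedness), followed by the same mutual-inverse computations resting on \autoref{lma:IdealAIdealCu} and \autoref{rem:ideala*a}. The one point where you go beyond the paper is making explicit, via $\widetilde{\Psi}(I)=\Psi(I)\otimes\K$, the stabilization step that the paper's one-line verification of $\Phi(\Psi(I))=I$ (which only tests elements $x\in A_+$, whereas $\Cu(\Psi(I))$ is built from $(\Psi(I)\otimes\K)_+$) passes over silently --- a worthwhile clarification rather than a different argument.
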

\begin{proof}
We begin by showing that $\Psi$ is well-defined, namely that
$\Psi(I)$ is an ideal in $A$. Given $x,y\in \Psi(I)$, by
\autoref{rem:x+y} we have
\[[(x+y)^*(x+y)]\leq [x^*x]+[y^*y]\in I.\]
Since $I$ is hereditary, it follows that $[(x+y)^*(x+y)]\in I$
and thus $x+y\in \Psi(I)$.
That $\Psi(I)$ is closed under scalar multiplication is clear. To show 
that it is a left and right ideal, let $x\in \Psi(I)$ and let $a\in A$.
Since
\[(ax)^*(ax)=x^*a^*ax\leq \|a\|^2 x^*x,\]
it follows from \autoref{prop:InHerSubalg} that $(ax)^*(ax)\precsim x^*x$.
Also, $(xa)^*(xa)=a^*x^*xa\precsim x^*x$. Since $I$ is hereditary,
it follows as before that $ax, xa\in \Psi(I)$.

It remains to show that $\Psi(I)$ is closed. Let $(x_n)_{n\in\N}$ be a
sequence in $\Psi(I)$ converging to $a\in A$. Then $(x_n^*x_n)_{n\in\N}$ 
converges to $a^*a$. Given $\ep>0$, find $n\in\N$ such that
$\|x_n^*x_n-a^*a\|<\ep$. By \autoref{prop:CutDownWayBelow},
we have $[(a^*a-\ep)_+]\leq [x_n^*x_n]\in I$. Since $I$ is
hereditary, we deduce that $[(a^*a-\ep)_+]\in I$.
Since $[a^*a]=\sup\limits_{m>0}[(a^*a-\frac{1}{m})_+]$ and $I$
is closed under suprema, we conclude that $[a^*a]\in I$ and
hence $a\in \Psi(I)$. 

It is also clear that both $\Phi$ and $\Psi$ are order-preserving.
To show that they are mutual inverses, let $I\in \Lat(\Cu(A))$.
Given $x\in A_+$, we have $[x]\in \Phi(\Psi(I))$ if and only if
$x\in \Psi(I)$, if and only if $[x^*x]=[x]\in I$. Thus 
$I=\Phi(\Psi(I))$. For the converse, given $J\in \Lat(A)$ 
we want to show that $J=\Psi(\Phi(J))$. 
For $a\in A$, we have $a\in \Psi(\Phi(J))$ if and only if
$[a^*a]\in \Phi(J)$, if and only if $[a^*a]\in \Cu(J)$.
Using the definition of Cuntz equivalence, it is clear that the 
above is equivalent to $a^*a\in J$, which is equivalent to
$a\in J$ by \autoref{rem:ideala*a}.
Thus $J=\Psi(\Phi(J))$ and the proof is complete.
\end{proof}

\begin{nota}\label{nota:infa}
Let $S$ be a Cu-semigroup. Given $a\in S$, write $\infty_a$ for the 
supremum of $(na)_{n\in\N}$.\end{nota}

It is easy to see that 
the ideal generated by $a$ is precisely 
$\{x\in S\colon x\leq \infty_a\}$.

\begin{cor}\label{cor:SimpleCuSmgp} 
Let $S$ be a Cu-semigroup. Then $S$ is simple if and only if 
$\infty_a=\infty_b$ for all $a,b\in S\setminus\{0\}$. 
Equivalently, for all nonzero $a,b\in S$, we have 
$a\leq \infty_b$.
\end{cor}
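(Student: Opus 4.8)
The plan is to route everything through the ideal $I_a := \{x \in S : x \leq \infty_a\}$ generated by a nonzero element $a$, using the description recorded just before the statement (see \autoref{nota:infa}). First I would recall that $S$ is simple precisely when $I_a = S$ for every nonzero $a \in S$: indeed, $I_a$ is the smallest ideal containing $a$, so if $S$ is simple then the nonzero ideal $I_a$ must equal $S$; conversely, any nonzero ideal $I$ contains some nonzero $a$ and hence contains $I_a = S$, forcing $I = S$. Unwinding the definition of $I_a$, the condition ``$I_a = S$ for all nonzero $a$'' says exactly that $b \leq \infty_a$ for all $b \in S$ and all nonzero $a$; since $0 \leq \infty_a$ is automatic, this matches the third condition in the statement after relabeling $a$ and $b$.

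It then remains to prove the equivalence of the two displayed conditions, namely $\infty_a = \infty_b$ for all nonzero $a,b$, and $a \leq \infty_b$ for all nonzero $a,b$. For one direction I would observe that $a \leq \infty_a$ always holds, since $a$ is a term of the increasing sequence $(na)_{n\in\N}$ whose supremum is $\infty_a$; hence if $\infty_a = \infty_b$ for all nonzero $a,b$, then $a \leq \infty_a = \infty_b$, giving the third condition at once. For the reverse direction, assuming $a \leq \infty_b$ for all nonzero $a,b$, I would first note the absorption identity $\infty_b + \infty_b = \infty_b$, which follows from axiom (O4) applied to the sequence $(nb)_{n\in\N}$ (so that $n\infty_b = \infty_b$ for every $n$). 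Then from $a \leq \infty_b$ I get $na \leq n\infty_b = \infty_b$, whence $\infty_a = \sup_n (na) \leq \infty_b$; swapping the roles of $a$ and $b$ yields $\infty_b \leq \infty_a$, and therefore $\infty_a = \infty_b$.

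The argument is essentially bookkeeping once the ideal-generation formula is in hand; the only genuinely algebraic input is the absorption identity $n\infty_b = \infty_b$, which is where (O4) enters. I do not anticipate a serious obstacle, but the step deserving the most care is making sure the quantifiers line up, in particular that ``$I_a = S$ for every nonzero $a$'' really matches the stated condition $a \leq \infty_b$ after relabeling, and that the trivial case $b = 0$ (where $0 \leq \infty_a$ holds for free) requires no separate treatment.
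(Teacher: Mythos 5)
Your proposal is correct and is precisely the argument the paper intends: the corollary is stated without proof as an immediate consequence of the preceding observation that the ideal generated by $a$ is $\{x\in S\colon x\leq\infty_a\}$, and you have simply supplied the routine details. In particular, your derivation of the absorption identity $n\infty_b=\infty_b$ from (O4), and the reduction of simplicity to $I_a=S$ for all nonzero $a$, are exactly the right bookkeeping, with the quantifiers and the trivial case $0\leq\infty_b$ handled correctly.
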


In view of the above corollary, there is a unique infinity
in every simple Cu-semigroup, which we will denote simply by $\infty$.

In the remainder of this section, we explain how to detect the 
Cuntz semigroups of all quotients of a given C$^*$-algebra in its 
own Cuntz semigroup.

\begin{df} 
Let $S$ be a Cu-semigroup and let $I$ be an ideal in it. 
Given $a,b\in S$, we set $a\leq_I b$ if there exists $c\in I$
such that $a\leq b+c$. We set $a\sim_I b$ if $a\leq_I b$ and 
also $b\leq_I a$.
\end{df}

It is easy to see that $\sim_I$ is an equivalence relation;
we write $S/I$ for the associated quotient. 
For $a\in S$, we write $a_I$ for its equivalence class.
We define an addition on $S/I$ by setting $a_I+b_I=(a+b)_I$, and 
an order by declaring $a_I\leq b_I$ if $a\leq_I b$. It can be 
shown that this gives $S/I$ the structure of a Cu-semigroup. The following is proved in \cite{CiuRobSan_ideals_2010}.

\begin{thm}\label{thm:quotients}
Let $A$ be a \ca, let $J$ be an ideal in it, and let 
$\pi\colon A\to A/J$ denote the quotient map. Given $a,b\in\Cu(A)$, we have 
\[\Cu(\pi)(a)\leq \Cu(\pi)(b) \mbox{ in } \Cu(A/J) \ \ \mbox{ if and only if } \ \ a\leq_{\Cu(J)} b \mbox{ in } \Cu(A).\]
In particular, $\Cu(\pi)\colon \Cu(A)\to \Cu(A/J)$ induces an order-isomorphism
\[\Cu(A)/\Cu(J)\cong \Cu(A/J).\]
\end{thm}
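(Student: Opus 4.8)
The plan is to prove the displayed ``if and only if'' for the order first, and then deduce the isomorphism formally. Throughout I would work inside $A\otimes\K$ with the ideal $J\otimes\K$, writing $\pi$ also for $\pi\otimes\id_\K\colon A\otimes\K\to (A/J)\otimes\K$ and using that continuous functional calculus (vanishing at $0$) commutes with $\pi$, so $\pi((a-\ep)_+)=(\pi(a)-\ep)_+$. Fix $a=[x]$ and $b=[y]$ with $x,y\in(A\otimes\K)_+$. The easy direction is almost immediate: if $a\leq_{\Cu(J)}b$, say $[x]\leq[y]+[z]$ with $z\in(J\otimes\K)_+$, then $x\precsim y\oplus z$, and applying the order-preserving Cu-morphism $\Cu(\pi)$ together with $\pi(z)=0$ gives $[\pi(x)]\leq[\pi(y)\oplus 0]=[\pi(y)]$, i.e. $\Cu(\pi)(a)\leq\Cu(\pi)(b)$.

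The hard direction is where the real content lies, and the main obstacle is \emph{uniformity}: I must produce a \emph{single} $c\in\Cu(J)$, not one depending on a tolerance. I would first establish a per-$\ep$ statement. Given $\ep>0$, apply part~(v) of R{\o}rdam's lemma (\autoref{thm:Rordam}) to $\pi(x)\precsim\pi(y)$ to get $\delta>0$ and $\bar r$ with $(\pi(x)-\ep)_+=\bar r(\pi(y)-\delta)_+\bar r^*$. Lifting $\bar r$ to $r\in A\otimes\K$ and using that functional calculus commutes with $\pi$, the element
\[
j_0:=(x-\ep)_+-r(y-\delta)_+r^*
\]
lies in $\ker\pi=J\otimes\K$. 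Hence $(x-\ep)_+\leq r(y-\delta)_+r^*+|j_0|$; since $r(y-\delta)_+r^*\precsim (y-\delta)_+\precsim y$ (via \autoref{cor:FuncCalc} and \autoref{prop:InHerSubalg}), combining this inequality with \autoref{prop:InHerSubalg}, \autoref{lma:orth}, and monotonicity of $\oplus$ yields $(x-\ep)_+\precsim y\oplus|j_0|$, so $z_\ep:=|j_0|\in(J\otimes\K)_+$ works. This lifting step, which turns an approximate inequality in the quotient into an equality modulo $J$, is the crux.

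Next I would assemble the witnesses. Taking $\ep=\tfrac1n$ gives $z_n\in(J\otimes\K)_+$ with $(x-\tfrac1n)_+\precsim y\oplus z_n$. As $A\otimes\K$ is stable, I can place Cuntz-equivalent copies of the $z_n$ in mutually orthogonal corners of $J\otimes\K$ and form a norm-convergent sum $z\in(J\otimes\K)_+$ with $z_n\precsim z$ for all $n$; then $(x-\tfrac1n)_+\precsim y\oplus z$ for every $n$. Passing to the supremum and using $[x]=\sup_n[(x-\tfrac1n)_+]$ (\autoref{rem:CutDownsConverge}), I conclude $[x]\leq[y]+[z]$ with $[z]\in\Cu(J)$ (\autoref{lma:IdealAIdealCu}), that is, $a\leq_{\Cu(J)}b$. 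I expect this assembly-plus-supremum passage, rather than the formal manipulations, to be the genuinely non-routine part.

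Finally, the ``in particular'' is formal. The equivalence just proved shows that $\Cu(\pi)$ respects $\leq_{\Cu(J)}$ in both directions, so it descends to a well-defined, additive order-embedding $\Cu(A)/\Cu(J)\to\Cu(A/J)$. Surjectivity holds because every positive element of $(A/J)\otimes\K$ lifts to a positive element (take the positive part of a self-adjoint lift) and $\Cu(\pi)([\,\cdot\,])=[\pi(\cdot)]$. Since a surjective order-embedding of ordered monoids is automatically an isomorphism, this gives $\Cu(A)/\Cu(J)\cong\Cu(A/J)$.
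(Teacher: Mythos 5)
Your proof is correct, and it actually does more than the paper: the paper proves only the easy implication (from $a\leq_{\Cu(J)}b$ to $\Cu(\pi)(a)\leq \Cu(\pi)(b)$), exactly as you do, and then explicitly omits the converse as ``somewhat more involved,'' deferring to \cite{CiuRobSan_ideals_2010}. Your hard direction supplies a sound, self-contained version of that omitted argument. Its crux is just where you say it is: part~(v) of R{\o}rdam's lemma gives an \emph{exact} equality $(\pi(x)-\ep)_+=\bar r(\pi(y)-\delta)_+\bar r^*$ in the quotient, so after lifting $\bar r$ the defect $j_0=(x-\ep)_+-r(y-\delta)_+r^*$ is a self-adjoint element of $\ker(\pi\otimes\id_\K)=J\otimes\K$, and $(x-\ep)_+\leq r(y-\delta)_+r^*+|j_0|$ produces the per-$\ep$ witness $z_\ep=|j_0|$; note that $r(y-\delta)_+r^*\precsim (y-\delta)_+$ is immediate from \autoref{df:CuSubeq}, so you do not even need the detour through \autoref{cor:FuncCalc}. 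The uniformization step is also right: placing Cuntz-equivalent copies of the $z_n$ in mutually orthogonal corners of the stable ideal, rescaled to be norm-summable (scaling preserves the class by part~(iii) of \autoref{cor:FuncCalc}), gives $z$ with $c_nz_n'\leq z$ for each orthogonal summand, hence $z_n\precsim z$ by \autoref{prop:InHerSubalg}; then \autoref{rem:CutDownsConverge} closes the argument. This is the same summation device the paper itself uses in Case~2 of \autoref{thm:suprema}, so it fits the toolkit available at this point in the text. Two small points deserve an explicit word: the identification $(A\otimes\K)/(J\otimes\K)\cong (A/J)\otimes\K$ (equivalently $\ker(\pi\otimes\id_\K)=J\otimes\K$), which holds since $\K$ is nuclear; and, in the ``in particular'' step, that positive elements of $(A/J)\otimes\K$ lift to positive elements via a self-adjoint lift followed by the positive part, using that functional calculus commutes with $\pi$ -- both are standard and both are used correctly in your write-up.
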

\begin{proof} 
Let $a,b\in \Cu(A)$ satisfy $a\leq_{\Cu(J)} b$, and choose
$c\in \Cu(J)$ with $a\leq b+c$. Since $\Cu(\pi)(c)=0$, we get 
\[\Cu(\pi)(a)\leq\Cu(\pi)(b+c)=\Cu(\pi)(b),\]
as desired. The converse is somewhat more involved, and we omit it.
\end{proof}

The results of this section should be compared to similar 
statements in K-theory: while $\Cu(A)$ encodes the lattice
of ideals of $A$, as well as the Cuntz semigroups of all ideals, 
in general K-theory does not contain any of this information.
(There is one notable exception: the ordered $\KK_0$-group of an
AF-algebra encodes the ideal structure and the $\KK_0$-groups of 
all ideals and quotients. This helps explain why the classification
of AF-algebras works also in the non-simple case, without a larger
invariant.)

\section{Functionals, quasitraces, and Cuntz's theorem}

In this section, we will show that there is a natural bijective
correspondence between (quasi)traces on $A$ and functionals on $\Cu(A)$; see \autoref{thm:QTfunctionals}. We will 
then present Cuntz's theorem, stating that a simple, stably
finite C$^*$-algebra always admits a quasitrace; see \autoref{thm:Cuntz}.

The motivating observation for this section comes from the 
Riesz theorem:

\begin{thm}\label{thm:Riesz}(Riesz).
Let $X$ be a compact Hausdorff space. Then there is a natural 
bijection between the set of all Radon Borel probability measures on $X$ and all tracial states on $C(X)$.
Given a Radon probability measure $\mu$ on $X$, the 
corresponding tracial state $\tau_\mu\colon C(X)\to \C$ is given by
\[\tau_\mu(f)=\int_X f(x)\ d\mu(x) = \int_0^\I \mu(\{x\in X\colon f(x)> t\}) \ dt\]
for all $f\in C(X)$.
\end{thm}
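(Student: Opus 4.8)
The plan is to deduce everything from the classical Riesz--Markov--Kakutani representation theorem, which identifies positive linear functionals on $C(X)$ with regular Borel (Radon) measures on $X$; the task here is really to package that result correctly and to verify the stated integral formula.

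First I would check that the assignment $\mu\mapsto\tau_\mu$ lands in the set of tracial states. Positivity of $\tau_\mu$ is immediate, since $\int_X f\,d\mu\geq 0$ whenever $f\geq 0$; the normalization $\tau_\mu(1)=\mu(X)=1$ holds because $\mu$ is a probability measure; and the trace identity $\tau_\mu(fg)=\tau_\mu(gf)$ is automatic, since $C(X)$ is commutative and hence $fg=gf$. Linearity in $f$ and affineness in $\mu$ both follow at once from linearity of the integral, so $\mu\mapsto\tau_\mu$ is a well-defined affine map into the tracial state space.

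Next I would establish the bijection. Every tracial state $\tau$ on $C(X)$ is in particular a positive linear functional with $\tau(1)=1$, so Riesz--Markov produces a unique regular Borel measure $\mu$ with $\tau(f)=\int_X f\,d\mu$ for all $f$, and evaluating at $f=1$ gives $\mu(X)=\tau(1)=1$, so that $\mu$ is a probability measure. Uniqueness of the representing measure yields injectivity of $\mu\mapsto\tau_\mu$, while its existence yields surjectivity; together with the affineness noted above, this is exactly the desired natural affine bijection.

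Finally, I would verify the layer-cake identity (understood for $f\geq 0$, which is the case relevant to the applications to positive elements). Writing $f(x)=\int_0^{f(x)}\,dt$ and applying Tonelli's theorem to the product of $\mu$ with Lebesgue measure on $[0,\infty)$ gives
\[\int_X f\,d\mu=\int_X\int_0^{\infty}\mathbf{1}_{\{t<f(x)\}}\,dt\,d\mu(x)=\int_0^{\infty}\mu(\{x\in X\colon f(x)>t\})\,dt,\]
which is the stated formula. The only genuine input is the Riesz--Markov theorem itself, and I expect the existence and uniqueness of the representing Radon measure to be the sole real obstacle; everything else (traciality for free by commutativity, the normalization bookkeeping, and the Tonelli computation) is routine.
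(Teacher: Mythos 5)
Your proposal is correct, and it is worth noting that the paper itself offers no proof of this statement at all: it is quoted as a classical result, and the surrounding text immediately moves on to rewriting $\tau_\mu$ in terms of Cuntz classes. So there is nothing in the paper to deviate from; what you have done is supply the standard proof, and you have supplied it correctly. The three ingredients you isolate are exactly right: (1) traciality is vacuous since $C(X)$ is commutative, so ``tracial state'' just means ``state,'' (2) the bijection is precisely the Riesz--Markov--Kakutani theorem (existence gives surjectivity, uniqueness of the \emph{regular} representing measure gives injectivity --- regularity is where the Radon hypothesis earns its keep, and you correctly invoke it), and (3) the layer-cake identity is Tonelli applied to $\mu\times\mathrm{Leb}$ on $X\times[0,\infty)$. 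Your one editorial judgement --- reading the second displayed equality as a statement about $f\geq 0$ --- is the right one: for general real or complex $f$ the integrand $\mu(\{f>t\})$ over $[0,\infty)$ does not recover $\int_X f\,d\mu$, and the paper only ever uses the formula for positive elements (indeed its next step replaces $\{f>t\}$ by $\supp_{\mathrm{o}}((f-t)_+)$, which presupposes $f\geq 0$). The only point you might make explicit is that a positive unital linear functional on $C(X)$ is automatically bounded of norm $1$, so no continuity hypothesis is being smuggled in when you feed $\tau$ to Riesz--Markov; but this is a one-line standard fact and its omission is not a gap.
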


We want to express the trace $\tau_\mu$ in a different manner.
Note that 
\[\{x\in X\colon f(x)> t\}=\supp_{\mathrm{o}}((f-t)_+),\]
which only depends on the Cuntz class of $(f-t)_+$. By writing 
$\tau_\mu$ as
\[\tau_\mu(f)=\int_0^\I \mu\big(\supp_{\mathrm{o}}((f-t)_+)\big) \ dt,\]
we have expressed $\tau_\mu(f)$ entirely in terms of 
Cuntz classes in $\Cu(C(X))$. 

Hence, if we are given a ``nice'' map $\lambda\colon \Cu(A)\to [0,\infty]$,
we may attempt to define a trace on $A$ via 
\[\tau_\lambda(a)=\int_0^\I \lambda\big(\supp_{\mathrm{o}}((a-t)_+)\big) \ dt\]
for all $a\in A$.
This is the essential idea behind the correspondence between traces
on $A$ and functionals on $\Cu(A)$. There is, however, one problem:
even for nice maps $\lambda$, it is not at all clear whether 
the map $\tau_\lambda$ defined above is additive. We are therefore
led to consider non-linear maps, which brings us to the definition of
a \emph{quasitrace}.

\begin{df}\label{df:quasitraces}
Let $A$ be a \uca. A \emph{1-quasitracial state} (or just 
\emph{1-quasitrace}) is a function $\tau\colon A\to \C$ such that
\bi\item[(i)] $\tau(a+ib)=\tau(a)+i\tau(b)$ for all $a,b\in A_{\mathrm{sa}}$;
\item[(ii)] $\tau$ is linear on \emph{commutative subalgebras} of $A$;
\item[(iii)] $\tau(a^*a)=\tau(aa^*)\geq 0$ for all $A$;
\item[(iv)] $\tau(1)=1$.\ei
We say that $\tau$ is a \emph{quasitracial state} (or just a 
\emph{quasitrace}) if it
extends to a 1-quasitrace on $M_n(A)$ for all $n\in\N$.\footnote{It was shown by Blackadar and Handelman that, in order for a $1$-quasitrace to be a quasitrace, it is enough for it to extend to $M_2(A)$; see \cite[Proposition II.4.1]{BlaHan_dimension_1982}.} We denote
by $\QT(A)$ the space of all quasitraces on $A$.
\end{df}

There exist 1-quasitraces that are not quasitraces; such examples
were first constructed by Kirchberg. 

\begin{rem} 
Note that 
a quasitrace is a trace if and only if it is additive
on all positive elements of $A$, not just on commuting ones. 
\end{rem}

It is a major open problem in C$^*$-algebra theory whether every
quasitrace is automatically a trace. By a very deep result of
Haagerup \cite{Haa_quasitraces_2014}, 
this is the case whenever the C$^*$-algebra
in question is \emph{exact}. In particular, this is always the case
for nuclear C$^*$-algebras.

We now define the \emph{dimension function} (or functional) 
associated to a quasitrace.

\begin{df}\label{df:DimFunct}
Let $A$ be a \ca. Given $\tau\in \QT(A)$, we define its associated
\emph{dimension function} $d_\tau\colon \Cu(A)\to [0,\I]$ by
\[d_\tau([a])=\lim_{n\to\I}\tau(a^{\frac{1}{n}})\]
for all $a\in M_\infty(A)_+$, and extended to 
$(A\otimes\K)_+$ by taking suprema.\footnote{Explicitly, 
for a general element
$a\in (A\otimes\K)_+$, one defines $d_\tau([a])$ to be the supremum
in $[0,\I]$
over $\ep>0$ of $d_\tau([(a-\ep)_+])$, which is well defined since $(a-\ep)_+$ belongs to $M_\infty(A)_+$.}
\end{df}

\begin{rem}
The sequence $(a^{\frac{1}{n}})_{n\in\N}$ converges in $A^{\ast\ast}$ in the weak-$\ast$ topology to the \emph{support
projection} $p_a$ of $a$; this is the smallest projection 
which acts as a unit on $a$. Since quasitraces are weak-$\ast$
continuous on $A^{\ast\ast}$, we have 
\[d_\tau(a)=
\lim\limits_{n\to\I}\tau(a^{\frac{1}{n}})=\tau(p_a).\]
\end{rem}

Implicit in \autoref{df:DimFunct} 
is the fact that $d_\tau([a])$
only depends on $[a]$. This is indeed the case. In fact, more is true:

\begin{prop}\label{prop:dtauFunctional}
Let $A$ be a \uca. Given $\tau\in \QT(A)$, the map $d_\tau$ is 
well-defined on $\Cu(A)$. Moreover, we have:
\be[{\rm (i)}]\item $d_\tau(0)=0$;
\item $d_\tau([1])=1$;
\item $d_\tau(s+t)=d_\tau(s)+d_\tau(t)$ for all $s,t\in \Cu(A)$;
\item $d_\tau(s)\leq d_\tau(t)$ whenever $s\leq t$ in $\Cu(A)$;
\item $d_\tau$ preserves suprema of increasing sequences.
\ee
\end{prop}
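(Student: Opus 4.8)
The strategy is to reduce all five assertions to a single monotonicity statement at the level of positive elements, namely that $a\precsim b$ implies $d_\tau([a])\leq d_\tau([b])$. Well-definedness on $\Cu(A)$ and part~(iv) are then immediate, and (i)--(iii) and (v) follow with little extra work. Throughout I would lean on the description $d_\tau([a])=\tau(p_a)$ recorded in the remark preceding the statement, where $p_a\in A^{\ast\ast}$ is the support projection of $a$ and $\tau$ is regarded as a normal (weak-$\ast$ continuous) quasitrace on $A^{\ast\ast}$.

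First I would isolate the one genuinely non-formal ingredient: for every $x\in A\otimes\K$ one has $\tau(p_{x^*x})=\tau(p_{xx^*})$. This is where the quasitrace axiom $\tau(y^*y)=\tau(yy^*)$ does the essential work. I would prove it either by taking the polar decomposition $x=v|x|$ in $A^{\ast\ast}$, so that $v^*v=p_{x^*x}$ and $vv^*=p_{xx^*}$ and the identity is just $\tau(v^*v)=\tau(vv^*)$; or, staying closer to the tools of the paper, by using the intertwining relation $x\,g(x^*x)=g(xx^*)\,x$ to obtain $\tau(h(x^*x))=\tau(h(xx^*))$ for every $h$ of the form $h(t)=t\,g(t)$, and then letting $h$ increase to the indicator of $(0,\infty)$ and invoking normality of $\tau$. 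I would also record the elementary facts that $c\in A_d$ forces $p_c\leq p_d$, and that $\tau$ is monotone on commuting projections (positivity plus linearity on a commutative subalgebra).

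With these in hand, monotonicity runs as follows. Given $a\precsim b$ and $\ep>0$, R{\o}rdam's lemma (\autoref{thm:Rordam}(iv)) produces $\delta>0$ and $x$ with $(a-\ep)_+=x^*x$ and $xx^*\in A_{(b-\delta)_+}$. Then
\[
d_\tau([(a-\ep)_+])=\tau(p_{x^*x})=\tau(p_{xx^*})\leq \tau(p_{(b-\delta)_+})=d_\tau([(b-\delta)_+])\leq d_\tau([b]),
\]
using the key identity at the second step, $p_{xx^*}\leq p_{(b-\delta)_+}$ at the third, and $(b-\delta)_+\leq b$ at the last. Since $d_\tau([a])=\sup_{\ep>0} d_\tau([(a-\ep)_+])$ by definition (\autoref{df:DimFunct}), taking the supremum over $\ep$ yields $d_\tau([a])\leq d_\tau([b])$. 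Applying this to $a\precsim b$ and to $b\precsim a$ gives well-definedness on $\Cu(A)$, and the statement is exactly part~(iv).

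The remaining items are then bookkeeping. Parts (i) and (ii) are immediate from $p_0=0$, $p_1=1$ and $\tau(1)=1$. For additivity (iii) I would first treat $a,b\in M_\infty(A)_+$, where $a\oplus b$ has support projection $p_a\oplus p_b$ and $\tau$ is additive on the orthogonal diagonal blocks, giving $d_\tau([a\oplus b])=d_\tau([a])+d_\tau([b])$; the general case follows by applying this to the cut-downs $(a-\ep)_+\oplus(b-\ep)_+=((a\oplus b)-\ep)_+$ and using that the supremum of a sum of increasing nets is the sum of the suprema. For (v), let $(s_n)_{n\in\N}$ be increasing with supremum $s=[a]$; monotonicity gives $\sup_n d_\tau(s_n)\leq d_\tau(s)$, while for each $\ep>0$ we have $[(a-\ep)_+]\ll[a]=\sup_n s_n$ (\autoref{rem:CutDownWayBelow}), so by the definition of $\ll$ there is $n$ with $[(a-\ep)_+]\leq s_n$, whence $d_\tau([(a-\ep)_+])\leq d_\tau(s_n)\leq\sup_n d_\tau(s_n)$; taking the supremum over $\ep$ and using \autoref{df:DimFunct} gives the reverse inequality. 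The main obstacle is precisely the lemma $\tau(p_{x^*x})=\tau(p_{xx^*})$ and the passage to $A^{\ast\ast}$ it relies on; once $d_\tau$ is known to be monotone and given by $\tau(p_a)$, everything else is routine manipulation of cut-downs and compact containment.
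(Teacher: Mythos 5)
Your proof is correct, but note that the paper itself offers no argument for this proposition: it states that the proof is ``not hard'' and omits it, proving only the commutative special case \autoref{prop:dtauonComm}. So there is nothing to match against, and what you have written is a legitimate completion of the hint the authors leave in the remark following \autoref{df:DimFunct}. Your reduction is well designed: by passing to support projections you only ever compare projections $p\leq q$, which automatically commute (since $p\leq q$ forces $pq=p$), so you need monotonicity and additivity of the quasitrace only on commuting elements, where linearity on commutative subalgebras applies. This sidesteps the genuine pitfall that a quasitrace is not obviously monotone on noncommuting positive elements, which is exactly where a naive element-level argument breaks down. The chain via part~(iv) of R{\o}rdam's lemma, the identity $\tau(p_{x^*x})=\tau(p_{xx^*})$, and $p_{xx^*}\leq p_{(b-\delta)_+}\leq p_b$ is correct, and items (i)--(iii) and (v) do reduce to bookkeeping as you say. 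One small addition: part of well-definedness is the consistency of the two clauses of \autoref{df:DimFunct}, i.e.\ that for $a\in M_\infty(A)_+$ one has $\lim_n\tau(a^{\frac{1}{n}})=\sup_{\ep>0}d_\tau([(a-\ep)_+])$; in your framework this is again weak-$\ast$ continuity applied to $p_{(a-\ep)_+}\nearrow p_a$, and it deserves a sentence.

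Two points of care. First, everything rests on the assertion, made in the paper's remark without proof, that $\tau$ extends weak-$\ast$ continuously to $A^{\ast\ast}$; this is a nontrivial fact about $2$-quasitraces, and your polar-decomposition variant needs slightly more, namely that the extension is again a \emph{quasitrace} on $A^{\ast\ast}$, so that $\tau(v^*v)=\tau(vv^*)$ may be applied to the partial isometry $v\in A^{\ast\ast}$. Your second, functional-calculus route is the safer of the two, since there the identity $\tau(h(x^*x))=\tau(h(xx^*))$ is verified entirely inside the C$^*$-algebra and only continuity of the extension along increasing sequences from the algebra itself is invoked, which is precisely what the paper's remark asserts. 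Second, a minor slip: the functions accessible via $y=xg(x^*x)$, for which $y^*y=h(x^*x)$ and $yy^*=h(xx^*)$, are those of the form $h(t)=tg(t)^2$ rather than $tg(t)$. This is harmless: replace $g$ by $g^{\frac{1}{2}}$, and note that the functions $h_n(t)=\min(nt,1)$, which increase to the indicator of $(0,\infty)$, are of this form, which is all you need.
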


The above proposition is not hard to prove, but we will omit the argument. Instead, we will show a particular case, which
connects back to the motivation we gave after \autoref{thm:Riesz}.

\begin{prop}\label{prop:dtauonComm}
Let $X$ be a compact Hausdorff space, let $\mu$ be a Radon Borel
probability measure, and let $\tau_\mu\colon C(X)\to \C$ be the 
trace it induces. Given $a\in C(X)_+$, we have
\[d_{\tau_\mu}([a])=\mu(\supp_{\mathrm{o}}(a)).\]
\end{prop}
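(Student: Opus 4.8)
The plan is to compute $d_{\tau_\mu}([a])$ directly from its definition as a limit of traces of roots, and then recognize the limiting integrand as the indicator of the open support. First I would unwind the definition: by \autoref{df:DimFunct}, since $a\in C(X)_+$ may be regarded as an element of $M_\infty(C(X))_+$, we have
\[d_{\tau_\mu}([a])=\lim_{n\to\I}\tau_\mu\big(a^{\frac{1}{n}}\big)=\lim_{n\to\I}\int_X a(x)^{\frac{1}{n}}\,d\mu(x),\]
where in the last step I use that $\tau_\mu$ is the integration functional $\tau_\mu(f)=\int_X f\,d\mu$ on $C(X)$, as in \autoref{thm:Riesz}.

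Next I would analyze the integrand pointwise. For a fixed $x\in X$, if $a(x)>0$ then $a(x)^{\frac{1}{n}}\to 1$ as $n\to\I$, whereas if $a(x)=0$ then $a(x)^{\frac{1}{n}}=0$ for every $n$. Thus the sequence of functions $a(\cdot)^{\frac{1}{n}}$ converges pointwise on $X$ to the characteristic function $\chi_{\supp_{\mathrm{o}}(a)}$ of the open support $\supp_{\mathrm{o}}(a)=\{x\in X\colon a(x)\neq 0\}$, which is an open, hence Borel, subset of $X$.

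The remaining step, which is the only analytic point and is entirely routine, is to justify interchanging the limit with the integral. Since $0\le a(x)^{\frac{1}{n}}\le \max\{1,\|a\|\}$ for all $n\in\N$ and all $x\in X$, and the constant $\max\{1,\|a\|\}$ is $\mu$-integrable because $\mu$ is a finite (probability) measure, the dominated convergence theorem applies and gives
\[d_{\tau_\mu}([a])=\int_X \chi_{\supp_{\mathrm{o}}(a)}(x)\,d\mu(x)=\mu\big(\supp_{\mathrm{o}}(a)\big),\]
as desired. I do not expect any genuine obstacle here: the hard part, if any, is merely the bookkeeping of the uniform bound needed to invoke dominated convergence, and even that could be bypassed by splitting $X$ into $\{a<1\}$, $\{a=1\}$ and $\{a>1\}$ and applying monotone convergence on each piece separately.
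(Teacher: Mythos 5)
Your proposal is correct and follows essentially the same route as the paper's proof: unwind $d_{\tau_\mu}$ via the limit of $\int_X a^{\frac{1}{n}}\,d\mu$, observe pointwise convergence of $a^{\frac{1}{n}}$ to the indicator of $\supp_{\mathrm{o}}(a)$, and conclude by dominated convergence. The only cosmetic difference is that the paper first normalizes $\|a\|\leq 1$ (making the sequence increasing and bounded by $1$), whereas you dominate by $\max\{1,\|a\|\}$, which works equally well since $\mu$ is finite.
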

\begin{proof}
Without loss of generality, we may assume that $\|a\|\leq 1$.
Note that $(a^{\frac{1}{n}})_{n\in\N}$ is an increasing sequence
which converges pointwise to the indicator
function of $\supp_{\mathrm{o}}(a)$.
Applying the dominated convergence theorem at the third step, we get
\[d_\tau([a])\!=\!\lim_{n\to\I} \tau(a^{\frac{1}{n}})=\lim_{n\to\I}\int_X a^{\frac{1}{n}}(x)\ d\mu(x)\! =\! \int_X 
\lim_{n\to\I} a^{\frac{1}{n}}(x)\ d\mu(x)\! =\!\mu(\supp_{\mathrm{o}}(a)),\]
as desired.
\end{proof}

\autoref{prop:dtauFunctional} shows that the map $d_\tau$ is a normalized 
functional on $\Cu(A)$ in the sense of the 
following definition.

\begin{df}\label{df:functionals}
Let $S$ be a Cu-semigroup. A \emph{functional} on $S$ is a 
map $f\colon S\to [0,\I]$ 
which preserves the zero element, 
addition, order, and suprema of increasing sequences. We denote 
by $F(S)$ the set of all functionals on $S$.

If $e\in S$ is a distinguished compact element in $S$, a functional
$\lambda\in F(S)$ is said to be \emph{normalized (at $e$)} if 
$\lambda(e)=1$. We write $F_e(S)$ for the set of all normalized
functionals on $S$.
\end{df}

In this section, we will mostly focus on normalized functionals,
but we will come back to non-normalized ones in \autoref{sec:functionals}.

We write $d\colon \QT(A)\to F_{[1]}(\Cu(A))$ for the map 
given by $d(\tau)=d_\tau$ for $\tau\in \QT(A)$; see \autoref{prop:dtauFunctional}. Both $\QT(A)$ and $F_{[1]}(\Cu(A))$
are convex sets, and it is easy to see that $d$ is an affine 
map.\footnote{The sets $\QT(A)$ and $F_{[1]}(\Cu(A))$ also admit natural
topologies with respect to which they are compact and 
Hausdorff: for QT$(A)$
the topology is induced by pointwise convergence, while the
topology on $F_{[1]}(\Cu(A))$ is described at the beginning of 
\autoref{sec:functionals}. One can 
check that $d$ is continuous with respect to these topologies, and hence a homeomorphism. This is worked out in detail in
\cite{EllRobSan_cone_2011}.}

\begin{thm}\label{thm:QTfunctionals}
Let $A$ be a \uca. Then $d\colon \QT(A)\to F_{[1]}(\Cu(A))$ is 
an affine bijection.
\end{thm}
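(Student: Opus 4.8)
Since $d$ is already known to be affine, the task reduces to showing it is a bijection, which I would do by constructing an explicit inverse guided by the reformulation of the Riesz trace following \autoref{thm:Riesz}. For \emph{injectivity}, note first that a quasitrace is determined by its restriction to $A_+$: by condition~(i) of \autoref{df:quasitraces} and linearity on the commutative subalgebra $C^*(a,1)$ one has $\tau(x)=\tau(\re x)+i\tau(\im x)$ and $\tau(a)=\tau(a_+)-\tau(a_-)$. So it suffices to recover $\tau|_{A_+}$ from $d_\tau$. For $a\in A_+$ I would use the layer-cake decomposition $a=\int_0^\infty e_{(t,\infty)}(a)\,dt$ in $A^{**}$, where $e_{(t,\infty)}(a)$ is the spectral projection of $a$ over $(t,\infty)$, which is exactly the support projection $p_{(a-t)_+}$. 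Since the normal extension of $\tau$ is weak-$*$ continuous and linear on the commutative algebra $W^*(a)$, interchanging $\tau$ with the integral gives
\[
\tau(a)=\int_0^\infty \tau\big(p_{(a-t)_+}\big)\,dt=\int_0^\infty d_\tau\big([(a-t)_+]\big)\,dt,
\]
whose right-hand side depends only on $d_\tau$; hence $d_\tau=d_{\tau'}$ forces $\tau=\tau'$.

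For \emph{surjectivity}, the formula above suggests that, given $\lambda\in F_{[1]}(\Cu(A))$, one should set $\tau_\lambda(a)=\int_0^\infty \lambda([(a-t)_+])\,dt$ for $a\in A_+$. This is finite: in a unital algebra $a\precsim 1$, so $\lambda([(a-t)_+])\le\lambda([1])=1$, and the integrand vanishes for $t\ge\|a\|$. Condition~(iv) is immediate, since for $t\in(0,1)$ one has $[(1-t)_+]=[(1-t)\cdot 1]=[1]$ by \autoref{cor:FuncCalc}(iii), whence $\tau_\lambda(1)=\int_0^1 1\,dt=1$. Condition~(iii) follows from \autoref{lma:CutDwnSymm}: because $(x^*x-t)_+\sim(xx^*-t)_+$ for all $t>0$, the defining integrals for $\tau_\lambda(x^*x)$ and $\tau_\lambda(xx^*)$ agree termwise. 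I would then extend $\tau_\lambda$ to all of $A$ by the prescription forced by~(i), namely $\tau_\lambda(a)=\tau_\lambda(a_+)-\tau_\lambda(a_-)$ on self-adjoints and $\tau_\lambda(x)=\tau_\lambda(\re x)+i\tau_\lambda(\im x)$ in general.

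The main obstacle is verifying additivity of $\tau_\lambda$ on commuting positive elements (condition~(ii)), which is exactly what makes the extension of the previous step well-defined. Given commuting $a,b\in A_+$, I would pass to the separable commutative subalgebra $C^*(a,b,1)\cong C(Y)$ with $Y$ compact metrizable. The genuinely technical claim—this is the Blackadar--Handelman heart of the matter—is that the restriction of $\lambda$ to $\Cu(C(Y))$ is induced by a Radon measure $\mu$ on $Y$ via $\lambda([f])=\mu(\supp_{\mathrm{o}}(f))$; this rests on \autoref{prop:CtzCompCX} to identify Cuntz classes with open supports and is a Riesz-type representation argument. Granting it, the layer-cake formula turns $\tau_\lambda|_{C(Y)}$ into integration against $\mu$,
\[
\tau_\lambda(a)=\int_0^\infty \mu\big(\supp_{\mathrm{o}}((a-t)_+)\big)\,dt=\int_Y a\,d\mu,
\]
which is manifestly linear, giving additivity on $C^*(a,b,1)$. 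Matrix-stability of the Cuntz semigroup, $\Cu(M_n(A))\cong\Cu(A)$, then lets me feed the same $\lambda$ into the same construction over each $M_n(A)$, producing a coherent family and showing that $\tau_\lambda$ is a genuine quasitrace rather than merely a $1$-quasitrace.

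Finally, the injectivity computation already says $\tau_{d_\tau}=\tau$, so it remains only to check $d_{\tau_\lambda}=\lambda$. For $b\in A_+$ with $\|b\|\le 1$ and fixed $t\in(0,1)$, the sequence $(b^{1/n}-t)_+$ is increasing in $n$, and comparing open supports via \autoref{prop:CtzCompCX} together with \autoref{rem:CutDownsConverge} shows $\sup_n[(b^{1/n}-t)_+]=[b]$; continuity of $\lambda$ along increasing sequences gives $\lambda([(b^{1/n}-t)_+])\nearrow\lambda([b])$. Monotone convergence then yields
\[
d_{\tau_\lambda}([b])=\lim_{n\to\infty}\int_0^1\lambda\big([(b^{1/n}-t)_+]\big)\,dt=\int_0^1\lambda([b])\,dt=\lambda([b]),
\]
and passing to suprema over the cut-downs $(a-\ep)_+$ extends this to all of $\Cu(A)$. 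Thus $\lambda\mapsto\tau_\lambda$ is a two-sided inverse of $d$, completing the proof.
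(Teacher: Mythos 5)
Your proposal is correct and takes essentially the same route as the paper: the same inverse formula $\tau_\lambda(a)=\int_0^\infty\lambda\big([(a-t)_+]\big)\,dt$, condition (iii) via \autoref{lma:CutDwnSymm}, and additivity on commutative subalgebras by representing $\lambda$ restricted to $\Cu(C(Y))$ by a Radon measure on open supports --- the technical claim you grant is exactly where the paper defines $\mu_0$ on open sets and invokes Caratheodory's extension theorem. Your additional details (the layer-cake identity in $A^{**}$ giving $\tau_{d_\tau}=\tau$, the $b^{1/n}$ monotone-convergence verification of $d_{\tau_\lambda}=\lambda$, and the matrix-level coherence) merely flesh out what the paper compresses into the remark that mutual inverseness may be checked on commutative subalgebras via \autoref{prop:dtauonComm}.
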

\begin{proof}
Given $\lambda\in F_{[1]}(\Cu(A))$, define $\tau_\lambda\in \QT(A)$ by 
\[\tau_\lambda(a)=\int_0^\I \lambda\big([(a-t)_+]\big) dt\]
for all $a\in A_+$, and extended $\mathbb{T}$-linearly to $A$ 
using real and imaginary parts, and positive and negative parts.
We claim that $\tau_\lambda$ is a 
quasitracial state on $A$. Condition (i) in \autoref{df:quasitraces}
is satisfied by construction, while (iv) is clear. 
For (ii), let $B$ be a commutative unital C$^*$-subalgebra, 
and let $X$ denote the 
maximal ideal space of $B$. Note that the 
restriction of $\tau_\lambda$ to $B$ induces an 
assignment $\mu_0\colon \mathcal{O}(X)\to [0,1]$,
defined on the open subsets $\mathcal{O}(X)$ of $X$, which is
$\sigma$-additive on disjoint sets.\footnote{In order to define 
$\mu_0$, given an open set $U$ find a continuous function $f\in C(X)$
whose open support is exactly $U$. Then set $\mu_0(U)=\lambda([f])$.}
By Caratheodory's Theorem, $\mu_0$ extends to a Radon Borel probability
measure $\mu$ on $X$. It follows that $(\tau_\lambda)|_B=\tau_\mu$, and thus 
$\tau_\lambda$ is additive on $B$.

For (iii), let $a\in A$. Using \autoref{lma:CutDwnSymm} at 
the second step, we have
\[\tau_\lambda (a^*a)=\int_0^\I \lambda([a^*a-t]_+) dt
 =\int_0^\I \lambda([aa^*-t]_+)dt=\tau_\lambda(aa^*)\geq 0,
\]
as desired.

Finally, in order to show that the two assignments $\tau\mapsto d_\tau$
and $\lambda\mapsto \tau_\lambda$ are mutual inverses, it suffices to
check this on commutative subalgebras, which 
follows from \autoref{prop:dtauonComm}.
\end{proof}

\begin{rem}
Note that the fact that $\tau_\lambda$ is additive on commutative
subalgebras is ultimately a consequence of Caratheodory's Theorem.
Thus, the question of whether all quasitraces are traces can be
reinterpreted as asking whether there is a noncommutative version
of Caratheodory's Theorem for C$^*$-algebras.
\end{rem}

We have stated and proved \autoref{thm:QTfunctionals} only for 
quasitracial \emph{states} on $A$ 
and \emph{normalized} functionals on $\Cu(A)$,
but the bijective correspondence also extends to 
lower-semicontinuous $[0,\I]$-valued quasitraces on $A$ and 
functionals on $\Cu(A)$, with the same formulas. 




Having established the correspondence between quasitraces on $A$
and functionals on $\Cu(A)$, we are ready to prove Cuntz's theorem \cite{Cun_dimension_1978}:
a simple, stably finite \ca\ always admits a quasitrace.
This was historically the first use of the Cuntz semigroup, and
the reason why it receives its name. (It should be mentioned
that Cuntz used a slightly different semigroup, with positive
elements taken from $M_\infty(A)$ instead of $A\otimes\K$, and
in fact he considered its Grothendieck enveloping group.)

We will use, without proof, that if $\tau\in \QT(A)$, then 
\[\ker(\tau):=\{x\in A\colon \tau(x^*x)=0\}\]
is an ideal in $A$. In particular, if $A$ is simple and  
$a\in A_+$ satisfies $\tau(a)=0$, then $a=0$. This follows, 
for example, from part~(3) of Lemma~3.5 in~\cite{Haa_quasitraces_2014}.

\begin{thm}\label{thm:Cuntz} (Cuntz).
 Let $A$ be a simple, unital \ca. Then $A$ is stably finite
 if and only if it admits a quasitracial state.
\end{thm}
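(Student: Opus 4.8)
The plan is to prove both implications, the forward one being short and the converse carrying all the weight.

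\medskip

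\emph{Quasitrace $\Rightarrow$ stably finite.} Suppose $\tau\in\QT(A)$. For each $n\in\N$, $\tau$ extends to a quasitrace $\tau_n$ on $M_n(A)$, which is again simple and satisfies $\tau_n(1)=n\neq 0$. Since $\ker(\tau_n)=\{x\colon \tau_n(x^*x)=0\}$ is an ideal (the fact we are allowed to assume), it must be $\{0\}$, so $\tau_n$ is faithful. If some projection $p\in M_n(A)$ were Murray--von Neumann equivalent to a proper subprojection $p'<p$, then $\tau_n(v^*v)=\tau_n(vv^*)$ would give $\tau_n(p-p')=0$, forcing $p=p'$ by faithfulness, a contradiction. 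Hence every $M_n(A)$ is finite, i.e.\ $A$ is stably finite.

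\medskip

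\emph{Stably finite $\Rightarrow$ quasitrace.} By \autoref{thm:QTfunctionals} it suffices to produce a normalized functional $\lambda\in F_{[1]}(\Cu(A))$. The natural first candidate is the ``lower rank'' $\lambda_0(x)=\inf\{m/n\colon m,n\in\N,\ nx\leq m[1]\}$, which is clearly order-preserving and subadditive. The crucial point, and the \emph{only} place where stable finiteness is used, is that $\lambda_0([1])=1$: if $n[1]\leq m[1]$, then $1_n\precsim_{\mathrm{MvN}}1_m$ by \autoref{lma:CtzCompPjns}, so $1_n$ is equivalent to a subprojection of $1_m$; were $n>m$, this would exhibit the unit of $M_n(A)$ as equivalent to a proper subprojection of itself, contradicting finiteness of $M_n(A)$. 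Thus $n\leq m$, and the infimum equals $1$.

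\medskip

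The main obstacle is that $\lambda_0$ is only \emph{sub}additive, so it is not yet a functional. I would repair this with a Hahn--Banach argument: restrict attention to the submonoid of elements dominated by some multiple of $[1]$, pass to its Grothendieck group tensored with $\R$, regard (the homogenization of) $\lambda_0$ as a sublinear functional, and extend the assignment $[1]\mapsto 1$ on $\R\cdot[1]$ to a global linear functional $f$ with $f\leq\lambda_0$. Positivity is then automatic: for $x\geq 0$ one has $n(-x)\leq 0=0\cdot[1]$, so $\lambda_0(-x)\leq 0$ and hence $f(x)=-f(-x)\geq -\lambda_0(-x)\geq 0$; and $f([1])=1$ by construction.

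\medskip

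It remains to upgrade the additive, order-preserving, normalized $f$ (defined on bounded elements) to a genuine functional on all of $\Cu(A)$ that preserves suprema of increasing sequences, as required by \autoref{df:functionals}. I would set $\lambda([a])=\sup_{\ep>0}f\big([(a-\ep)_+]\big)$, with value $\I$ on elements not dominated by a multiple of $[1]$, exploiting that $[(a-\ep)_+]\ll[a]$ and that these cut-downs form an $\ll$-increasing sequence with supremum $[a]$ (axiom (O2) together with \autoref{prop:WayBelowCutDown}). One then checks directly that $\lambda$ inherits additivity and order-preservation from $f$ and preserves suprema by its very definition as a supremum of values on compactly contained cut-downs, while $\lambda([1])=f([1])=1$ since $[1]$ is compact. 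Thus $\lambda\in F_{[1]}(\Cu(A))$, and \autoref{thm:QTfunctionals} produces the desired $\tau_\lambda\in\QT(A)$. The delicate steps are therefore the Hahn--Banach passage to additivity and the verification that the regularized $\lambda$ is both lower semicontinuous (suprema-preserving) and still normalized.
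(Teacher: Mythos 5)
Your proof is correct, and its overall skeleton is the paper's: reduce to producing a normalized functional on $\Cu(A)$ via \autoref{thm:QTfunctionals}; use stable finiteness only through the comparison claim that $n[1]\leq m[1]$ forces $n\leq m$ (proved, as in the paper, from \autoref{lma:CtzCompPjns}); then regularize along the cut-downs $[(a-\ep)_+]$, using \autoref{prop:WayBelowCutDown}, to restore supremum-preservation, with normalization surviving because $[1]$ is compact. Where you genuinely diverge is the extension step. The paper defines $\lambda_0$ only on the submonoid $\{n[1_A]\colon n\in\N\}$ and invokes, as a black box, the injectivity of $[0,\infty]$ in the category of positively ordered monoids to extend it to a monotone additive map on all of $\Cu(A)$. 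You instead build the concrete sublinear ``lower rank'' $\lambda_0(x)=\inf\{m/n\colon nx\leq m[1]\}$ and run classical Hahn--Banach on the Grothendieck group (tensored with $\R$) of the bounded part, in the style of Blackadar--Handelman and Goodearl--Handelman. Your route is more self-contained, since it does not outsource the extension to an unproved injectivity statement, at the cost of some bookkeeping; the paper's version is shorter, and indeed the paper itself remarks (after \autoref{thm:RordamStrComp}) that this proof has a Hahn--Banach flavor, so the two devices are close cousins. Your forward direction, via Murray--von Neumann equivalent projections and faithfulness of the extended quasitrace, is a cosmetic variant of the paper's isometry argument and is fine.

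One wrinkle you should address explicitly: once you pass to the Grothendieck group, the relevant order is the \emph{algebraic} preorder, so the domination of $t[1]\mapsto t$ by $\lambda_0$ on the line $\R\cdot[1]$ requires not the claim as you state it, but the a priori stronger statement that $n[1]+e\leq m[1]+e$, with $e$ bounded (say $e\leq k[1]$), forces $n\leq m$. This is not automatic, since $\Cu(A)$ need not be cancellative, but it follows from your claim by the standard iteration trick: adding the inequality to itself repeatedly gives $jn[1]+e\leq jm[1]+e\leq (jm+k)[1]$ for all $j\in\N$, whence $jn\leq jm+k$ by the claim, and letting $j\to\infty$ yields $n\leq m$. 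With that sentence inserted, your argument is complete; the remaining verifications you defer (additivity of the regularization, via $((a\oplus b)-\ep)_+=(a-\ep)_+\oplus(b-\ep)_+$ for orthogonal representatives, and supremum-preservation, via interpolating a second cut-down $[(a-\ep)_+]\ll[(a-\tfrac{\ep}{2})_+]\ll[a]$ and applying \autoref{thm:Rordam}) are routine and match what the paper leaves to the reader.
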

\begin{proof}
Suppose that $A$ admits a quasitracial state $\tau$.
Without loss of generality, it suffices to show that $A$ is finite
(otherwise consider the quasitracial state 
$\tau\otimes\mathrm{tr}_n$ on $M_n(A)$ instead of $\tau$). 
Let $v\in A$ be an isometry. Then 
$\tau(vv^*)=\tau(v^*v)=\tau(1_A)=1_A$, and hence 
$\tau(1_A-vv^*)=0$. Thus $1_A-vv^*$ is a projection in the kernel
of $\tau$, which is an ideal in $A$. By simplicity, we must have
$vv^*=1_A$, so that $v$ is a unitary. 

We turn to the converse, so assume that $A$ is stably finite.
\vspace{.2cm}

\textbf{Claim:} \emph{for $n,m\in\N$, if $n[1_A]\leq m[1_A]$
then $n\leq m$.}
Suppose that $n[1_A]=m[1_A]$ for some $n< m\in\N$. Then
$
1_{M_m(A)}\precsim
1_{M_n(A)}$.
By \autoref{lma:CtzCompPjns}, we deduce that
\[1_{M_m(A)}\precsim_{\mathrm{MvN}}
1_{M_n(A)},\]
so there is an isometry $v\in M_m(A)$ such that 
$vv^*=\begin{psmallmatrix}1_{M_n(A)} & 0\\0 & 0_{m-n}\end{psmallmatrix}$. This contradicts stable finiteness of $A$
and proves the claim.
%
\vspace{.2cm}

Set $S_0=\{n[1_A]\colon n\in\N\}\subseteq\Cu(A)$. 
Define an additive map $\lambda_0\colon S_0\to [0,\infty]$ by $\lambda_0(n[1_A])=n$ for all $n\in\N$. Note that $\lambda_0$ is 
order-preserving by the previous claim.
Since $[0,\infty]$ is 
an injective object in the category of positively ordered monoids, we can extend it to a map $\widetilde{\lambda}\colon \Cu(A)\to [0,\infty]$ which preserves the zero, addition and order, but not necessarily suprema of increasing sequences. We fix this by taking its ``regularization''
$\lambda\colon \Cu(A)\to [0,\infty]$ given by
\[\lambda(a)=\sup \{\widetilde{\lambda}(a')\colon a'\ll a\}\]
for all $a\in \Cu(A)$.
One can check that $\lambda$ preserves suprema of increasing
sequences, so it is a functional.
Since $[1_A]\ll [1_A]$, we have $\lambda([1_A])=\widetilde{\lambda}([1_A])=1$,
so $\lambda$ is normalized.
By \autoref{thm:QTfunctionals}, $\lambda$ induces a 
quasitracial state on $A$, as desired.
\end{proof}

\section{A construction of the Jiang-Su algebra \texorpdfstring{$\mathcal{Z}$}{Z} using the Cuntz semigroup}
\label{sec:JS}

In this section, we use the Cuntz semigroup to 
define the Jiang-Su algebra $\mathcal{Z}$ via
generalized dimension drop algebras, 
and we compute its Cuntz semigroup. 

Recall that a function
$f\colon X\to S$ from a topological space $X$ into an $\Cu$-semigroup $S$ is said to be \emph{lower semicontinuous} if
for every $s\in S$, the set 
\[f^{-1}(\{t\in S\colon s\ll t\})\] 
is 
open in $X$. We write $\mathrm{Lsc}(X,S)$ for the set of all 
lower-semicontinuous functions $X\to S$. In some relevant cases, $\mathrm{Lsc}(X,S)$ is a $\Cu$-semigroup; see \cite{AntPerSan_pullbacks_2011}.

\begin{df}\label{df:GenDimDropAlg}
We define the \emph{generalized dimension drop algebra} (of type 2,3)
by 
\[
\mathcal{Z}_{2^\infty,3^\infty}=\left\{a\in C\big([0,1], M_{2^\infty}\otimes M_{3^\infty}\big)\colon \begin{aligned}
&a(0)\in M_{2^\infty}\otimes 1_{M_{3^\infty}},\\
& a(1)\in 1_{ M_{2^\infty}}\otimes M_{3^\infty}
\end{aligned}\right\}
\]
\end{df}

The Cuntz semigroup of $\mathcal{Z}_{2^\infty,3^\infty}$ can be
computed using \cite[Corollary~3.5]{AntPerSan_pullbacks_2011}:
\[
 \Cu(\mathcal{Z}_{2^\infty,3^\infty})\cong \left\{f\in \mathrm{Lsc}\big([0,1], \N[\tfrac{1}{6}]\cup (0,\I]\big)\colon 
 \begin{aligned}
&f(0)\in \N[\tfrac{1}{2}]\cup (0,\I]\\
& f(1)\in \N[\tfrac{1}{3}]\cup (0,\I]
\end{aligned}\right\},
\]
with pointwise order and addition (see, essentially,  \cite[Example~4.3]{AntPerSan_pullbacks_2011}).
Using these identifications, we 
define a Cu-morphism 
\[\phi\colon \Cu(\mathcal{Z}_{2^\infty,3^\infty})
\to \Cu(\mathcal{Z}_{2^\infty,3^\infty}),\] 
by setting
\[\phi(f)=\int_{[0,1]}f(t)\ dt\]
for $f\in \Cu(\mathcal{Z}_{2^\infty,3^\infty})$. This 
integral has to be interpreted appropriately: if $f$ is compact,
then $\phi(f)$ is the constant function with the corresponding 
compact element as its value, and 
otherwise it is the constant sequence with the corresponding 
non-compact element with that value.
The main feature of this map is that it is \emph{trace-collapsing}:
this means that if $\lambda,\lambda'\colon
\Cu(\mathcal{Z}_{2^\infty,3^\infty})\to [0,\I]$ are 
normalized functionals, then $\lambda\circ\phi=
\lambda'\circ\phi$. 

By a deep result of Robert \cite[Theorem~1.0.1]{Rob_classification_2012},
there exists a unital homomorphism 
\[\Phi\colon \mathcal{Z}_{2^\infty,3^\infty}\to \mathcal{Z}_{2^\infty,3^\infty} \ \ \mbox{ with } \ \ \Cu(\Phi)=\phi.\]

\begin{df}\label{df:JiangSu}
The \emph{Jiang-Su algebra} $\mathcal{Z}$ is defined to be the 
stationary direct limit
\[\mathcal{Z}=\varinjlim (\mathcal{Z}_{2^\infty,3^\infty},\Phi).\]
\end{df}

This definition of $\mathcal{Z}$ is not the original one by 
Jiang and Su \cite{JiaSu_simple_1999}, and this presentation of 
$\mathcal{Z}$ was essentially obtained by R{\o}rdam and Winter 
\cite{RorWin_algebra_2010}.

The type of the generalized dimension drop algebra is actually
irrelevant, and the only crucial ingredient is that 2 and 3 are coprime. Indeed, coprimeness 
implies that $\N[\frac{1}{2}] \cap \N[\frac{1}{3}]=\N$ and 
guarantees that the direct limit does not have any projections;
see the proof of \autoref{thm:CuZ}.

The tools developed in the previous sections allow us to 
compute the Cuntz semigroup of $\mathcal{Z}$. The computation of this semigroup was first carried out in \cite[Theorem 3.1]{PerTom_recast_2007}.

\begin{thm}\label{thm:CuZ}
The Cuntz semigroup of $\mathcal{Z}$ can be described as follows.
As sets, we have
\[\Cu(\mathcal{Z})=\N\sqcup (0,\I].\]
Addition and order are the expected ones on each component. 
For $n\in\N$, let $c_n\in\N$ be the corresponding
compact element, and for $x\in (0,\I]$ let $s_x$ be the 
corresponding non-compact element in $(0,\I]$. 
For $n\in\N$ and $x\in (0,\infty]$, 
we have:
\bi
\item $ s_x\leq c_n$ if and only if $x\leq n$;
\item $c_n\leq s_x$ if and only if $n< x$; 
\item $ c_n+s_x=s_{n+x}$.
\ei
\end{thm}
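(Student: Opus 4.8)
The plan is to compute $\Cu(\mathcal{Z})$ by exploiting that the Cuntz semigroup functor preserves the stationary direct limit defining $\mathcal{Z}$. Writing $S=\Cu(\mathcal{Z}_{2^\infty,3^\infty})$ for the Cu-semigroup of lower-semicontinuous functions described above, and recalling that $\Cu(\Phi)=\phi$ is the integration map, \autoref{thm:Culimits} gives
\[
\Cu(\mathcal{Z})\cong \varinjlim\big(\Cu(\mathcal{Z}_{2^\infty,3^\infty}),\Cu(\Phi)\big)=\varinjlim(S,\phi).
\]
Everything then reduces to identifying this stationary limit, and the crucial structural observation is that $\phi$ is idempotent with a very concrete image.

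First I would identify the target. Let $C\subseteq S$ be the subset of constant functions. A constant function with a compact value $c_v$ (so $v\in\N[\tfrac16]$) must satisfy the boundary constraints $f(0)\in\N[\tfrac12]\cup(0,\I]$ and $f(1)\in\N[\tfrac13]\cup(0,\I]$, forcing $v\in\N[\tfrac12]\cap\N[\tfrac13]=\N$ by coprimality of $2$ and $3$; a constant function with a soft value $s_x$, $x\in(0,\I]$, satisfies the constraints automatically. Thus $C=\{c_n:n\in\N\}\sqcup\{s_x:x\in(0,\I]\}$, and reading off the pointwise order and addition from the structure of $\Cu(M_{6^\infty})=\N[\tfrac16]\sqcup(0,\I]$ on values (analogously to \autoref{eg:CuCAR}) yields exactly the relations claimed: on each component the operations are the obvious ones, while $s_x\le c_n\iff x\le n$, $c_n\le s_x\iff n<x$, and $c_n+s_x=s_{n+x}$. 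One also checks that $C$ is closed under suprema of increasing sequences in $S$ (a bounded increasing sequence of $c_n$'s is eventually constant, while increasing soft values converge in $(0,\I]$), so $C$ is a sub-Cu-semigroup isomorphic to $\N\sqcup(0,\I]$.

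Next I would show that $\phi$ is an idempotent endomorphism of $S$ with image exactly $C$. By its very definition $\phi(f)=\int_{[0,1]}f$ is always a constant function lying in $C$, so $\phi(S)\subseteq C$; conversely $\phi$ fixes $C$, since integrating a constant function returns the same value, and a constant integer function is compact in $S$ whereas a constant soft function is not, so the compact/soft dichotomy in the definition of $\phi$ gives $\phi(c_n)=c_n$ and $\phi(s_x)=s_x$. Hence $\phi|_C=\mathrm{id}_C$, which forces $\phi(S)=C$ and $\phi^2=\phi$.

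Finally, for a stationary inductive system whose connecting map is an idempotent Cu-endomorphism, the direct limit is canonically the retract onto the image: the corestriction $\phi'\colon S\to C$ forms a cocone (since $\phi'\circ\phi=\phi'$), inducing $\Theta\colon\varinjlim(S,\phi)\to C$, while the inclusion $\iota\colon C\to S$ induces $\Xi\colon C\to\varinjlim(S,\phi)$; the identities $\phi'\circ\iota=\mathrm{id}_C$ and $\iota\circ\phi'=\phi$ then show that $\Theta$ and $\Xi$ are mutually inverse. Combining with the first display gives $\Cu(\mathcal{Z})\cong C\cong\N\sqcup(0,\I]$ with the stated order and addition. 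I expect the main work to lie in the second paragraph: pinning down $C$ and its induced order (in particular the strict inequality $c_n\le s_x\iff n<x$), verifying closure under suprema, and checking that the compact/soft bookkeeping in $\phi$ really yields $\phi^2=\phi$. The concluding idempotent-limit argument is then purely formal.
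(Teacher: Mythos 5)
Your proposal is correct, and its core computation is exactly the paper's: both arguments start from $\Cu(\mathcal{Z})\cong\varinjlim(\Cu(\mathcal{Z}_{2^\infty,3^\infty}),\phi)$ via \autoref{thm:Culimits}, and both pin down the relevant elements by the same endpoint-plus-coprimality argument ($c(0)\in\N[\tfrac12]$, $c(1)\in\N[\tfrac13]$, constant, hence value in $\N[\tfrac12]\cap\N[\tfrac13]=\N$). Where you genuinely diverge is in how the limit is then identified. The paper, following the template of \autoref{eg:CuCAR}, computes the algebraic direct limit in the category of partially ordered monoids, obtains $\N\sqcup(0,\I]$, and then remarks that ``in principle one would need to complete, but one can check that this is already a Cu-semigroup.'' You instead observe that $\phi$ is an idempotent Cu-endomorphism with image the sub-Cu-semigroup $C$ of constant functions, and invoke the purely formal fact that the stationary limit along an idempotent is the retract onto its image. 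Your route buys a cleaner finish: the universal-property argument with $\Theta$ and $\Xi$ sidesteps the completion question entirely, whereas the paper's ``check it is already complete'' is left to the reader; conversely, the paper's route requires no verification that $C$ sits inside $S$ as a sub-Cu-semigroup. On that last point there is one item you should make explicit: for $\Xi=\mu_1\circ\iota$ to be a Cu-morphism you need the inclusion $\iota\colon C\hookrightarrow S$ to preserve compact containment, not just suprema. Closure of $C$ under suprema taken in $S$ gives $a\ll_S b\Rightarrow a\ll_C b$ for free, but the direction you actually need ($a\ll_C b\Rightarrow a\ll_S b$) requires checking, say, that the constant function with value $s_x$ is way below the one with value $s_y$ in $S$ whenever $x<y$, and that the $c_n$ are compact in $S$ (the latter is asserted in the paper's proof). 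This is a routine verification in the $\Lsc$-picture of $\Cu(\mathcal{Z}_{2^\infty,3^\infty})$, of the same ``one can check'' calibre as the paper's own elision, so it is a gap in writing rather than in substance.
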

\begin{proof}
This is similar to \autoref{eg:CuCAR}.
We first want to understand the algebraic direct limit of the 
Cuntz semigroups. One can check that
the compact elements in $\Cu(\mathcal{Z}_{2^\infty,3^\infty})$
are the constant functions 
\[c\colon [0,1]\to 
 \N[\tfrac{1}{6}]\cup (0,\I]\]
whose constant value is compact (and thus in $\N[\tfrac{1}{6}]$). 
The endpoint conditions on $c$
give $c(0)\in \N[\frac{1}{2}]$ and $c(1)\in \N[\frac{1}{3}]$.
Since $c(0)=c(1)$ and $\N[\frac{1}{2}] \cap \N[\frac{1}{3}]=\N$, 
we deduce that the function $c$ must have constant value in $\N$. 

It follows that the direct limit of $(\Cu(\mathcal{Z}_{2^\infty,3^\infty}),\phi)$ in the category of partially ordered monoids is
$\N\sqcup (0,\I]$, with order and addition matching the ones in the 
statement. In principle one would need to complete, as was done 
in \autoref{eg:CuCAR}, but one can check that this is already a 
Cu-semigroup.
\end{proof}

\begin{rem}\label{rem:DividUnitZ}
From the description above, it follows that for every $n\in\N$
there exists $z\in (\mathcal{Z}\otimes\K)_+$ with 
\[n[z]\leq [1_{\mathcal{Z}}]\leq (n+1)[z].\]
One can take $z$ to be, for example, 
any representative of $s_{\frac{1}{n}}$.
\end{rem}

\begin{cor}
The Jiang-Su algebra $\mathcal{Z}$ is simple, nuclear, 
has a unique trace, and has no projections other 
than $0$ and $1$.
\end{cor}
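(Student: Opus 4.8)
The plan is to extract all four properties from the computation of $\Cu(\mathcal{Z})$ in \autoref{thm:CuZ} together with the structure theory developed above. For simplicity, I would first check that $\Cu(\mathcal{Z})$ is a simple Cu-semigroup and then transport this to $\mathcal{Z}$ through the lattice isomorphism $\Lat(\mathcal{Z})\cong\Lat(\Cu(\mathcal{Z}))$ of \autoref{thm:CuIdeals}. By \autoref{cor:SimpleCuSmgp} it is enough to see that $\infty_a$ is independent of the nonzero element $a$. Observe that $s_\infty$ is a largest element of $\Cu(\mathcal{Z})$, since $c_n\leq s_\infty$ and $s_x\leq s_\infty$ for all $n$ and all $x$. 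For a nonzero compact element one has $nc_m=c_{nm}$, and the increasing sequence $(c_{nm})_n$ eventually dominates every $c_k$ and every $s_x$ with $x<\infty$; as the only upper bound of $\{c_k\}_k$ is $s_\infty$, we get $\infty_{c_m}=\sup_n c_{nm}=s_\infty$, and likewise $\infty_{s_x}=\sup_n s_{nx}=s_\infty$ for $x>0$. Thus $\infty_a=s_\infty$ for every nonzero $a$, so $\Cu(\mathcal{Z})$ is simple and hence so is $\mathcal{Z}$.

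Nuclearity is the most routine point: $\mathcal{Z}_{2^\infty,3^\infty}$ is the preimage of the nuclear subalgebra $M_{2^\infty}\oplus M_{3^\infty}$ (included via $x\oplus y\mapsto (x\otimes 1)\oplus(1\otimes y)$) under the endpoint evaluation $C([0,1],M_{2^\infty}\otimes M_{3^\infty})\to (M_{2^\infty}\otimes M_{3^\infty})^{\oplus 2}$, so it is a pullback of nuclear C$^*$-algebras and therefore nuclear; since $\mathcal{Z}=\varinjlim\mathcal{Z}_{2^\infty,3^\infty}$, nuclearity passes to the limit. For the trace, \autoref{thm:QTfunctionals} reduces the problem to showing that the set $F_{[1]}(\Cu(\mathcal{Z}))$ of functionals normalized at $[1_{\mathcal{Z}}]=c_1$ is a singleton. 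A functional $\lambda$ is additive, order-preserving and continuous for suprema of increasing sequences; additivity forces $\lambda(c_n)=n$, while $s_x+s_y=s_{x+y}$ shows that $g(x)=\lambda(s_x)$ is additive on $(0,\infty)$, and monotonicity then forces $g$ to be linear, $g(x)=cx$. The mixed relations of \autoref{thm:CuZ} pin down the slope: $s_n\leq c_n$ gives $cn\leq n$, hence $c\leq 1$, whereas $c_m\leq s_x$ for every $x>m$ gives $m\leq cx$ and, letting $x\to m^+$, $c\geq 1$; so $c=1$ and $\lambda$ is unique (one checks directly that these formulas do give a functional, so the set is nonempty). Therefore $\mathcal{Z}$ has a unique quasitracial state, and since $\mathcal{Z}$ is nuclear, hence exact, Haagerup's theorem \cite{Haa_quasitraces_2014} makes this quasitrace a trace.

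Finally, for the projections I would first note that $\mathcal{Z}$ is simple, unital and admits a quasitracial state, so \autoref{thm:Cuntz} makes it stably finite. Let $p\in\mathcal{Z}$ be a projection. Since $(p-\tfrac12)_+=\tfrac12 p\sim p$, \autoref{prop:WayBelowCutDown} shows that $[p]$ is compact, so $[p]=c_n$ for some $n$, and $p\leq 1_{\mathcal{Z}}$ yields $c_n=[p]\leq [1_{\mathcal{Z}}]=c_1$, forcing $n\in\{0,1\}$. If $n=0$ then $p\sim 0$, so $p=0$; if $n=1$ then $p\sim 1_{\mathcal{Z}}$, whence $p\precsim_{\mathrm{MvN}}1_{\mathcal{Z}}\precsim_{\mathrm{MvN}}p$ by \autoref{lma:CtzCompPjns} and, by stable finiteness, $p\sim_{\mathrm{MvN}}1_{\mathcal{Z}}$; the implementing partial isometry is then an isometry with range projection $p\leq 1_{\mathcal{Z}}$, which finiteness forces to be a unitary, so $p=1_{\mathcal{Z}}$. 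The hard part will be the uniqueness of the trace: one must argue carefully that a monotone additive function on $(0,\infty)$ is linear and then use the cross-component order relations to normalize its slope, and one must also arrange the logical order so that stable finiteness, needed for the projection count, is deduced from \autoref{thm:Cuntz} rather than assumed.
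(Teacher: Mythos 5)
Your proof is correct and follows essentially the same route as the paper: all four properties are read off from the computation of $\Cu(\mathcal{Z})$ in \autoref{thm:CuZ}, with simplicity via \autoref{cor:SimpleCuSmgp} and the ideal correspondence, the unique trace via the unique normalized functional together with \autoref{thm:QTfunctionals} and Haagerup's theorem, and the projection count via compactness of $[p]$. You merely spell out several steps the paper leaves tacit --- the nuclearity of the dimension drop algebras, the Cauchy-functional-equation argument pinning down the functional, and the stable-finiteness argument (via \autoref{thm:Cuntz} and \autoref{lma:CtzCompPjns}) showing $[p]=c_1$ forces $p=1_{\mathcal{Z}}$ --- all of which are accurate.
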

\begin{proof}
It is easy to see that $\Cu(\mathcal{Z})$ is a simple Cu-semigroup
(for example, since $\infty_s=\infty_t$ for all 
nonzero $s,t\in \Cu(\mathcal{Z})$). Hence $\mathcal{Z}$ is simple 
by \autoref{cor:SimpleCuSmgp}. 
Moreover, it has a unique
normalized functional (given by $\lambda(c_n)=n$ for $n\in\N$
and $\lambda(s_x)=x$ for all $x\in (0,\I]$). Thus $\mathcal{Z}$ 
has a unique quasitracial state by \autoref{thm:QTfunctionals}, 
and since $\mathcal{Z}$ is exact (being nuclear), it follows that
$\QT(\mathcal{Z})=\mathrm{T}(\mathcal{Z})$, as desired. Finally, a projection in $\mathcal{Z}$ other than $0$ and $1$ would yield 
a compact element in $\Cu(\mathcal{Z})$ strictly between $0$
and $1$, which does not exist. 
\end{proof}

It was an old question of Kaplansky whether all simple \ca s
must contain nontrivial projections, and the first counterexample
was $C^*_\lambda(\mathbb{F}_2)$ (the other proposed candidate at 
the time, the irrational rotation algebra $A_\theta$, turned out to have many projections). 
The question then became whether
\emph{nuclear} simple \ca s must contain nontrivial projections, and 
Blackadar showed that the answer to this question is also negative.
The algebra $\mathcal{Z}$ constructed by 
Jiang and Su in \cite{JiaSu_simple_1999} 
is a further example of a simple, nuclear C$^*$-algebra without projections, and it attracted a great deal of attention
due to its prominent role in the classification programme. For example, since the Elliott invariant cannot
distinguish between $A$ and $A\otimes\mathcal{Z}$ (see 
\autoref{rem:ElltensorZ}), only $\mathcal{Z}$-stable \ca s can be 
expected to be classified using exclusively the invariant Ell.

\section{\texorpdfstring{$\mathcal{Z}$}{Z}-stability and strict comparison; the 
Toms-Winter conjecture}

The goal of this section is to prove a theorem of R{\o}rdam 
\cite{Ror_stable_2004},
relating two notions that do not in principle seem to be
related: $\mathcal{Z}$-stability on the one hand, and almost unperforation of the Cuntz semigroup 
on the other hand. We will also show that
almost unperforation in the Cuntz semigroup is equivalent to 
strict comparison, and make connections to the Toms-Winter
conjecture.

\begin{df}\label{df:Zstab}
Let $A$ be a \ca. We say that $A$ is \emph{$\mathcal{Z}$-stable}
if $A\otimes\mathcal{Z}\cong A$.
\end{df}

For most practical purposes, just knowing that an isomorphism
exists will not be of much help. As it turns out, $\mathcal{Z}$
has some remarkable properties that allow one to show that,
whenever an isomorphism $A\otimes\mathcal{Z}\cong A$ exists, 
then a \emph{nice} isomorphism exists, in the sense of the 
following result.

\begin{thm}\label{thm:Zssa}(Jiang-Su \cite{JiaSu_simple_1999}).
Let $A$ be a separable \ca. 
If $A\otimes\mathcal{Z}\cong A$, then 
there exists an isomorphism 
$\varphi\colon A\to A\otimes\mathcal{Z}$ which is 
approximately unitarily equivalent to the map $a\mapsto a\otimes 1_{\mathcal{Z}}$. In particular, $\varphi$ satisfies
$[\varphi(a)]=[a\otimes 1_{\mathcal{Z}}]$ for all $a\in (A\otimes\K)_+$.
\end{thm}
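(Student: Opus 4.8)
The plan is to derive the statement from the single structural fact that $\mathcal{Z}$ is \emph{strongly self-absorbing}: Jiang and Su proved that the first-factor embedding $\mathcal{Z}\to\mathcal{Z}\otimes\mathcal{Z}$, $z\mapsto z\otimes 1_{\mathcal{Z}}$, is itself approximately unitarily equivalent to an isomorphism. The usable reformulation is that $\mathcal{Z}$ has an \emph{approximately inner half-flip}, i.e.\ the two embeddings $z\mapsto z\otimes 1_{\mathcal{Z}}$ and $z\mapsto 1_{\mathcal{Z}}\otimes z$ of $\mathcal{Z}$ into $\mathcal{Z}\otimes\mathcal{Z}$ are approximately unitarily equivalent (written $\approx_{\mathrm{a.u.}}$). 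This is the only property of $\mathcal{Z}$ that I would use; everything else is a general mechanism valid verbatim for any strongly self-absorbing algebra in its place.

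The core of the argument is an Elliott two-sided approximate intertwining. By hypothesis I fix an isomorphism $\alpha\colon A\otimes\mathcal{Z}\to A$, and write $\iota\colon A\to A\otimes\mathcal{Z}$ for the first-factor embedding $\iota(a)=a\otimes 1_{\mathcal{Z}}$. The idea is to set up the zig-zag
\[
A \xrightarrow{\ \iota\ } A\otimes\mathcal{Z} \xrightarrow{\ \alpha\ } A \xrightarrow{\ \iota\ } A\otimes\mathcal{Z} \xrightarrow{\ \alpha\ } \cdots
\]
and to build, inductively over an exhausting sequence of finite subsets, unitaries in the relevant unitizations that make the alternating triangles commute up to summable errors. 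The crucial point is that the approximately inner half-flip lets one interchange a first-factor embedding with a second-factor one up to conjugation by a unitary, and this is exactly what supplies the approximate commutativity of the diagram. Elliott's intertwining theorem then produces an isomorphism $\varphi\colon A\to A\otimes\mathcal{Z}$ together with the bookkeeping needed to conclude $\varphi\approx_{\mathrm{a.u.}}\iota$, thereby upgrading the abstract isomorphism $A\cong A\otimes\mathcal{Z}$ to one that is, up to inner automorphisms, the first-factor embedding.

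The main obstacle is the input itself: establishing the approximately inner half-flip of $\mathcal{Z}$ is the genuinely hard, algebra-specific step, where the construction of $\mathcal{Z}$ as a limit of generalized dimension-drop algebras (\autoref{thm:CuZ}) is needed. A secondary difficulty is the intertwining bookkeeping: one must choose the intermediate unitaries with norm estimates whose errors are summable and patch them coherently (for which a $K_1$-injectivity-type fact about $\mathcal{Z}$ is used), so that the limit is approximately unitarily equivalent to $\iota$ rather than merely an abstract isomorphism. Once the half-flip is in hand, the intertwining is formal.

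Finally, the ``in particular'' clause is internal to the material already developed. After tensoring with $\K$, the relation $\varphi\approx_{\mathrm{a.u.}}\iota$ persists for $\varphi\otimes\id_{\K}$ and $\iota\otimes\id_{\K}$, so for $a\in(A\otimes\K)_+$ there are unitaries $u_n$ (in the unitization) with $u_n\varphi(a)u_n^*\to a\otimes 1_{\mathcal{Z}}$. Since continuous functional calculus commutes with conjugation, $(u_n\varphi(a)u_n^*-\ep)_+=u_n(\varphi(a)-\ep)_+u_n^*\sim(\varphi(a)-\ep)_+$ for every $\ep>0$, and likewise $u_n\varphi(a)u_n^*\sim\varphi(a)$. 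Given $\ep>0$, choosing $n$ with $\|u_n\varphi(a)u_n^*-a\otimes 1_{\mathcal{Z}}\|<\ep$ and applying \autoref{lma:CutDownDistance} gives both $(\varphi(a)-\ep)_+\precsim a\otimes 1_{\mathcal{Z}}$ and $(a\otimes 1_{\mathcal{Z}}-\ep)_+\precsim\varphi(a)$. Letting $\ep\to 0$ and invoking R{\o}rdam's lemma (\autoref{thm:Rordam}) yields $\varphi(a)\sim a\otimes 1_{\mathcal{Z}}$, that is, $[\varphi(a)]=[a\otimes 1_{\mathcal{Z}}]$, as desired.
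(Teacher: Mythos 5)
The paper does not prove this theorem; it is quoted from the literature with a citation to Jiang--Su, so there is no internal proof to compare against. Judged on its own merits, your outline is the standard argument, in the form it takes in the theory of strongly self-absorbing C$^*$-algebras (Jiang--Su's original approach, later axiomatized by Toms and Winter): the approximately inner half-flip of $\mathcal{Z}$ is indeed the one algebra-specific input, and the upgrade of an abstract isomorphism $A\cong A\otimes\mathcal{Z}$ to one approximately unitarily equivalent to $a\mapsto a\otimes 1_{\mathcal{Z}}$ is exactly an Elliott approximate intertwining along the zig-zag you describe, with the half-flip --- applied in the last two tensor factors of $A\otimes\mathcal{Z}\otimes\mathcal{Z}$ and tensored with $\mathrm{id}_A$ --- supplying the approximate commutativity of the triangles. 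Two remarks on the details you flag: for this particular statement the half-flip plus the intertwining suffice, and the $K_1$-injectivity of $\mathcal{Z}$ (which holds automatically, since $\mathcal{Z}$ has stable rank one and trivial $\mathrm{K}_1$) is not genuinely needed; and since $A$ is not assumed unital, the intertwining unitaries must be taken in $\widetilde{A\otimes\mathcal{Z}}$ (or the multiplier algebra), which the standard machinery accommodates.

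Your verification of the ``in particular'' clause is complete and correct, and it is the part of the statement the survey actually uses (in the proof of \autoref{thm:RordamStrComp}). One small point is worth a sentence: after stabilizing, the implementing unitaries are of the form $u_n\otimes 1$ and live in the multiplier algebra of $A\otimes\mathcal{Z}\otimes\K$ rather than in its minimal unitization, but this is harmless for Cuntz comparison, since for any multiplier unitary $v$ one still has $vav^*\sim a$: take $x=va^{\frac{1}{2}}$, which lies in the algebra, and apply part~(iv) of \autoref{cor:FuncCalc}. With that observation, your appeal to \autoref{lma:CutDownDistance} and R{\o}rdam's lemma (\autoref{thm:Rordam}) goes through verbatim.
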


We will begin by relating $\mathcal{Z}$-stability to 
the following notion:

\begin{df}
Let $S$ be a Cu-semigroup. We say that $S$ is \emph{almost
unperforated} if whenever $s,t\in S$ and $n\in\N$ satisfy
$(n+1)s\leq nt$, then $s\leq t$.
\end{df}

The following is the main result of this section.
For $a\in A_+$ and $n\in\N$, recall that $n[a]=[a\otimes 1_n]$
in $\Cu(A)$. The following is 

\begin{thm}\label{thm:RordamStrComp} (R{\o}rdam \cite{Ror_stable_2004}). 
Let $A$ be a simple, separable, $\mathcal{Z}$-stable unital C$^*$-algebra. Then 
$\Cu(A)$ is almost unperforated.
\end{thm}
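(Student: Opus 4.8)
The plan is to leverage two facts already at our disposal: the near-divisibility of $\mathcal{Z}$ recorded in \autoref{rem:DividUnitZ}, and the fact (\autoref{thm:Zssa}) that for separable $\mathcal{Z}$-stable $A$ the absorption isomorphism acts on Cuntz classes by $[x]\mapsto[x\otimes 1_{\mathcal{Z}}]$. So let $s=[a]$ and $t=[b]$ with $a,b\in(A\otimes\K)_+$, and suppose $(n+1)s\leq nt$, i.e.\ $a^{\oplus(n+1)}\precsim b^{\oplus n}$. We want to conclude $a\precsim b$.

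First I would use \autoref{rem:DividUnitZ} to fix, for this particular $n$, an element $z\in(\mathcal{Z}\otimes\K)_+$ with
\[ n[z]\leq[1_{\mathcal{Z}}]\leq(n+1)[z], \]
equivalently $z^{\oplus n}\precsim 1_{\mathcal{Z}}$ and $1_{\mathcal{Z}}\precsim z^{\oplus(n+1)}$ in $\mathcal{Z}\otimes\K$. The only auxiliary fact needed is that Cuntz subequivalence is preserved by tensoring with a fixed positive element: if $w_1\precsim w_2$ (witnessed by $s_kw_2s_k^*\to w_1$) and $c\in A_+$, then conjugating $c\otimes w_2$ by $1_A\otimes s_k$ gives $c\otimes w_1\precsim c\otimes w_2$, and symmetrically in the other tensor variable. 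This is immediate from the definition.

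Then I would chain three such tensored subequivalences in $(A\otimes\mathcal{Z})\otimes\K$, using at each step the harmless reshuffling $x\otimes w^{\oplus m}\sim(x\otimes w)^{\oplus m}$:
\[ [a\otimes 1_{\mathcal{Z}}]\ \leq\ (n+1)[a\otimes z]\ \leq\ n[b\otimes z]\ \leq\ [b\otimes 1_{\mathcal{Z}}]. \]
Here the first inequality tensors $1_{\mathcal{Z}}\precsim z^{\oplus(n+1)}$ by $a$, the middle one tensors the hypothesis $a^{\oplus(n+1)}\precsim b^{\oplus n}$ by $z$, and the last tensors $z^{\oplus n}\precsim 1_{\mathcal{Z}}$ by $b$. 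Thus $[a\otimes 1_{\mathcal{Z}}]\leq[b\otimes 1_{\mathcal{Z}}]$ in $\Cu(A\otimes\mathcal{Z})$. Finally \autoref{thm:Zssa} yields an isomorphism $\varphi\colon A\to A\otimes\mathcal{Z}$ with $\Cu(\varphi)([x])=[x\otimes 1_{\mathcal{Z}}]$; since $\Cu(\varphi)$ is an order-isomorphism by functoriality (\autoref{thm:CuAinCu}), the last inequality descends to $[a]\leq[b]$, i.e.\ $s\leq t$.

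The conceptual crux---and the step that would be the genuine obstacle without \autoref{rem:DividUnitZ} in hand---is the production of the almost-divisible $z$: it is exactly the slack in $n[z]\leq[1_{\mathcal{Z}}]\leq(n+1)[z]$ that absorbs the mismatch between the coefficients $n+1$ and $n$ in the hypothesis. It is worth noting that simplicity of $A$ plays no role in this implication; the hypotheses are used only through separability and unitality (for \autoref{thm:Zssa}) and through $\mathcal{Z}$-stability (so that $z$ may be brought to act inside $A$ after tensoring).
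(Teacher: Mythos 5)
Your proposal is correct and follows essentially the same route as the paper: fix $z$ with $n[z]\leq[1_{\mathcal{Z}}]\leq(n+1)[z]$ via \autoref{rem:DividUnitZ}, chain the tensored subequivalences $a\otimes 1_{\mathcal{Z}}\precsim a\otimes z^{\oplus(n+1)}\precsim b\otimes z^{\oplus n}\precsim b\otimes 1_{\mathcal{Z}}$, and descend through the isomorphism from \autoref{thm:Zssa}. Your explicit verification that tensoring by a fixed positive element preserves $\precsim$, and your closing observation that simplicity is never actually used, are accurate refinements of the paper's (identical) argument.
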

\begin{proof} 
Let $n\in\N$ and $a,b\in (A\otimes\K)_+$ satisfy
\[\tag{7.1}(n+1)[a]\leq n[b] \ \mbox{ in } \Cu(A).\]
Use \autoref{rem:DividUnitZ} to find $z\in (\mathcal{Z}\otimes\K)_+$ such that 
\[\tag{7.2} n[z]\leq [1_{\mathcal{Z}}]\leq (n+1)[z]  \ \mbox{ in } \Cu(\mathcal{Z})\]
Working in (the stabilization of) $A\otimes\mathcal{Z}$, we get
\[a\otimes 1_{\mathcal{Z}} \stackrel{(7.2)}{\precsim} a\otimes (z\otimes 1_{n+1})
 \sim (a\otimes 1_{n+1})\otimes z \stackrel{(7.1)}{\precsim}
 (b \otimes 1_n)\otimes z\stackrel{(7.2)}{\precsim} b\otimes 1_{\mathcal{Z}}.
\]
Use \autoref{thm:Zssa} to choose an isomorphism 
$\varphi\colon A\to A\otimes\mathcal{Z}$ satisfying 
$[\varphi(a)]=[a\otimes 1_{\mathcal{Z}}]$ for all $a\in (A\otimes\K)_+$.
Applying $\varphi$ to the subequivalence above, we deduce
that $[\varphi(a)]\leq [\varphi(b)]$. Since $\varphi$ is an 
isomorphism, this finishes the proof.\end{proof}

We put \autoref{thm:RordamStrComp} into context, by relating it
to the notion of \emph{strict comparison}.

\begin{df}
Let $A$ be a simple unital C$^*$-algebra. We say that $A$ has \emph{strict comparison
(of positive elements by quasitraces)} if whenever $a,b\in (A\otimes\K)_+$ are nonzero and satisfy 
$d_\tau(a)<d_\tau(b)$ for all $\tau\in \QT(A)$, then $a\precsim b$.
\end{df}

Strict comparison is essentially a property of $\Cu(A)$. Using 
\autoref{thm:QTfunctionals}, one can show that $A$ has strict 
comparison if and only if whenever $s,t\in \Cu(A)$ are nonzero
and satisfy $\lambda(s)<\lambda(t)$ for all $\lambda\in F_{[1]}(\Cu(A))$, then $s\leq t$. Less obvious is the fact that strict comparison
is equivalent to almost unperforation; see 
\cite[Proposition~3.2]{Ror_stable_2004}.
(One direction is easy,
namely almost unperforation implies strict comparison, while the 
converse requires an order-semigroup theoretic version of the 
Hahn-Banach theorem, similar to what was used in the 
proof of \autoref{thm:Cuntz}.)

In particular, \autoref{thm:RordamStrComp} asserts that 
$\mathcal{Z}$-stable \ca s have strict comparison. 
Perhaps surprisingly, it is conjectured that the converse is
true in the simple, nuclear setting:

\begin{cnj}\label{cnj:TW}(Toms-Winter regularity conjecture, see \cite{TomWin_villadsen_2009}, and also \cite[Conjecture~5.2]{Win_proceedings_2018}).
Let $A$ be a simple, separable, unital, nuclear \ca. The following are equivalent:
\be[{\rm (i)}]\item $\dim_{\mathrm{nuc}}(A)<\I$;
\item $A$ is $\mathcal{Z}$-stable;
\item $A$ has
strict comparison.
\ee 
\end{cnj}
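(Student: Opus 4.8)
The plan is to establish the three implications comprising the equivalence separately, noting at the outset that one of them is exactly \autoref{thm:RordamStrComp} and that, at the time of writing, the conjecture is only partially resolved. The implication (ii) $\Rightarrow$ (iii) is precisely R{\o}rdam's theorem (\autoref{thm:RordamStrComp}) together with the equivalence of almost unperforation and strict comparison discussed immediately after it, so nothing further is required there. The genuine content lies in the equivalence (i) $\Leftrightarrow$ (ii), which is a theorem, and in the remaining implication (iii) $\Rightarrow$ (ii), which I would flag at once as the one open direction of the conjecture. I would treat the two directions of (i) $\Leftrightarrow$ (ii) by quite different machinery.

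For (i) $\Rightarrow$ (ii) (Winter), the strategy is to produce a unital $^*$-homomorphism from $\mathcal{Z}$ into the central sequence algebra $F(A)=(A_\omega\cap A')/\mathrm{Ann}$, which by Jiang--Su rigidity (\autoref{thm:Zssa}) is equivalent to $A\otimes\mathcal{Z}\cong A$. The c.p.c.\ order-zero maps from finite-dimensional algebras witnessing $\dim_{\mathrm{nuc}}(A)<\infty$ provide, in the limit, order-zero maps into $F(A)$ whose mutual orthogonality and approximate unitality can be arranged well enough to embed a generalized dimension-drop algebra, and hence $\mathcal{Z}$, centrally. The technical bridge is property (SI), which upgrades the comparison and divisibility data coming from (i) into the exact Rokhlin-type relations defining $\mathcal{Z}$.

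For (ii) $\Rightarrow$ (i) (Castillejos--Evington--Tikuisis--White--Winter) the plan is considerably harder and runs through the tracial state space: one shows that a $\mathcal{Z}$-stable simple separable nuclear algebra admits \emph{complemented partitions of unity} and then, via a careful analysis of the uniform tracial completions $\pi_\tau(A)''$, builds finite-dimensional approximations of the identity whose order-zero ``colours'' are uniformly bounded, yielding $\dim_{\mathrm{nuc}}(A)\le 1$. The main obstacle here is the passage from approximate multiplicativity in the uniform trace norm to genuine completely positive approximations in operator norm, which requires the full strength of the analysis of tracial von Neumann bimodules together with nuclearity.

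The genuinely open direction is (iii) $\Rightarrow$ (ii): as noted in the introduction, showing that strict comparison implies $\mathcal{Z}$-stability is the only implication of the conjecture that remains unresolved in full generality, and this is where I expect the main obstacle. The natural line of attack, successful under hypotheses on the trace simplex (Matui--Sato for finitely many extreme traces; Kirchberg--R{\o}rdam, Sato, and Toms--White--Winter for a compact, finite-dimensional extreme boundary), is to derive property (SI) from strict comparison and combine it with excision of traces to produce the central order-zero maps as in (i) $\Rightarrow$ (ii). The difficulty is that controlling the behaviour over a trace simplex of arbitrary shape has so far resisted the available excision techniques, and it is exactly here that a new idea is needed. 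I would therefore present (i) $\Leftrightarrow$ (ii) and (ii) $\Rightarrow$ (iii) as the established content, and state (iii) $\Rightarrow$ (ii) as the remaining conjectural implication.
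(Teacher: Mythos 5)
Your proposal is correct and matches the paper's treatment: since this is a conjecture, the paper likewise offers no proof but records exactly the same state of knowledge, namely that (i)~$\Leftrightarrow$~(ii) is a theorem (Winter for one direction, Castillejos--Evington--Tikuisis--White--Winter for the other), that (ii)~$\Rightarrow$~(iii) is \autoref{thm:RordamStrComp}, and that (iii)~$\Rightarrow$~(ii) remains open outside special cases on the trace simplex. Your additional sketches of the underlying machinery (property (SI), complemented partitions of unity, uniform tracial completions) are accurate and simply more detailed than the paper's citations.
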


Nuclear dimension for \ca s is a noncommutative version of
the covering dimension for topological spaces which was introduced by
Winter and Zacharias \cite{WinZac_nuclear_2010}, and we will not define this notion here. 

We state the conjecture in this form for historical reasons, but 
the fact that (i) and (ii) are equivalent is by now a theorem:
that (i) implies (ii) is an impressive
result of Winter \cite{Win_nuclear_2012}, while the implication
from (ii) to (i) has recently been obtained in an equally 
outstanding work by Castillejos, Evington, Tikuisis, White and Winter \cite{CasEviTikWhiWin_nuclear_2019}.
The fact that (ii) implies (iii) is precisely \autoref{thm:RordamStrComp}. The converse implication
remains open, although it is known in some cases,
such as whenever $\partial_eT(A)$ is compact and 
finite-dimensional thanks to the independent 
works of Kirchberg-R{\o}rdam \cite{KirRor_central_2014},
Toms-White-Winter \cite{TomWhiWin_stability_2015}, and 
Sato \cite{Sat_traces_2012}; or
whenever $A$ has stable rank one and locally finite 
nuclear dimension thanks to the work of Thiel 
\cite{Thi_ranks_2020}; 
or whenever $A$ has uniform property $\Gamma$, by the 
work of Castillejos, Evington, Tikuisis and White \cite{CasEviTikWhi_uniform_2022}.

\section{Toms' example and the relation with the Elliott invariant}\label{sec:Toms}

In this section, we present an example, due to Andrew Toms
\cite{Tom_classification_2008}, of two C$^*$-algebras that agree on the Elliott invariant (and more), yet they are distinguished by their Cuntz semigroup. This example shows the importance of the Cuntz semigroup outside the class of $\mathcal{Z}$-stable C$^*$-algebras, as a key ingredient for classification. We also relate the Cuntz semigroup with the Elliott invariant.


\begin{df}
Let $A$ be a unital C$^*$-algebra. The \emph{Elliott invariant} of $A$, denoted by $\Ell(A)$, consists of the $4$-tuple 
\[
\mathrm{Ell}(A)=\big((\KK_0(A),[1_A]), \KK_1(A), \mathrm{T}(A), r_A\big),
\]
 where $r_A\colon \KK_0(A)\times \mathrm{T}(A)\to \R$ is the pairing between K-theory and traces, defined as $r_A([p],\tau)=\tau(p)$
 for all projections $[p]\in \KK_0(A)$ and all $\tau\in\mathrm{T}(A)$.
\end{df}

It should be pointed out that Elliott's original formulation also included 
the positive cone $\KK_0(A)^+$ of $\KK_0(A)$ 
as part of the invariant. 
The modification we make here is 
inspired by the most recent approach to classification
\cite{CarGabSchTikWhi_classification_2022}, and
has the following convenient
consequence:

\begin{rem}\label{rem:ElltensorZ}
Let $A$ be a unital C$^*$-algebra. Then $\Ell(A)\cong\Ell(A\otimes\mathcal{Z})$.
\end{rem}

For $\mathcal{Z}$-stable C$^*$-algebras,
$\KK_0(A)^+$ can be recovered from the remaining
parts of $\Ell(A)$ as follows:
\[\KK_0(A)^+=\{x\in \KK_0(A)\colon r_A(x,\tau)>0 \mbox{ for all }
 \tau\in \mathrm{T}(A)\}\cup\{0\}.
\]
Thus, for $\mathcal{Z}$-stable C$^*$-algebras, 
there is no loss
of information when dropping $\KK_0(A)^+$ from the invariant. For the sake of this discussion, denote by 
$\widetilde{\mathrm{Ell}}(A)$ the invariant obtained from Ell$(A)$ by
adding the positive cone $\KK_0(A)^+$ of $\KK_0(A)$ 
as part of the invariant. While 
$\Ell(A)\cong\Ell(A\otimes\mathcal{Z})$ for \emph{every}
unital \ca\ $A$, this is no longer true if one 
considers $\widetilde{\mathrm{Ell}}$ instead, 
as $\mathcal{Z}$-stable \ca s have weakly unperforated
$\KK_0$-groups;
see \cite{GonJiaSu_obstructions_2000}.


The Elliott conjecture originally predicted that 
$\widetilde{\mathrm{Ell}}$ would be a complete invariant for the class of simple, separable, unital, nuclear C$^*$-algebras. The counterexamples to this conjecture were obtained by R\o rdam first, and later by Toms, which is the one we present below. These examples 
showed that $\mathcal{Z}$-stability is not automatic
for simple, separable, nuclear C$^*$-algebras, and this strongly suggested the need to add this condition as 
an assumption in the conjecture.
After many years of work by many researchers, which
culminated in \cite{KirPhi_embed_2000, Phi_classification_2000, GonLinNiu_class1_2020, GonLinNiu_class2_2020, EllGonLinNiu_class_2020, TikWhiWin_class_2017}, the following impressive classification theorem was obtained; see also \cite[Theorem~D]{Win_proceedings_2018}. Note that,
since $\mathcal{Z}$-stability is assumed, 
$\widetilde{\mathrm{Ell}}$ can be replaced by
$\mathrm{Ell}$ in the following statement:

\begin{thm}\label{thm:classification}
Let $A$ and $B$ be simple, separable, unital, nuclear, $\mathcal{Z}$-stable C$^*$-algebras satisfying the UCT. Then $A\cong B$ if and 
only if Ell$(A)\cong$ Ell$(B)$.
\end{thm}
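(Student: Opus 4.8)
The forward implication is immediate: since $\Ell$ is functorial, an isomorphism $A\cong B$ induces an isomorphism $\Ell(A)\cong \Ell(B)$. The substance is the converse, and the plan is to reduce it to two classification theorems of very different character, according to a fundamental dichotomy. Concretely, I would first invoke the fact (due to R{\o}rdam, see \cite{Ror_stable_2004}) that a simple, unital, $\mathcal{Z}$-stable \ca\ is either stably finite or purely infinite, the two cases being distinguished by whether the trace simplex $\mathrm{T}(A)$ is empty or not (in the stably finite case a trace exists by Cuntz's theorem, \autoref{thm:Cuntz}). Since an isomorphism $\Ell(A)\cong \Ell(B)$ in particular carries $\mathrm{T}(A)$ onto $\mathrm{T}(B)$, the algebras $A$ and $B$ necessarily fall into the same case, which may then be treated separately.

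In the purely infinite case, where $\mathrm{T}(A)=\E=\mathrm{T}(B)$, the algebras are unital Kirchberg algebras and the Elliott invariant collapses to the pair $\big((\KK_0(A),[1_A]),\KK_1(A)\big)$, the trace data and the pairing $r_A$ being vacuous. Here I would appeal to the Kirchberg--Phillips classification \cite{KirPhi_embed_2000,Phi_classification_2000}, which asserts that unital Kirchberg algebras satisfying the UCT are classified by precisely this $K$-theoretic data. An isomorphism of invariants therefore lifts to an isomorphism $A\cong B$.

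In the stably finite case I would run the Elliott intertwining machine, whose two pillars are an \emph{existence} theorem (every morphism $\Ell(A)\to \Ell(B)$ is induced by a $*$-\hm\ $A\to B$, up to approximate unitary equivalence) and a \emph{uniqueness} theorem (two $*$-\hms\ inducing the same map on $\Ell$ are approximately unitarily equivalent). Granting these, Elliott's two-sided approximate intertwining, applied to the given isomorphism of invariants together with its inverse, assembles an isomorphism $A\cong B$. The proof of the two pillars is exactly the content of \cite{GonLinNiu_class1_2020,GonLinNiu_class2_2020,EllGonLinNiu_class_2020}: one first classifies a concrete class of model algebras (of generalized tracial rank at most one) and then shows, by a tracial-approximation argument, that every algebra satisfying the hypotheses is isomorphic to such a model. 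Throughout, $\mathcal{Z}$-stability is indispensable—it supplies strict comparison (\autoref{thm:RordamStrComp}) and the regularity needed both to run the intertwining and to absorb the model algebras—and it is ultimately what makes $\Ell$ a complete invariant at all.

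The main obstacle is the existence theorem in the stably finite case. Its proof rests on the quasidiagonality of all traces on separable, nuclear \cas\ satisfying the UCT, the deep theorem of Tikuisis, White and Winter \cite{TikWhiWin_class_2017}; this is precisely the input that allows one to construct the required $*$-\hms\ and to control their behaviour at the level of traces, thereby matching the prescribed action on $\Ell$. I expect everything else—the dichotomy, the Kirchberg--Phillips input, and the formal intertwining—to be comparatively manageable, whereas the passage from a morphism of invariants to an actual $*$-\hm, routed through quasidiagonality and the fine structure of the trace simplex, is where the real difficulty lies.
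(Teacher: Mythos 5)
The paper offers no proof of this theorem: it is stated as the culmination of the cited literature \cite{KirPhi_embed_2000, Phi_classification_2000, GonLinNiu_class1_2020, GonLinNiu_class2_2020, EllGonLinNiu_class_2020, TikWhiWin_class_2017}, and your sketch is a faithful roadmap of precisely those works --- R{\o}rdam's finite/purely infinite dichotomy for simple $\mathcal{Z}$-stable algebras, Kirchberg--Phillips in the traceless case, and the existence--uniqueness intertwining of Gong--Lin--Niu/Elliott--Gong--Lin--Niu with the Tikuisis--White--Winter quasidiagonality theorem as the key input in the stably finite case. Your proposal is correct and takes essentially the same route as the paper's (cited) proof.
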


Here, the UCT stands for the so-called Universal Coefficient Theorem and, in the theorem above, its assumption is potentially vacuous. This is one of the most important open problems in the area.

Let us turn the attention now to the construction offered by Toms. This was in turn based on previous work of Villadsen; see \cite{Vil_simple_1998, Vil_stable_1999}.
Let $(k_i)_{i\in\N}$ and $(n_i)_{i\in\N}$ be sequences of natural numbers, to be specified later. For each $i\in\N$, set $N_i=\prod_{j\leq i} n_j$ and
\[
A_i=M_{k_i}\otimes C\left( [0,1]^{6N_i}\right).
\]
Identify $[0,1]^{6N_i}$ with $\big([0,1]^{6N_{i-1}}\big)^{n_i}$ and for each $i$ and $l$ such that $1\leq l\leq n_i$,
let
\[
\pi_l^{(i)}\colon [0,1]^{6N_i}\to [0,1]^{6N_{i-1}}
\]
be the coordinate projection, given by $\pi_l^{(i)}(x_1,\dots,x_{n_i})=x_l$ for all $(x_1,\dots,x_{n_i})\in [0,1]^{6N_{i}}$.
For ease of notation, write $X_i=[0,1]^{6N_i}$. For each $i\in\N$, choose a dense sequence $(z_l^{(i)})$ in $X_i$, and choose points $x_1^{(i)},\dots, x_i^{(i)}\in X_i$ by setting $x_i^{(i)}=z_i^{(i)} $ and, if $1\leq j\leq i-1$, choose $x_j^{(i)}$ such that $\pi_1^{(j)}\pi_1^{(j+1)}\dots\pi_1^{i-2}\pi_1^{i-1}(x_j^{(i)})=z^{(i)}_{i+1-j}$.

Let us define $\phi_{i-1}\colon A_{i-1}\to A_i$ as follows. 
Given $f\in A_{i-1}$ and $x\in [0,1]^{6N_i}$, set
\[
\phi_{i-1}(f)(x)=\mathrm{diag}\Big(f\big(\pi_1^{(i)}(x)\big),\dots,f\big(\pi_{n_i}^{(i)}(x)\big),f\big(x_1^{(i-1)}\big),\dots,f\big(x_{i-1}^{(i-1)}\big)\Big).
\]
We now choose $n_i$ in such a way that $n_i$ is much larger than $i$ as $i\to\infty$ and such that for each $r\in\N$, there is $i_0$ with $r| (n_{i_0}+i_0)$. Set $n_1=1$, $k_1=4$ and $k_{i+1}=k_i(i+6N_i)$, and let $A=\varinjlim (A_i,\phi_i)$.

\begin{prop} 
\label{prop:Asimple} The C$^*$-algebra $A$ just constructed is simple,
separable, unital, nuclear, satisfies the UCT, has real rank one and stable rank one, and $A\otimes\mathcal{Z}$ 
is isomorphic to an AI algebra. 
\end{prop}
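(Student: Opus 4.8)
The plan is to read off the first group of properties from the building blocks $A_i = M_{k_i}\otimes C([0,1]^{6N_i})$ and the connecting maps, and then to deduce the AI statement from the classification machinery quoted above. Being separable, unital and nuclear is immediate: each $A_i$ is separable, nuclear and unital, the maps $\phi_{i-1}$ are unital (the number of blocks $n_i+(i-1)$ equals $k_i/k_{i-1}$, which forces $n_i=6N_{i-1}$, consistent with the stipulation that $n_i\gg i$), and all three properties pass to countable direct limits. Each $A_i$ lies in the bootstrap class, so $A$ satisfies the UCT. For simplicity I would invoke the standard Villadsen-type criterion for diagonal AH algebras: it suffices that for every $i$ and every nonempty open $U\subseteq X_i$ there is $j>i$ such that every point of $X_j$ is sent into $U$ by at least one eigenvalue map of $\phi_{j-1}\circ\cdots\circ\phi_i$. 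The point-evaluation summands $f\mapsto f(x_l^{(i-1)})$ yield \emph{constant} eigenvalue maps, and the density of the sequences $(z_l^{(i)})$ together with the prescribed choice of the $x_j^{(i)}$ guarantees that these constants eventually land in every open set, which is exactly the criterion.

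The stable rank one and real rank one assertions are the genuinely example-specific part, and I expect them to be the main obstacle. The difficulty is that the spaces $X_i=[0,1]^{6N_i}$ have fast dimension growth (this is precisely what later produces perforation in $\Cu(A)$), so the usual slow-dimension-growth theorems on ranks do not apply. Here I would follow Toms: since $A$ has a nonempty trace space (the compatible limit of the trace spaces of the $A_i$) it is stably finite, and one exploits the large multiplicities and the diagonal structure of the connecting maps to establish stable rank one directly, with real rank one coming from the analogous analysis of self-adjoint elements. In a survey I would quote these from \cite{Tom_classification_2008} rather than reprove them.

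For the AI statement I would argue by classification, which is cheap once the invariant is identified. Since each cube $[0,1]^{6N_i}$ is contractible, $K_*(A_i)=(\Z,0)$; hence $\KK_1(A)=0$ and $\KK_0(A)=\varinjlim\big(\Z,\ \times(n_{i+1}+i)\big)$ is a simple dimension group, realized as a subgroup of $\Q$ with its natural order. The trace space $\mathrm{T}(A)$ is a metrizable Choquet simplex, and since $\KK_0(A)$ has rank one it carries a unique state, so the pairing is automatically determined. By the range-of-invariant theorem for simple unital AI algebras there is a simple, unital AI algebra $B$ with $\Ell(B)\cong\Ell(A)$. As a simple AI algebra, $B$ has nuclear dimension at most one, hence is $\mathcal{Z}$-stable by Winter's theorem, and it satisfies the UCT. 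On the other hand $A\otimes\mathcal{Z}$ is simple, separable, unital, nuclear, $\mathcal{Z}$-stable and satisfies the UCT, and by \autoref{rem:ElltensorZ} we have $\Ell(A\otimes\mathcal{Z})\cong\Ell(A)\cong\Ell(B)$. Applying \autoref{thm:classification} to $A\otimes\mathcal{Z}$ and $B$ gives $A\otimes\mathcal{Z}\cong B$, an AI algebra. The point worth emphasizing is that this argument forces $\KK_0(A)$ to be unperforated: the perforation in Toms' example lives entirely in $\Cu(A)$ and not in $\Ell(A)$, which is exactly why $\Ell$ cannot distinguish $A$ from $A\otimes\mathcal{Z}$.
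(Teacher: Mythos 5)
Your proposal is correct and follows essentially the same route as the paper: simplicity via the density of the point-evaluation eigenvalue maps as in Villadsen's argument, stable and real rank one quoted from the literature, $K_*(A)$ computed from contractibility of the cubes, and the AI statement obtained by producing a simple AI algebra $B$ with $\Ell(B)\cong\Ell(A)\cong\Ell(A\otimes\mathcal{Z})$ and applying \autoref{thm:classification}. If anything, you are more careful than the paper in justifying that $B$ meets the hypotheses of the classification theorem (nuclear dimension at most one plus Winter's theorem to get $\mathcal{Z}$-stability), a step the paper leaves implicit.
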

\begin{proof}
The choice of the points $x_j^{(i)}$, for $1\leq j\leq i$ and for each $i$, ensure that $A$ is simple, as the arguments in \cite{Vil_simple_1998} show. is $A$ is separable, unital, nuclear, and that it satisfies the UCT, are clear by construction.

Since $X_i$ is contractible for all $i\in\N$, we have 
\[(\KK_0(A_i), [1_{A_i}],\KK_1(A_i))\cong(\Z, k_i,\{0\}).
\]
This, coupled with the fact that $\KK$-theory is continuous and the choice of $(n_i)_{i\in\N}$, ensures that $(\KK_0(A),[1_A],\KK_1(A))\cong (\Q, 1,\{0\})$.
It follows that there is a simple
AI-algebra $B$ whose Elliott invariant is that of $A$. Moreover,
both $A\otimes\mathcal{Z}$ and $B$ satisfy the assumptions of 
\autoref{thm:classification}, so they are isomorphic.

By the results in \cite{Vil_simple_1998}, the real rank and the stable rank of $A$ are both equal to one, and thus the same is true for $B$.
\end{proof}

\begin{prop}
\label{prop:Anotaunp} The Cuntz semigroup of the C$^*$-algebra $A$ in \autoref{prop:Asimple} is not almost unperforated.
\end{prop}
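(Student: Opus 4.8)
The plan is to reduce the statement to the failure of strict comparison and then exhibit explicit witnesses. Since $A$ is simple and unital by \autoref{prop:Asimple}, the equivalence recalled after \autoref{thm:RordamStrComp} applies: $\Cu(A)$ is almost unperforated if and only if $A$ has strict comparison of positive elements by quasitraces. Thus it suffices to produce nonzero $a,b\in (A\otimes\K)_+$ with
\[
d_\tau(a)<d_\tau(b)\quad\text{for every }\tau\in\QT(A),
\]
and yet $a\not\precsim b$. This is exactly a violation of strict comparison, and hence forces $\Cu(A)$ to be perforated.

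The trace side I would handle first, as it is the more routine part. Every $\tau\in\QT(A)$ restricts to a quasitrace on each $A_i=M_{k_i}\otimes C(X_i)$, where it is given by integrating the normalized matrix trace against a Borel probability measure on $X_i$; correspondingly, for a positive element the value of $d_\tau$ is computed from the normalized pointwise rank $x\mapsto \mathrm{rk}(a(x))/k_i$, together with the multiplicities introduced by the $n_{i+1}$ coordinate projections and the point evaluations appearing in $\phi_i$. With these formulas one chooses a Villadsen-type positive element $a$ (built from the coordinate-projection part of the system) and a comparison element $b$ of slightly larger rank, and checks by direct integration that $d_\tau(a)<d_\tau(b)$ holds uniformly over $\QT(A)$.

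The heart of the proof, and the step I expect to be the main obstacle, is establishing $a\not\precsim b$. Here the geometry built into the recursion is essential: the number $n_{i+1}$ of coordinate-projection summands is comparable to $\dim X_i=6N_i$, and the normalized dimension $\dim(X_i)/k_i$ is kept bounded below by a positive constant as $i\to\infty$ (this is precisely the role of the choices $k_{i+1}=k_i(i+6N_i)$ and $n_{i+1}\approx 6N_i$). Because of this dimension matching, a hypothetical Cuntz subequivalence $a\precsim b$ would have to be realized, at every sufficiently late finite stage, by a continuous rearrangement of the eigenvalue data over $X_j$, and the dimension growth produces a nonvanishing characteristic class (an Euler/top Chern class obstruction in the spirit of Villadsen \cite{Vil_simple_1998}) that rules this out. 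Equivalently, these estimates show that the radius of comparison $\mathrm{rc}(A)$ is strictly positive, which is exactly the negation of strict comparison.

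The genuinely delicate point is that the building blocks $X_i=[0,1]^{6N_i}$ are contractible, so all of the relevant topology is created by the connecting maps — by the interaction of the coordinate projections with the point evaluations at the specially chosen points $x_j^{(i)}$, which are also responsible for the simplicity of $A$. Verifying that the characteristic-class obstruction genuinely survives the passage to the inductive limit is the technical core, and is where one must invoke the quantitative estimates of Villadsen \cite{Vil_simple_1998} and Toms \cite{Tom_classification_2008}. Once the witnesses $a,b$ are fixed, the trace computation is comparatively mechanical.
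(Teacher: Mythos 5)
Your strategy shares its topological core with the paper's proof (Villadsen's Euler-class obstruction, Toms' witnesses), and the opening reduction is legitimate in itself, since the equivalence of almost unperforation with strict comparison is recorded after \autoref{thm:RordamStrComp}. The genuine gap is in the step you call routine. You propose to fix witnesses $a,b$ pushed forward from a finite stage and to verify $d_\tau(a)<d_\tau(b)$ for \emph{every} $\tau\in\QT(A)$ by direct integration against the measures describing traces on the $A_i$. But the witnesses in this construction are necessarily supported on a proper closed subset of the spectrum at each finite stage: in the paper they have the form $f\theta_1$ and $f(\rho^*(\xi)\times\rho^*(\xi))$, with $f$ a scalar function vanishing off $S\times S\subseteq [0,1]^3\times[0,1]^3$. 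Any trace on $A_i$ given by a measure concentrated off this support yields $d_\tau(a)=d_\tau(b)=0$, so the strict inequality is simply false at every finite stage, and no pointwise integration argument can establish it there. Strictness only becomes true in the simple limit, and the clean way to obtain it is not by integrating at all but by first proving the purely algebraic relation $11[a]\leq 10[b]$ in $\Cu(A)$ -- which comes from stability of vector bundles ($10(\xi\times\xi)$ has rank $20$ over a $4$-dimensional base, hence contains $11$ trivial line bundles, while the nonzero Euler class of $\xi\times\xi$ blocks even one) -- and then observing $d_\tau(a)\leq\tfrac{10}{11}d_\tau(b)<d_\tau(b)$, using that $0<d_\tau(b)<\infty$ by simplicity and the fact that $b$ lies in a matrix algebra over $A$. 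But once one has $11[a]\leq 10[b]$ together with $a\not\precsim b$, almost unperforation fails by definition, so the whole detour through quasitraces is superfluous; this direct route is exactly what the paper takes.

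On the non-comparison side your sketch points at the right mechanism but defers all of its content. The paper reduces the claim to finding $x,y\in A_1$ and $\delta>0$ with $11[\phi_{1,i}(x)]\leq 10[\phi_{1,i}(y)]$ in $\Cu(A_i)$ and $\|r\phi_{1,i}(y)r^*-\phi_{1,i}(x)\|>\delta$ for all $r\in A_i$ and all $i$ -- the uniform norm gap is what survives the passage to the inductive limit, which matches your remark that a subequivalence in $A$ would have to be approximately realized at late finite stages. It then makes stage $i=1$ explicit: the quantitative bound $\|x(\xi\times\xi)x^*-\theta_1\|\geq\tfrac{1}{2}$ from Villadsen's Lemma~1, transplanted into $A_1$ via the spherical shell $S$ and the cut-off $f$. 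Your appeal to a positive radius of comparison and to the ratio $\dim(X_i)/k_i$ staying bounded below is the correct heuristic for why the obstruction persists along the system, but as written it outsources precisely the step you yourself identify as the technical core; to count as a proof it would need at least the finite-stage witnesses and estimates that the paper supplies.
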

\begin{proof}
It is enough to find positive elements $x,y\in A_1$ such that for all $i\in\N$ and some $\delta>0$, we have $11[\phi_{1,i}(x)]\leq 10[\phi_{1,i}(y)]$ in $\Cu(A_i)$, but $\Vert r\phi_{1,i}(y)r^*-\phi_{1,i}(x)\Vert>\delta$ for all $r\in A_i$.

To give the idea, we do this for $i=1$, that is, we show that $\Cu(A_1)$ is not almost unperforated, and partly reproduce the argument in \cite[Proof of Theorem 1.1]{Tom_classification_2008}. Notice that $A_1=C([0,1]^3\times [0,1]^3, M_4)$.
Set 
\[S=\big\{x\in [0,1]^3\colon \tfrac{1}{8}<\mathrm{dist}\big(x,\big(\tfrac{1}{2},\tfrac{1}{2},\tfrac{1}{2}\big)\big)<\tfrac{3}{8}\big\},\] so that $M_4(C_0(S\times S))$ is a hereditary subalgebra of $A_1$.

Let $\xi$ be a line bundle over $S^2$ with nonzero Euler class, and use $\theta_1$ to denote the trivial line bundle. Since $\xi\times\xi$ does not have zero Euler class, $\theta_1$ is not a sub-bundle of $\xi\times\xi$ over $S^2\times S^2$ (see \cite[Lemma 1]{Vil_simple_1998}). Considering $\xi\times\xi$ and $\theta_1$ as projections in $M_4(C_0(S^2\times S^2))$, we have $\Vert x(\xi\times \xi)x^*-\theta_1\Vert\geq \tfrac{1}{2}$ for all $x\in M_4(C_0(S^2\times S^2))$. Stability properties of vector bundles yield, on the other hand, that $11[\theta_1]\leq 10[\xi\times \xi]$.
Set 
\[S'=\big\{x\in S\colon\mathrm{dist}\big(x,\big(\tfrac{1}{2},\tfrac{1}{2},\tfrac{1}{2}\big)\big)<\tfrac{1}{4}\big\}\subseteq S\] and
let $f\in A_1$ be a positive scalar function supported on $S\times S$ which equals one on $S'\times S'$. Let $\rho$ denote the projection of $\overline S$ onto $S'$. One has, by restricting from $S\times S$ to $S'\times S'$, that 
\[
\Vert xf(\rho^*(\xi)\times\rho^*(\xi))x^*-f\theta_1\Vert\geq \tfrac{1}{2},
\]
for any $x\in A_1$, where $\rho^*$ is the pullback of $\xi$ via $\rho$, and also
\[
11[f\theta_1]\leq 10[f(\rho^*(\xi)\times \rho^*(\xi))]
\]
in $\Cu(A_1)$. This shows that $\Cu(A_1)$ is not almost unperforated.
\end{proof}

\begin{thm} 
\label{cor:ABnotiso} The C$^*$-algebras $A$ and $B=A\otimes\mathcal{Z}$ from \autoref{prop:Asimple} are not isomorphic, yet they have the same stable and real rank, and satisfy $\Ell(A)\cong\Ell(B)$.\footnote{In fact, they even satisfy $\widetilde{\Ell}(A)\cong\widetilde{\Ell}(B)$, which is a stronger statement since $A$ is not $\mathcal{Z}$-stable.}
\end{thm}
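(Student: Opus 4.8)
The plan is to establish the three assertions separately, leaving the non-isomorphism for last. First, the isomorphism $\Ell(A)\cong\Ell(B)$ requires no work beyond invoking \autoref{rem:ElltensorZ}: since $B=A\otimes\mathcal{Z}$, this is exactly the statement that $\Ell(A)\cong\Ell(A\otimes\mathcal{Z})$ for every unital C$^*$-algebra. For the ranks, \autoref{prop:Asimple} already gives that $A$ has real rank one and stable rank one, and the same proposition records that $A\otimes\mathcal{Z}$ is isomorphic to an AI algebra whose real and stable ranks are both one; so $A$ and $B$ share both ranks. (Alternatively, since $B$ is simple, separable, unital, stably finite and $\mathcal{Z}$-stable, stable rank one follows from R{\o}rdam's result quoted before \autoref{df:sr1}.)

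The heart of the matter is to show $A\ncong B$, and the plan is to deduce this from $\Cu(A)\ncong\Cu(B)$. Since $\Cu$ is a functor (\autoref{thm:CuAinCu}), any isomorphism $A\cong B$ would induce a Cu-semigroup isomorphism $\Cu(A)\cong\Cu(B)$, so it suffices to distinguish the two Cuntz semigroups. The distinguishing feature is almost unperforation, which is preserved under Cu-isomorphisms because it is phrased purely in terms of order and addition: a bijective Cu-morphism carries the implication $(n+1)s\leq nt\Rightarrow s\leq t$ from one semigroup to the other. On one side, $\Cu(A)$ fails to be almost unperforated by \autoref{prop:Anotaunp}. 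On the other side, $B=A\otimes\mathcal{Z}$ is separable, unital, and $\mathcal{Z}$-stable (as $\mathcal{Z}\otimes\mathcal{Z}\cong\mathcal{Z}$ gives $B\otimes\mathcal{Z}\cong B$), and it is simple (being isomorphic to the simple AI algebra of \autoref{prop:Asimple}, or because $A$ is simple and $\mathcal{Z}$ is simple and nuclear); hence R{\o}rdam's theorem (\autoref{thm:RordamStrComp}) applies and $\Cu(B)$ \emph{is} almost unperforated. Therefore $\Cu(A)\ncong\Cu(B)$, and consequently $A\ncong B$.

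The only real obstacle in this argument is the bookkeeping needed to confirm that $B$ meets every hypothesis of \autoref{thm:RordamStrComp}; the single nonobvious hypothesis is simplicity, which is supplied by the AI identification in \autoref{prop:Asimple}. Everything else is a direct appeal to results already established above.

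Finally, to obtain the sharper statement of the footnote, one must also match the ordered groups $\KK_0$. From the proof of \autoref{prop:Asimple} one has $\KK_0(A)\cong\Q$ with $[1_A]=1$ and positive cone the standard one $\Q_{\geq 0}$, which is (weakly) unperforated. The map $a\mapsto a\otimes 1_{\mathcal{Z}}$ implementing the isomorphism of \autoref{rem:ElltensorZ} sends $\KK_0(A)^+$ into $\KK_0(B)^+$, and the reverse inclusion holds because, on the $\mathcal{Z}$-stable algebra $B$, the positive cone is recovered from the unchanged trace pairing as $\{x\in\KK_0(B)\colon r_B(x,\tau)>0 \text{ for all }\tau\in\mathrm{T}(B)\}\cup\{0\}$, which corresponds under the isomorphism to $\Q_{\geq 0}$ again. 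Hence the order isomorphism $\KK_0(A)\cong\KK_0(B)$ respects positive cones, giving $\widetilde{\Ell}(A)\cong\widetilde{\Ell}(B)$; this is genuinely stronger than $\Ell(A)\cong\Ell(B)$ precisely because $A$, not being $\mathcal{Z}$-stable, is not \emph{a priori} forced to have unperforated $\KK_0$.
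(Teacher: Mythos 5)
Your proposal is correct and follows essentially the same route as the paper: $\Cu(A)$ fails almost unperforation by \autoref{prop:Anotaunp}, $\Cu(B)$ is almost unperforated by \autoref{thm:RordamStrComp} since $B$ is $\mathcal{Z}$-stable, so $A\ncong B$ by functoriality of $\Cu$, while the ranks come from \autoref{prop:Asimple} and $\Ell(A)\cong\Ell(B)$ from \autoref{rem:ElltensorZ}. Your only additions are bookkeeping the paper leaves implicit (simplicity and $\mathcal{Z}$-stability of $B$ via $\mathcal{Z}\otimes\mathcal{Z}\cong\mathcal{Z}$, invariance of almost unperforation under Cu-isomorphism) and a correct justification of the footnote's $\widetilde{\Ell}$ claim, which the paper asserts without proof.
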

\begin{proof} 
We know from \autoref{prop:Anotaunp} that $\Cu(A)$ is not almost unperforated. On the other hand, $\Cu(B)$ is almost unperforated as $B$ is $\mathcal{Z}$-stable,
by \autoref{thm:RordamStrComp}. Hence, $A$ and $B$ cannot be isomorphic.

The argument in \autoref{prop:Asimple} shows that $A$ and $B$ have stable rank and real rank one, and they have the same Elliott invariant by \autoref{rem:ElltensorZ}.
\end{proof}

We now proceed to discuss the connection of the Cuntz semigroup
with the Elliott invariant for classifiable \ca s.
The first connection is given by the following
computation of the Cuntz semigroup of a $\mathcal{Z}$-stable 
C$^*$-algebra, obtained in \cite{BroPerTom_cuntz_2008, BroTom_three_2007}. For each element $x=[p]\in \V(A)$, we denote by $\widehat{x}\colon \QT(A)\to \R_{++}$ the continuous function defined by $\widehat{x}(\tau)=\tau(p)$ for $\tau\in$ QT$(A)$. We also denote by $\mathrm{LAff}(\QT(A))_{++}$ the semigroup of lower semicontinuous, affine functions defined on $\QT(A)$ with values on $(0,\infty]$.

\begin{thm}\label{thm:CuAtimesZ}
Let $A$ be a simple, separable, unital, stably finite 
$\mathcal{Z}$-stable
C$^*$-algebra.
Then 
\[\Cu(A)\cong \underbrace{\mathrm{V}(A)}_{\mathrm{compacts}} \sqcup \ \
\mathrm{LAff}(\QT(A))_{++},\]
with addition and order defined as follows:
\begin{enumerate}[{\rm (i)}]
\item The addition in $\V(A)$ is the usual addition and in $\mathrm{LAff}(\QT(A))_{++}$ is given by pointwise addition of functions. If $x\in\V(A)$ and $f\in\mathrm{LAff}(\QT(A))_{++}$, then $x+f=\widehat{x}+f$.
\end{enumerate}
For $x\in \V(A)$ and $f\in \mathrm{LAff}(\QT(A))_{++}$, we have
\begin{itemize}
\item[(ii)]  $x\leq f$ if $\widehat{x}(\tau)< f(\tau)$ for every $\tau\in \QT(A)$.
\item[(iii)] $f\leq x$ if $f(\tau)\leq \widehat{\tau}(x)$ for every $\tau\in \QT(A)$.
\end{itemize}
\end{thm}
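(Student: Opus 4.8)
The plan is to build an explicit isomorphism
\[
\Lambda\colon \Cu(A)\longrightarrow \V(A)\sqcup \mathrm{LAff}(\QT(A))_{++}
\]
sending each compact element to its class in $\V(A)$ and each non-compact $[a]$ to its \emph{rank function} $\widehat{[a]}\colon\tau\mapsto d_\tau([a])$. First I would assemble the structural consequences of the hypotheses. Since $A$ is simple, unital, stably finite and $\mathcal{Z}$-stable, the result of R{\o}rdam recalled after \autoref{df:sr1} gives that $A$ has stable rank one, so by \autoref{prop:CpctSr1} the compact elements of $\Cu(A)$ form a subsemigroup order-isomorphic to $\V(A)$; by \autoref{thm:RordamStrComp} the semigroup $\Cu(A)$ is almost unperforated, hence $A$ has strict comparison; and because $A$ is simple, every nonzero $\tau\in\QT(A)$ has proper kernel, so it is faithful and $d_\tau([a])>0$ for all $[a]\neq 0$. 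The dichotomy underlying the disjoint union is then simply \emph{compact} versus \emph{non-compact}, with the two components mapping to $\V(A)$ and $\mathrm{LAff}(\QT(A))_{++}$ respectively, exactly as in \autoref{eg:CuCAR} and \autoref{thm:CuZ}.

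Next I would verify that $\widehat{[a]}\in\mathrm{LAff}(\QT(A))_{++}$ for non-compact $[a]$: affineness in $\tau$ is inherited from the affineness of $d\colon\QT(A)\to F_{[1]}(\Cu(A))$ in \autoref{thm:QTfunctionals}; strict positivity of the values is the faithfulness above; and lower semicontinuity follows by writing $d_\tau([a])=\sup_{\ep>0}\tau(f_\ep(a))$, a supremum of $\tau$-continuous functions, for continuous $f_\ep$ vanishing at $0$ and equal to $1$ past $\ep$. The engine of the whole argument is the following \emph{strict cut-down} observation: if $[a]$ is non-compact then for every $\ep>0$ and every $\tau$ one has $d_\tau([(a-\ep)_+])<d_\tau([a])$ strictly. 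Indeed $d_\tau([a])-d_\tau([(a-\ep)_+])=\tau\big(\chi_{(0,\ep]}(a)\big)$ via support projections, and were this to vanish for some $\tau$, faithfulness would give $\chi_{(0,\ep]}(a)=0$, forcing $0$ to be isolated in $\spec(a)$ and hence $[a]$ to be compact.

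With this in hand the comparison lemma is immediate: for non-compact $[a]$, if $d_\tau([a])\le d_\tau([b])$ for all $\tau$, then $d_\tau([(a-\ep)_+])<d_\tau([b])$ for all $\tau$, so strict comparison yields $[(a-\ep)_+]\le[b]$, and letting $\ep\to 0$ (using $[a]=\sup_{\ep}[(a-\ep)_+]$, \autoref{rem:CutDownsConverge}) gives $[a]\le[b]$. Applying this in both directions shows that the rank function is injective on non-compact elements; together with the order-embedding $\V(A)\hookrightarrow\Cu(A)$ and the disjointness of the components, $\Lambda$ is injective. The same lemma delivers the order relations: (iii) $f\le x\iff f\le\widehat{x}$ pointwise is the lemma with $[b]$ a projection; (ii) $x\le f\iff\widehat{x}<f$ strictly holds because a compact $[p]$ satisfies $[p]\ll[p]\le[b]$, so $[p]\le[(b-\ep)_+]$ for some $\ep$ and thus $\widehat{[p]}<\widehat{[b]}$ strictly by the strict cut-down fact, the converse being direct strict comparison; and (i) is a computation with $d_\tau$. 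Additivity, and the fact that a compact plus a non-compact element is again non-compact, both follow by noting that $\big((p\oplus a)-\ep\big)_+\sim p\oplus(a-\ep)_+$ has strictly smaller rank than $[p]+[a]$ at every $\tau$, so $[p]+[a]$ inherits the strict cut-down property and is therefore non-compact.

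The remaining, and I expect hardest, step is surjectivity onto $\mathrm{LAff}(\QT(A))_{++}$, namely the realization of ranks: given $f$ I must produce a non-compact $[a]$ with $d_\tau([a])=f(\tau)$ for all $\tau$. I would write $f=\sup_n f_n$ with $f_n\ll f_{n+1}$ strictly, realize each $f_n$ by a positive element, and arrange a strictly $\ll$-increasing sequence whose supremum is non-compact with rank $f$; the existence of such realizations is precisely the divisibility phenomenon furnished by $\mathcal{Z}$-stability (cf.\ \autoref{rem:DividUnitZ}), and is the realization-of-ranks content of \autoref{thmintro:CatCu2}. Once $\Lambda$ is a bijection respecting addition and the three order rules, the closing assertion that $(\Cu(A),\KK_1(A))$ is equivalent to $\Ell(A)$ follows by reading off $\V(A)$, $\QT(A)$ and the pairing from the decomposition. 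The two bookkeeping points I would watch most carefully are the infinite-value case $d_\tau([b])=\I$ in the comparison lemma, where the strict inequality must be recovered through the divisibility rescaling $(k+1)[a']\le k[a]$ rather than from the cut-down directly, and the choice of a genuinely non-compact representative in the realization step.
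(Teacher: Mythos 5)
The paper itself offers no proof of this theorem: it is stated as imported from Brown--Toms and Brown--Perera--Toms \cite{BroTom_three_2007, BroPerTom_cuntz_2008}. Measured against that original argument, your blind proposal reconstructs the standard route faithfully: the compact/non-compact dichotomy (compacts identified with $\V(A)$ via \autoref{prop:CpctSr1}, after R{\o}rdam's theorem that simple, unital, stably finite, $\mathcal{Z}$-stable algebras have stable rank one), rank functions landing in $\mathrm{LAff}(\QT(A))_{++}$, strict comparison from \autoref{thm:RordamStrComp} combined with the strict cut-down inequality $d_\tau([(a-\varepsilon)_+])<d_\tau([a])$ for non-compact $[a]$ to obtain injectivity and the order rules (ii) and (iii), and realization of ranks for surjectivity. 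Your two flagged danger points are exactly the right ones; in particular, producing a genuinely non-compact representative when $f=\widehat{p}$ for a projection $p$ is precisely what makes the two copies of the ``rank of $p$'' in the disjoint union distinct, as in \autoref{eg:CuCAR}.

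One justification, as literally written, is wrong and needs repair. You compute $d_\tau([a])-d_\tau([(a-\varepsilon)_+])=\tau\bigl(\chi_{(0,\varepsilon]}(a)\bigr)$ and argue that vanishing would contradict faithfulness; but $\chi_{(0,\varepsilon]}(a)$ lives in $A^{**}$, and faithfulness of $\tau$ on $A$ says nothing about its normal extension to the bidual, which is typically not faithful. The standard fix stays inside $C^*(a)\cong C_0(\spec(a)\setminus\{0\})$, where $\tau$ is linear and induces a Borel measure $\mu_\tau$ with $d_\tau([a])-d_\tau([(a-\varepsilon)_+])=\mu_\tau((0,\varepsilon])$; if $[a]$ is non-compact then $0$ is a non-isolated point of $\spec(a)$ (otherwise $a$ is Cuntz equivalent to a projection and $[a]$ is compact), so some nonzero positive $g(a)$ with $g$ supported in $(0,\varepsilon)$ exists, and faithfulness on the simple algebra gives $0<\tau(g(a))\leq\|g\|\,\mu_\tau((0,\varepsilon))$. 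With this fix your comparison lemma goes through in all cases without the divisibility detour you propose for $d_\tau([b])=\infty$: since $(a-\varepsilon)_+$ is Cuntz dominated by an element of some $M_n(A)$, one always has $d_\tau([(a-\varepsilon)_+])\leq n<\infty$, so the strict inequality is automatic against an infinite value. (The only residual edge case is $[a]=\infty$, where all ranks are infinite, but there everything is trivial.) Finally, for surjectivity you could, instead of redoing the dyadic construction of \cite{BroPerTom_cuntz_2008}, invoke the paper's \autoref{thm:realization} (the present $A$ is separable, of stable rank one, and nowhere scattered, being simple and non-elementary), provided you supply the translation between $L(F(\Cu(A)))$ and $\mathrm{LAff}(\QT(A))_{++}$: functionals correspond to lower semicontinuous quasitraces by \cite{EllRobSan_cone_2011}, and every strictly positive lower semicontinuous affine function on the metrizable Choquet simplex $\QT(A)$ is an increasing supremum of continuous affine ones.
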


\begin{rem}
For stably finite, nuclear $\mathcal{Z}$-stable C$^*$-algebras, the above allows one
to recover $\KK_0(A)$ as the Grothendieck group of $\V(A)=\Cu(A)_c$,
as well as $\QT(A)=F(\Cu(A))$. The pairing $r_A$ can be recovered then by evaluation of a functional on a compact element. In particular,
the pair $(\Cu(A),\mathrm{K}_1(A))$ is equivalent to Ell$(A)$.
\end{rem}

In contrast to \autoref{thm:CuAtimesZ}, the Cuntz semigroup
records essentially \emph{no} information in the purely infinite
case. Indeed, if $A$ is simple and purely inifnite, then 
$\Cu(A)\cong \{0,\infty\}$ regardless of the K$_0$-group of $A$.
This is easy to see: given $a,b\in A_+$ nonzero, find $r\in A$
such that $rbr^*=a$, which implies $a\precsim b$. It follows that 
all nonzero positive elements in (matrices over) $A$ are 
Cuntz-equivalent.

As we see from \autoref{thm:CuAtimesZ}, the Cuntz 
semigroup of $A$ does not in general contain any information
about its K$_1$-group. One can, however, 
recover K$_1(A)$ using the Cuntz semigroup of a 
different algebra, namely $A\otimes C(\T)$.\footnote{Thinking
of the Cuntz semigroup as a refined version of K$_0$, the 
idea behind this is that $\KK_0(A\otimes C(\T))\cong \KK_0(A)\oplus \KK_1(A)$.}
We will need the following construction.

Let $S$ be a $\Cu$-semigroup. Assume that the subset
$S_{\mathrm{nc}}$ of non-compact elements is an absorbing
subsemigroup, in the sense that $S_{nc}+S\subseteq S_{\mathrm{nc}}$.
(This is always the case if $S=\Cu(A)$ for a simple, separable,
unital, stably finite $\mathcal{Z}$-stable \ca\ by \autoref{thm:CuAtimesZ}.)
Denote by $S_{\mathrm{c}}$ the subsemigroup of compact elements and
$S_\mathrm{c}^*=S_\mathrm{c}\setminus\{0\}$. Let $G$ be an abelian
group and consider the semigroup
\[
S_G=(\{0\}\sqcup (G\times S_{\mathrm{c}}^*))\sqcup
S_{\mathrm{nc}}\,,
\]
with natural operations in both components, and $(g,x)+y=x+y$
whenever $x\in S_{\mathrm{c}}^*$, $y\in S_{\mathrm{nc}}$, and $g\in
G$. This semigroup can be ordered as follows:
\begin{enumerate}[{\rm (i)}]
\item For $x,y\in S_{\mathrm{c}}^*$, and $g,h\in G$, we have $(g,x)\leq (h,y)$ if and only if $x=y$ and $g=h$, or else
$x<y$.
\item For $x\in S_{\mathrm{c}}^*$, $y\in S_{\mathrm{nc}}$, $g\in
G$, $(g,x)$ is comparable with $y$ if $x$ is comparable with $y$.
\end{enumerate}
The proof of the following is left as an exercise.

\begin{lma}
\label{lem:tontada} Let $S$ be an object of \textbf{Cu} such that
$S_{\mathrm{nc}}$ is an absorbing subsemigroup. If $G$ is an abelian
group, then $S_G$ is also an object of \textbf{Cu}.
\end{lma}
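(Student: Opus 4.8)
The plan is to route everything through the label-forgetting map $\pi\colon S_G\to S$ defined by $\pi(0)=0$, $\pi(g,x)=x$ for $x\in S_{\mathrm{c}}^{*}$, and $\pi(y)=y$ for $y\in S_{\mathrm{nc}}$. First I would record that $\pi$ is surjective, order-preserving, zero-preserving and additive; additivity is the first place the hypothesis enters, since $S_{\mathrm{nc}}+S\subseteq S_{\mathrm{nc}}$ is exactly what makes the mixed rule $(g,x)+y=x+y$ land unambiguously in $S_{\mathrm{nc}}$, so that no $G$-label is needed (or available) once a noncompact summand appears. The poset axioms for $S_G$ then follow by a short case analysis over the three types of elements ($0$, elements of $G\times S_{\mathrm{c}}^{*}$, elements of $S_{\mathrm{nc}}$): antisymmetry uses that a compact element of $S$ is never equal to a noncompact one, and transitivity is checked type by type, the only subtle step being that $x\le b\le z$ with $b\in S_{\mathrm{nc}}$ and $x,z\in S_{\mathrm{c}}^{*}$ forces $x<z$ strictly (equality $x=z$ would squeeze the noncompact $b$ between equal compacts).

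Next I would verify that $S_G$ is a positively ordered monoid, whose crux is compatibility of the order with addition. Whenever a noncompact summand is present, both sides of the desired inequality lie in $S_{\mathrm{nc}}$ and the statement collapses under $\pi$ to the (inherited) order in $S$, so these cases are immediate. The genuinely delicate case, and \emph{the main obstacle}, is when $a\le a'$ and $b$ are all compact: there one must know that adding a nonzero compact strictly increases a compact element, i.e. that no relation $x'+w=w$ holds for nonzero compacts, for otherwise a surviving $G$-label could break $(h,w)\le(g'+h,\,x'+w)$. This is precisely where one uses that $S_{\mathrm{nc}}$ is absorbing and comprises \emph{all} noncompact elements, so that any element fixed by adding a nonzero compact is infinite, hence noncompact, and therefore never appears as a second coordinate; I would isolate this as a sublemma. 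Associativity, commutativity, and minimality of $0$ are then routine consequences of the corresponding facts in $G$ and $S$ together with the collapse rule.

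The order-theoretic axioms rest on two computations. I claim the compact elements of $S_G$ are exactly $\{0\}\sqcup(G\times S_{\mathrm{c}}^{*})$, and that for $a\in S_{\mathrm{nc}}$ one has $a\ll_{S_G}b$ if and only if $a\ll_{S}\pi(b)$. Both come from an explicit description of suprema: for an increasing sequence $(a_n)$ in $S_G$, the images $\pi(a_n)$ increase to some $s\in S$; if $s\in S_{\mathrm{nc}}$ then $s$ is the supremum in $S_G$, while if $s$ is compact (including $s=0$) the sequence must stabilize — the $G$-labels freeze by rule~(i) — and its eventual value is the supremum. This gives (O1) and shows $\pi$ preserves suprema of increasing sequences. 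Compactness of $(g,x)$ then follows since any increasing sequence whose supremum dominates $(g,x)$ has $\pi$-images either stabilizing at $x$ (when the supremum is compact, forcing the whole sequence to freeze at $(g,x)$) or eventually strictly exceeding $x$ (when the supremum is noncompact), and in either case some term dominates $(g,x)$; the equivalence for $a\in S_{\mathrm{nc}}$ is proved by lifting $S$-sequences to $S_G$ in the same way.

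Finally, (O2)--(O4) reduce to $S$. For (O2), compact elements use constant sequences, and for $a\in S_{\mathrm{nc}}$ I would lift a $\ll_S$-increasing sequence $(v_n)$ with $\sup_S v_n=a$ supplied by (O2) in $S$ to $\widetilde v_n\in S_G$ (sending a compact $v_n$ to $(0,v_n)$ and a noncompact $v_n$ to itself): this is increasing with supremum $a$ by the (O1) description and $\ll_{S_G}$-increasing by the $\ll$-characterization above. For (O3) the $\ll$-characterization turns the hypotheses into statements about $\pi$ in $S$, and applying (O3) in $S$ — together with the order-compatibility of the all-compact case — yields $a+b\ll_{S_G}a'+b'$. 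For (O4) I would push the identity down to $S$ using that $\pi$ is additive and supremum-preserving and that (O4) holds in $S$, then lift it back via the explicit suprema, checking in the compact case that the frozen $G$-labels on both sides agree. The single recurring difficulty, beyond bookkeeping, is ensuring that the discrete $G$-direction over the compact part never interferes with order or addition, which is exactly what the absorbing hypothesis secures.
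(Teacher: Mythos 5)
Your overall architecture is sound: the label-forgetting map $\pi\colon S_G\to S$, the description of suprema (labels freeze over a compact limit, everything collapses to $S$ over a noncompact one), the resulting identification of the compacts of $S_G$ with $\{0\}\sqcup(G\times S_{\mathrm{c}}^*)$, and the reductions of (O2)--(O4) all go through essentially as you outline. The genuine gap sits exactly at the step you yourself flag as the main obstacle. Your sublemma --- that the absorbing hypothesis forces any element fixed under addition of a nonzero compact to be ``infinite, hence noncompact'' --- is false: the inference from infinite to noncompact fails, because $\infty_w=\sup_n nw$ is itself \emph{compact} whenever the sequence $(nw)_n$ stabilizes, and absorbingness of $S_{\mathrm{nc}}$ does nothing to prevent this. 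Concretely, take $S=\Cu(\mathcal{O}_2\oplus\mathcal{Z})\cong\{0,\infty\}\times\big(\N\sqcup(0,\infty]\big)$: here $S_{\mathrm{nc}}=\{0,\infty\}\times(0,\infty]$ is a nonempty absorbing subsemigroup, yet $w=x'=(\infty,0)$ is a nonzero compact with $x'+w=w$. For nontrivial $G$ this breaks order-compatibility in $S_G$: from $0\le (g',x')$ one would need $(h,w)\le(g'+h,x'+w)=(g'+h,w)$, which by rule (i) forces $g'=0$. (The minimal instance is $S=\Cu(\mathcal{O}_2)=\{0,\infty\}$ with $\infty$ compact and $S_{\mathrm{nc}}=\emptyset$.) So the property you actually need --- $x<x'$ implies $x+w<x'+w$ for $x,x',w\in S_{\mathrm{c}}$, i.e.\ cancellation of compacts --- provably cannot be extracted from the absorbing hypothesis; it must be assumed or imported from context. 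In the settings where the lemma is applied (\autoref{thm:zstable} and the functor $\mathrm{F}$), it is available: there $S_{\mathrm{c}}$ is either $G_0^+$ sitting inside a group, or $\mathrm{V}(A)$ for a finite $\mathcal{Z}$-stable algebra, where weak cancellation yields cancellation of compact elements (\autoref{rmk:wc}).

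Two smaller points. First, even granting your sublemma, reducing the general monotonicity statement ($x<x'$, add a compact $w$) to the special shape $x'+w=w$ silently requires a complement $c$ with $x+c=x'$, i.e.\ axiom (O5) as in \autoref{rem:compl}; objects of $\CatCu$ in \autoref{df:CatCu} satisfy only (O1)--(O4), so you should prove $x+w\neq x'+w$ directly from whatever cancellation hypothesis you adopt. Second, the same strictness is needed a second time, in your (O3) step for sums of two compact-labelled elements (when one of the two inequalities is of ``equal pair'' type and the other is strict in the second coordinate, the labels $h,h'$ are unconstrained, so one must have strict inequality $x+y<x+y'$ in $S_{\mathrm{c}}$). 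Isolating ``$x<x'\Rightarrow x+w<x'+w$ on $S_{\mathrm{c}}$'' as a sublemma with an honest hypothesis repairs the argument everywhere it is used; the rest of your proof is correct modulo routine bookkeeping.
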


For a  C$^*$-algebra $A$, let us denote $\Cu_\mathbb{T}(A)=\Cu(A\otimes C(\mathbb{T}))$. 

\begin{thm}
\label{thm:zstable}(\cite[Theorem 3.8]{AntDadPerSan_recovering_2014}). Let $A$ be a separable, finite,
$\mathcal{Z}$-stable C$^*$-algebra. Then, there is an
order-isomorphism
\[
\Cu_{\mathbb{T}}(A)\cong (\{0\}\sqcup (\mathrm{K}_1(A))\times \mathrm{V}(A)^*
)\sqcup\Lsc_{\mathrm{nc}}(\T,\Cu(A))\,,
\]
where $\mathrm{V}(A)^*=\mathrm{V}(A)\setminus\{0\}$, and where the right hand side is ordered as above.
\end{thm}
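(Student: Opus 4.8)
The plan is to put $B := A \otimes C(\T)$, so that $\Cu_{\mathbb{T}}(A) = \Cu(B)$, and to analyze $\Cu(B)$ by separating its compact elements from its non-compact ones. First I would record that $B$ is again separable and $\mathcal{Z}$-stable, since $B \otimes \mathcal{Z} \cong (A\otimes\mathcal{Z})\otimes C(\T) \cong B$, and that it is stably finite (being $\mathcal{Z}$-stable and finite). Consequently, by the Brown--Ciuperca identification recorded in \autoref{rem:CuCAR}, the compact elements of $\Cu(B)$ form exactly $\V(B)$, and $\Cu(B)$ splits as $\V(B) \sqcup \Cu(B)_{\mathrm{nc}}$. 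The right-hand side of the statement is organized in the same way, so it suffices to identify each summand and then verify the cross relations between them.

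Next I would compute $\V(B)$. The split short exact sequence $0 \to A\otimes C_0(\R) \to B \to A \to 0$, coming from evaluation at a basepoint of $\T$, yields $\mathrm{K}_0(B) \cong \mathrm{K}_0(A) \oplus \mathrm{K}_1(A)$. Evaluation at a point defines a monoid morphism $\V(B) \to \V(A)$, $[p]\mapsto[p(t_0)]$, whose value (the fibre class) is independent of $t_0$ because $\T$ is connected. I would then show that a nonzero class of $\V(B)$ is determined by its fibre class $x \in \V(A)^*$ together with a monodromy (clutching) invariant valued in $\mathrm{K}_1(A)$, giving the monoid isomorphism $\V(B) \cong \{0\} \sqcup (\mathrm{K}_1(A)\times \V(A)^*)$. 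The order is checked directly via \autoref{lma:CtzCompPjns}: if $[p]\le[q]$ then $x\le y$ fibrewise; when $x=y$, stable finiteness forces a projection with everywhere-zero fibre class to vanish, hence $p\sim q$ and the twists coincide; when $x<y$ there is room to absorb any twist. This is exactly the ordering prescribed on the first summand in \autoref{lem:tontada}.

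Third I would treat the non-compact part through the continuous-field structure of $B$ over $\T$. For $[a]\in\Cu(B)$ the assignment $t\mapsto [a(t)] = \Cu(\mathrm{ev}_t)([a])$ is a lower-semicontinuous function $\T \to \Cu(A)$, producing an order-preserving map $\Cu(B)_{\mathrm{nc}} \to \Lsc_{\mathrm{nc}}(\T,\Cu(A))$, and the content is that this map is an order-isomorphism. Injectivity and the order statement rest on fibrewise Cuntz comparison together with the strict comparison granted by $\mathcal{Z}$-stability, that is, almost unperforation (\autoref{thm:RordamStrComp}); surjectivity — realizing any prescribed lower-semicontinuous non-compact function by a single positive element of $B\otimes\K$ — uses $\mathcal{Z}$-absorption to glue fibrewise representatives, in the spirit of the divisibility \autoref{rem:DividUnitZ}.

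The decisive and hardest step is to show that the $\mathrm{K}_1$-decoration carried by the compacts disappears on the non-compact part: a non-compact element has enough internal room that its monodromy can be untwisted using $\mathcal{Z}$-absorption, so the non-compact summand is a plain function semigroup with no $\mathrm{K}_1$-information. This is precisely what makes the two pieces assemble according to the $S_G$ recipe of \autoref{lem:tontada} with $G=\mathrm{K}_1(A)$ and $S=\Cu(A)$. To finish, I would verify the cross relations: a compact pair $(g,x)$ and a non-compact function $f$ compare if and only if the constant fibre class $x$ compares with $f$ fibrewise, the twist $g$ being irrelevant because the comparison already forces a strict fibrewise inequality; this matches clause~(ii) of the ordering on $S_G$. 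Assembling the three summands and these relations yields the claimed order-isomorphism.
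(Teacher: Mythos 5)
You should first know that the paper contains no proof of this theorem: it is imported verbatim from \cite[Theorem~3.8]{AntDadPerSan_recovering_2014}, so your proposal has to be measured against the argument given there. In outline your architecture does match it — split off the compacts via Brown--Ciuperca (\autoref{rem:CuCAR}), compute $\V(A\otimes C(\T))$ by a fibre-class-plus-clutching analysis resting on stable rank one, identify the non-compact part with $\Lsc_{\mathrm{nc}}(\T,\Cu(A))$, and assemble by the $S_G$ recipe of \autoref{lem:tontada}. But your treatment of the local-to-global step, which is the actual content of the theorem, has a genuine gap. You claim the order statement on non-compacts ``rests on fibrewise Cuntz comparison together with strict comparison,'' and surjectivity on ``gluing fibrewise representatives using $\mathcal{Z}$-absorption.'' Strict comparison (\autoref{thm:RordamStrComp}) cannot deliver this: functionals on $\Cu(C(\T,A))$ arise from measures on $\T$ paired with fibrewise functionals, and pointwise $[a(t)]\leq[b(t)]$ yields only the non-strict inequalities $\lambda(a)\leq\lambda(b)$ — a functional concentrated where the two rank functions agree (a possibly nonempty closed set) sees no strict gap, so almost unperforation has no purchase. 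Indeed ``pointwise order implies Cuntz subequivalence'' is precisely the theorem, not a consequence of strict comparison. Similarly, an arbitrary non-compact lower semicontinuous function may have infinitely many jumps, and divisibility alone produces no representative. What \cite{AntDadPerSan_recovering_2014} actually does (building on \cite{Tik_cuntz_2011} and the $C(X)$-algebra technology of \cite{AntPerSan_pullbacks_2011}) is cut $\T$ into arcs, use a computation of the form $\Cu(C([0,1],A))\cong\Lsc([0,1],\Cu(A))$ over each arc — the same tool used for the dimension-drop algebra in \autoref{sec:JS} — and control the gluing at the two seams; the clutching unitary at the seam is exactly where the $\mathrm{K}_1$-twist survives for compacts and is absorbed for non-compacts. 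Without such a patching mechanism your third and fourth paragraphs restate the theorem rather than prove it. (Relatedly, your claim that $(g,x)\leq f$ forces a \emph{strict} fibrewise inequality is not correct: equality can persist on a proper closed subset of $\T$; the twist is nevertheless irrelevant because such a set misses an arc, over which there is no monodromy obstruction.)

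A second gap is fullness. Your monodromy invariant naturally lives in $\mathrm{K}_1$ of the corner $p_0(A\otimes\K)p_0$, and identifying this with $\mathrm{K}_1(A)$ requires the fibre projection to be full — in effect, simplicity of $A$, which \cite{AntDadPerSan_recovering_2014} assume and which the hypotheses as printed here omit. The omission is not cosmetic: for $A=A_1\oplus\mathcal{Z}$ with $A_1$ simple, unital, finite, $\mathcal{Z}$-stable and $\mathrm{K}_1(A_1)\neq 0$, a projection over $\T$ whose fibre class lies in the $\mathcal{Z}$-summand admits no nontrivial twist, whereas the displayed formula predicts $\mathrm{K}_1(A)$-many distinct classes above that fibre class. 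So your second step needs the fullness argument made explicit (it is also what underwrites your ``room to absorb any twist when $x<y$'' claim, together with cancellation from part~(iv) of \autoref{prop:sr1}), and you should flag that simplicity must be added to the hypotheses — it is likewise what guarantees that the non-compact part is absorbing, as \autoref{lem:tontada} requires.
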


Let us interpret this in terms of the theory of classification. To this end, let us write $\mathrm{\textbf{Ell}}$ to denote the
category whose objects are $4$-tuples of the form
\[
((G_0,u),G_1,X,r)\,,
\]
where $G_0$ is a (countable) abelian group with distinguished element
$u$, $G_1$ is a (countable) abelian
group, $X$ is a (metrizable) Choquet simplex, $r\colon X\to
\mathrm{St}(G_0,u)$ is an affine map, where $\mathrm{St}(G_0,u)$
denotes the state space of $(G_0,u)$, such that, if we set 
\[G_0^+=\{g\in G_0\colon r(x)(g)>0 \mbox{ for all } x\in X\}\cup \{0\},\]
then $(G_0, G_0^+,u)$ is a simple, partially ordered group with
order unit $u$.

The morphisms in $\mathrm{\textbf{Ell}}$ between $((G_0,u),G_1,X,r)$ and
$((H_0,v),H_1,Y,s)$ are given by triples $(\theta_0,\theta_1, \gamma)$, where $\theta_0\colon G_0 \to H_0$ is a
morphism of groups with $\theta_0(u)=v$, the map
$\theta_1\colon G_1\to H_1$ is a morphism
of groups, and $\gamma\colon Y\to X$ is an affine and
continuous map such that $r\circ\gamma=\theta_0^*\circ s$, where
$\theta_0^*\colon \mathrm{St}(H_0,v)\to \mathrm{St}(G_0,u)$ is the
naturally induced map at the level of states.

Let $\mathbf{C^*_{\mathcal{Z}}}$ denote the class of simple, separable, unital, nuclear, finite $\mathcal{Z}$-stable
C$^*$-algebras. The Elliott invariant then naturally defines a
functor
\[
\mathrm{Ell}\colon\mathbf{\mathbf{C^*_{\mathcal{Z}}}}\to \mathrm{\textbf{Ell}}.
\]
We now define
\[
\mathrm{F}\colon \mathrm{\textbf{Ell}}\to \CatCu
\]
as follows. If $\mathcal{E}=((G_0,u),G_1,X,r)$ is an object of
$\mathrm{\textbf{Ell}}$ and $G_0^+$ is defined as above, then notice first that $G_0^+\sqcup \mathrm{LAff}(X)_{++}$ is an object 
of $\CatCu$ (see,
for example, \cite[Lemma 6.3]{AntBosPer_completion_2011}).
Set $G_0^{++}=G_0^+\setminus\{0\}$ and 
\[
\mathrm{F}(\mathcal{E})=(\{0\}\sqcup (G_1\times G_0^{++}))\sqcup\Lsc_{\mathrm{nc}}(\T,
G_0^+\sqcup \mathrm{LAff}(X)_{++})\,,
\]
with addition given by
$(g+f)(x)=r(x)(g)+f(x)$, for all $g\in G$, $f\in\mathrm{LAff}(X)$ and
$x\in X$. 
It follows from \autoref{lem:tontada} that $\mathrm{F}(\mathcal{E})$ is also an object of $\CatCu$. 

\begin{lma}\label{lma:FunctorF}
$\mathrm{F}$ is a functor, and its corestriction $\mathrm{F}\colon \mathrm{\textbf{Ell}}\to \mathrm{F}(\mathrm{\textbf{Ell}})$ is full, faithful and dense.
\end{lma}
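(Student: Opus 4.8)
The plan is as follows. Since $\mathrm{F}(\mathrm{\textbf{Ell}})$ is, by definition, the image of $\mathrm{F}$, both fullness and density of the corestriction are automatic once $\mathrm{F}$ is known to be a functor; the substance of the statement is therefore that $\mathrm{F}$ is a well-defined functor and is faithful. Accordingly I will first define $\mathrm{F}$ on morphisms and verify the functor axioms, the bulk of the work being to check that the resulting map is a genuine $\Cu$-morphism, and then reconstruct a morphism of $\mathrm{\textbf{Ell}}$ from its image under $\mathrm{F}$, which gives faithfulness.

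For the action on morphisms, let $(\theta_0,\theta_1,\gamma)\colon ((G_0,u),G_1,X,r)\to ((H_0,v),H_1,Y,s)$ be given. The compatibility $r\circ\gamma=\theta_0^*\circ s$ unfolds to $s(y)(\theta_0(g))=r(\gamma(y))(g)$ for all $g\in G_0$ and $y\in Y$; in particular $\theta_0(G_0^{++})\subseteq H_0^{++}$. Hence $\theta_0$ together with precomposition $f\mapsto f\circ\gamma$ assemble into a map $\psi\colon G_0^+\sqcup\mathrm{LAff}(X)_{++}\to H_0^+\sqcup\mathrm{LAff}(Y)_{++}$, which is a $\Cu$-morphism: the compatibility identity is precisely what guarantees that $\psi$ respects the order between compact and non-compact elements described in \autoref{thm:CuAtimesZ}. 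I then set $\mathrm{F}(\theta_0,\theta_1,\gamma)$ to be $(h,x)\mapsto(\theta_1(h),\theta_0(x))$ on the compact part $\{0\}\sqcup(G_1\times G_0^{++})$, and postcomposition with $\psi$ on $\Lsc_{\mathrm{nc}}(\T,-)$; here postcomposition preserves lower semicontinuity because $\Cu$-morphisms are continuous for the Scott topology (\autoref{rem:Scott}).

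The main obstacle is verifying the $\Cu$-morphism axioms across the interface between the two parts, since the object $\mathrm{F}(\mathcal{E})$ has the intricate glued structure coming from \autoref{lem:tontada}. The key identity to respect is $(h,x)+y=x+y$ for $y$ non-compact: one must check that $\mathrm{F}(\theta_0,\theta_1,\gamma)$ applied to either side gives $\theta_0(x)+\psi(y)$, so that the group labels are consistently absorbed, and similarly that increasing sequences of compacts whose $G_0^+$-parts escape to infinity are sent to sequences with the same behaviour, so that the corresponding (non-compact) suprema are preserved. Once this is done, preservation of identities and of composites is immediate from the componentwise formulas, so $\mathrm{F}$ is a functor and fullness and density follow formally.

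Finally, for faithfulness, suppose $\mathrm{F}(\theta_0,\theta_1,\gamma)=\mathrm{F}(\theta_0',\theta_1',\gamma')$. The second coordinate of the induced map on compact elements is $x\mapsto\theta_0(x)$, which recovers $\theta_0$ on $G_0^{++}$, hence on $G_0^+$, and then on all of $G_0=G_0^+-G_0^+$ (the difference set is all of $G_0$ because $u$ is an order unit); thus $\theta_0=\theta_0'$. Since $r(\cdot)(u)\equiv 1$ forces $u\in G_0^{++}$, the first coordinate of the image of $(h,u)$ equals $\theta_1(h)$, whence $\theta_1=\theta_1'$. On the constant non-compact functions $\mathrm{F}(\theta_0,\theta_1,\gamma)$ acts by $f\mapsto f\circ\gamma$ on $\mathrm{LAff}(X)_{++}$; as the strictly positive continuous affine functions separate the points of the Choquet simplex $X$ (the same separation mechanism underlying \autoref{thm:QTfunctionals}), this assignment determines $\gamma$, so $\gamma=\gamma'$. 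This proves faithfulness and hence that $\mathrm{F}$ is an equivalence onto its image.
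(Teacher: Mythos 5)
Your overall architecture — defining $\mathrm{F}$ on a morphism $(\theta_0,\theta_1,\gamma)$ by $(h,x)\mapsto(\theta_1(h),\theta_0(x))$ on the labelled compact part and by postcomposition with $\psi=\theta_0\sqcup\gamma^*$ on the $\Lsc_{\mathrm{nc}}$ part, treating fullness and density of the corestriction onto the image as formal, and proving faithfulness by reconstructing $(\theta_0,\theta_1,\gamma)$ — matches the intended proof, and your faithfulness argument is essentially correct. However, there is a genuine gap at exactly the point which is the entire content of the paper's proof: you never verify that postcomposition with $\psi$ sends a \emph{non-compact} element $f\in\Lsc_{\mathrm{nc}}(\T,G_0^+\sqcup\mathrm{LAff}(X)_{++})$ to a non-compact element of $\Lsc(\T,H_0^+\sqcup\mathrm{LAff}(Y)_{++})$. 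If $\psi\circ f$ were compact, i.e.\ constant with value some $h\in H_0^+$, then it would not lie in $\Lsc_{\mathrm{nc}}(\T,\cdot)$ at all but in the glued compact part $\{0\}\sqcup(H_1\times H_0^{++})$ of the target from \autoref{lem:tontada}, where an $H_1$-label would have to be invented; your formula for $\mathrm{F}$ on morphisms is then simply undefined. The interface checks you do list — the identity $(h,x)+y=x+y$ and sequences of compacts escaping to infinity — do not cover the dangerous case, namely a bounded, non-constant, $G_0^+$-valued lower semicontinuous function which $\theta_0$ might a priori collapse to a constant. Your appeal to Scott continuity only guarantees lower semicontinuity of $\psi\circ f$, not non-compactness.

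The paper closes exactly this gap, by contradiction: if $\theta_0(f(\T))=\{h\}$ with $f(\T)\subseteq G_0^+$, lower semicontinuity of $f$ (together with connectedness of $\T$: the sets $\{t\colon f(s)\leq f(t)\}$ are open, so if no two values of $f$ were strictly comparable these level sets would disconnect $\T$) yields $s,t\in\T$ with $f(t)<f(s)$; simplicity of $(G_0,G_0^+,u)$ makes $f(s)-f(t)$ an order unit, so $f(s)\leq n(f(s)-f(t))$ for some $n$, and applying $\theta_0$ gives $h\leq 0$, hence $h=0$, which contradicts $\theta_0(u)=v$ because any nonzero value of $f$ is an order unit mapped to $0$. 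Note that your own preliminary observation $\theta_0(G_0^{++})\subseteq H_0^{++}$ already gives a shorter route: $\theta_0(f(s)-f(t))\in H_0^{++}$ is nonzero, so $\psi\circ f$ cannot be constant, while any $f$ taking a value in $\mathrm{LAff}(X)_{++}$ has $\psi\circ f$ taking a value in $\mathrm{LAff}(Y)_{++}$ and is non-compact for free. So the gap is fixable with ingredients you have in hand, but as written the well-definedness of $\mathrm{F}$ on morphisms — the heart of \autoref{lma:FunctorF} — is asserted rather than proved.
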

\begin{proof}
We only check that, if
\[
(\theta_0,\theta_1,\gamma)\colon((G_0,u),G_1,X,r)\to((H_0,v),H_1,Y,s)
\]
is a morphism in $\mathrm{\textbf{Ell}}$ and $f\colon\T\to
G_0^+\sqcup\mathrm{LAff} (X)_{++}$ is non-compact, then
$(\theta_0\sqcup\gamma^*)\circ f\colon\T\to
H_0^+\sqcup\mathrm{LAff}(Y)_{++}$ is also non-compact. Here
\[
\theta_0\sqcup\gamma^*\colon G_0^+\sqcup\mathrm{LAff}(X)_{++}\to
H_0^+\sqcup\mathrm{LAff}(Y)_{++}
\]
is defined as $\theta_0$ on $G_0^+$ and $\gamma^*$ on
$\mathrm{LAff}(X)_{++}$.

By way of contradiction, if $(\theta_0\sqcup\gamma^*)\circ f$ is compact, then there is $h\in
H_0^+$ such that $\theta_0(f(\T))=\{h\}$ and $f(\T)\subseteq G_0^+$.
As $f$ is non-compact and lower semicontinuous, there are $s,t\in
\T$ with $f(t)<f(s)$, whence $f(s)-f(t)\in G_0^{++}$ is an
order-unit. Thus, there exists $n\in\N$ with $f(s)\leq
n(f(s)-f(t))$. After applying $\theta_0$, we obtain that $h\leq 0$,
hence $h=0$. But this is not possible since, as $f$ is not
constant, it takes some non-zero value $a$, which will be an
order-unit with $\theta_0(a)=0$, contradicting that $\theta_0(u)=v$.
\end{proof}

As a consequence, we get the following.

\begin{thm}[{\cite{Tik_cuntz_2011}}] 
\label{thm:equivalence}
Upon restriction to the class of unital, 
simple, separable and finite $\mathcal
Z$-stable algebras, $\mathrm{Ell}$ is a classifying
functor if, and only if, so is $\Cu_{\T}$.
\end{thm}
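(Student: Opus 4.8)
The plan is to show that $\Cu_{\T}$ factors, up to a natural isomorphism of functors, through $\mathrm{Ell}$ via the functor $\mathrm{F}$ of \autoref{lma:FunctorF}; that is, to establish a natural isomorphism $\Cu_{\T}\cong \mathrm{F}\circ \mathrm{Ell}$ on the class in question. Once this is in place, the equivalence of the two classification statements becomes a formal consequence of the fact that $\mathrm{F}$ is full and faithful onto its image.

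First I would check the factorization on objects. Fix a unital, simple, separable, nuclear, finite $\mathcal{Z}$-stable \ca\ $A$ and write $\mathrm{Ell}(A)=\big((\mathrm{K}_0(A),[1_A]),\mathrm{K}_1(A),\mathrm{T}(A),r_A\big)$, so that in the notation defining $\mathrm{F}$ we have $G_0=\mathrm{K}_0(A)$, $G_1=\mathrm{K}_1(A)$ and $X=\mathrm{T}(A)$. Since $A$ is nuclear, Haagerup's theorem gives $\QT(A)=\mathrm{T}(A)$; and since $A$ is simple, finite and $\mathcal{Z}$-stable (hence stably finite), the natural map $\V(A)\to \mathrm{K}_0(A)$ is injective with image the positive cone $G_0^+=\{g\in \mathrm{K}_0(A)\colon r_A(x)(g)>0 \text{ for all } x\in X\}\cup\{0\}$ discussed after \autoref{rem:ElltensorZ}. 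Feeding these identifications into \autoref{thm:CuAtimesZ} yields an isomorphism $\Cu(A)\cong G_0^+\sqcup \mathrm{LAff}(X)_{++}$ of Cu-semigroups, under which $\V(A)^*$ corresponds to $G_0^{++}$. Substituting into the description of $\Cu_{\T}(A)$ furnished by \autoref{thm:zstable} gives
\[
\Cu_{\T}(A)\cong \big(\{0\}\sqcup (\mathrm{K}_1(A)\times G_0^{++})\big)\sqcup \Lsc_{\mathrm{nc}}\big(\T,\, G_0^+\sqcup \mathrm{LAff}(X)_{++}\big),
\]
which is exactly $\mathrm{F}(\mathrm{Ell}(A))$ by the definition of $\mathrm{F}$.

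Next I would upgrade this to a natural isomorphism of functors, which is the step I expect to be the main obstacle: \autoref{thm:CuAtimesZ} and \autoref{thm:zstable} are stated as object-level isomorphisms, so one must verify that the identifications are compatible with the maps induced by a unital $\ast$-homomorphism $\varphi\colon A\to B$. Concretely, one tracks $\varphi$ through each summand and checks that on the compact part it induces $\mathrm{K}_0(\varphi)$ and $\mathrm{K}_1(\varphi)$, while on the noncompact part it induces $\mathrm{LAff}(\mathrm{T}(\varphi))$ through the pullback of traces; these are precisely the data out of which $\mathrm{F}(\mathrm{Ell}(\varphi))$ is assembled in \autoref{lma:FunctorF}. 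Granting the naturality of the isomorphism of \autoref{thm:zstable} established in \cite{AntDadPerSan_recovering_2014} (and the corresponding naturality in \autoref{thm:CuAtimesZ}), these checks are routine and yield $\Cu_{\T}\cong \mathrm{F}\circ \mathrm{Ell}$ as functors.

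Finally I would conclude by a purely categorical argument. By \autoref{lma:FunctorF}, the corestriction of $\mathrm{F}$ to $\mathrm{F}(\mathrm{\textbf{Ell}})$ is full and faithful, hence an equivalence onto its image; in particular $\mathrm{F}$ is faithful and reflects isomorphisms. Consequently, for objects $A,B$ in the class, $\eta\mapsto \mathrm{F}(\eta)$ is a bijection from the isomorphisms $\mathrm{Ell}(A)\to \mathrm{Ell}(B)$ onto the isomorphisms $\mathrm{F}(\mathrm{Ell}(A))\to \mathrm{F}(\mathrm{Ell}(B))$, carrying $\mathrm{Ell}(\varphi)$ to $\mathrm{F}(\mathrm{Ell}(\varphi))$ for every isomorphism $\varphi\colon A\to B$. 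Hence an isomorphism $\mathrm{Ell}(A)\cong \mathrm{Ell}(B)$ is induced by an isomorphism $A\cong B$ if and only if the corresponding isomorphism $\mathrm{F}(\mathrm{Ell}(A))\cong \mathrm{F}(\mathrm{Ell}(B))$ is, so that $\mathrm{Ell}$ is a classifying functor if and only if $\mathrm{F}\circ \mathrm{Ell}$ is. Since the property of being a classifying functor is preserved under natural isomorphism and $\Cu_{\T}\cong \mathrm{F}\circ \mathrm{Ell}$, we conclude that $\mathrm{Ell}$ is classifying if and only if $\Cu_{\T}$ is.
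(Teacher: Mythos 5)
Your proposal is correct and takes essentially the same route as the paper: the paper likewise establishes the natural equivalence $\mathrm{F}\circ\mathrm{Ell}\simeq \Cu_{\T}$ (citing \autoref{thm:zstable} together with the naturality of the computation in \cite{BroPerTom_cuntz_2008}) and then invokes \autoref{lma:FunctorF} to conclude by categorical generalities. The only cosmetic difference is that the paper packages the final step by choosing a quasi-inverse $\mathrm{G}$ for the full, faithful, dense functor $\mathrm{F}$ and writing $\mathrm{Ell}\simeq \mathrm{G}\circ\Cu_{\T}$, whereas you argue directly that a fully faithful functor induces bijections on isomorphism sets and reflects isomorphisms; these are interchangeable formulations of the same argument.
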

\begin{proof}
Since F$\colon \mathrm{\textbf{Ell}}\to \mathrm{F}(\mathrm{\textbf{Ell}})$ is a full, faitful
and dense functor by \autoref{lma:FunctorF}, 
it is a general fact in category theory
that it yields an equivalence of
categories, so that there exists another
functor $\mathrm{G}\colon \mathrm{F}(\mathrm{\textbf{Ell}})\to 
\mathrm{\textbf{Ell}}$ such that $\mathrm{F}\circ \mathrm{G}$
and $\mathrm{G}\circ \mathrm{F}$ are naturally equivalent to the respective
identities.
This, together with \autoref{thm:zstable} and \cite[Corollary 5.7]{BroPerTom_cuntz_2008} implies that 
there are natural equivalences of functors
\[
\mathrm{F}\circ\mathrm{Ell}\simeq\Cu_{\T}\,\text{ and }\,\mathrm{Ell}\simeq
\mathrm{G}\circ\Cu_{\T},
\]
which implies the result.
\end{proof}

\section{Additional axioms and properties}\label{sec:Axioms}

In this section, we introduce two important new axioms and a property for Cu-semigroups. As it turns out, the axioms are satisfied by the Cuntz semigroup of any C$^*$-algebra, whereas the property, that measures cancellation in the Cuntz semigroup, holds for C$^*$-algebras of stable rank one.

\begin{df}
Let $S$ be a $\Cu$-semigroup. We say that $S$ has \emph{weak cancellation} provided $x+z\ll y+z$ in $S$ implies $x\ll y$, for all $x,y,z\in S$.
\end{df}

Not every Cuntz semigroup is weakly cancellative. If, for example, $A$ is a purely infinite simple C$^*$-algebra, then $\Cu(A)=\{0,\infty\}$, where necessarily $\infty$ is compact. Hence $\infty+\infty\ll 0+\infty$, but it is not true that $\infty\ll 0$. It is easy to show that weak cancellation passes to quotients, hence another example where weak cancellation fails in the non purely infinite setting may be obtained by taking a stably finite C$^*$-algebra $A$ with a purely infinite simple quotient, for example the cone over $\mathcal{O}_2$. 
An example with finite stable rank is the Toeplitz algebra $\mathcal{T}$, since it contains a non-unitary isometry $s$, so that $p:=ss^*$ is a projection satisfying $p\oplus (1-p)=p+(1-p)=1\sim p$ with $p\neq 1$.

We record below an equivalent formulation for weak cancellation that is repeatedly used in the literature. The proof is left as an exercise; see \cite[Lemma 2.5]{AntPerRobThi_stable_2022} for the argument.

\begin{lma}
Let $S$ be a $\Cu$-semigroup. The following conditions are equivalent:
\begin{enumerate}[{\rm (i)}]
\item $S$ has weak cancellation.
\item If $x,y,z\in  S$ safisfy $x+z\ll y+z$, then $x\leq y$.
\item If $x,y,z,z'\in S$ satisfy $x+z\leq y+z'$ and $z'\ll z$, then $x\leq y$. 
\end{enumerate}
\end{lma}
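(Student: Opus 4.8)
The plan is to prove the cyclic chain of implications (i) $\Rightarrow$ (ii) $\Rightarrow$ (iii) $\Rightarrow$ (i). The first implication is immediate, since $x\ll y$ forces $x\leq y$. Throughout I will freely use the two elementary monotonicity facts that $a\ll b\leq c$ implies $a\ll c$, and $a\leq b\ll c$ implies $a\ll c$, both of which follow directly from \autoref{df:CompCont}, together with the fact that suprema of increasing sequences are least upper bounds, which is built into axiom (O1).

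For (ii) $\Rightarrow$ (iii), I would assume $x+z\leq y+z'$ with $z'\ll z$ and aim to conclude $x\leq y$. Using axiom (O2), write $x=\sup_n x_n$ with $x_n\ll x_{n+1}$, so that it suffices to prove $x_n\leq y$ for every $n$ (then $y$ is an upper bound of $(x_n)_n$, whence $x\leq y$). Fixing $n$, one has $x_n\ll x$ and $z'\ll z$, so axiom (O3) yields $x_n+z'\ll x+z$. Combining this with the hypothesis $x+z\leq y+z'$ gives $x_n+z'\ll y+z'$, and now condition (ii), applied to the triple $(x_n,y,z')$, produces $x_n\leq y$, as needed.

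The crux of the argument is (iii) $\Rightarrow$ (i), where one must upgrade a conclusion of the form $x\leq w_n$ (all that condition (iii) ever yields) into the finer statement $x\ll y$. Assume $x+z\ll y+z$, and to verify $x\ll y$ let $(w_n)_n$ be any increasing sequence with $w:=\sup_n w_n\geq y$. I would fix, via (O2), a $\ll$-increasing sequence $(z_n)_n$ with $\sup_n z_n=z$, and then pass to the diagonal sequence $u_n:=w_n+z_n$. By axiom (O4), $\sup_n u_n=w+z\geq y+z$, so the compact containment $x+z\ll y+z$ furnishes an index $n$ with $x+z\leq u_n=w_n+z_n$. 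Since $z_n\ll z_{n+1}\leq z$ gives $z_n\ll z$, condition (iii) applies to $x+z\leq w_n+z_n$ and delivers $x\leq w_n$. As $(w_n)_n$ was arbitrary, this is exactly $x\ll y$, closing the cycle.

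The main obstacle is precisely this last step: condition (iii) only compares elements in the order $\leq$, so to recover the relation $\ll$ demanded by weak cancellation one cannot argue purely algebraically but must invoke the definition of $\ll$ against an arbitrary test sequence. The diagonal trick combining $(w_n)_n$ with an approximating sequence for $z$, together with axiom (O4), is what makes the hypothesis $z_n\ll z$ of (iii) available for free and thereby converts the single inequality $x+z\leq w_n+z_n$ into the required $x\leq w_n$.
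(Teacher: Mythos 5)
Your proof is correct and takes essentially the same route as the argument the paper defers to (\cite[Lemma 2.5]{AntPerRobThi_stable_2022}): the cyclic chain (i)$\Rightarrow$(ii)$\Rightarrow$(iii)$\Rightarrow$(i), with (O2)--(O3) driving the second implication and (O2)+(O4) the third. The only cosmetic difference is in (iii)$\Rightarrow$(i), where the standard argument extracts $y'\ll y$ and $z'\ll z$ with $x+z\leq y'+z'$ and concludes $x\leq y'\ll y$, hence $x\ll y$, while you verify $x\ll y$ directly against an arbitrary test sequence via the diagonal $w_n+z_n$ --- the same approximation idea, unwound one level further.
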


The following theorem is due to R\o rdam and Winter \cite{RorWin_algebra_2010}. 

\begin{thm}
\label{wc} Let $A$ be a C$^*$-algebra of stable rank one. Then $\Cu(A)$ has weak cancellation.
\end{thm}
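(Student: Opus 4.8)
The plan is to verify condition (iii) of the lemma immediately preceding this theorem, namely that $x+z\le y+z'$ together with $z'\ll z$ forces $x\le y$; by that lemma this is equivalent to weak cancellation. Replacing $A$ by $A\otimes\K$, which again has stable rank one by \autoref{prop:sr1}(iii), I may assume $A$ is stable. Choose orthogonal representatives $x=[a]$, $z=[c]$ (so $a\perp c$) and $y=[b]$, $z'=[c']$, obtained by placing chosen representatives in complementary corners of $A\otimes\K\cong M_2(A\otimes\K)$. Then $x+z=[a\oplus c]$ and $y+z'=[b\oplus c']$ by \autoref{lma:orth}, so the hypothesis reads $a\oplus c\precsim b\oplus c'$ and the goal is $a\precsim b$. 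Since $[c']\ll[c]$, \autoref{prop:WayBelowCutDown} gives $\eta>0$ with $[c']\le[(c-\eta)_+]$, whence $b\oplus c'\precsim b\oplus(c-\eta)_+$ and therefore $a\oplus c\precsim b\oplus(c-\eta)_+$. Thus everything reduces to the following cancellation statement:

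\emph{If $A$ has stable rank one and $a\oplus c\precsim b\oplus(c-\eta)_+$ for some $\eta>0$, then $a\precsim b$.}

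For the cancellation I would work in the Hilbert-module picture of Coward--Elliott--Ivanescu \cite{CowEllIva_Cuntz_2008}, in which $\Cu(A)$ consists of countably generated Hilbert modules (write $E_d$ for the module associated to $d\in(A\otimes\K)_+$), Cuntz subequivalence becomes inclusion of submodules, and --- crucially in stable rank one --- a submodule that is compactly contained in another is orthogonally complemented. Translating the reduced hypothesis, \autoref{prop:CtzCompsr1} yields $t\in A\otimes\K$ with $t^*t=a\oplus c$ and $tt^*\in A_{b\oplus(c-\eta)_+}$; since $a\perp c$ and the range element is block-diagonal, the relevant hereditary subalgebras split orthogonally as $A_a\oplus A_c$ and $A_b\oplus A_{(c-\eta)_+}$, and $t$ exhibits $E_a\oplus E_c$ as a submodule of $E_b\oplus E_{(c-\eta)_+}$. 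The role of the strict gap $[(c-\eta)_+]\ll[c]$ is to provide the room needed to match a complemented copy of $E_c$ on the two sides and cancel it, leaving precisely $E_a\hookrightarrow E_b$, i.e.\ $a\precsim b$.

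The main obstacle is exactly this cancellation of the common summand $c$; this is where stable rank one does all the work, through orthogonal complementation of compactly contained submodules together with the upgrade from Cuntz equivalence to unitary equivalence supplied by \autoref{lma:CES}, \autoref{prop:sr1}(iv) and \autoref{thm:StableUnitImpl}. I expect the concrete argument to be an approximate-intertwining in the spirit of the proof of \autoref{prop:CtzCompsr1}: using \autoref{lma:CES}, construct unitaries $u_n\in\U(\widetilde{A\otimes\K})$ matching successive cut-downs of the $c$-parts on the two sides to within $2^{-n}$, pass to a limit unitary, and conjugate to remove $c$, with the fixed gap $\eta$ ensuring that the recursion converges and genuinely cancels the summand rather than merely comparing its cut-downs. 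That the hypothesis cannot be dropped is already visible in the discussion preceding the theorem: for the Toeplitz algebra $\mathcal{T}$ one has $p+(1-p)=1\sim p$ with $p\neq 1$, so cancellation of this type fails outside stable rank one.
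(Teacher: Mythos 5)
Your reductions are fine and match the paper's setup: passing to the stable case, choosing orthogonal representatives, and using $z'\ll z$ to replace $z'$ by a cut-down, so that everything hinges on the implication ``$a\oplus c\precsim b\oplus(c-\eta)_+$ implies $a\precsim b$'' (this is exactly inequality (10.1) in the paper's proof). But at that point your proposal stops proving and starts predicting: the sentence ``I expect the concrete argument to be an approximate-intertwining\dots'' defers the entire content of the theorem. The claimed mechanism --- match a complemented copy of $E_c$ on both sides and cancel it --- is precisely the hard step, and nothing in your sketch carries it out. Even granting that compact containment yields complemented submodules in stable rank one (a fact not established in the paper; what is recorded there is only that subequivalence becomes isomorphism onto a submodule), the embedding of $E_a\oplus E_c$ into $E_b\oplus E_{(c-\eta)_+}$ produced by \autoref{prop:CtzCompsr1} need not respect the direct-sum decompositions, so there is no immediate way to ``remove'' the $c$-summand; aligning it with the target's $(c-\eta)_+$-summand while keeping the $a$-part inside $E_b$ is exactly what an honest argument must supply. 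A smaller but genuine error along the way: for $a\perp c$ the hereditary subalgebra $A_{a\oplus c}$ does \emph{not} split as $A_a\oplus A_c$ (it contains the off-diagonal corners; take $a=e_{11}$, $c=e_{22}$ in $M_2$), though the module-level conclusion $E_{a\oplus c}\cong E_a\oplus E_c$ you draw from it happens to be true for other reasons.

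The paper closes this gap by a two-step argument that your sketch contains no trace of. First it proves the case where the cancelled class is \emph{compact}: represent it by a projection $p$ (\autoref{prop:CpctSr1}), use \autoref{thm:StableUnitImpl} to move $(a-\varepsilon)_++p$ unitarily into $A_{b+p}$, observe that $p$ and $upu^*$ are Cuntz-equivalent projections in the hereditary subalgebra $A_{b+p}$, which has stable rank one and is stably finite, so they are Murray--von Neumann equivalent (\autoref{lma:CtzCompPjns}) and then unitarily equivalent in $\widetilde{A_{b+p}}$ by part (iv) of \autoref{prop:sr1}; compressing by $1-p$ lands $(a-\varepsilon)_+$ in $A_b$. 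Second, it reduces the general case to the compact one via the function $h_\varepsilon$ with $\varepsilon\leq c+h_\varepsilon(c)\leq 1$, which gives $1\precsim c\oplus h_\varepsilon(c)$ and $(c-\varepsilon)_++h_\varepsilon(c)\precsim 1$, converting the arbitrary common summand $c$ into the unit class --- a compact element --- at the cost of the gap $\eta$ you correctly identified. If you want to complete your module-theoretic route instead, you would have to prove an analogue of this projection-case cancellation from scratch; as written, your proposal assumes it.
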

\begin{proof}
By part~(iii) of \autoref{prop:sr1}, we may assume that $A$ is stable. Let $x$, $y$, $z$ in $\Cu(A)$ satisfy $x+z\ll y+z$. Assume first that $z$ is compact, that is, $z\ll z$. Choose elements $a,b\in A_+$
with $x=[a]$, $y=[b]$, and use \autoref{prop:CpctSr1} 
to choose a projection $p\in A$ with $z=[p]$. 
We may assume that
$a,b$ are both orthogonal to $p$. Let $\varepsilon>0$. 
Using \autoref{thm:StableUnitImpl},
choose a unitary $u$ (in the unitization of $A$) such that
$u((a-\varepsilon)_++p)u^*\in A_{b+p}$. 
Then $p$ and $upu^*$ are projections in $A_{b+p}$ whose Cuntz classes in $A$ agree.
Using that $A_{b+p}$ is a hereditary subalgebra of $A$, one 
readily checks that their Cuntz classes in $A_{b+p}$ 
also agree (this is almost identical to the 
proof of \autoref{lma:IdealAIdealCu}). 
Note that $A_{b+p}$ has stable rank by part~(ii)
of \autoref{prop:sr1}, and hence is stably finite by
part~(i) of the same lemma. By \autoref{lma:CtzCompPjns}
and the comments after it, we deduce that $p\sim_{\mathrm{MvN}} q$ in $A_{b+p}$. Using again that 
$A_{b+p}$ has stable rank one, part~(iv) of 
\autoref{prop:sr1} gives us a unitary $v$ in the unitization of $A_{b+p}$ such that $vpv^*=upu^*$.

Observe now that $vu(a-\varepsilon)_+u^*v^*$ belongs to $A_{b+p}$ and is orthogonal to $p$. Thus 
\[vu(a-\varepsilon)_+u^*v^*\in (1-p)A_{p+b}(1-p)=A_b.\] 
This shows that $(a-\varepsilon)_+\precsim b$ and since $\varepsilon$ is arbitrary, we conclude that $a\precsim b$, that is, $x\leq y$.

For the general case, choose $z'\ll z$ such that $x+z\leq  y+z'$. Choose representatives as before $x=[a]$, $y=[b]$, $z=[c]$. We may assume that $z'=[(c-\varepsilon)_+]$ for some $\varepsilon>0$, and so
\[\tag{10.1}
a\oplus c\precsim b\oplus (c-\varepsilon)_+
\] 
Define
\[h_\varepsilon(t)=\begin{cases}
		1, & \text{if } t=0 \\
        \text{linear}, &\text{if } t\in [0,\varepsilon]\\
        0, &\text{if } t\geq\varepsilon.
       \end{cases}
\]
Then
$\ep\leq c+h_\varepsilon(c)\leq 1$. Using part~(ii) of~\autoref{cor:FuncCalc} at the second step and using 
\autoref{lma:orth} at the third, we get
\[\tag{10.2}(c-\ep)_++h_\varepsilon(c)\leq c+h_\varepsilon(c)\sim 1\precsim c\oplus h_\ep(c).\]
On the other hand, $(c-\ep)_+\perp h_\varepsilon(c)$ and
thus \autoref{lma:orth} implies that
\[\tag{10.3}(c-\ep)_+ \oplus h_\ep(c)\sim (c-\ep)_+ + h_\ep(c)
\]
Combining these things, we get 
\begin{align*}
a\oplus 1&\stackrel{\mathrm{(10.2)}}{\precsim} a\oplus c\oplus h_\varepsilon(c)\\
&\stackrel{\mathrm{(10.1)}}{\precsim}  b\oplus (c-\varepsilon)_+\oplus h_\varepsilon(c)\\ &\stackrel{\mathrm{(10.3)}}{\precsim} b\oplus (c-\ep)_++h_\varepsilon(c)\\
&\stackrel{\mathrm{(10.2)}}{\precsim} b\oplus 1,
\end{align*}
and therefore $a\precsim b$ by the first part of the proof.
\end{proof}

\begin{rem}
\label{rmk:wc}
Notice that if $S$ is a $\Cu$-semigroup with weak cancellation, then $S$ has cancellation of compact elements. In particular, if $A$ is a C$^*$-algebra of stable rank one, then its Cuntz semigroup $\Cu(A)$ has cancellation of compact elements (in fact, a close inspection of the proof of \autoref{wc} reveals one proves this first before concluding weak cancellation).
\end{rem}

An abelian monoid $S$ is said to be \emph{algebraically ordered} provided $x\leq y$ in $S$ exactly when there is $z\in S$ with $x+z=y$. The order in a Cu-semigroup is in general not algebraic
(consider, for example, the Cuntz semigroup of $\mathcal{Z}$; see
\autoref{thm:CuZ}),
and the axiom below measures how far this is from being the case.

\begin{df}
\label{pgr:O5}
Let $S$ be a $\Cu$-semigroup. We say that $S$ satisfies (O5) (or has \emph{almost algebraic order}) if, whenever $x+z\leq y$ and $x'\ll x$, $z'\ll z$, there is $w\in S$ such that 
\[x'+w\leq y\leq x+w \ \ \mbox{ and } \ \ z'\leq w.\]
\end{df}

The formulation above is taken from \cite{AntPerThi_tensor_2018}, and has the advantage that it behaves well with respect to inductive limits of $\Cu$-semigroups; see \cite[Theorem 4.5]{AntPerThi_tensor_2018}.

This axiom appears in a different form in other papers, namely by taking $z=0$. In fact, this was the original formulation in \cite{RorWin_algebra_2010}, which therefore reads: whenever $x\leq y$ and $x'\ll x$, there is $w\in S$ such that $x'+w\leq y\leq x+w$. Let us temporarily refer to this version as axiom $\widetilde{\mathrm{(O5)}}$.

\begin{lma}
Let $S$ be a $\Cu$-semigroup. Then (O5) implies $\widetilde{\mathrm{(O5)}}$ and, in the presence of weak cancellation, the converse holds.
\end{lma}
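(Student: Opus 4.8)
The plan is to treat the two implications separately; the first is immediate, and essentially all of the content lies in the converse.

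For (O5) $\Rightarrow$ $\widetilde{\mathrm{(O5)}}$, I would simply specialize. Given the hypotheses of $\widetilde{\mathrm{(O5)}}$, namely $x\le y$ and $x'\ll x$, I would apply (O5) with $z=z'=0$. Its hypotheses are then met: $x+0=x\le y$, $x'\ll x$, and $0\ll 0$ (the neutral element is compact, since $0$ is minimal, so $0\le x_n$ for every term of any increasing sequence). The element $w$ produced by (O5) satisfies $x'+w\le y\le x+w$, with the extra condition $0\le w$ vacuous, and this is exactly the conclusion of $\widetilde{\mathrm{(O5)}}$.

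For the converse, assume $\widetilde{\mathrm{(O5)}}$ and weak cancellation, and suppose $x+z\le y$ with $x'\ll x$ and $z'\ll z$. The first two requirements on the desired $w$, namely $x'+w\le y\le x+w$, constitute precisely a complementation statement for $x$ inside $y$; since $x\le x+z\le y$, I would obtain such a complement $v$ directly from $\widetilde{\mathrm{(O5)}}$ applied to $(x\le y,\ x'\ll x)$, so that $x'+v\le y\le x+v$. The genuine task is to arrange in addition that $z'\le w$. As a preliminary observation, $x'+z'\le x+z\le y$, so $z'$ does sit below \emph{some} element compatible with the complement condition; the difficulty is to produce a \emph{single} $w$ that is simultaneously a complement of $x$ in $y$ and dominates $z'$.

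This reconciliation is the main obstacle, and the only place where weak cancellation should be needed. I expect the two most natural additive guesses to fail: applying $\widetilde{\mathrm{(O5)}}$ instead to $(x+z\le y,\ x'+z'\ll x+z)$ — using (O3) for the compact containment — yields $w_0$ with $x'+z'+w_0\le y\le x+z+w_0$, and then $w=z+w_0$ gives $y\le x+w$ and $z'\le w$ but not necessarily $x'+w\le y$, whereas $w=z'+w_0$ gives $x'+w\le y$ and $z'\le w$ but not necessarily $y\le x+w$. Hence no single closed formula works, and one is forced to compare a complement $v$ of $x$ with a complement $w_0$ of $x+z$ and to cancel the overlapping copies of $x$ and $z$. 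Here I would invoke weak cancellation in form (iii) (that $a+c\le b+c'$ with $c'\ll c$ forces $a\le b$), feeding in the approximations $x'\ll x$ and $z'\ll z$ to cancel the compactly-contained excess and thereby promote $v$ to an element also lying above $z'$. I anticipate this cancellation step to be the technical heart; its necessity is confirmed by the purely infinite case, where $\Cu(A)=\{0,\infty\}$ fails weak cancellation and the two forms of the axiom diverge.
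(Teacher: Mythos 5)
Your first implication is exactly the paper's (which dismisses it as trivial), and your diagnosis of the converse is on target: weak cancellation is indeed the engine, and the two closed formulas you rule out do fail. But the proposal stops short of a proof, and the route you sketch would not close. The obstruction is one of \emph{sides}: every usable form of weak cancellation (including form (iii), $a+c\leq b+c'$ with $c'\ll c$ implies $a\leq b$) requires the compactly contained copy to sit on the \emph{right}-hand side of the inequality, i.e.\ one cancels a large element against a $\ll$-smaller one. Your data $x'\ll x$ and $z'\ll z$ place the small elements on the left. Concretely, if you take $v$ from $\widetilde{\mathrm{(O5)}}$ applied to $x'\ll x\leq y$, you get $x'+v\leq y\leq x+v$, and the only comparison available is $x+z\leq y\leq x+v$: there is no $x''\ll x$ on the right against which to cancel, so no form of weak cancellation applies, and indeed $z'\leq v$ need not follow from this data. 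The same sidedness problem defeats your fallback of comparing $v$ with a complement $w_0$ of $x+z$: from $x'+z'+w_0\leq y\leq x+v$ one gets $z'+w_0+x'\leq v+x$ with the $\ll$-small term $x'$ again on the wrong side.

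The missing idea, which is the whole content of the paper's proof, is an \emph{interpolation} step: using (O2) one picks $x''$, $z''$ with $x'\ll x''\ll x$ and $z'\ll z''\ll z$, and then applies $\widetilde{\mathrm{(O5)}}$ not to the pair $(x',x)$ but to $x'\ll x''\leq y$. The resulting $w$ satisfies $x'+w\leq y\leq x''+w$, and now the slack is on the correct side: from (O3) one has $x''+z''\ll x+z\leq y\leq x''+w$, so weak cancellation cancels $x''$ and yields $z''\ll w$, hence $z'\leq w$; meanwhile $y\leq x''+w\leq x+w$ recovers the upper bound with the full $x$. So your instinct that the approximations $x'\ll x$, $z'\ll z$ must be "fed in" was right, but they must be fed in \emph{before} invoking $\widetilde{\mathrm{(O5)}}$, as interpolated midpoints that deliberately weaken the complement inequality to $y\leq x''+w$; fed in afterwards, as your plan has it, they have no cancellation to act on.
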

\begin{proof}
The first part of the statement is trivial. Assume, for the converse, that  $S$ is a $\Cu$-semigroup that has weak cancellation and satisfies $\widetilde{\mathrm{(O5)}}$. We are to show that $S$ satisfies (O5). To this end, let $x',x,z',z,y\in S$ satisfy $x+z\leq y$ and $x'\ll x$, $z'\ll z$. First choose $x'\ll x''\ll x$, $z'\ll z''\ll z$, and $y'\ll y$ such that $x''+z''\ll y'$. 
Since $x'\ll x''\leq y$, by $\widetilde{\mathrm{(O5)}}$ there is $w\in S$ such that $x'+w\leq y\leq x''+w$. Using weak cancellation on the inequality
\[
x''+z''\ll y'\ll y\leq x''+w,
\] 
we obtain $z''\ll w$. Thus $z'\leq w$. On the other hand $x'+w\leq y\leq x''+w\leq x+w$, as desired.
\end{proof}

\begin{rem}
\label{rem:compl}
Suppose that $S$ satisfies (O5), and let $x\in S_c$ and $y\in S$
satisfy $x\leq y$. 
Applying (O5) to $x\ll x\leq y$, we find $w\in S$ with $x+w=y$. 
This shows that compact elements can always be complemented and thus the subsemigroup $S_c$ of compact elements is algebraically ordered.
\end{rem}

The reason why (O5) is significant is recorded below:

\begin{prop}
\label{O5}
Let $A$ be a C$^*$-algebra. Then $\Cu(A)$ satisfies (O5).
\end{prop}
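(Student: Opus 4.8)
The plan is to unwind (O5) into a statement about positive elements and then build the witnessing element $w$ by hand as an orthogonal complement. Write $x=[a]$, $z=[c]$, $y=[b]$ for representatives $a,c,b\in(A\otimes\K)_+$, so that $x+z\le y$ means $a\oplus c\precsim b$. Since $x'\ll x$ and $z'\ll z$, \autoref{prop:WayBelowCutDown} furnishes a single $\ep>0$ with $x'\le[(a-\ep)_+]$ and $z'\le[(c-\ep)_+]$. It then suffices to produce $w\in\Cu(A)$ with $[(a-\ep)_+]+w\le[b]\le[a]+w$ and $[(c-\ep)_+]\le w$, since then $x'+w\le[(a-\ep)_+]+w\le y\le[a]+w=x+w$ and $z'\le[(c-\ep)_+]\le w$, which is exactly (O5).

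First I would manufacture two orthogonal positive elements inside $A_b$ capturing the cut-downs of $a$ and $c$. Applying part~(v) of R{\o}rdam's lemma (\autoref{thm:Rordam}) to $a\oplus c\precsim b$ yields $\delta>0$ and $r$ with $(a-\ep)_+\oplus(c-\ep)_+=r(b-\delta)_+r^*$. Setting $e=(b-\delta)_+$ and letting $q_1,q_2$ be the two coordinate projections of $M_2$, the elements $f_1=e^{1/2}r^*q_1re^{1/2}$ and $f_2=e^{1/2}r^*q_2re^{1/2}$ lie in $A_e\subseteq A_b$ and satisfy $f_1+f_2=e^{1/2}r^*re^{1/2}$. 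They are mutually orthogonal because $q_1\big((a-\ep)_+\oplus(c-\ep)_+\big)q_2=0$, and writing $f_1=(q_1re^{1/2})^*(q_1re^{1/2})$ with $(q_1re^{1/2})(q_1re^{1/2})^*=q_1\big((a-\ep)_+\oplus(c-\ep)_+\big)q_1=(a-\ep)_+\oplus 0$, part~(iv) of \autoref{cor:FuncCalc} gives $f_1\sim(a-\ep)_+$; symmetrically $f_2\sim(c-\ep)_+$.

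The heart of the argument is to construct $w$ as the Cuntz class of an orthogonal complement of $f_1$ inside $b$: a positive $d\in A_b$ with $d\perp f_1$, $f_1\oplus d\sim b$, and $f_2\precsim d$. Granting such a $d$ and setting $w=[d]$, all three inequalities fall out of \autoref{lma:orth} and $(a-\ep)_+\precsim a$: indeed $[(a-\ep)_+]+w=[f_1]+[d]=[f_1\oplus d]=[b]$, whence $[(a-\ep)_+]+w\le[b]=[f_1]+[d]\le[a]+[d]=[a]+w$, while $[(c-\ep)_+]=[f_2]\le[d]=w$.

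The main obstacle is precisely the construction of $d$, since $\Cu(A)$ has no subtraction and an exact orthogonal complement of $f_1$ need not exist. I would build it approximately: for each small $\eta>0$ produce $d_\eta\in A_b$ with $d_\eta\perp(f_1-\eta)_+$, $(f_1-\eta)_+\oplus d_\eta\sim(b-\eta)_+$, and $f_2\precsim d_\eta$, and then set $w=\sup_\eta[d_\eta]$ using (O1); the inequalities above are then recovered in the limit via $[b]=\sup_\eta[(b-\eta)_+]$ (\autoref{rem:CutDownsConverge}) and the compatibility of addition with suprema (axiom (O4), established in \autoref{thm:CuAinCu}). Producing each $d_\eta$ is where weak stable rank one of $A\otimes\K$ enters: using \autoref{thm:StableUnitImpl} one may move a cut-down of $f_1$ by a unitary of $\widetilde{A\otimes\K}$ inside $A_{(b-\eta)_+}$ and peel off its orthogonal complement there. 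This is the genuinely technical step, requiring approximation of the polar decomposition of $re^{1/2}$ by honest elements of the algebra, and it is where the real work of the proposition lies; the rest is bookkeeping with cut-downs and suprema.
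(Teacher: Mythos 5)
Your setup is sound as far as it goes: the reduction of (O5) to the cut-down inequalities, and the construction via part~(v) of R{\o}rdam's lemma of mutually orthogonal elements $f_1\perp f_2$ in $A_{(b-\delta)_+}$ with $f_1\sim(a-\ep)_+$ and $f_2\sim(c-\ep)_+$, are both correct. The problem is that everything after that is a promissory note on precisely the step that constitutes the proposition. An exact complement $d$ with $f_1\oplus d\sim b$ need not exist, as you yourself note; but your approximate substitute inherits the same defect: the requirement $(f_1-\eta)_+\oplus d_\eta\sim(b-\eta)_+$ is again an \emph{exact} Cuntz equivalence, and nothing in \autoref{thm:StableUnitImpl} produces it --- that theorem lets you conjugate a cut-down into $A_{(b-\eta)_+}$ by a unitary of the unitization, but ``peeling off'' a $d_\eta$ realizing equality of classes is exactly the unproved content. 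Moreover, even granting the $d_\eta$, the definition $w=\sup_\eta[d_\eta]$ is illegitimate: (O1) furnishes suprema of \emph{increasing} sequences only, and you arrange no monotonicity of $\eta\mapsto[d_\eta]$; without it, your subsequent appeals to (O4) --- needed to get $\sup_\eta\big([(f_1-\eta)_+]+[d_\eta]\big)=[f_1]+w$ and $[b]\leq[a]+w$ in the limit --- collapse as well. Repairing this would require an inductive choice of $\eta_n\downarrow 0$ with $d_{\eta_n}\precsim d_{\eta_{n+1}}$, which is yet more unconstructed machinery.

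The paper's proof shows that no complement, exact or approximate, is needed, because (O5) only asks for the two one-sided comparisons $x'+w\leq y$ and $y\leq x+w$. Following R{\o}rdam--Winter, one applies \autoref{thm:Rordam} to $(a-\ep)_+\precsim b$ to obtain $v$ with $v^*v=(a-\ep)_+$ and $vv^*\in A_b$, takes $h=h_\ep(vv^*)$ for the function $h_\ep$ that equals $1$ at $0$ and vanishes beyond $\ep$, and simply sets $w=[hbh]$ in closed form. The orthogonality $h\perp(vv^*-\ep)_+$ yields $(a-2\ep)_+\oplus hbh\sim(vv^*-\ep)_++hbh\precsim b$, while the pointwise bound $\ep\leq vv^*+h_\ep(vv^*)$ yields $b\precsim b^{\frac{1}{2}}(vv^*+h^2)b^{\frac{1}{2}}\precsim vv^*\oplus hbh\sim(a-\ep)_+\oplus hbh\precsim a\oplus hbh$ --- no unitaries, no weak stable rank one, no suprema. (The paper proves only the weaker version $\widetilde{\mathrm{(O5)}}$ with $z=0$ this way and cites \cite[Proposition~4.6]{AntPerThi_tensor_2018} for the general case; your orthogonalization of $f_1$ and $f_2$ is indeed the right extra ingredient for nonzero $z$, but the resulting verification $z'\leq w$, like the complement itself, still has to be carried out rather than announced, and your write-up stops exactly where the real argument begins.)
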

\begin{proof}
We prove the simpler version $\widetilde{\mathrm{(O5)}}$, mostly
following the original proof in \cite{RorWin_algebra_2010}. (For the general argument, see \cite[Proposition 4.6]{AntPerThi_tensor_2018}.) Thus assume that $x'\ll x\leq y$ in $\Cu(A)$, and set $x=[a]$, $y=[b]$. For simplicity we assume that $x'=[(a-2\varepsilon)_+]$ for some $\varepsilon>0$. 

By \autoref{thm:Rordam}, there is $v\in A$ such that $(a-\varepsilon)_+=v^*v$ and $vv^*\in A_b$. Let $h_\varepsilon$ be defined as in the proof of \autoref{wc}, so that $h_\varepsilon(vv^*)\perp (vv^*-\varepsilon)_+$
and $\ep\leq vv^*+h_\ep(vv^*)$. The latter inequality 
implies that $\ep b\leq b^{\frac{1}{2}}(vv^*+h_\ep(vv^*))b^{\frac{1}{2}}$
and thus
\[\tag{10.4} b\precsim b^{\frac{1}{2}}(vv^*+h_\ep(vv^*))b^{\frac{1}{2}}.\]
Let $c=h_\varepsilon(vv^*)bh_\varepsilon(vv^*)$ and set $w=[c]\in \Cu(A)$. Note
that 
\[\tag{10.5} c\sim b^{\frac{1}{2}}h_\ep(vv^*)^2 b^{\frac{1}{2}}\]
by part~(iv) of \autoref{cor:FuncCalc}.
Since both $(vv^*-\varepsilon)_+$ and $c$ belong to $A_b$,
we have 
\[\tag{10.6}(vv^*-\varepsilon)_++c\precsim b\]
by \autoref{prop:InHerSubalg}.
Using \autoref{lma:CutDwnSymm} at the second step, and using 
\autoref{lma:orth} at the third step, we get
\[
(a-2\varepsilon)_+\oplus c= (v^*v-\varepsilon)_+\oplus c\sim (vv^*-\varepsilon)_+\oplus c\sim (vv^*-\varepsilon)_++c
\stackrel{\mathrm{(10.6)}}{\precsim} b,
\]
so that $x'+w\leq y$. 

It remains to show that $b\precsim a\oplus c$, which
gives $y\leq x+w$. 
Using \autoref{lma:orth} at the fourth step, and using $v^*v=(a-\ep)_+$ and part~(iv) of \autoref{cor:FuncCalc} at the fifth step, we get
\begin{align*}
b&\stackrel{\mathrm{(10.4)}}{\precsim} b^{\frac{1}{2}}(vv^*+h_\varepsilon(vv^*)^2)b^{\frac{1}{2}}\\
& \ \ = \ \ b^{\frac{1}{2}}vv^*b^{\frac{1}{2}}+b^{\frac{1}{2}}h_\varepsilon(vv^*)^2b^{\frac{1}{2}}\\
&\stackrel{\mathrm{(10.5)}}{\precsim} vv^*+c \precsim vv^*\oplus c\\
& \ \ \sim \ \ (a-\varepsilon)_+\oplus c\precsim a\oplus c.\qedhere
\end{align*}
\end{proof}

An abelian monoid $S$ is said to have \emph{Riesz decomposition} if whenever $x\leq y+z$ in $S$, there are elements $x_1, x_2\in S$ with $x=x_1+x_2$ and $x_1\leq y$, $x_2\leq z$. Again, Cu-semigroups do not 
in general have Riesz decomposition (and $\Cu(\mathcal{Z})$ is an
example).
The following axiom measures a certain degree of Riesz decomposition in a $\Cu$-semigroup. 

\begin{df}
\label{pgr:O6}
Let $S$ be a $\Cu$-semigroup. We say that $S$ satisfies (O6), or that
it has \emph{almost Riesz decomposition}, if whenever $x',x,y,z\in S$ satisfy $x'\ll x\leq y+z$, then there are $z,t\in S$ with 
\[x'\leq z+t, \ \ \ \ z\leq x, y, \ \ \mbox{ and } \ \ t\leq x, z.\]
\end{df}

Again, (O6) is satisfied by the Cuntz semigroup of every C$^*$-algebra; see \cite[Proposition 5.1.1]{Rob_cone_2013}.

\begin{prop}
\label{O6}
Let $A$ be a C$^*$-algebra. Then $\Cu(A)$ satisfies (O6).
\end{prop}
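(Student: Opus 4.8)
The plan is to lift the relation to the level of positive elements and perform the decomposition there, exploiting that $(b+c-\delta)_+$ splits orthogonally when $b\perp c$. First I would choose representatives: write $x=[a]$ with $a\in(A\otimes\K)_+$, and using the identification $\Cu(A)\cong\Cu(M_2(A))$ I may pick orthogonal representatives $b,c\in(A\otimes\K)_+$ with $y=[b]$, $z=[c]$ and $bc=0$, so that by \autoref{lma:orth} we have $y+z=[b+c]$ and the hypothesis $x\le y+z$ reads $a\precsim e$, where $e:=b+c$. Since $x'\ll x=[a]$, \autoref{prop:WayBelowCutDown} provides $\ep>0$ with $x'\le[(a-\ep)_+]$; it therefore suffices to decompose $(a-\ep)_+$.

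Next I would apply part~(v) of R{\o}rdam's lemma (\autoref{thm:Rordam}) to $a\precsim e$ to obtain $\delta>0$ and $r\in A\otimes\K$ with $(a-\ep)_+=r(e-\delta)_+r^*$. Because $b\perp c$ the two elements commute, and applying the function $t\mapsto(t-\delta)_+$, which vanishes at $0$, to the orthogonal sum $b+c$ yields $(e-\delta)_+=(b-\delta)_++(c-\delta)_+$. Conjugating, I set
\[
a_1:=r(b-\delta)_+r^*,\qquad a_2:=r(c-\delta)_+r^*,
\]
both positive, with $a_1+a_2=(a-\ep)_+$.

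Then I read off all five inequalities. The conjugation estimate $rur^*=(ru^{1/2})(ru^{1/2})^*\sim u^{1/2}r^*ru^{1/2}\le\|r\|^2u$, hence $rur^*\precsim u$ (via \autoref{cor:FuncCalc} and \autoref{prop:InHerSubalg}), gives $a_1\precsim(b-\delta)_+\precsim b$ and $a_2\precsim c$; thus $[a_1]\le y$ and $[a_2]\le z$. Since $a_1,a_2\ge0$ and $a_1+a_2=(a-\ep)_+\le a$, we also get $a_1,a_2\le a$, whence $[a_1]\le x$ and $[a_2]\le x$ by \autoref{prop:InHerSubalg}. Finally \autoref{lma:orth} yields $(a-\ep)_+=a_1+a_2\precsim a_1\oplus a_2$, so that $x'\le[(a-\ep)_+]\le[a_1]+[a_2]$. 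Setting $s:=[a_1]$ and $t:=[a_2]$ verifies (O6).

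The step I expect to require the most care is the functional-calculus identity $(e-\delta)_+=(b-\delta)_++(c-\delta)_+$ for orthogonal $b,c$, together with the initial choice of orthogonal representatives. By contrast, the condition that looks like the genuine obstacle—that the two pieces be dominated by $x$ and not merely by $y$ and $z$—should dissolve immediately, since two positive elements summing to $(a-\ep)_+$ are each dominated by it, and hence by $a$.
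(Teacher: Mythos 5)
Your proof is correct, and while it shares the paper's overall skeleton (orthogonalize the representatives $b\perp c$, reduce $x'$ to a cut-down $[(a-\ep)_+]$ via \autoref{prop:WayBelowCutDown}, then split using orthogonality), the splitting mechanism is genuinely different and arguably cleaner. The paper uses part~(iv) of R{\o}rdam's lemma to write $(a-\ep)_+=v^*v$ with $vv^*\in A_{(b+c-\delta)_+}$, introduces the auxiliary function $g_\delta$ acting as a unit on $vv^*$, and must then control the cross terms $g_\delta(b)vv^*g_\delta(c)+g_\delta(c)vv^*g_\delta(b)$ via the estimate of \autoref{rem:x+y}, at the cost of a factor of $2$ (harmless, since $2d\sim d$). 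You instead use part~(v), the conjugation form $(a-\ep)_+=r(e-\delta)_+r^*$, and observe that the cut-down function itself splits over the orthogonal sum: $(b+c-\delta)_+=(b-\delta)_++(c-\delta)_+$, which holds because $f(b+c)=f(b)+f(c)$ for continuous $f$ with $f(0)=0$ when $bc=0$ (indeed $C^*(b,c)\cong C^*(b)\oplus C^*(c)$). Conjugating by $r$ then yields an \emph{exact} additive decomposition $a_1+a_2=(a-\ep)_+$ with no cross terms, no auxiliary function, and no factor-of-$2$ estimate; the dominations $[a_i]\leq x$ come from the operator inequalities $a_i\leq a_1+a_2=(a-\ep)_+\leq a$, where the paper has to pass through $[g_\delta(b)vv^*g_\delta(b)]=[v^*g_\delta(b)^2v]\leq[(a-\ep)_+]$, and $rur^*\precsim u$ handles domination by $y$ and $z$. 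All the ingredients you invoke (part~(v) of \autoref{thm:Rordam}, \autoref{lma:orth}, \autoref{prop:InHerSubalg}, \autoref{cor:FuncCalc}) are established before this proposition, so there is no circularity; what the paper's route buys instead is a witness $vv^*$ sitting inside the hereditary subalgebra $A_{(b+c-\delta)_+}$, a form of conclusion that is reused elsewhere (for instance in the stable rank one analysis), whereas your route is the more direct path to (O6) itself.
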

\begin{proof}
We may assume that $A$ is stable.  Suppose that $x\leq y+z$ in $\Cu(A)$, and $x'\ll x$. Choose representatives $a,b,c\in A$ such that $x=[a]$, $y=[b]$, and $z=[c]$. We may assume that $b\perp c$ and for simplicity we also assume that  $x'=[(a-\varepsilon)_+]$ for some $\varepsilon>0$.

By \autoref{thm:Rordam}, there are $v\in A$ and $\delta>0$ such that $(a-\varepsilon)_+=v^*v$ and $vv^*\in A_{(b+c-\delta)_+}$. Define a continuous function by
\[g_\delta(t)=\begin{cases}
		0, & \text{if } t=0 \\
        \text{linear}, &\text{if } t\in [0,\delta]\\
        1, &\text{if } t>\delta.
       \end{cases}
\]
Then $g_\delta(b+c)vv^*=vv^*$. Using at the first step 
that 
$g_\delta(b+c)=g_\delta(b)+g_\delta(c)$, which holds because 
$b\perp c$, and using 
\autoref{rem:x+y} at the second step,
we have
\begin{align*}
g_\delta(b+c)&vv^*g_\delta(b+c) \\ 
&=g_\delta(b)vv^*g_\delta(b)+g_\delta(b)vv^*g_\delta(c)+g_\delta(c)vv^*g_\delta(b)+g_\delta(c)vv^*g_\delta(c)\\
&\leq 2(g_\delta(b)vv^*g_\delta(b)+g_\delta(c)vv^*g_\delta(c)).
\end{align*}

Using this at the third step and that $b\perp c$ again at the fourth step, we obtain
\begin{align*}
[(a-\varepsilon)_+]=[vv^*]= & [g_\delta(b+c)vv^*g_\delta(b+c)]\\
\leq & [g_\delta(b)vv^*g_\delta(b)+ g_\delta(c)vv^*g_\delta(c)]\\
= & [g_\delta(b)vv^*g_\delta(b)]+ [g_\delta(c)vv^*g_\delta(c)].
\end{align*}
The proof is finished by setting $z:=[g_\delta(b)vv^*g_\delta(b)]=[v^*g_\delta(b)^2v]\leq [(a-\varepsilon)_+], [b]$ and $t:=[g_\delta(c)vv^*g_\delta(c)]\leq [(a-\varepsilon)_+], [c]$.
\end{proof}

\section{\texorpdfstring{C$^*$}{C*}-algebras with stable rank one}
\label{sec:sr1}

In this section we explore some properties satisfied by the Cuntz semigroups of C$^*$-algebra with stable rank. One obtains in particular solutions to three open problems on the Cuntz semigroup that, at the same time, reflect on the structure of such algebras. In these notes, we will discuss two of the three problems mentioned (see Sections \ref{sec:BH} and \ref{sec:functionals}). Large portions of this section are taken from \cite{AntPerRobThi_stable_2022}.

We will need the following variation of the axiom (O6), introduced
by Thiel in \cite{Thi_ranks_2020}:

\begin{df}
\label{pgr:O6+}
Let $S$ be a $\Cu$-semigroup. We say that $S$ satisfies (O6+) if whenever $x,y,z\in S$ satisfy $x\leq y+z$ and $u',u,v',v\in S$ are such that $u'\ll u\leq x,y$, and $v'\ll v\leq x,z$, then there are elements $s,t\in S$ such that
\[
x\leq s+t, \ \ \ \  u'\ll s\leq x,y, \ \ \mbox{ and } \ \ v'\ll t\leq x,z.
\]
\end{df}

It was shown in the proof of \cite[Lemma 6.3]{Thi_ranks_2020} that (O6+) has a simpler equivalent formulation. We  reproduce the argument below for convenience, since we shall use this later.

\begin{lma}
Let $S$ be a $\Cu$-semigroup. Then $S$ satisfies (O6+) if and only if the following (one-sided) property holds: whenever $x,y,z\in S$ satisfy $x\leq y+z$ and $u',u\in S$ are such that $u'\ll u\leq x,y$, then there is $s\in S$ such that $x\leq s+y$ and $u'\ll s\leq x,y$. 
\end{lma}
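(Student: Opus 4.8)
The plan is to prove the two implications separately. The forward implication is a direct specialization of (O6+), obtained by feeding it trivial data on one side, while the converse is obtained by applying the one-sided property twice. Throughout I will read the one-sided property as: if $x\leq y+z$ and $u'\ll u\leq x,y$, then there is $s\in S$ with $u'\ll s\leq x,y$ and $x\leq s+z$ (that is, one refines the distinguished summand $y$ to $s$ while leaving the other summand $z$ untouched).

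For the forward direction, suppose $S$ satisfies (O6+) and let $x\leq y+z$ together with $u'\ll u\leq x,y$ be given. The key observation is that the minimal element $0$ supplies trivial data on the $z$-side: indeed $0\leq x,z$, and $0\ll 0$ because $0$ lies below every term of any increasing sequence, so $0$ is compact. Applying (O6+) with $v'=v=0$ therefore produces $s,t\in S$ satisfying $x\leq s+t$, $u'\ll s\leq x,y$, and $0\ll t\leq x,z$, the last of which simply says $t\leq x,z$. Since $t\leq z$ and addition is compatible with the order, we get $x\leq s+t\leq s+z$, so $s$ witnesses the one-sided property.

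For the converse, assume the one-sided property and let $x\leq y+z$ with $u'\ll u\leq x,y$ and $v'\ll v\leq x,z$. First I apply the one-sided property to the decomposition $x\leq y+z$, using the $y$-side data $u'\ll u\leq x,y$, to obtain $s$ with $u'\ll s\leq x,y$ and $x\leq s+z$. Now $x\leq s+z=z+s$ is a genuinely new decomposition of $x$, and the one-sided property, being stated for arbitrary elements, applies to it with the roles $(y,z)\mapsto(z,s)$ and the prescribed data $v'\ll v\leq x,z$; this yields $t$ with $v'\ll t\leq x,z$ and $x\leq t+s$. Combining, $x\leq s+t$, $u'\ll s\leq x,y$, and $v'\ll t\leq x,z$, which is exactly (O6+).

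I expect no serious obstacle: the argument is purely formal. The only two points that need to be checked are that $0\ll 0$ holds (so that $0$ is a legitimate choice of both $v$ and $v'$ in the forward direction), which is immediate from minimality of $0$, and that the one-sided property may be invoked on \emph{any} decomposition of $x$ and on either summand — this symmetry is what allows the second application in the converse to refine the remaining summand $z$ into $t$.
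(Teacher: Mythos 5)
Your proof is correct and is essentially the paper's own argument: the converse is the same double application of the one-sided property (first refining $y$ in $x\leq y+z$ to get $s$ with $x\leq s+z$, then refining $z$ in the new decomposition $x\leq z+s$ to get $t$, exactly as in the paper), and your forward direction simply spells out, via the legitimate choice $v'=v=0$ with $0\ll 0\leq x,z$, what the paper dismisses as clear. You were also right to read the conclusion of the one-sided property as $x\leq s+z$ rather than the literal $x\leq s+y$: the latter is a typo in the statement (it would fail already in $\overline{\N}$, where (O6+) holds), and your corrected reading is the one the paper itself uses later, for instance in the proof of \autoref{lma:ht}.
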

\begin{proof}
It is clear that (O6+) implies the property in the statement.
Conversely, assume
the above property and let $x,y,z,u',u,v',v\in S$ satisfy $x\leq y+z$, $u'\ll u\leq x$, and $v'\ll v\leq x,z$. Applying the property in the statement to $x\leq y+z$ and $u'\ll u\leq x,y$, we obtain $s\in S$ such that $x\leq s+y$ and $u'\ll s\leq x,y$. A second application of said property to $x\leq s+y$ and $v'\ll v\leq x,z$ yields an element $t\in S$ with $x\leq s+t$ and $v'\ll t\leq x,z$, as desired.
\end{proof}

The proof of the following is rather involved, hence we omit the details.

\begin{thm}[{\cite[Theorem 6.4]{Thi_ranks_2020}}]
\label{thm:O6+}
Let $A$ be a C$^*$-algebra of stable rank one. Then $\Cu(A)$ satisfies (O6+).
\end{thm}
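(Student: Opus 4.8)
The plan is to reduce, via the preceding lemma, to the one-sided reformulation of (O6+) and then to prove that version by a construction modeled on — but substantially refining — the proof of \autoref{O6}. Concretely, it suffices to show: if $x\le y+z$ in $\Cu(A)$ and $u'\ll u\le x,y$, then there is $s\in\Cu(A)$ with $u'\ll s\le x,y$ and $x\le s+z$. By part~(iii) of \autoref{prop:sr1} I would assume $A$ is stable, fix representatives $x=[a]$, $y=[b]$, $z=[c]$ with $a\precsim b\oplus c$, and arrange $b\perp c$ as usual, so that $A_b$ and $A_c$ sit inside the hereditary subalgebra $A_{b+c}$.

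The starting point is exactly the output of the proof of \autoref{O6}: for each $\ep>0$ one obtains, via R{\o}rdam's lemma (\autoref{thm:Rordam}) and the stable rank one implementation \autoref{prop:CtzCompsr1}, an element $v=v_\ep$ with $(a-\ep)_+=v^*v$ and $vv^*\in A_{b+c}$, together with cut-down pieces $s_\ep:=[g_\delta(b)vv^*g_\delta(b)]\le x,y$ and $t_\ep:=[g_\delta(c)vv^*g_\delta(c)]\le x,z$ satisfying $[(a-\ep)_+]\le s_\ep+t_\ep$. This already splits the cut-downs of $x$ into a part below $y$ and a part below $z$. Two upgrades are then needed: (a) the $z$-summand must be the full $z$ and the coverage must hold for $x$ itself, not merely for its cut-downs; and (b) the $y$-piece must dominate $u'$.

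For (a), since $t_\ep\le z$ each inequality reads $[(a-\ep)_+]\le s_\ep+z$, so the goal is to manufacture a single $s\le x,y$ out of the $s_\ep$. If the $s_\ep$ could be chosen increasing along some $\ep_n\downarrow 0$, then $s:=\sup_n s_{\ep_n}$ would satisfy $s\le x,y$ (an upper bound of elements below both $x$ and $y$ is below both, by (O1)), while $x=\sup_n[(a-\ep_n)_+]\le s+z$ by \autoref{rem:CutDownsConverge}. Securing this monotonicity is where stable rank one enters: I would use the unitary implementation of \autoref{thm:StableUnitImpl} to place the relevant positive elements in a common hereditary subalgebra, and weak cancellation (\autoref{wc}) to compare successive pieces, thereby replacing the $s_\ep$ by a coherent increasing sequence representing the same classes.

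For (b), choose $u''$ with $u'\ll u''\ll u$; it then suffices to arrange $u''\le s$, since $u'\ll u''\le s$ forces $u'\ll s$. As $u''\ll u\le x,y$, a representative of $u''$ can be realized, again through \autoref{prop:CtzCompsr1} and \autoref{thm:StableUnitImpl}, inside the hereditary subalgebras generated by representatives of both $x$ and $y$ at once; the problem is to fold it into one of the pieces $s_{\ep_n}$ while keeping that piece below $y$. This is the crux and the main obstacle: one cannot simply add $u''$ to $s_\ep$, since the sum would leave the order-interval below $[b]$, so two elements both dominated by $[b]$ must be amalgamated into a single element still dominated by $[b]$ and dominating both. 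Such a join-type element need not exist in a general $\Cu$-semigroup, and it is precisely here that the concrete stable rank one structure — the unitarily implemented, hereditary-subalgebra form of Cuntz comparison — together with weak cancellation is indispensable. Carrying out this amalgamation uniformly enough to preserve the monotonicity from (a) is the involved computation I expect to occupy the bulk of the argument; once $s$ is produced with $u''\le s\le x,y$ and $x\le s+z$, the one-sided property, and hence (O6+), follows.
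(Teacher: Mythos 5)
Your reduction to the one-sided property and your starting point are sound: the reformulation established just before \autoref{thm:O6+} does reduce (O6+) to producing, from $x\leq y+z$ and $u'\ll u\leq x,y$, a single $s$ with $u'\ll s\leq x,y$ and $x\leq s+z$; and the proof of \autoref{O6} does yield pieces $s_\ep\leq x,y$ and $t_\ep\leq x,z$ with $[(a-\ep)_+]\leq s_\ep+t_\ep$, so that an increasing choice of the $s_{\ep_n}$ would indeed finish via (O1) and (O4). But what you have submitted is a plan, not a proof. Both upgrades (a) and (b) are announcements: no mechanism is given for rechoosing the $s_\ep$ coherently, and the decisive amalgamation of $u''$ into the $y$-piece is explicitly deferred to ``the involved computation I expect to occupy the bulk of the argument.'' That computation \emph{is} the theorem --- the survey itself omits the proof as ``rather involved,'' deferring to \cite[Theorem~6.4]{Thi_ranks_2020} --- so flagging the difficulty does not discharge it.

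Moreover, the tools you earmark for the crux demonstrably cannot carry it in the form you describe. \autoref{thm:StableUnitImpl} holds for every stable C$^*$-algebra (weak stable rank one), $\Cu(C(S^2))$ is weakly cancellative (see the remark following the $C(S^2)$ example), and (O5) and (O6) hold for all C$^*$-algebras; yet $\Cu(C(S^2))$ fails (O6+), and the failure in that example occurs precisely at your step (b): an $s$ with $(1,0)\leq s+\mathbf{1}_{U}$ and $s\leq (1,0),(1,1)$ exists there, and it is the additional lower bound $\mathbf{1}_{U}\leq s$ that makes $s$ impossible. So an argument assembled from the (O6)-pieces, unitary implementation, and weak cancellation alone would apply verbatim to (the stabilization of) $C(S^2)$ and prove a false statement. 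The only items on your list genuinely unavailable beyond stable rank one are \autoref{prop:CtzCompsr1} and \autoref{lma:CES}, and your sketch assigns them no concrete role in constructing the join. Worse, the step where you ``realize a representative of $u''$ inside the hereditary subalgebras generated by representatives of both $x$ and $y$ at once'' is an interpolation-type assertion, and in this survey interpolation (\autoref{thm:CuARiesz}) is a downstream consequence of (O6+) (via \autoref{lma:ht} and \autoref{prop:key}), so assuming anything of that shape here is circular unless justified independently. A correct proof must make stable rank one do element-level constructive work, as in Thiel's argument, rather than enter only through its order-theoretic shadows.
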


Next, we observe that the assumption of stable rank one is necessary
in \autoref{thm:O6+}. The following is \cite[Example 6.7]{Thi_ranks_2020}

\begin{eg}
The Cuntz semigroup of the C$^*$-algebra $C(S^2)$ does not satisfy (O6+), and
it is well-known that $\mathrm{sr}(C(S^2))=2$.
\end{eg}
\begin{proof}
Denote by $\Lsc (S^2,\NN)$ the monoid of lower semicontinuous functions from $S^2$ with values in $\NN$, equipped with pointwise order and addition. For any open subset $U\subseteq S^2$, we use $\mathbf{1}_U$ to denote the characteristic function of $U$, which is an element of $\Lsc(S^2,\NN)$. Note that the non-compact elements of Lsc$(S^2,\overline{\N})$ are given by 
\[\Lsc(S^2,\NN)_{\mathrm{nc}}=\Lsc(S^2,\NN)\setminus\{n\mathbf{1}_{S^2}\colon n\geq 1\}.\] 
Using \cite[Theorem 1.2]{Rob_cone_2013} it is possible to show that
\[
\Cu(C(S^2))\cong (\N_{>0}\times\Z)\sqcup\Lsc(S^2,\NN)_{\mathrm{nc}}.
\]
Addition and order may be described as follows. Elements in each one of the components of the disjoint union are added as usual and ordered also as usual. If $(n,m)\in \N_{>0}\times\Z$ and $f\in \Lsc(S^2,\NN)_{\mathrm{nc}}$, then $(n,m)+f=n\mathbf{1}_{S^2}+f$.

Next, $(n,m)\leq f$ if and only if $n\mathbf{1}_{S^2}\leq f$, and $f\leq (n,m)$ if and only if $f\leq n\mathbf{1}_{S^2}$.

Choose open subsets of $U,V\subseteq S^2$ with $\overline{U}\subseteq V$. Then $\mathbf{1}_{U}\ll\mathbf{1}_{V}$, and they are non-constant functions. We have 
\[(1,0)\leq (1,1)+n\mathbf{1}_{U} \ \ \mbox{ and } \ \ \mathbf{1}_{U}\ll\mathbf{1}_{V}\leq (1,0), (1,1).\] 
To reach a contradiction, assume that $\Cu(C(S^2))$ satisfies (O6+). Then there is $s\in\Cu(C(S^2))$ such that $(1,0)\leq s+\mathbf{1}_{U}$ and $\mathbf{1}_{U}\leq s\leq (1,0), (1,1)$.

Since $s\leq (1,0),(1,1)$ we see that $s\in \Lsc(S^2,\NN)_{\mathrm{nc}}$ and thus there is $x\in S^2$ such that $s(x)=0$. But since $\mathbf{1}_{U}\leq s$, we see that $x\notin U$, which implies that $(s+\mathbf{1}_{U})(x)=0$. Therefore $(1,0)$ is not dominated by $s+\mathbf{1}_{U}$, which is impossible.
\end{proof}

\begin{rem}
It is natural to ask whether (O6+) follows from (O6) and weak cancellation (the latter, as we have shown in \autoref{wc}, holds for C$^*$-algebras of stable rank one). That is however not the case. Again, $C(S^2)$ is a counterexample, since by the computation above one can show its Cuntz semigroup is weakly cancellative. 
\end{rem}

Our next goal is to show that Cuntz semigroups of C$^*$-algebras of stable rank one admit infima which are compatible with addition,
in the sense of the definition below. 
This will have important consequences later on.

\begin{df}
A partially ordered set $S$ is an \emph{inf-semilattice} provided the order-theoretic infimum $x\wedge y$ exists for any elements $x,y\in S$. We say that an ordered semigroup is \emph{inf-semilattice ordered} if it is an inf-semilattice and, for any $x,y,z\in S$, we have
\[
(x+z)\wedge (y+z)=(x\wedge y)+z.
\]
\end{df}

\begin{lma}
\label{byO6plus}
Let $S$ be a $\Cu$ semigroup which is an inf-semilattice and satisfies (O6+). If $x, y,z\in \Cu(A)$ satisfy $x\leq y+z$, then
$x\leq (x\wedge y)+(x\wedge z)$.
\end{lma}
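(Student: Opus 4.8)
The plan is to read the desired inequality directly off of (O6+), using the inf-semilattice hypothesis only at the very end to rephrase two lower-bound conditions as statements about meets. First I would apply (O6+) to the given inequality $x\le y+z$. The axiom requires auxiliary data $u'\ll u\le x,y$ and $v'\ll v\le x,z$, and the key observation is that the trivial choice $u'=u=0$ and $v'=v=0$ is admissible: since $0$ is the least element of the positively ordered monoid $S$ we have $0\le x,y$ and $0\le x,z$, and since $0$ is compact we have $0\ll 0$.

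With these choices, (O6+) produces elements $s,t\in S$ such that $x\le s+t$, with $0\ll s\le x,y$ and $0\ll t\le x,z$. I would then discard the (now vacuous) conditions $0\ll s$ and $0\ll t$ and retain only that $s$ is a common lower bound of $x$ and $y$, while $t$ is a common lower bound of $x$ and $z$. Because $S$ is an inf-semilattice, the meets $x\wedge y$ and $x\wedge z$ exist, and being the greatest lower bounds they satisfy $s\le x\wedge y$ and $t\le x\wedge z$. Finally, using that the order on $S$ is compatible with addition, I conclude
\[
x\le s+t\le (x\wedge y)+(x\wedge z),
\]
which is the assertion. Note that only the existence of the meets is used, not the full identity $(x+z)\wedge(y+z)=(x\wedge y)+z$.

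I do not expect any genuine obstacle here: the whole content is matching the hypotheses of (O6+) correctly. The one point deserving a word of care is the legitimacy of the trivial approximations. Should one prefer to avoid invoking the compactness of $0$, one could instead use (O2) to write $x\wedge y=\sup_n u_n$ and $x\wedge z=\sup_m v_m$ along $\ll$-increasing sequences, apply (O6+) to the pairs $u_n\ll u_{n+1}\le x,y$ and $v_m\ll v_{m+1}\le x,z$, and then pass to suprema. This refinement is unnecessary, however, since a single application with $u'=u=v'=v=0$ already delivers the full conclusion.
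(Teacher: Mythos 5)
Your proof is correct and is essentially identical to the paper's: both apply (O6+) to $x\leq y+z$ with the trivial choices $u'=u=v'=v=0$ to obtain $s\leq x,y$ and $t\leq x,z$ with $x\leq s+t$, and then pass to the infima. Your extra remarks on the admissibility of $0\ll 0$ and the optional (O2) refinement are sound but not needed.
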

\begin{proof}
Applying axiom (O6+) to $x\leq y+z$ with $u=u'=v=v'=0$, we obtain $x\leq s+t$ for some $s\leq x,y$ and $t\leq x,z$. The existence of infima proves the lemma.
\end{proof}

The lemma below shows that axiom (O6+) is automatic for inf-semilattice \emph{ordered} semigroups.

\begin{lma}
An inf-semilattice ordered $\Cu$-semigroup $S$ satisfies (O6+). 
\end{lma}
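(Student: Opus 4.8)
The plan is to produce the two elements required by (O6+) explicitly, using the infima guaranteed by the inf-semilattice structure. Given $x,y,z\in S$ with $x\le y+z$, and given $u'\ll u\le x,y$ together with $v'\ll v\le x,z$, the natural candidates are
\[
s=x\wedge y \qquad\text{and}\qquad t=x\wedge z,
\]
both of which exist since $S$ is an inf-semilattice. By construction $s\le x,y$ and $t\le x,z$, so the bounds on $s$ and $t$ demanded by (O6+) hold for free.

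I would next dispatch the two compact-containment conditions, which are immediate from the monotonicity of $\ll$. Indeed, from $u\le x$ and $u\le y$ one gets $u\le x\wedge y=s$, and since $u'\ll u\le s$ it follows directly from \autoref{df:CompCont} that $u'\ll s$; symmetrically $v'\ll t$. The only fact used here is that $a\ll b$ and $b\le c$ imply $a\ll c$, which one reads off the definition of $\ll$ at once (any increasing sequence whose supremum dominates $c$ a fortiori dominates $b$).

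The only substantive point is the remaining inequality $x\le s+t$, and this is where the inf-semilattice \emph{ordered} hypothesis is essential. Applying the compatibility identity $(p+r)\wedge(q+r)=(p\wedge q)+r$ three times, I would rewrite the sum as a four-fold infimum:
\begin{align*}
s+t&=(x\wedge y)+(x\wedge z)
=\big((x\wedge y)+x\big)\wedge\big((x\wedge y)+z\big)\\
&=(x+x)\wedge(x+y)\wedge(x+z)\wedge(y+z).
\end{align*}
Since $S$ is positively ordered we have $x\le x+x$, $x\le x+y$ and $x\le x+z$, while $x\le y+z$ is precisely the hypothesis; thus $x$ is a common lower bound of all four terms, whence $x\le s+t$. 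With $s,t$ chosen as above, all conditions of (O6+) are verified. I do not expect a real obstacle: once one guesses to take the infima $x\wedge y$ and $x\wedge z$, the single mildly delicate step is recognizing that the compatibility axiom converts $s+t$ into an infimum of four terms each dominating $x$, after which everything is bookkeeping.
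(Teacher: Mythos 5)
Your proposal is correct and follows essentially the same route as the paper's proof: the same choices $s=x\wedge y$ and $t=x\wedge z$, the same observation that $u'\ll u\leq s$ and $v'\ll v\leq t$, and the same use of the compatibility identity to rewrite $s+t$ as the four-fold infimum $(x+x)\wedge(x+y)\wedge(x+z)\wedge(y+z)$, each term of which dominates $x$.
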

\begin{proof}
Assume that $x,y,z\in S$ satisfy $x\leq y+z$ and $u',u,v',v\in S$ are such that $u'\ll u\leq x,y$, and $v'\ll v\leq x,z$. Let $s=x\wedge y$ and $t=x\wedge z$. Then clearly $u'\ll u\leq s$ and $v'\ll v\leq t$.
Using the inf-semillatice condition repeatedly, we get
\[
(x+x)\wedge (x+z)\wedge (x+y)\wedge (y+z)=(x+x\wedge y)\wedge(z+x\wedge y)=(x\wedge y)+(x\wedge z).
\]
Using this at the second step, we conclude that
\[
x\leq (2x)\wedge (x+z)\wedge (x+y)\wedge (y+z)=(x\wedge y)+(x\wedge z)=s+t. \qedhere
\]
\end{proof}

As we outline in \autoref{Cstarinf-semilattice}, the Cuntz semigroup of any C$^*$-algebra of stable rank one is inf-semilattice ordered. Given the argument above, one would get a different proof that (O6+) is verified for the Cuntz semigorup of C$^*$-algebras of stable rank one. However, (O6+) is used to show the inf-semilattice ordered property, so that raises the question of whether one can show that C$^*$-algebras of stable rank one have inf-semilattice ordered Cuntz semigroups without using (O6+).

In preparation for \autoref{prop:key} below, we need to briefly discuss a different picture of the Cuntz semigroup, as developed in \cite{CowEllIva_Cuntz_2008}.
\begin{df}
A \emph{(right) Hilbert module} over a C$^*$-algebra $A$ is a (right) $A$-module $X$ together with a map $\langle \cdot,\cdot \rangle\colon X\times X\to A$ which is $\C$-linear on the second entry and satisfies
\begin{enumerate}[{\rm (i)}]
\item $\langle x, ya\rangle=\langle x, y\rangle a$
\item $\langle x,y\rangle=\langle y,x\rangle^*$
\item $\langle x,x\rangle\geq 0$ and equals zero precisely when $x=0$,
\end{enumerate} 
for all $x,y\in X$, $a\in A$, and such that $X$ is complete with respect to the norm $\Vert x\Vert=\langle x,x\rangle^{\frac{1}{2}}$.
\end{df}
The standard example of an $A$-Hilbert module is the C$^*$-algebra
$A$ itself, with structure given by $\langle x,y\rangle=x^*y$. 
More generally, if $a\in A_+$, then $\overline{aA}$ is naturally a
Hilbert $A$-module.
We denote by $\ell^2(A)$ the so-called \emph{standard Hilbert $A$-module}, which is given as
\[
\ell^2(A)=\Big\{(x_n)_{n\in\N}\in\prod_{n\in\N} A\colon \sum_{n\in\N} x_n^*x_n \text{ converges in the norm of } A\Big\},
\]
with inner product defined as $\langle x,y\rangle=\sum_{n\in\N}x_n^*y_n$ for $x,y\in \bigoplus_{n\in\N} A$.

\begin{df}
A Hilbert $A$-module $X$ is said to be \emph{countably generated} if there is a countable set $\{x_n\colon n\in\N\}\subseteq X$ such that  $X=\overline{\sum_{n=1}^\infty x_nA}$.
\end{df}

A fundamental result in the theory of Hilbert modules is Kasparov's absorption theorem; 
see \cite[Theorem 1.4.2]{ManTroi_Hilbert_2005}.

\begin{thm}[Kasparov's theorem]
Let $X$ be a countably generated Hilbert $A$-module. Then $X\oplus\ell^2(A)\cong \ell^2(A)$.
\end{thm}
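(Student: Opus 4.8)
The plan is to follow the classical approach: construct an explicit adjointable operator from $\ell^2(A)$ onto $X\oplus \ell^2(A)$, and then promote it to a unitary. First I would fix a sequence $(\eta_n)_{n\in\N}$ in $X$ that generates $X$, satisfies $\|\eta_n\|\le 1$, and in which every generator occurs infinitely often (simply repeat a countable generating set). Writing $(u_n)_{n\in\N}$ for the standard generators of the domain copy $\ell^2(A)$ and $(e_n)_{n\in\N}$ for those of the $\ell^2(A)$-summand of the codomain, I would define $T\colon \ell^2(A)\to X\oplus\ell^2(A)$ by $T(u_n)=2^{-n}\eta_n\oplus 2^{-n}e_n$, that is, $T((a_n)_n)=\big(\sum_n 2^{-n}\eta_n a_n\big)\oplus (2^{-n}a_n)_n$. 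The weights $2^{-n}$ guarantee convergence, and a direct computation shows that $T$ is adjointable with $T^*(x\oplus (b_n)_n)=\big(2^{-n}(\langle \eta_n,x\rangle+b_n)\big)_n$; here the estimate $\langle\eta_n,x\rangle^*\langle\eta_n,x\rangle\le \langle x,x\rangle$ together with the factor $2^{-n}$ makes the defining series converge in $\ell^2(A)$.

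Next I would establish that both $T$ and $T^*$ have dense range. For $T^*$ this is immediate, since every finitely supported sequence $(c_n)_n$ equals $T^*(0\oplus (2^n c_n)_n)$. The density of the range of $T$ is the delicate point, and is exactly where the infinite repetition of the generators is used: given $x\in X$ and $\ep>0$, I would first approximate $x$ by a finite combination $\sum_k \eta_{n_k}c_k$, and then, using that each generator $\eta_{n_k}$ is carried by infinitely many indices, split each coefficient $c_k$ into $J$ equal pieces $c_k/J$ distributed over $J$ distinct indices all carrying $\eta_{n_k}$. This leaves the $X$-component unchanged while forcing the $\ell^2(A)$-component to have norm $O(J^{-1/2})$; letting $J\to\infty$ shows $x\oplus 0\in\overline{\range(T)}$, hence $X\oplus 0\subseteq \overline{\range(T)}$. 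Since $T(2^n u_n)=\eta_n\oplus e_n$ and $\eta_n\oplus 0$ already lies in the closure, each $0\oplus e_n$ does too, so $0\oplus\ell^2(A)\subseteq\overline{\range(T)}$ and therefore $\overline{\range(T)}=X\oplus\ell^2(A)$.

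Finally I would upgrade $T$ to a unitary. The main obstacle is that Hilbert modules need not admit polar decompositions, so I cannot simply invert $T$; instead I exploit that $T$ and $T^*$ \emph{both} have dense range. From the identity $\||T|\xi\|=\|T\xi\|$ for all $\xi$, where $|T|=(T^*T)^{1/2}$, together with $\overline{\range(|T|)}=\overline{\range(T^*)}=\ell^2(A)$, the prescription $V(|T|\xi)=T\xi$ defines an $A$-linear inner-product-preserving map on a dense submodule of $\ell^2(A)$, which extends by continuity to an isometry $V\colon \ell^2(A)\to X\oplus\ell^2(A)$. Its range is closed (being the isometric image of a complete module) and dense (it contains $\range(T)$), hence equal to $X\oplus\ell^2(A)$; a surjective module isometry is automatically adjointable with $V^*=V^{-1}$, so $V$ is the desired unitary and $X\oplus\ell^2(A)\cong\ell^2(A)$. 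I expect the dense-range argument for $T$ to be the technical heart, with the isometry-extension step serving as the conceptually important substitute for polar decomposition.
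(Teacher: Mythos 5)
Your proposal is correct, and it is essentially the standard Kasparov stabilization argument: the paper itself states the theorem without proof, citing \cite[Theorem 1.4.2]{ManTroi_Hilbert_2005}, and your construction (a generating sequence with each generator repeated infinitely often, the weighted operator $T$ with adjoint computed explicitly, density of the ranges of $T$ and $T^*$ via the coefficient-splitting trick, and the unitary obtained by extending $|T|\xi\mapsto T\xi$ in place of a polar decomposition) is precisely the proof found in that reference. No discrepancies to report.
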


In \cite{CowEllIva_Cuntz_2008}, a notion of subequivalence between countably generated Hilbert $A$-modules, weaker than isomorphism, was introduced. By antisymmetrizing this subequivalence, a partially ordered semigroup was built, and it  was shown that this semigroup is isomorphic to $\Cu(A)$. Under this isomorphism, a positive element $a\in (A\otimes\K)_+$ corresponds to $\overline{a(A\otimes\K)}$.

Further, in the stable rank one case, it was proved that equivalence of $A$-modules is the same as isomorphism, and a Hilbert $A$-module $X$ is  subequivalent to a Hilbert $A$-module $Y$ precisely when $X\cong X'\subseteq Y$; see also \cite[Section 4]{AraPerTom_survey_2011}  for further details.

One of the key results in this and coming sections is the following result, proved in \cite[Proposition 2.8]{AntPerRobThi_stable_2022}:

\begin{prop}
\label{prop:key} Let $A$ be a stable C$^*$-algebra of stable rank one, and let $a\in A_+$. Then there are a C$^*$-algebra $B$, also of stable rank one, that contains $A$ as closed, two-sided ideal, and a projection $p_a\in B$ such that the following property holds:
\[
\text{For }x\in \Cu(A), \text{ we have }x\leq [a]\text{ in }\Cu(A) \text{ if and only if }x\leq [p_a]\text{ in }\Cu(B).
\]
\end{prop}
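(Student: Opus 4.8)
The plan is to realise the Hilbert module $E:=\overline{aA}$ as the range of an honest projection sitting just outside $A$, and to let $B$ be $A$ together with that projection. Since $A$ is stable we may write $A\cong\K(\mathcal{H})$ with $\mathcal{H}=\ell^2(A)$, and $A$, viewed as a right Hilbert module over itself, is isomorphic to $\mathcal{H}$. The module $E$ is countably generated, so Kasparov's absorption theorem gives $E\oplus\mathcal{H}\cong\mathcal{H}$. Fixing such an isomorphism realises $E$ as an orthogonally complemented submodule of $\mathcal{H}$ whose complement is again a full copy of $\mathcal{H}$; concretely, I obtain a projection $p_a\in\mathcal{L}(\mathcal{H})=\mathcal{M}(A)$ whose range module is isomorphic to $E$ and with $1-p_a$ Murray--von Neumann equivalent to $1$ in $\mathcal{M}(A)$. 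I then set $B:=A+\C p_a\subseteq\mathcal{M}(A)$. Because $p_a\in\mathcal{M}(A)$, the algebra $A$ is automatically a closed two-sided ideal of $B$, with $B/A\cong\C$. (If $[a]$ happens to be compact, \autoref{prop:CpctSr1} produces, using stability, a projection $q\in A$ with $[q]=[a]$, and one may simply take $B=A$ and $p_a=q$; so the content is in the non-compact case, where $p_a\notin A$.)

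The heart of the matter is showing that $B$ has stable rank one, and this is where I expect the main obstacle. The two corners $p_aAp_a\cong\K(E)\cong\overline{aAa}$ and $(1-p_a)A(1-p_a)\cong A$ both have stable rank one by \autoref{prop:sr1}, so their unitizations $p_a\widetilde{B}p_a$ and $(1-p_a)\widetilde{B}(1-p_a)$ are unital and of stable rank one. Decomposing an arbitrary element of $\widetilde{B}=A+\C p_a+\C(1-p_a)$ as a $2\times2$ matrix with respect to $p_a\oplus(1-p_a)$, its off-diagonal entries lie in the stable ideal $A$, and I would approximate it by invertibles through a Schur-complement argument, pushing the diagonal corners to invertibles and correcting the off-diagonal part inside $A$. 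The crucial point is that the quotient $B/A\cong\C$ has trivial $\mathrm{K}_1$, so there is no obstruction to lifting invertibles from the quotient; this is precisely what fails for the Toeplitz algebra (whose ideal $\K$ has quotient $C(\T)$), and it is why the stability of $A$, which forces both corners to be well behaved, is used here. I expect this density argument to be the longest and most delicate step.

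Granting that $B$ has stable rank one, the Cuntz-theoretic equivalence is comparatively soft. I fix a strictly positive element $w$ of the hereditary subalgebra $\overline{p_a(A\otimes\K)p_a}$ of $A\otimes\K$; since the range module of $p_a$ over $A$ is isomorphic to $E=\overline{aA}$, this hereditary subalgebra is the one associated with $a$, so that $w\sim a$, i.e. $[w]=[a]$ in $\Cu(A)$. For the forward implication, $w\in\overline{p_a(B\otimes\K)p_a}=(B\otimes\K)_{p_a}$ gives $w\precsim_B p_a$ by \autoref{prop:InHerSubalg}, whence $a\sim w\precsim_B p_a$, that is $[a]\leq[p_a]$ in $\Cu(B)$; composing with the order-embedding $\Cu(A)\hookrightarrow\Cu(B)$ of \autoref{lma:IdealAIdealCu}, every $x\leq[a]$ in $\Cu(A)$ satisfies $x\leq[p_a]$ in $\Cu(B)$.

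For the reverse implication, I take $x=[c]$ with $c\in(A\otimes\K)_+$ satisfying $[c]\leq[p_a]$ in $\Cu(B)$, and fix $\ep>0$. By part~(iv) of R{\o}rdam's lemma (\autoref{thm:Rordam}) applied in $B\otimes\K$, there are $\delta\in(0,1)$ and $y\in B\otimes\K$ with $(c-\ep)_+=y^*y$ and $yy^*\in(B\otimes\K)_{(p_a-\delta)_+}=\overline{p_a(B\otimes\K)p_a}$, the last equality holding because $p_a$ is a projection. Now $y^*y=(c-\ep)_+\in A\otimes\K$, and since $A\otimes\K$ is an ideal of $B\otimes\K$, \autoref{rem:ideala*a} forces $y\in A\otimes\K$, hence $yy^*\in A\otimes\K$. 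As $yy^*$ lies in $\overline{p_a(B\otimes\K)p_a}$ it is fixed under compression by $p_a$, so together with $yy^*\in A\otimes\K$ it belongs to $\overline{p_a(A\otimes\K)p_a}=\overline{w(A\otimes\K)w}$, and \autoref{prop:InHerSubalg} yields $yy^*\precsim w\sim a$ in $\Cu(A)$. Therefore $[(c-\ep)_+]=[yy^*]\leq[a]$ for every $\ep>0$, and letting $\ep\to0$ via \autoref{rem:CutDownsConverge} gives $[c]\leq[a]$ in $\Cu(A)$, as desired.
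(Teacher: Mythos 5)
Your proof is correct, and your construction of $(B,p_a)$ --- Kasparov absorption applied to $E=\overline{aA}$ inside $\ell^2(A)\cong A$, the projection $p_a\in\mathcal{M}(A)$ onto the resulting summand, and $B=A+\C p_a=C^*(p_a,A)$ with $B/A\cong\C$ --- is exactly the paper's. Where you genuinely diverge is in verifying the displayed equivalence. The paper runs both directions through the Hilbert-module picture: it produces $v\in p_aA$ with $v^*v=(b-\ep)_+$ and concludes from the chain $\overline{(b-\ep)_+A}\cong\overline{vv^*A}\subseteq p_aA\cong\overline{aA}$, invoking the stable-rank-one dictionary between module containment and Cuntz subequivalence. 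You instead fix a strictly positive $w\in\overline{p_aAp_a}$ and use the module picture exactly once, to get $[w]=[a]$ (since $\overline{wA}=p_aA\cong\overline{aA}$, and isomorphic countably generated modules have Cuntz-equivalent generators); after that both directions are elementary: \autoref{prop:InHerSubalg} and the order-embedding of \autoref{lma:IdealAIdealCu} give the forward implication, while \autoref{thm:Rordam}, the ideal absorption $y^*y\in A\otimes\K\Rightarrow y\in A\otimes\K$ from \autoref{rem:ideala*a}, and the identity $yy^*=p_ayy^*p_a$ give the reverse. This buys a real simplification: the module theory is quarantined in the single identity $[w]=[a]$, and stable rank one is never used for the equivalence itself, only to ensure that $B$ has stable rank one --- whereas the paper's argument leans on the stable-rank-one module dictionary throughout. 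Two remarks. First, $[c]\leq[p_a]$ in $\Cu(B)$ should be read with $p_a\otimes e_{11}\in\mathcal{M}(B\otimes\K)$, so the hereditary subalgebra you land in is $\overline{p_aBp_a}\otimes e_{11}$; this is a harmless imprecision. Second, you substantially overestimate the difficulty of $\mathrm{sr}(B)=1$: no Schur-complement analysis of the $2\times 2$ decomposition is needed. Since $\widetilde{B}/A\cong\C\oplus\C$ has connected invertible group, every invertible in the quotient lifts to an invertible of $\widetilde{B}$; so given $x\in\widetilde{B}$, perturb it so that its (scalar) image is invertible, pick an invertible lift $y$, and observe that $y^{-1}x\in 1+A\subseteq A+\C 1\cong\widetilde{A}$, which is a limit of invertibles because $\mathrm{sr}(A)=1$. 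Your diagnosis of the crucial point --- lifting invertibles is unobstructed because the quotient is $\C$, in contrast with the Toeplitz algebra --- is exactly right, and this three-line extension argument is all that the paper's terse remark ``since $B/A\cong\C$, we see that $B$ also has stable rank one'' is hiding.
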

\begin{proof}
We sketch the construction. Consider the Hilbert module $H=\overline{aA}$, which by Kasparov's theorem is isomorphic to a direct summand of $\ell^2(A)$, that is, $H\oplus H'\cong\ell^2(A)$ for some Hilbert module $H'$. Also, since $A$ is stable,
we have $\ell^2(A)\cong A$ as Hilbert modules.

Let $M(A)$ be the multiplier algebra of $A$, identified with the algebra of adjointable operators on $A$, and let $p_a\in M(A)$ be the projection that corresponds, under the previous identification, to the projection onto $\overline{aA}$, so that $\overline{aA}\cong p_aA$. Set $B=C^*(p_a,A)\subseteq M(A)$. By construction then, $A$ is a closed, two-sided ideal of $B$ and, since $B/A\cong\C$, we see that $B$ also has stable rank one.

To prove the stated property, let $b\in A_+$ and set $x=[b]\in \Cu(A)$. If $b\precsim p_a$ in $B$ and $\ep>0$ is given, then by
\autoref{lma:CutDownDistance} there is an element $d\in B$ such that $(b-\varepsilon)_+=d^*p_ad$. Setting $v=p_ad\in p_aB$, we have $(b-\varepsilon)_+=v^*v$.

Observe also that $v^*v\in A$, hence $v\in A$ by \autoref{rem:ideala*a}, since $A$ is an ideal of $B$. Thus $v\in p_aB\cap A=p_aA\cong \overline{aA}$. Altogether, we have that
\[
\overline{(b-\varepsilon)_+A}=\overline{v^*vA}\cong \overline{vv^*A}\subseteq p_aA\cong\overline{aA}
\]
as Hilbert $A$-modules. 
This implies, by using the Hilbert module picture of $\Cu(A)$, that $(b-\varepsilon)_+\precsim a$ and, since $\varepsilon>0$ is arbitrary, we obtain $b\precsim a$.

For the converse, it suffices to show that $[a]\leq [p_a]$ in $\Cu(B)$. This follows from $\overline{aB}=\overline{aA}\cong p_aA\subseteq p_aB$, as Hilbert $B$-modules.
\end{proof}

%

\begin{df}
We say that an ordered semigroup $S$ satisfies the \emph{Riesz interpolation property} if given $x,y,z,t\in S$ with $x,y\leq z,t$, there exists $w\in S$ such that $x,y\leq w\leq z,t$.

A partially ordered group is called an \emph{interpolation group} if it satisfies the Riesz interpolation property.
\end{df}

If $S$ is algebraically ordered and cancellative, it is well known that the Riesz interpolation property is equivalent to the Riesz decomposition property, and also to the Riesz refinement property; see \cite[Proposition 2.1]{Goo_poag_1986}

It was shown in \cite{Per_structure_1997} that, for C$^*$-algebras of real rank zero and stable rank one, $\Cu(A)$ enjoys the three Riesz properties. This is largely due to the fact that the Murray-von Neumann semigroup is cancellative and has Riesz decomposition; see \cite{BlaHan_dimension_1982} and \cite{Zha_riesz_1991}. However, this is no longer true if one drops the real rank zero assumption,
even if one assumes stable rank one. An example of this is given by the Jiang-Su algebra $\mathcal{Z}$, as was observed in \cite[Remark 6.9]{Thi_ranks_2020}. Indeed, we have
seen in \autoref{thm:CuZ} that 
\[\Cu(\mathcal{Z})\cong \underbrace{\{c_n\colon n\in\N\}}_{\mathrm{compacts}}\sqcup \{s_t\colon t\in (0,\I]\}=\N\sqcup (0,\I].\]
Let $x=c_1\in\N$, and let $y=z=s_{\frac{2}{3}}\in (0,\infty]$. We clearly have that $x=c_1\leq s_{\frac{4}{3}}=y+z$. Since $c_1$ is compact and there are no other compact elements below $c_1$ except $c_0$ and $c_1$, we see that if $x=u+v$, we must have $u=c_0$ and $v=c_1$, or reversed. Clearly $u,v\not\leq s_{\frac{2}{3}}$. 

However, as we shall prove below, Riesz interpolation persists if one drops the assumption of real rank zero and keeps the stable rank one condition.

In the lemma below, we use axiom $\widetilde{\mathrm{(O5)}}$, since the $\Cu$-semigroup is assumed to be weakly cancellative; see \autoref{pgr:O5}. We also use the one-sided version of (O6+); see \autoref{pgr:O6+}. 

\begin{lma}
\label{lma:ht}
Let $S$ be a weakly cancellative $\Cu$-semigroup satisfying (O5) and (O6+), and let $e,x\in S$. If $e$ is compact, the set
\[
\big\{ z\in S : z\leq e,x \big\}
\]
is upward directed.	
\end{lma}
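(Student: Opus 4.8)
The plan is to show that any two elements $z_1,z_2$ of the set $L=\{z\in S:z\le e,x\}$ admit a common upper bound lying again in $L$. The guiding observation is that $e$ and $x$ are upper bounds for \emph{every} element of $L$, so by (O1) the supremum of any increasing sequence of elements of $L$ automatically stays below $e$ and below $x$; this is the mechanism I would use to produce the desired bound as a limit. Concretely, using (O2) I would fix $\ll$-increasing sequences $z_1^{(n)}\uparrow z_1$ and $z_2^{(n)}\uparrow z_2$, and reduce to producing, for each $n$, an element $w_n\in L$ with $z_1^{(n)},z_2^{(n)}\le w_n$ and $w_n\le w_{n+1}$. Then $w:=\sup_n w_n$ lies in $L$ (it is $\le e$ and $\le x$ as a supremum of elements bounded by $e$ and $x$), and $w\ge\sup_n z_1^{(n)}=z_1$ and $w\ge\sup_n z_2^{(n)}=z_2$.

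The local construction of each $w_n$ is where compactness of $e$ and weak cancellation enter. First I would complement $z_1$ simultaneously inside $e$ and inside $x$: since $S$ satisfies (O5), the weaker axiom $\widetilde{\mathrm{(O5)}}$ holds, and for a fixed $z_1'\ll z_1$ it yields $c,d\in S$ with $z_1'+c\le e\le z_1+c$ and $z_1'+d\le x\le z_1+d$. The inequalities $z_1'+c\le e$ and $z_1'+d\le x$ are the ones that keep sums below $e$ and below $x$. Next, from $z_2\le e\le z_1+c$ I would apply (O6+) to peel off the part of $z_2$ lying beyond $z_1$, obtaining $s,t\in S$ with $z_2\le s+t$, $s\le z_1,z_2$ and $t\le c,z_2$; by a further use of (O6+) against $x\le z_1+d$, together with weak cancellation (\autoref{rmk:wc}) to transport the relevant $\ll$-relations through the decomposition, one arranges that the peeled-off increment may be taken below $d$ as well, i.e. inside both complements. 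For such a $t$ one has $z_1'+t\le z_1'+c\le e$ and $z_1'+t\le z_1'+d\le x$, so $z_1'+t\in L$, while $s\le z_1$ gives $z_2\le s+t\le z_1+t$.

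The hard part is precisely the clash between these two facts: $z_1'+t$ lies in $L$ but dominates only the truncation $z_1'$, whereas $z_1+t$ dominates $z_1$ and $z_2$ but the complement relations supplied by $\widetilde{\mathrm{(O5)}}$ give $e\le z_1+c$ (the wrong direction) rather than $z_1+t\le e$. Compactness of $e$ is what repairs this: it is exactly the hypothesis making a suitable truncation complemented by an \emph{equality} $z_1^{(n)}+c_n=e$ in the spirit of \autoref{rem:compl}, and it forces the way-below approximants to be absorbed after finitely many steps. Thus I would run the construction at the level of the truncations $z_1^{(n)}$, producing increments $t_n$ with $z_1^{(n)}+t_n\le e,x$ and $z_2^{(n)}\le z_1^{(n)}+t_n$, set $w_n=z_1^{(n)}+t_n$, and invoke (O4) to pass the bounds to $w=\sup_n w_n$.

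I expect the genuine obstacle to be organizing this simultaneous control below $e$ and below $x$. The naive guess $z_1+t$ is a sum of two elements each below $x$, hence a priori only below $2x$, and it is the two-sided output of (O6+) (the feature absent from the weaker (O6)) that keeps the increment $t$ below both complements at once while preserving $z_2\le z_1+t$. That this cannot be avoided is confirmed by the example of $C(S^2)$ discussed above, where (O6+) fails, the analogous set is \emph{not} upward directed, and weak cancellation together with (O6) alone do not suffice.
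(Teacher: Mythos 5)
Your global skeleton (reduce, via (O1) and (O2), to producing common bounds for way-below approximants and then pass to suprema) is the same reduction the paper makes, but the local construction has two genuine gaps. First, the merging step: applying (O6+) once to $z_2\leq e\leq z_1+c$ and again to $z_2\leq x\leq z_1+d$ produces two \emph{unrelated} decompositions $z_2\leq s+t$ and $z_2\leq s_2+t_2$ with $t\leq c$ and $t_2\leq d$; no form of (O6+), and no amount of weak cancellation, lets you replace these by a single increment $t\leq c,d$ while keeping $z_2\leq s+t$ with $s\leq z_1$. Intersecting decompositions in this way is exactly the kind of step that requires infima, which an abstract Cu-semigroup need not have (the inf-semilattice structure of \autoref{Cstarinf-semilattice} is a theorem about separable stable rank one algebras, not a hypothesis of this lemma). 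Second, your repair of the ``clash'' misreads \autoref{rem:compl}: that remark complements a \emph{compact} element $x$ inside any $y\geq x$, producing $x+w=y$; your truncations $z_1^{(n)}$ are not compact, and compactness of the \emph{upper} element $e$ yields no equality $z_1^{(n)}+c_n=e$ (from $\widetilde{\mathrm{(O5)}}$ you only get $z_1^{(n)}+c_n\leq e\leq z_1+c_n$, with the uncontrolled inequality on the wrong side). So the difficulty you correctly isolate --- $z_1'+t$ lies in the set but dominates only $z_1'$, while $z_1+t$ dominates $z_2$ but is not known to lie below $e$ or $x$ --- is never actually resolved.

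The missing idea is to invoke (O6+) with \emph{nontrivial} side data, which is precisely what separates it from (O6). The paper complements only inside $e$: by $\widetilde{\mathrm{(O5)}}$ choose $w$ with $z_1'+w\leq e\leq z_1+w$; since $z_1\leq x$, this gives $e\leq x+w$; now apply the one-sided form of (O6+) to $e\leq x+w$ together with the pair $z_2'\ll z_2\leq e,x$, obtaining a \emph{single} element $y$ with $e\leq y+w$ and $z_2'\ll y\leq e,x$ --- membership in the set and domination of $z_2'$ come out of one application, with no merging needed. Compactness of $e$ then plays its true role: it converts the chain $z_1'+w\leq e\ll e\leq y+w$ into $z_1'+w\ll y+w$, whence weak cancellation yields $z_1'\ll y$, and any $y'$ with $z_1',z_2'\ll y'\ll y$ is the desired bound. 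Note that the applications of (O6+) you specify concretely all take $u=u'=v=v'=0$ and hence are no stronger than the (O6)-type decomposition of \autoref{byO6plus}; since $\Cu(C(S^2))$ satisfies (O5), (O6) and weak cancellation, yet the set $\{z : z\leq (1,0),(1,1)\}$ there fails to be upward directed even though $(1,0)$ is compact (its nonzero elements are the $\mathbf{1}_U$ with $U\subsetneq S^2$ open, and two such with $U\cup V=S^2$ admit no upper bound in the set), a scheme that only ever engages (O6)-strength decompositions cannot close.
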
	
\begin{proof}
We first notice that the set $\{ z\in S : z\leq e,x \}$ is order-hereditary and closed under suprema of increasing sequences. To 
prove the lemma, it suffices to show that the set
\[
\big\{ z'\in S : \text{there is }z\in S\text{ such that }z'\ll z\leq e,x \big\}
\]
is upward directed. (We leave the proof of this claim as an exercise.)
Take elements $z_1,z_2,z_1', z_2' \in S$ that satisfy
\[
z_1'\ll z_1\leq e,x, \ \ \text{ and } \ \ z_2'\ll z_2\leq e,x.
\]
Apply (O5) to $z_1'\ll z_1\leq e$ to find $w\in S$ such that $z_1'+w\leq e\leq z_1+w$. Since $z_1\leq x$, we obtain $e\leq x+w$. We now apply (O6+) to this inequality together with $z_2'\ll z_2\leq e,x$. Thus, we find $y\in S$ such that $e\leq y+w$ and $z_2'\ll y\leq e,x$. Putting together the left hand side of the inequality coming from (O5),  the one coming from (O6+), and using that $e$ is compact, we get
\[
z_1'+w\leq e\ll e\leq y+w.
\]
Weak cancellation in $S$ implies $z_1'\ll y$. Hence, $z_1',z_2'\ll y\leq e,x$. Now choose $y'\in S$ with $z_1',z_2'\ll y'\ll y$, and check that $y '$ has the desired properties.
\end{proof}

\begin{thm}
\label{thm:CuARiesz}
Let $A$ be a C$^*$-algebra of stable rank one. Then $\Cu(A)$ has the Riesz interpolation property.
\end{thm}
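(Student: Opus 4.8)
The plan is to reduce the problem to interpolation below a \emph{compact} element, where the directedness statement \autoref{lma:ht} applies, and then to transport the resulting interpolant back using \autoref{prop:key}. By part~(iii) of \autoref{prop:sr1} together with the identification $\Cu(A)=\Cu(A\otimes\K)$, we may assume that $A$ is stable. Fix $x,y,z,t\in\Cu(A)$ with $x,y\leq z,t$, write $z=[c]$ for some $c\in A_+$, and apply \autoref{prop:key} to $c$ to obtain a C$^*$-algebra $B$ of stable rank one containing $A$ as a closed two-sided ideal, together with a projection $p_c\in B$ such that, for every $w\in\Cu(A)$, one has $w\leq z$ in $\Cu(A)$ if and only if $w\leq[p_c]$ in $\Cu(B)$.

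Next I would move the whole configuration into $\Cu(B)$. By \autoref{lma:IdealAIdealCu} the inclusion $A\hookrightarrow B$ induces an order-embedding $\Cu(A)\to\Cu(B)$ whose image is an ideal; in particular the order relations among $x,y,t$ are unchanged, and this ideal is hereditary. Since $x,y\leq z$ in $\Cu(A)$, the characterization in \autoref{prop:key} gives $x,y\leq[p_c]$ in $\Cu(B)$, while $x,y\leq t$ persists in $\Cu(B)$. Thus $x$ and $y$ both lie in the set $\{w\in\Cu(B)\colon w\leq[p_c],\,t\}$, and $[p_c]$ is compact because it is a projection (\autoref{prop:WayBelowCutDown}).

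Now $B$ has stable rank one, so $\Cu(B)$ is weakly cancellative (\autoref{wc}) and satisfies (O5) and (O6+) (\autoref{O5} and \autoref{thm:O6+}); hence \autoref{lma:ht}, applied with $e=[p_c]$, shows that $\{w\in\Cu(B)\colon w\leq[p_c],\,t\}$ is upward directed. Since $x$ and $y$ belong to this set, there exists $w\in\Cu(B)$ with $x,y\leq w$ and $w\leq[p_c],\,t$. It remains to check that $w$ is an interpolant inside $\Cu(A)$: from $w\leq t$ and the fact that $\Cu(A)$ is a hereditary ideal of $\Cu(B)$ we get $w\in\Cu(A)$; then $w\leq[p_c]$ in $\Cu(B)$ translates, via \autoref{prop:key}, to $w\leq z$ in $\Cu(A)$, while $w\leq t$ and $x,y\leq w$ descend to $\Cu(A)$ because the embedding is an order-embedding. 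Therefore $x,y\leq w\leq z,t$ in $\Cu(A)$, as required.

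The main obstacle is organizational rather than computational: one must ensure that the interpolant produced abstractly in the larger semigroup $\Cu(B)$ actually lands in $\Cu(A)$ and respects the original order there. This is exactly where the two features of \autoref{prop:key} and \autoref{lma:IdealAIdealCu} are used in tandem—heredity of the ideal $\Cu(A)\subseteq\Cu(B)$ pulls $w$ back into $\Cu(A)$, and the equivalence $w\leq z\Leftrightarrow w\leq[p_c]$ guarantees that the genuine upper bound $z$ (not merely its compact surrogate $[p_c]$) dominates $w$.
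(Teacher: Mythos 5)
Your proposal is correct and follows essentially the same route as the paper's proof: reduce to the stable case via part~(iii) of \autoref{prop:sr1}, replace one (non-compact) upper bound by the compact class $[p_c]$ in the larger stable rank one algebra $B$ from \autoref{prop:key}, invoke \autoref{lma:ht} (with weak cancellation, (O5) and (O6+) supplied by \autoref{wc}, \autoref{O5} and \autoref{thm:O6+}), and pull the interpolant back into $\Cu(A)$ using heredity of the ideal from \autoref{lma:IdealAIdealCu}. The only difference is presentational—the paper phrases the argument as upward directedness of the set of common lower bounds and identifies the two sets outright, while you work with the interpolation quadruple directly—but the ingredients and their roles are identical.
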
	
\begin{proof}
Let $x,y\in\Cu(A)$. We must show that the set $\{ z\in \Cu(A) : z\leq x,y \}$ is upward directed. If $x$ is compact, this follows from \autoref{lma:ht}. We now use \autoref{prop:key} to reduce the general case to this case.

By part~(iii) of \autoref{prop:sr1}, we may assume that $A$ is stable. Choose $a\in A_+$ such that $x=[a]$.
Applying \autoref{prop:key} to $A$ and $a$, we obtain a C$^*$-algebra $B$ with stable rank one that contains $A$ as a closed, two-sided ideal, and a projection $p_a\in B$ such that $z\in\Cu(A)$ satisfies $z\leq x$ if and only if $z\leq[p_a]$. Since $[p_a]$ is compact in $\Cu(B)$, and since $B$ has stable rank one, it follows from \autoref{lma:ht} that the set $\{ z\in\Cu(B) :  z\leq [p_a],y \}$ is upward directed. The inclusion $A\subseteq B$ identifies $\Cu(A)$ with an 
ideal in $\Cu(B)$ by \autoref{lma:IdealAIdealCu}. 
The result follows once we show that
\[
\big\{ z\in \Cu(A) : z\leq x,y \big\}
= \big\{ z\in\Cu(B) :  z\leq [p_a],y \big\}.
\]

One inclusion follows using that $x\leq [p_a]$. For the converse inclusion, take $z\in \Cu(B)$ such that $z\leq [p_a],y$.
Since $\Cu(A)$ is an ideal of $\Cu(B)$ and $y\in\Cu(A)$, we have $z\in\Cu(A)$.
Since also $z\leq [p_a]$, \autoref{prop:key} implies that $z\leq x$.
\end{proof}	

We close this section showing that \autoref{thm:CuARiesz} can be used to prove that the Cuntz semigroup of a separable C$^*$-algebra with stable rank one is inf-semilattice ordered. The existence of infima follows easily from the Riesz interpolation property.

\begin{df}
A $\Cu$-semigroup $S$ is said to be \emph{countably based} provided there is a countable set $\mathcal{B}\subseteq S$ such that for
all $s,t\in S$ satisfying $s\ll t$, there is $u\in \mathcal{B}$ such that $s\ll u\ll t$.
\end{df}
It is known that the Cuntz semigroup $\Cu(A)$ of a separable C$^*$-algebra $A$ is always countably based. For example, one can take a countable dense subset $F$ of $A$ and then consider the set $\mathcal{B}=\big\{[(a-\tfrac{1}{n})_+]\colon a\in F, n\in \N\big\}$.

\begin{thm}
\label{Cstarinf-semilattice}
Let $A$ be a separable C$^*$-algebra of stable rank one. Then $\Cu(A)$ is an inf-semilattice ordered semigroup.
\end{thm}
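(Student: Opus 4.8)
The plan is to verify the two requirements of \autoref{Cstarinf-semilattice} separately: first the existence of binary infima, and then their compatibility with the addition.

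For existence of infima, I would fix $x,y\in\Cu(A)$ and consider the set $D=\{z\in\Cu(A)\colon z\le x \text{ and } z\le y\}$ of common lower bounds. This set is hereditary, contains $0$, and is closed under suprema of increasing sequences (if $z_n$ increases to $z$ with all $z_n\in D$, then $z\le x,y$ since $x$ and $y$ are upper bounds). Crucially, $D$ is \emph{upward directed}: given $z_1,z_2\in D$, the Riesz interpolation property (\autoref{thm:CuARiesz}) produces $w$ with $z_1,z_2\le w\le x,y$, so $w\in D$. I would then use separability, which guarantees that $\Cu(A)$ is countably based, to extract from $D$ a cofinal increasing sequence: letting $\mathcal B$ be a countable basis and $\mathcal B_D=\mathcal B\cap D$, every approximant $z'\ll z\in D$ satisfies $z'\ll u\le z$ for some $u\in\mathcal B_D$ (choose $u\in\mathcal B$ with $z'\ll u\ll z$; then $u\le z\le x,y$ forces $u\in D$). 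Enumerating $\mathcal B_D$ and using directedness one builds an increasing sequence $(v_n)$ in $D$ dominating every element of $\mathcal B_D$; its supremum $w$ exists by (O1), lies in $D$, and, via (O2) together with the cofinality of $\mathcal B_D$, dominates every element of $D$. Hence $w$ is the greatest common lower bound, i.e. $w=x\wedge y$.

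With infima available and (O6+) in hand (\autoref{thm:O6+}), \autoref{byO6plus} applies. For the identity $(x+z)\wedge(y+z)=(x\wedge y)+z$, the inequality ``$\ge$'' is immediate from $x\wedge y\le x,y$. For ``$\le$'', I would write $m=x\wedge y$ and $w=(x+z)\wedge(y+z)$, and reduce to showing $w'\le m+z$ for every $w'\ll w$. Since $w\le x+z$, \autoref{byO6plus} gives $w\le (w\wedge x)+(w\wedge z)$; applying it again to $w\wedge x\le w\le y+z$ and using $(w\wedge x)\wedge y=w\wedge m\le m$, one pushes the ``$x$--part'' below $m$, at the cost of producing an extra summand dominated by $z$. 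The surplus copies of the $z$--part would then be eliminated using weak cancellation (\autoref{wc}), in the equivalent one-sided form recorded earlier, after passing to $\ll$-approximants so that cancellation of a strictly-below summand is legitimate.

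The main obstacle is precisely this last step: iterating the almost--Riesz decomposition (O6+) naturally yields a bound of the shape $w\le m+(\text{several pieces each below }z)$, and collapsing these pieces into a single copy of $z$ is not formal. This is where stable rank one is indispensable, entering through weak cancellation (\autoref{wc}); indeed the identity genuinely fails in the absence of (O6+), as the computation for $\Cu(C(S^2))$ shows, so no purely order-theoretic shortcut avoiding these hypotheses can exist. I expect the careful bookkeeping---choosing the approximants $w'\ll w$ and the cut-downs of $z$ so that weak cancellation removes exactly the redundant summand---to be the delicate technical core of the argument.
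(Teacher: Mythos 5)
Your first half---the existence of binary infima---is exactly the paper's argument and is correct: the set of common lower bounds of $x$ and $y$ is upward directed by the Riesz interpolation property (\autoref{thm:CuARiesz}), and separability makes $\Cu(A)$ countably based, so every upward directed set has a supremum; that supremum is $x\wedge y$. Your extraction of a cofinal increasing sequence from a countable basis is a correct filling-in of a step the paper leaves implicit.

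The distributivity half, however, has a genuine gap, and you have located it yourself without resolving it. Two applications of \autoref{byO6plus} give $w\leq (w\wedge x)+(w\wedge z)$ and then $w\wedge x\leq (w\wedge x\wedge y)+(w\wedge x\wedge z)$, so the best this route produces is $w\leq m+z_1+z_2$ with $m=x\wedge y$ and $z_1,z_2\leq z$, i.e.\ $w\leq m+2z$. Weak cancellation cannot ``collapse'' $z_1+z_2$ into a single copy of $z$: it only removes a summand occurring on \emph{both} sides of an inequality ($u+c\ll v+c$ implies $u\leq v$), and in $w'\leq m+z_1+z_2$ there is no common summand to remove; no choice of $\ll$-approximants changes this, so the ``delicate bookkeeping'' you defer to is not merely delicate but structurally unavailable. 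The paper's proof avoids the double decomposition entirely by a complement trick that your plan is missing: it first reduces to the case where $x$ and $z$ are compact, then, given $w'\ll w\leq x+z$, applies $\widetilde{\mathrm{(O5)}}$ to produce $v$ with $w'+v\leq x+z\leq w+v$. Since $w\leq y+z$ this yields $x+z\leq y+z+v$, and cancellation of the \emph{compact} element $z$ (\autoref{rmk:wc}) gives $x\leq y+v$. A \emph{single} application of \autoref{byO6plus} then gives $x\leq m+v$, hence $x+z\leq m+z+v$; since $x+z$ is compact, $w'+v\leq x+z\ll x+z\leq m+z+v$, and now weak cancellation legitimately removes the \emph{common} summand $v$, giving $w'\leq m+z$ and, taking suprema over $w'\ll w$, the desired inequality. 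The (O5)-complement $v$ is what converts the surplus into a cancellable common summand---this is the missing idea---and even then the paper only proves the compact case, with the general case obtained through successive generalizations carried out in \cite{AntPerRobThi_stable_2022}. A side remark: your appeal to $\Cu(C(S^2))$ shows the failure of (O6+) there, not of the distributive identity itself, so it does not quite support the claim you draw from it.
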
	
\begin{proof} (Outline)
Without loss of generality, we may assume that $A$ is stable. 

Since $A$ is separable, as observed above $\Cu(A)$ is countably based.  It follows from this that every upward directed set has a supremum. Given $x,y\in \Cu(A)$, this applies in particular to the set $\{ z\in \Cu(A) : z\leq x,y\}$, which is upward directed since 
$\Cu(A)$ has the Riesz interpolation property by \autoref{thm:CuARiesz}.
Notice that the supremum of the set $\{ z\in \Cu(A) : z\leq x,y\}$ is precisely $x\wedge y$.
Thus, $\Cu(A)$ is an inf-semilattice.

In order to prove the distributivity of $\wedge$ over addition, note that we always have $x\wedge y+z\leq x+z, y+z$. Thus we need to show that
\[
(x+z)\wedge (y+z)\leq ( x\wedge y)+z,
\]
for all $x,y,z\in\Cu(A)$.
To indicate the flavour of the argument, we give a proof in the case that both $x$ and $z$ are compact elements. The general case is obtained through successive generalizations.

Let $w=(x+z)\wedge (y+z)$. Choose $w'\in \Cu(A)$ such that $w'\ll w$. Applying (O5) (or rather, $\widetilde{\mathrm{(O5)}}$) to the inequality $w'\ll w\leq x+z$, we find $v\in\Cu(A)$ such that $w'+v\leq x+z\leq w+v$. We get $x+z\leq y+z+v$. As $A$ has stable rank one, $\Cu(A)$ has cancellation of compact elements (see \autoref{rmk:wc}), and since $z$ is compact by assumption, we obtain $x\leq y+v$.
By \autoref{byO6plus}, $x\leq (x\wedge y)+v$. Adding $z$ on both sides we get $x+z\leq (x\wedge y)+v+z$. Since both $x$ and $z$ are compact, so is $x+z$, and thus
\[
w'+v\leq x+z\ll x+z\leq (x\wedge y)+z+v\,.
\]
Now weak cancellation implies $w'\leq (x\wedge y)+z$ and, since $w'$ is arbitrary satisfying $w'\ll w$, the inequality $(x+z)\wedge (y+z)\leq ( x\wedge y)+z$ holds.
\end{proof}

\section{The classical Cuntz semigroup, its relation to \texorpdfstring{$\Cu(A)$}{Cu(A)}, and the Blackadar-Handelman conjecture}
\label{sec:BH}

In the previous section we studied the basic structural properties of the Cuntz semigroup of any separable  C$^*$-algebra with stable rank one. We will obtain now an easy consequence of \autoref{thm:CuARiesz}, which allows us to solve, in the stable rank one case, a conjecture due to Blackadar and Handelman on the structure of dimension functions.

\begin{df}[\cite{Cun_dimension_1978}]
Let $A$ be a unital C$^*$-algebra. Set $M_\infty(A)=\bigcup_{n\in\N}  M_n(A)$, identifying each $M_n(A)$ inside $M_{n+1}(A)$ as the upper left corner. Then $M_\infty(A)$ is a local C$^*$-algebra, and the 
relation of Cuntz (sub)equivalence from \autoref{df:CuSubeq} can be 
restricted to it. Define
\[
\W(A)=M_\infty(A)_+/\!\!\sim,
\]
the so-called \emph{classical Cuntz semigroup} of $A$. 

A \emph{dimension function} on $A$ is a map $d\colon M_\infty(A)_+\to [0,\infty)$ satisfying the following properties:
\begin{itemize}
 \item $d(a\oplus b)=d(a)+d(b)$ for all $a,b\in M_\infty(A)_+$;
 \item $d(a)\leq d(b)$ whenever $a\precsim b$;
 \item $d(1_A)=1$.
\end{itemize}
Denote the set of dimension functions of $A$ by $\mathrm{DF}(A)$. If we also denote by $\KK_0^*(A)$ the Grothendieck group of $\W(A)$, it is not hard to verify that there is a bijection between $\mathrm{DF}(A)$ and $\mathrm{St}(\KK_0^*(A),[1_A])$, the state space of the group $\KK_0^*(A)$. Indeed, given a dimension function $d$ on $A$, one sets $s_d([a]-[b])=d(a)-d(b)$, which defines a state on $\KK_0^*(A)$.
\end{df}

The following first appeared in \cite{BlaHan_dimension_1982}:

\begin{cnj}\label{cnj:BH} (Blackadar-Handelman).
Let $A$ be a unital \ca. Then $\mathrm{DF}(A)$ is a Choquet simplex. 
\end{cnj}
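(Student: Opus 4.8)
The plan is to prove the conjecture under the standing hypotheses of this section, namely that $A$ is separable, unital, and of stable rank one, where all the structural machinery is available. The starting point is the affine bijection recorded above between $\mathrm{DF}(A)$ and the state space $\mathrm{St}(\KK_0^*(A),[1_A])$ of the Grothendieck group of the classical Cuntz semigroup $\W(A)$, normalized at the class of the unit. Since this bijection is affine, it suffices to show that $\mathrm{St}(\KK_0^*(A),[1_A])$ is a Choquet simplex. For this I would invoke the classical theorem from the theory of partially ordered abelian groups (see \cite{Goo_poag_1986}, and already \cite{BlaHan_dimension_1982}) that the state space of an interpolation group with order unit is a Choquet simplex. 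Thus the whole problem reduces to verifying that $\KK_0^*(A)$ enjoys the Riesz interpolation property, and this reduction is the part that is genuinely classical and ``easy''.

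The new input is to produce interpolation in $\KK_0^*(A)$ out of the Riesz interpolation of $\Cu(A)$ established in \autoref{thm:CuARiesz}. First I would transfer that property down to the monoid $\W(A)$. The natural map identifies $\W(A)$ with the sub-ordered-monoid of $\Cu(A)$ consisting of the classes $[a]$ with $a\in M_\infty(A)_+$. The key observation, and the point where stable rank one does the real work, is that this sub-monoid is order-hereditary inside $\Cu(A)$: if $w\in\Cu(A)$ satisfies $w\le [c]$ for some $c\in M_\infty(A)_+$, write $w=[b]$ with $b\precsim c$ and apply \autoref{prop:CtzCompsr1} (using \autoref{prop:sr1}(iii) to pass to the stabilization) to obtain $x$ with $x^*x=b$ and $xx^*$ in the hereditary subalgebra generated by $c$. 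Since $c$ is supported in a finite corner, that hereditary subalgebra lies in $M_\infty(A)$, so $b\sim xx^*\in M_\infty(A)_+$ and hence $w\in\W(A)$. Consequently, given $x,y\le z,t$ in $\W(A)$, an interpolant $w\in\Cu(A)$ furnished by \autoref{thm:CuARiesz} automatically satisfies $w\le z\in\W(A)$ and therefore lies in $\W(A)$; this shows that $\W(A)$ itself has the Riesz interpolation property as an ordered monoid.

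Finally I would pass from the ordered monoid $\W(A)$ to its Grothendieck group $\KK_0^*(A)$, using the standard equivalence for partially ordered abelian groups between the Riesz interpolation, decomposition, and refinement properties (see \cite[Proposition 2.1]{Goo_poag_1986}) to reduce group-level interpolation to a decomposition statement for elements of the positive cone, which is the image of $\W(A)$. The cancellation available from stable rank one, namely weak cancellation of $\Cu(A)$ and in particular cancellation of compact elements (\autoref{wc} and \autoref{rmk:wc}), is what lets one clear denominators and reconcile formal differences of Cuntz classes with genuine subequivalences.

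The main obstacle is precisely this last passage from the semigroup to its Grothendieck group. The order on $\W(A)$ is Cuntz subequivalence rather than the algebraic order, so ordered-monoid interpolation does not formally entail interpolation of the associated group without a cancellation input: one must check that when a group inequality $x\le y+z$ is expanded into a Cuntz (sub)equivalence after adding a suitable auxiliary class, the cancellation of compact elements lets one absorb that auxiliary class and recover a decomposition already at the level of $\W(A)$. Everything upstream of this step, the heredity of $\W(A)$ in $\Cu(A)$ and the citation of the simplex theorem, is routine once \autoref{thm:CuARiesz} is in hand, which is exactly why the result is advertised as an easy consequence of it.
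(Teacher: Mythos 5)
Your overall architecture is the same as the paper's proof of \autoref{thm:BH} (reduce to interpolation for $\KK_0^*(A)$ via the bijection $\mathrm{DF}(A)\cong\mathrm{St}(\KK_0^*(A),[1_A])$ and \cite[Theorem 10.17]{Goo_poag_1986}, and get interpolation for $\W(A)$ from \autoref{thm:CuARiesz} plus heredity of $\W(A)$ in $\Cu(A)$). Your proof of heredity is correct and in fact more direct than the paper's \autoref{lma:WAhereditary}: if $c\in M_n(A)_+$ then $\overline{c(A\otimes\K)c}\subseteq M_n(A)$, so \autoref{prop:CtzCompsr1} applied in $A\otimes\K$ immediately puts a representative of $w$ inside $M_\infty(A)_+$, avoiding the approximation-and-suprema argument via \cite[Lemma 4.3]{BroPerTom_cuntz_2008} that the paper uses. (Separability, which you assume, is not needed anywhere in this argument; \autoref{thm:CuARiesz} does not require it.)

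The genuine gap is your last step, and it is of an instructive kind: the ``main obstacle'' you identify is not an obstacle, while the tool you propose to overcome it with would not work. No cancellation of any sort is needed to pass from interpolation in $\W(A)$ to interpolation in $\KK_0^*(A)$, because the positive cone of the Grothendieck group is the stabilized image of the semigroup order: $[x]\le [y]$ in $\KK_0^*(A)$ if and only if $x+t\precsim y+t$ for some $t\in\W(A)$. Given $a_i\le b_j$ ($i,j=1,2$) in the group, one chooses representatives with a common subtrahend, $a_i=[x_i]-[z]$ and $b_j=[y_j]-[z]$, sums the four witnesses to get a single $t$ with $x_i+t\le y_j+t$ for all $i,j$, interpolates in $\W(A)$ by some $x$, and checks that $e=[x]-[z+t]$ interpolates in the group; this is exactly the paper's \autoref{lma:interpolation}, valid for \emph{every} positively ordered semigroup with Riesz interpolation. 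By contrast, the route you sketch through Riesz decomposition plus cancellation cannot be completed as stated: classes in $\W(A)$ are typically \emph{not} compact in $\Cu(A)$ (any $[a]$ with $0$ a non-isolated point of the spectrum of $a$), so cancellation of compact elements from \autoref{wc} and \autoref{rmk:wc} says nothing about them, and full cancellation genuinely fails even under stable rank one: in $\Cu(\mathcal{Z})$ one has $c_1+s_1=s_{2}=s_1+s_1$ with $c_1\neq s_1$ (\autoref{thm:CuZ}), and both classes lie in $\W(\mathcal{Z})$ (indeed $s_1\le c_1$, so heredity applies). So as written your argument is incomplete at precisely the step you flag as open; the repair is the translation trick above, not a cancellation input.
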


The above conjecture was verified for C$^*$-algebras with real rank zero and stable rank one (\cite{Per_structure_1997}), for certain C$^*$-algebras of stable rank $2$ (\cite{AntBosPerPet_geometric_2014}),  and for C$^*$-algebras with finite radius of comparison and finitely many extremal quasitraces (\cite{DeS_conject_2016}). It was also asked in \cite[Problem 3.13]{AntBosPerPet_geometric_2014} for which unital C$^*$-algebras does it hold that $\KK_0^*(A)$ is an interpolation group.

Note that we also have 
\[\W(A)=\{x\in \Cu(A)\colon x=[a] \text{ for some } a\in M_\infty(A)_+\}.\] In the case that $A$ has stable rank one, we show below that $\W(A)$ is a hereditary subset of $\Cu(A)$; see \cite[Lemma 3.4]{AntBosPer_completion_2011}. This means that, if $x\leq y$ in $\Cu(A)$ and $y\in \W(A)$, then $x\in \W(A)$. 

\begin{lma}
\label{lma:WAhereditary}
Let $A$ be a C$^*$-algebra of stable rank one. Then  $\W(A)$ is hereditary in $\Cu(A)$. In particular,
\[\W(A)=\{x\in\Cu(A)\colon x\leq n[a]\text{ for some }a\in A_+,n\in\N\}.\]
\end{lma}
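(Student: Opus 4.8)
The plan is to prove the heredity statement first, since it is the real content, and then obtain the displayed formula as a formal consequence (recall from the line preceding the lemma that $\W(A)=\{x\in\Cu(A)\colon x=[a]\text{ for some }a\in M_\infty(A)_+\}$).

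\emph{Heredity.} Suppose $y\in\W(A)$ and $x\le y$ in $\Cu(A)$; I must show $x\in\W(A)$. Write $y=[b]$ with $b\in M_m(A)_+$ and $x=[c]$ with $c\in (A\otimes\K)_+$ satisfying $c\precsim b$. The point is that the hereditary subalgebra generated by $b$ never leaves $M_m(A)$: if $p\in M(A\otimes\K)$ is the projection onto the first $m$ coordinates, then $b=pbp$, so $\overline{b(A\otimes\K)b}\subseteq p(A\otimes\K)p=M_m(A)$. Now $A\otimes\K$ has stable rank one by part~(iii) of \autoref{prop:sr1}, so \autoref{prop:CtzCompsr1}, applied in $A\otimes\K$, turns $c\precsim b$ into an element $v\in A\otimes\K$ with $v^*v=c$ and $vv^*\in\overline{b(A\otimes\K)b}\subseteq M_m(A)$. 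Part~(iv) of \autoref{cor:FuncCalc} gives $c=v^*v\sim vv^*$, and since $vv^*\in M_m(A)_+\subseteq M_\infty(A)_+$ we conclude $x=[vv^*]\in\W(A)$. This step, resting entirely on the stable-rank-one form of Cuntz comparison, is where all the difficulty lies.

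\emph{The description.} Let $R$ be the set on the right-hand side. For $R\subseteq\W(A)$: if $x\le n[a]$ with $a\in A_+$, then $n[a]=[\diag(a,\dots,a)]$ with $\diag(a,\dots,a)\in M_n(A)_+\subseteq M_\infty(A)_+$, so $n[a]\in\W(A)$, and heredity forces $x\in\W(A)$. For $\W(A)\subseteq R$: given $x=[b]$ with $b=(b_{ij})\in M_m(A)_+$, I would produce a single $a\in A_+$ with $b\precsim\diag(a,\dots,a)$ ($m$ copies), which yields $x\le m[a]$. Taking $a=\sum_{i,j=1}^m(b_{ij}^*b_{ij}+b_{ij}b_{ij}^*)\in A_+$, each summand shows $b_{ij}^*b_{ij}\le a$ and $b_{ij}b_{ij}^*\le a$, so both lie in the hereditary subalgebra $A_a=\overline{aAa}$; hence $b_{ij}\in A_a$, using the routine fact that for a hereditary subalgebra $B$ and $z\in A$ one has $z=\lim_n |z^*|^{1/n}z\,|z|^{1/n}\in BAB\subseteq B$ whenever $z^*z,zz^*\in B$. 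Thus $b\in M_m(A_a)=(M_m(A))_{\diag(a,\dots,a)}$, and \autoref{prop:InHerSubalg} gives $b\precsim\diag(a,\dots,a)$, i.e. $[b]\le m[a]$.

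The genuinely delicate point is the heredity step; once it is in place the two inclusions of the description are bookkeeping. The only auxiliary ingredient for the second inclusion is the elementary fact that the entries of a positive matrix all sit in a hereditary subalgebra generated by a single element of $A$, which I would dispatch as above without further comment.
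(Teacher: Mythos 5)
Your proof is correct, but it follows a genuinely different route from the paper's. The paper never invokes \autoref{prop:CtzCompsr1}: instead it approximates $a\in(A\otimes\K)_+$ by elements $a_n\in M_\infty(A)_+$, manufactures from R{\o}rdam's lemma a $\ll$-increasing sequence $([b_n])_{n\in\N}$ in $\W(A)$ with $[a]=\sup_n[b_n]$ and $[b_n]\leq[b]$, and then appeals to \cite[Lemma 4.3]{BroPerTom_cuntz_2008} -- the fact that, in the stable rank one case, a $\ll$-increasing sequence in $\W(A)$ that is bounded above in $\W(A)$ has a supremum $[c]$ \emph{computed in} $\W(A)$ -- before squeezing with cut-downs to get $c\sim a$; stable rank one enters only through that citation. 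You instead pass to $A\otimes\K$ (stable rank one by part~(iii) of \autoref{prop:sr1}), apply \autoref{prop:CtzCompsr1} to $c\precsim b$ to get $v$ with $v^*v=c$ and $vv^*\in\overline{b(A\otimes\K)b}$, and observe via the corner projection that this hereditary subalgebra sits inside $M_m(A)$; this is shorter, stays entirely within results proved in the survey, and produces an \emph{exact} representative $vv^*\in M_m(A)_+$ of the class $x$ in one step rather than a limit procedure. What the paper's route buys is the auxiliary fact about suprema of bounded sequences in $\W(A)$, which is of independent interest; what yours buys is economy and a cleaner conceptual reason for heredity (Cuntz subequivalence below a matrix element is witnessed inside the matrix corner when the stable rank is one). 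A further point in your favor: the paper leaves the displayed equality implicit, whereas you prove both inclusions, and your choice $a=\sum_{i,j}(b_{ij}^*b_{ij}+b_{ij}b_{ij}^*)$ together with the standard fact that $z^*z,zz^*\in B$ forces $z\in B$ for hereditary $B$ is a correct and tidy way to dominate $[b]$ by $m[a]$.
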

\begin{proof}
Let $a\in (A\otimes\mathcal{K})_+$, $b\in M_{\infty}(A)_+$, and assume that $a\precsim b$.  We need to show that there is $c\in M_{\infty}(A)_+$ such that $c\sim a$. 

Since $A\otimes\mathcal{K}$ is the completion of $M_{\infty}(A)$ and $a\in (A\otimes\mathcal{K})_{+}$, there exists a sequence $(a_{n})_{n\in\N}$ in $M_{\infty}(A)_{+}$ with $a=\lim\limits_{n\to\I} a_{n}$ and $\|a-a_{n}\|\leq \tfrac{1}{n}$. 
By \autoref{lma:CutDownDistance}, for each $n\in\N$
there exists $d_n\in A\otimes\mathcal{K}$ such that $(a-\tfrac{1}{n})_{+}=d_{n}a_{n}d_{n}^{*}$ and then  
\[
(a-\tfrac{1}{n})_{+}=d_{n}a_{n}d_{n}^{*}\sim a^{\frac{1}{2}}_{n}d^{*}_{n}d_{n}a^{\frac{1}{2}}_{n}\,.
\]
Put $b_n:=a_{n}^{\frac{1}{2}}d^{*}_{n}d_{n}a_{n}^{\frac{1}{2}}\in M_{\infty}(A)_{+}$. We have $[ a]=\sup\limits_{n\in\N} [ b_n]$ in $\Cu(A)$, and $([ b_n])_{n\in\N}$ is $\ll$-increasing in $\Cu(A)$.

Now, the sequence $([ b_n])_{n\in\N}$ is bounded above in $\W(A)$ by $[ b]$. Therefore, it also has a supremum $[ c]$ in $\W(A)$, by \cite[Lemma 4.3]{BroPerTom_cuntz_2008}. In fact, the arguments in \cite{BroPerTom_cuntz_2008} show that for each $n$ there exist $m$ and $\delta_n>0$ with $(c-\tfrac{1}{n})_{+}\precsim (b_{m}-\delta_{n})_{+}$, and such that $(\delta_n)_{n\in\N}$ strictly decreases to zero. Therefore
\[
(c-\tfrac{1}{n})_{+}\precsim (b_{m}-\delta_{n})_{+}\precsim b_{m}\precsim a
\]
in $A\otimes\mathcal{K}$, and thus $c\precsim a$.

On the other hand, since also $b_{n}\precsim c$ for all $n$, and $[ a]=\sup\limits_{n\in\N}[b_n]$ in $\Cu(A)$, we see that $a\precsim c$. Thus $c\sim a$, as desired.
\end{proof}

\begin{lma}
\label{lma:interpolation}
Let $S$ be a positively ordered semigroup that has the Riesz interpolation property. Then its Grothendieck group $G(S)$ is an interpolation group.
\end{lma}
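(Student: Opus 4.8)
The plan is to prove the Riesz interpolation property for $G(S)$ \emph{directly}, rather than routing through Riesz decomposition or refinement. The refinement route is hopeless here: a positively ordered monoid with the interpolation property need not be a refinement monoid (the classical Cuntz semigroup $\W(\mathcal{Z})$ is an instance, since $\mathcal{Z}$ has stable rank one so $\W(\mathcal{Z})$ interpolates, yet $s_1+s_1=c_1+s_1$ admits no refinement). Interpolation is softer: it only asks for a \emph{fresh} interpolant and never requires splitting a given element, which is exactly why it can persist where refinement fails. Throughout I regard $S$ as commutative and write $\gamma\colon S\to G(S)$ for the canonical map; I order $G(S)$ by the positive cone $P=\{\gamma(b)-\gamma(a)\colon a\leq b \text{ in } S\}$ generated by the image of $S$. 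The point of this choice is that $\gamma$ becomes order-preserving, so that $a\leq b$ in $S$ forces $\gamma(a)\leq\gamma(b)$ in $G(S)$; this compatibility is what will let me lift an interpolant found in $S$ back to $G(S)$.

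First I would reduce to positive data. Given $x_1,x_2,y_1,y_2\in G(S)$ with $x_i\leq y_j$ for all $i,j$, write each as a difference of $\gamma$-values and add to all four the single element $t$ equal to the sum of all subtracted terms. Since $S$ is positively ordered we have $\gamma(c)\geq 0$ for every $c\in S$, so $t\geq 0$; being a translation in an abelian group, adding $t$ preserves every inequality $x_i\leq y_j$ and turns an interpolant of the shifted problem into one of the original. After the shift the four elements lie in $\gamma(S)$, so I may assume $x_i=\gamma(u_i)$ and $y_j=\gamma(v_j)$ with $u_i,v_j\in S$.

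The heart of the argument is converting the relations $\gamma(u_i)\leq\gamma(v_j)$ back into genuine $\leq$-relations in $S$, after a common positive shift. Unwinding the cone together with the defining relation of the Grothendieck group, each inequality produces $p_{ij}\leq q_{ij}$ in $S$ and a stabilizing element $e_{ij}\in S$ with $v_j+p_{ij}+e_{ij}=u_i+q_{ij}+e_{ij}$. Adding $u_i+e_{ij}$ across $p_{ij}\leq q_{ij}$ and using this equation gives $u_i+(e_{ij}+p_{ij})\leq v_j+(e_{ij}+p_{ij})$ in $S$. Absorbing all correction terms into the single element $E=\sum_{k,l}(e_{kl}+p_{kl})\in S$ (adding the appropriate complement to each side) yields the uniform inequalities $u_i+E\leq v_j+E$ for all $i,j$. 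Now Riesz interpolation in $S$, applied to $u_1+E,u_2+E\leq v_1+E,v_2+E$, supplies $s\in S$ with $u_i+E\leq s\leq v_j+E$. Since $\gamma$ is order-preserving, $\gamma(u_i)+\gamma(E)\leq\gamma(s)\leq\gamma(v_j)+\gamma(E)$, and setting $w:=\gamma(s)-\gamma(E)$ gives $x_i\leq w\leq y_j$, as required.

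I expect the main obstacle to be precisely the bookkeeping in the third paragraph: $\gamma$ need not be injective and the order on $S$ need not be algebraic, so one cannot simply reinterpret $G(S)$-inequalities as $S$-inequalities. The device that resolves this is the common positive shift $E$, chosen large enough to absorb every stabilizing term $e_{ij}$ arising from the Grothendieck relation; positivity of $S$ is used twice, once to reduce to $\gamma(S)$ and once to ensure these shifts move elements only upward and hence respect the order. This establishes the Riesz interpolation property for $G(S)$, that is, that $G(S)$ is an interpolation group in the sense defined above; by the equivalence of interpolation, decomposition and refinement in partially ordered abelian groups (Proposition~2.1 of \cite{Goo_poag_1986}, cited earlier), $G(S)$ then enjoys all three properties.
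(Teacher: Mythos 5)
Your proof is correct and takes essentially the same route as the paper's: reduce all four elements to differences with a common denominator so the inequalities live in $\gamma(S)$, absorb the Grothendieck stabilizers and cone witnesses into a single common shift (your $E$, the paper's $t$), interpolate in $S$, and subtract the shift back in $G(S)$. Your writeup merely makes explicit the unwinding of the positive cone and the Grothendieck relation that the paper's terser argument leaves implicit.
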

\begin{proof}
Let $a_1,a_2,b_1,b_2$ be elements in $G(S)$ such that $a_i\leq b_j$ for all $i,j=1,2$. There exist elements $z,x_i,y_j \in S$ such that
$a_i=[x_i]-[z]$ and $b_j=[y_j]-[z]$. (If $a_i=[x_i]-[v_i]$ and $b_i=[y_i]-[w_i]$, then one may take $z=v_1+v_2+w_1+w_2$.) Therefore, by adding $[z]$ to the inequality we get 
$[x_i]\leq [y_j]$ for all $i,j=1,2$.
Thus there exists $t\in S$ such that
\[x_i+t \leq y_j+t\]
for all $i,j=1,2$. By assumption, there exists $x\in S$ interpolating the above
inequality. Consider the element $e=[x]-[z+t]\in G(S)$.
Then it is easy to check that $e$ satisfies $a_i\leq e\leq b_j$ for all $i,j=1,2$.
\end{proof}

\begin{thm}
\label{thm:BH}
Let $A$ be a C$^*$-algebra of stable rank one. Then $\KK_0^*(A)$ is an interpolation group and thus $\mathrm{DF}(A)$ is a Choquet simplex.
In particular, \autoref{cnj:BH} holds if $A$ has stable rank one.
\end{thm}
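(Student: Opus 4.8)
The plan is to transfer the Riesz interpolation property established for the full Cuntz semigroup in \autoref{thm:CuARiesz} down to the classical Cuntz semigroup $\W(A)$, then pass to the Grothendieck group via \autoref{lma:interpolation}, and finally invoke the classical characterization of state spaces of interpolation groups.

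First I would show that $\W(A)$ itself has the Riesz interpolation property. Given $x,y,z,t\in \W(A)$ with $x,y\leq z,t$, I apply \autoref{thm:CuARiesz} inside $\Cu(A)$ to produce $w\in \Cu(A)$ with $x,y\leq w\leq z,t$. Since $w\leq z$ and $z\in \W(A)$, the hereditariness of $\W(A)$ in $\Cu(A)$ furnished by \autoref{lma:WAhereditary} forces $w\in \W(A)$, so $w$ witnesses interpolation within $\W(A)$. As $\W(A)$ is a positively ordered monoid (a submonoid of $\Cu(A)$ containing $0$), \autoref{lma:interpolation} then applies and shows that its Grothendieck group $G(\W(A))=\KK_0^*(A)$ is an interpolation group.

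Next I would verify that $[1_A]$ is an order unit of $\KK_0^*(A)$: for any $a\in M_n(A)_+$ we have $a\leq \|a\|\,1_{M_n(A)}\sim 1_{M_n(A)}$, so by \autoref{prop:InHerSubalg} together with part~(iii) of \autoref{cor:FuncCalc} we obtain $[a]\leq [1_{M_n(A)}]=n[1_A]$; thus $[1_A]$ dominates, up to a multiple, every element of $\W(A)$, and a fortiori every element of $\KK_0^*(A)^+$. Finally, the natural affine bijection between $\mathrm{DF}(A)$ and the state space $\mathrm{St}(\KK_0^*(A),[1_A])$ recorded after the definition of dimension functions is an affine homeomorphism for the pointwise-convergence topologies, and by the classical theorem of Goodearl (\cite[Theorem 10.17]{Goo_poag_1986}) the state space of an interpolation group with order unit is a Choquet simplex. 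Hence $\mathrm{DF}(A)$ is a Choquet simplex, which is exactly \autoref{cnj:BH} in the stable rank one case.

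I do not expect a genuine obstacle in any of these steps, since each is short once the preceding structural results are available. The real difficulty has been front-loaded into \autoref{thm:CuARiesz} and \autoref{lma:WAhereditary}, whose proofs draw on the delicate stable-rank-one machinery (Kasparov absorption, the projection picture of \autoref{prop:key}, and axiom (O6+)). The only ingredient external to these notes is the Goodearl characterization, which is a standard fact about partially ordered abelian groups; the sole point requiring a moment of care is confirming that $[1_A]$ is an order unit so that the simplex theorem is applicable.
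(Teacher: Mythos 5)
Your proof is correct and follows essentially the same route as the paper: interpolation in $\Cu(A)$ via \autoref{thm:CuARiesz}, transferred to $\W(A)$ by the hereditariness of \autoref{lma:WAhereditary}, then \autoref{lma:interpolation} and Goodearl's \cite[Theorem 10.17]{Goo_poag_1986}. Your explicit check that $[1_A]$ is an order unit is a sensible addition the paper leaves implicit, but it does not change the argument.
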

\begin{proof}
We know from \autoref{lma:WAhereditary} that $\W(A)$ is hereditary. We use this to show that $\W(A)$ has the Riesz interpolation property. By \autoref{thm:CuARiesz}, this is the case for $\Cu(A)$. Let $x,y,z,t\in \W(A)$ be such that $x,y\leq z,t$. Then this also holds in $\Cu(A)$ and thus there is $w\in\Cu(A)$ such that $x,y\leq w\leq z,t$. Since $\W(A)$ is hereditary, we have $w\in \W(A)$.

By \autoref{lma:interpolation}, $\KK_0^*(A)$ is an interpolation group, and using \cite[Theorem 10.17]{Goo_poag_1986}, we obtain that its state space, that is, $\mathrm{DF}(A)$, is a Choquet simplex.
\end{proof}

One of the reasons for introducing $\Cu(A)$ was the need of a continuous invariant. Regarding $\W$ as a functor from C$^*$-algebras to the category of positively ordered semigroups, it is clear that $\W$ is not continuous. (This already fails for $\K=\varinjlim M_n$.)

As it turns out, $\Cu(A)$ can be regarded as the completion of $W(A)$, just as $A\otimes\K$ is the completion of $M_\infty(A)$. In order to outline the exact relationship between these two semigroups, we need some additional concepts. The following is inspired by
\cite[Definition~I-1.11, p.57]{GieHofKeiLawMisSco_continuous_2003}.

\begin{df}
\label{pgr:aux}
Let $(X,\leq)$ be a partially ordered set.
A binary relation $\prec$ on $X$ is called an \emph{auxiliary relation} if the following properties are satisfied:
\begin{itemize}
\item[{\rm (i)}]
If $x\prec y$ then $x\leq y$, for all $x,y\in X$.
\item[{\rm (ii)}]
If $w\leq x\prec y\leq z$ then $w\prec z$, for all $w,x,y,z\in X$.
\end{itemize}
If, further, $X$ is a monoid, then an auxiliary relation $\prec$ is said to be \emph{additive} if it is compatible with addition and $0\prec x$ for every $x\in X$.
\end{df}

Observe that an auxiliary relation as defined above is transitive. To see this, if $x\prec y$ and $y\prec z$, then $x\leq y$ by condition (i) and applying condition (ii) to $x\leq y\prec z\leq z$, we obtain $x\prec z$.
In the case of a $\Cu$-semigroup $S$, the compact containment relation $\ll$ is an example of an auxiliary relation on $S$. If $A$ is a C$^*$-algebra, then $\W(A)$ may be equipped with the following auxiliary relation: $[a]\prec [b]$ if and only if $[a]\leq [(b-\varepsilon)_+]$ for some $\varepsilon>0$; see \cite[Proposition 2.2.5]{AntPerThi_tensor_2018}.
Equivalently by \autoref{rem:CutDownWayBelow}, $[a]\prec [b]$
in $\W(A)$ if and only if $[a]\ll [b]$ in $\Cu(A)$. 

\begin{df}
A $\mathrm{W}$-semigroup is a positively ordered semigroup $S$ together with an auxiliary relation $\prec$ such that the following axioms hold:
 \begin{itemize}
 \item[(W1)] For each $a\in S$, the set $a^{\prec}=\{b\in S\colon b\prec a\}$ has a $\prec$-increasing countable cofinal subset (with respect to $\prec$).
 \item[(W3)] $\prec$ is additive.
 \item[(W4)] If $a \prec b+c$ in $S$ then there are $b'\prec b$ and $c'\prec c$ such that $a\prec b'+c'$.
 \end{itemize}
 
A positively ordered semigroup morphism $f\colon S\to T$ between two $\W$-semigroups is a $\W$-morphism provided it preserves $\prec$ and that is also continuous, in the sense that if $b\prec f(a)$ in $T$, then there is $a'\prec a$ in $S$ such that $b\leq f(a')$. We will denote by $mathbf{W}$ the category whose objects are the $\W$-semigroups and whose morphisms are the $\W$-morphisms. The set of $\W$ morphisms between $\W$-semigroups $S$ and $T$ will be denoted by $\mathbf{W}(S,T)$. 
\end{df}

We remark that the terminology has evolved so that initially a $\mathrm{W}$-semigroup was also required to satisfy (W2): for each $a\in S$, we have $a=\sup a^\prec$, but this is not relevant for the theory. It is not even relevant, for many purposes, to require that a $\W$-semigroup is positively ordered (and only that it is equipped with a transitive relation $\prec$ satisfying the axioms above). 

Again, if $A$ is a C$^*$-algebra, it was shown in \cite[Proposition 2.2.5]{AntPerThi_tensor_2018} that $\W(A)$ is a $\W$-semigroup with the auxiliary relation defined above. 
Moreover, if $\varphi\colon A\to B$ is a homomorphism of \ca s, 
then the restriction $W(\varphi)$ of $\Cu(A)$ to $W(A)$ is a 
W-morphism $W(\varphi)\colon W(A)\to W(B)$.

It is easy to verify that $\CatCu$ is a full subcategory of \textbf{W} and that \textbf{W} has limits; 
see \cite[Theorem 2.2.9]{AntPerThi_tensor_2018}. It turns out it is also a reflexive category, which follows from the theorem below; see \cite{AntPerThi_tensor_2018}.

\begin{thm}
\label{thm:curefl}
Given a $\W$-semigroup $(S,\prec)$, there are a $\Cu$-semigroup $\gamma(S)$ and a $\W$-morphism $\alpha\colon S\to \gamma(S)$ such that:
\begin{enumerate}[{\rm (i)}]
\item $a'\prec a$ in $S$ whenever $\alpha(a')\ll \alpha(a)$.
\item If $b'\ll b$ in $\gamma(S)$, then there is $a\in S$ such that $b'\ll\alpha(a)\ll b$.
\end{enumerate}
\end{thm}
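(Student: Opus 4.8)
The plan is to realize $\gamma(S)$ as a sequential completion of $(S,\prec)$, mimicking the construction of direct limits in $\CatCu$ carried out in the proof of \autoref{thm:Culimits}. Concretely, I would let $\gamma(S)$ be the set of equivalence classes of $\prec$-\emph{increasing sequences} in $S$, that is, sequences $(x_n)_{n\in\N}$ with $x_n\prec x_{n+1}$ for all $n$, where $(x_n)_n\precsim (y_n)_n$ means that for every $n$ there is $m$ with $x_n\leq y_m$, and $\sim$ denotes the symmetrization of $\precsim$. Coordinatewise addition is well defined and lands again in $\prec$-increasing sequences by additivity of $\prec$ (axiom (W3)), and it is compatible with the order. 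The map $\alpha$ would send $a\in S$ to the class of any $\prec$-increasing sequence $(a_n)_n$ that is cofinal in $a^{\prec}=\{b\in S\colon b\prec a\}$; such a sequence exists by (W1), and one checks that its class is independent of the chosen cofinal sequence and that $\alpha$ is a $\W$-morphism.

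Before verifying the axioms I would record the interpolation property $a\prec b\Rightarrow \exists b'\ (a\prec b'\prec b)$: this is \emph{not} assumed, but it follows from (W4) applied to $a\prec b=b+0$ together with $0\prec 0$ and positivity, which force the second summand to vanish. The central technical step is then the characterization
\[
[(x_n)_n]\ll[(y_n)_n] \quad\Longleftrightarrow\quad \text{there is } m\in\N \text{ with } x_n\leq y_m \text{ for all } n.
\]
The ``if'' direction uses a diagonal representative of the relevant supremum together with interpolation, while the ``only if'' direction follows by exhibiting $[(y_n)_n]$ as the supremum of the $\ll$-increasing sequence $(\alpha(y_k))_k$ and testing compact containment against it. With this lemma in hand, the axioms (O1)--(O4) for $\gamma(S)$ follow by standard diagonal arguments: (O1) by interleaving the sequences representing an increasing sequence of classes; (O2) because $[(x_n)_n]=\sup_k \alpha(x_k)$ with $\alpha(x_1)\ll\alpha(x_2)\ll\cdots$; (O3) directly from the displayed characterization and (W3); and (O4) again by diagonalization using (W4).

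Finally, conditions (i) and (ii) are exactly where the correspondence between $\prec$ and $\ll$ must be pinned down. For (ii), given $b'\ll b$ in $\gamma(S)$ I would use the characterization to find a single term of a representative of $b$ dominating a representative of $b'$, and then feed that term through $\alpha$, with interpolation producing an intermediate $\alpha(a)$ satisfying $b'\ll\alpha(a)\ll b$. Condition (i) is the subtle one, and I expect it to be the main obstacle: from $\alpha(a')\ll\alpha(a)$ the characterization yields a single approximant $a_m\prec a$ with $a'_n\leq a_m$ for every approximant $a'_n$ of $a'$, and one must promote this to $a'\prec a$ itself. Since axiom (W2) (that is, $a=\sup a^{\prec}$) is deliberately \emph{not} available, this cannot be done by passing to a supremum in $S$; one is forced to argue entirely at the level of approximants, exploiting the cofinality of $(a'_n)_n$ in $(a')^{\prec}$ and the roundedness supplied by (W1). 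The ensuing reflector property---that every $\W$-morphism $f$ from $S$ into a $\CatCu$-object factors uniquely through $\alpha$ via $[(x_n)_n]\mapsto \sup_n f(x_n)$---is then a formal consequence of (O1). For the detailed execution of these last points I would follow \cite{AntPerThi_tensor_2018}.
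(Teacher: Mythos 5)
Your proposal is correct and follows essentially the same route as the paper's (itself only sketched) proof: the same quotient of $\prec$-increasing sequences — your $\leq$-based comparison is equivalent to the paper's $\prec$-based one, since the comparing sequences are $\prec$-increasing and $\prec$ is an auxiliary relation — the same characterization of $\ll$, and the same definition of $\alpha$ from (W1)-cofinal sequences, with your derivation of interpolation from (W4) and your treatments of (ii) and of the reflector property all sound. Your instinct that (i) is the genuinely delicate point is justified: under the survey's axioms, with (W2) dropped, the implication $\alpha(a')\ll\alpha(a)\Rightarrow a'\prec a$ can in fact fail (take $S=\NN$ with the trivial auxiliary relation $0\prec x$; then every $\prec$-increasing sequence is identically zero, $\gamma(S)=\{0\}$, and $\alpha$ collapses $S$), so the detailed argument in \cite{AntPerThi_tensor_2018} that you defer to uses precisely the roundedness $a=\sup a^\prec$ at this step — a defect of the statement as recorded here rather than of your construction.
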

\begin{proof}(Outline)
We just show how to construct $\gamma(S)$. One considers the set $S_\prec$ of $\prec$-increasing sequences in $S$. Any two such sequences are added pointwise, and one declares $(a_n)_{n\in\N}\precsim (b_n)_{n\in\N}$ if for every $k\in\N$ there is $n\in\N$ such that $a_k\prec b_n$. This defines a translation invariant preorder that yields an equivalence relation by setting $(a_n)_{n\in\N}\sim (b_n)_{n\in\N}$ if and only if $(a_n)_{n\in\N}\precsim (b_n)_{n\in\N}$ and $(b_n)\precsim (a_n)_{n\in\N}$. We then define $\gamma(S)$ to be $S_\prec/\!\!\sim$. Addition is induced by addition of sequences and the order is induced by $\precsim$. It is possible to prove that $[(a_n)_{n\in\N}]\ll [(b_n)_{n\in\N}]$ precisely if there is $k\in\N$ such that $a_n\prec b_k$ for all $n\in\N$.

In order to define $\alpha\colon S\to\gamma(S)$, let $a\in S$ and apply (W1) to find $(a_n)_{n\in\N}\in S^\prec$ which is cofinal in $a^\prec$. Then set $\alpha(a)=[(a_n)_{n\in\N}]$. We omit the details.
\end{proof}

The construction just outlined defines a functor $\gamma\colon \mathrm{\textbf{W}}\to\CatCu$ which is a reflector for the inclusion. Applied to C$^*$-algebras, this yields:

\begin{thm}[{\cite[Theorem 3.2.8]{AntPerThi_tensor_2018}}]
\label{thm:completion}
The compositions $\gamma\circ\W$ and $\Cu$ are naturally isomorphic as functors from \textbf{C$^*$} to $\CatCu$. In particular, if $A$ is a C$^*$-algebra, then $\Cu(A)$ is naturally isomorphic to $\gamma(\W(A))$.
\end{thm}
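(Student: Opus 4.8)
The plan is to produce, for each \ca\ $A$, an explicit order-isomorphism $\Phi_A\colon\gamma(\W(A))\to\Cu(A)$ and then to check that the $\Phi_A$ assemble into a natural transformation. Conceptually $\Phi_A$ is nothing but the $\Cu$-morphism furnished by the reflection property of $\gamma$ (\autoref{thm:curefl}) applied to the inclusion $\W(A)\hookrightarrow\Cu(A)$; this inclusion is a $\W$-morphism precisely because, as recalled above, the auxiliary relation $\prec$ on $\W(A)$ agrees with the restriction of $\ll$ from $\Cu(A)$. Concretely, an element of $\gamma(\W(A))$ is represented by a $\prec$-increasing sequence $([a_n])_{n\in\N}$ in $\W(A)$; since $\prec$ coincides with $\ll$, this is a $\ll$-increasing sequence in $\Cu(A)$ and hence, by (O1) (\autoref{thm:suprema}), has a supremum there. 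I would therefore set
\[\Phi_A\big([([a_n])_{n\in\N}]\big)=\sup\limits_{n\in\N}[a_n]\in\Cu(A).\]

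First I would verify that $\Phi_A$ is a well-defined generalized $\Cu$-morphism. Well-definedness and monotonicity both reduce to the observation that if $([a_n])_n\precsim([b_n])_n$ in $\gamma(\W(A))$, then for each $k$ there is $n$ with $[a_k]\prec[b_n]$, so $[a_k]\le\sup_m[b_m]$ and thus $\sup_k[a_k]\le\sup_m[b_m]$. Additivity follows from axiom (O4), which lets suprema pass through the pointwise addition of sequences, and $\Phi_A(0)=0$ is clear. The crucial point is that $\Phi_A$ is in fact an \emph{order-embedding}. For the nontrivial direction, suppose $\sup_k[a_k]\le\sup_m[b_m]$. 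For each $k$ we have $[a_k]\ll[a_{k+1}]\le\sup_m[b_m]$, so the defining property of compact containment (\autoref{df:CompCont}), applied to the increasing sequence $([b_m])_m$, yields $m$ with $[a_k]\le[b_m]\ll[b_{m+1}]$, that is, $[a_k]\prec[b_{m+1}]$. Hence $([a_n])_n\precsim([b_n])_n$, as desired. In particular $\Phi_A$ is injective, and being an order-embedding it automatically transports suprema of increasing sequences and the relation $\ll$, so it is a genuine $\Cu$-morphism.

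It remains to prove surjectivity and naturality. The key lemma for surjectivity is the purely general fact that for every $a\in(A\otimes\K)_+$ and every $\ep>0$ the class $[(a-\ep)_+]$ lies in $\W(A)$: choosing $a'\in M_\infty(A)_+$ with $\|a-a'\|<\tfrac{\ep}{2}$, \autoref{lma:CutDownDistance} gives a contraction $r$ with $(a-\tfrac{\ep}{2})_+=r a' r^*$, whence $(a-\tfrac{\ep}{2})_+\sim a'^{1/2}r^*r a'^{1/2}\in M_\infty(A)_+$ by part~(iv) of \autoref{cor:FuncCalc} (note $a'^{1/2}$ lives in some $M_k(A)$, so this element does too). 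Granting this, \autoref{rem:CutDownsConverge} shows $[a]=\sup_m[(a-\tfrac{1}{m})_+]$, and by \autoref{prop:WayBelowCutDown} the sequence $\big([(a-\tfrac{1}{m})_+]\big)_m$ can be arranged to be $\ll$-increasing; it therefore defines an element of $\gamma(\W(A))$ that $\Phi_A$ sends to $[a]$. Finally, naturality amounts to the commutativity of the square $\Phi_B\circ\gamma(\W(\varphi))=\Cu(\varphi)\circ\Phi_A$ for every $\ast$-homomorphism $\varphi\colon A\to B$; this is immediate once one recalls that $\Cu(\varphi)$ is a $\Cu$-morphism and hence commutes with the suprema defining $\Phi_A$, while on classes $\Cu(\varphi)([a_n])=[\varphi(a_n)]=\W(\varphi)([a_n])$. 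I expect the main obstacle to be the order-embedding step: it is exactly there that one must exploit the compatibility $\prec=\ll$ together with the definition of compact containment, and getting the quantifiers right (turning $[a_k]\le[b_m]$ into the strict $[a_k]\prec[b_{m+1}]$) is the delicate part on which the whole isomorphism hinges.
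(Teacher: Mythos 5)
Your construction is correct, and it is worth noting that the paper itself offers no proof of \autoref{thm:completion}: the result is quoted from \cite[Theorem 3.2.8]{AntPerThi_tensor_2018}, where the argument runs by verifying that the canonical map $\W(A)\to\Cu(A)$ enjoys the universal property of the completion (conditions (i) and (ii) of \autoref{thm:curefl}) and then invoking uniqueness of such completions. Your proposal unwinds that abstract argument into an explicit isomorphism $\Phi_A$ at the level of $\prec$-increasing sequences, and it rests on exactly the two pillars the reference uses: the identification of $\prec$ on $\W(A)$ with the restriction of $\ll$ from $\Cu(A)$, and the fact that $[(a-\ep)_+]\in\W(A)$ for every $a\in(A\otimes\K)_+$ and $\ep>0$. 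Your cut-down lemma is sound (the element $a'^{1/2}r^*ra'^{1/2}$ lies in $M_k(A)$ because $M_k(A)$ is hereditary in $A\otimes\K$), up to the harmless parameter mismatch that your choices yield $(a-\tfrac{\ep}{2})_+$ rather than $(a-\ep)_+$; since the conclusion is needed for all $\ep>0$ this costs nothing. The order-embedding step --- upgrading $[a_k]\leq [b_m]$ to $[a_k]\leq[b_m]\ll[b_{m+1}]$, hence $[a_k]\prec[b_{m+1}]$, via the auxiliary-relation property of $\ll$ --- is indeed the crux, and you have the quantifiers right.

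One sentence does need repair: an order-embedding does \emph{not} automatically preserve suprema of increasing sequences or the relation $\ll$. For instance, the inclusion of $[0,1)\cup\{2\}$ into $[0,2]$ is an order-embedding under which the supremum of $\big(1-\tfrac{1}{n}\big)_{n\in\N}$, which is $2$ in the domain, is not preserved. What is true, and what your argument actually delivers, is that a \emph{surjective} order-embedding of positively ordered monoids is an order-isomorphism and hence preserves all order-theoretic structure; so the claim that $\Phi_A$ is a $\Cu$-morphism should be deferred until after your surjectivity step, at which point the proof is complete as written. The naturality verification is correct and requires no change.
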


The result above is extremely useful when constructing
objects in the category $\CatCu$ as certain ``completions''
of objects in \textbf{W}. We already saw an
example of this in \autoref{thm:Culimits}, when we 
constructed inductive limits in $\CatCu$. Indeed, what
we did there was to consider the inductive limit of 
Cu-semigroups in the category \textbf{W}, and then apply
the functor $\gamma$. The same strategy can be used to 
construct other objects, such as (infinite) direct sums.
On the other hand, many other constructions (such as
products) will require a different
treatment, since these constructions do not obviously
exist in \textbf{W} either. This is done in \autoref{sec:Structure}, where we consider an even larger
category \textbf{Q} and a natural functor 
$\tau\colon \mathrm{\textbf{Q}}\to \CatCu$; see 
\autoref{prp:CuCoreflQ}.

\section{Functionals and the realization of ranks}
\label{sec:functionals}

In this section, we formulate the problem of realization of ranks and sketch the solution for C$^*$-algebras of stable rank one. The inf-semilattice ordered structure of the Cuntz semigroup for such algebras is a key element for the solution.
Another ingredient that is needed in this setting is an
additional axiom for Cu-semigroups, called \emph{Edward's 
condition}; see \cite{AntPerThi_edwards_2021}. Since we will
omit the proofs where this axiom is needed, we will also not 
discuss this condition here. 

Recall from \autoref{df:functionals} that a \emph{functional}
on a Cu-semigroup $S$ is an additive function $\lambda\colon S
\to [0,\I]$ satisfying $\lambda(0)=0$, that preserves order and 
suprema of increasing sequences.
We equip the set $F(S)$ of functionals on $S$ with operations of addition and scalar multiplication by nonzero, positive real numbers, defined pointwise. Moreover, $F(S)$ is a topological cone with 
respect to the topology whose subbase is given by the 
collection of all sets
\[V_{x,r}=\{\lambda\in F(S)\colon f(x)>r\} \ \ \mbox{ and } 
 \ \ W_{x,r}=\{\lambda\in F(S)\colon f(x')<r \mbox{ for some } x'\ll x\},
\]
for $x\in S$ and $r\in (0,\infty)$; see \cite{Kei_cuntz_2017} and also 
\cite{EllRobSan_cone_2011}. With respect to this topology, 
given $\lambda\in F(S)$ and a net $(\lambda_i)_{i\in I}$ in $F(S)$, we have $\lambda_i\to \lambda$ if and only if
\[
\limsup \lambda_i(x')\leq \lambda(x)\leq \liminf \lambda_i(x)\hbox{ for all }x',x\in S \text{ such that }x'\ll x.
\]
It was shown in \cite[Theorem~3.17]{Kei_cuntz_2017} (see 
also \cite[Theorem~4.8]{EllRobSan_cone_2011}) that, with this topology, $F(S)$ is a compact Hausdorff ordered
topological cone.

By \autoref{thm:QTfunctionals} and the comments after it,
for any C$^*$-algebra $A$ there is a natural bijection between $F(\Cu(A))$ and the set 
of $[0,\infty]$-valued, lower semicontinuous $2$-quasitraces on $A$.


A significant difference when considering 
\emph{non-normalized} functionals on Cu-semigroups, is that 
these naturally arise from the ideal structure of the semigroup, 
as follows:

\begin{lma}
\label{prp:extFctl}
Let $S$ be a $\Cu$-semigroup, let $I\subseteq S$ be an ideal, and let $\lambda\colon I\to[0,\infty]$ be a functional.
Define $\tilde{\lambda}\colon S\to[0,\infty]$ by
\[
\tilde{\lambda}(x)=
\begin{cases}
\lambda(x), &\text{ if } x\in I; \\
\infty, &\text{ otherwise}.
\end{cases}
\]
Then $\tilde{\lambda}$ is a functional on $S$.
\end{lma}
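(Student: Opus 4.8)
The plan is to check directly that $\tilde\lambda$ satisfies each of the four defining conditions of a functional listed in \autoref{df:functionals}, namely $\tilde\lambda(0)=0$, additivity, order-preservation, and preservation of suprema of increasing sequences. In every case I would argue by distinguishing whether the elements involved belong to $I$, and the only facts I would invoke are the three structural properties of an ideal from \autoref{df:IdealCu}: that $I$ is a submonoid, that it is hereditary, and that it is closed under suprema of increasing sequences.

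For the first three conditions the verification is immediate. Since $0\in I$, I get $\tilde\lambda(0)=\lambda(0)=0$. For additivity, if $x,y\in I$ then $x+y\in I$ and $\tilde\lambda(x+y)=\lambda(x)+\lambda(y)$; while if, say, $x\notin I$, then from $x\leq x+y$ (which holds because $S$ is positively ordered and addition respects the order) heredity of $I$ forces $x+y\notin I$, so both sides equal $\infty$. For order-preservation, if $x\leq y$ and $y\in I$ then $x\in I$ by heredity and I use that $\lambda$ is order-preserving; if $y\notin I$ then $\tilde\lambda(y)=\infty$ dominates $\tilde\lambda(x)$ trivially.

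The one condition that deserves a little care is preservation of suprema, and this is where closure of $I$ under suprema enters. Given an increasing sequence $(x_n)_{n\in\N}$ with supremum $x$, I would split into two cases. If $x\in I$, then every $x_n\leq x$ lies in $I$ by heredity, $x$ remains the supremum of $(x_n)_{n\in\N}$ when computed inside $I$, and $\tilde\lambda(x)=\lambda(x)=\sup_n\lambda(x_n)=\sup_n\tilde\lambda(x_n)$ since $\lambda$ preserves suprema. If $x\notin I$, then not every $x_n$ can lie in $I$, for otherwise the closure of $I$ under suprema of increasing sequences would put $x\in I$; hence some $x_m\notin I$ and $\sup_n\tilde\lambda(x_n)\geq\tilde\lambda(x_m)=\infty=\tilde\lambda(x)$. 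I do not anticipate any genuine obstacle here: the statement is essentially a bookkeeping exercise in the ideal axioms, with this contrapositive use of closure under suprema being its only mildly subtle point.
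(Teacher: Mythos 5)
Your proof is correct and follows essentially the same case analysis as the paper's, using heredity of $I$ for order-preservation and additivity, and closure under suprema for the last condition. The only difference is that the paper dismisses the suprema case with ``follows in a similar manner,'' whereas you spell it out correctly, including the contrapositive use of closure under suprema of increasing sequences when $x\notin I$.
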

\begin{proof}
Let us show that $\tilde{\lambda}$ is order-preserving. If $x\leq y$ in $S$ and $y\notin I$, then $\tilde{\lambda}(y)=\infty$, and clearly $\tilde{\lambda}(x)\leq\tilde{\lambda}(y)$. If $y\in I$, then $x\in I$ as well, since $I$ is an ideal of $S$, and thus $\tilde{\lambda}(x)=\lambda(x)\leq\lambda(y)=\tilde{\lambda}(y)$.

Next, let $x,y\in S$. Clearly $x+y\in I$ if and only if both $x,y\in I$. If $x,y\in I$, then 
\[\tilde{\lambda}(x+y)
=\lambda(x+y)
=\lambda(x)+\lambda(y)
=\tilde{\lambda}(x)+\tilde{\lambda}(y).\]
If either $x\notin I$ or $y\notin I$, then $x+y\notin I$, hence $\tilde{\lambda}(x+y)=\infty=\tilde{\lambda}(x)+\tilde{\lambda}(y)$. That $\tilde{\lambda}$ preserves suprema of increasing sequences follows in a similar manner.
\end{proof}

For a $\Cu$-semigroup $S$ that satisfies (O5), we give below the appropriate notion of dual for the cone $F(S)$.
Denote by $\mathrm{Lsc}(F(S))$ the set of functions $f\colon F(S)\to[0,\infty]$ that are additive, order-preserving, homogeneous (with respect to nonzero, positive scalars), lower semicontinuous, and satisfy $f(0)=0$. 
This set is equipped with pointwise order, addition, and scalar multiplication by nonzero positive scalars. 

\begin{df}\label{df:rank}
Let $S$ be a Cu-semigroup and let $x\in S$.
Given $x\in S$, the \emph{rank} of $x$ is the function $\widehat{x}\colon F(S)\to [0,\infty]$ given by evaluation, namely:
\[
\widehat{x}(\lambda)=\lambda(x)
\]
for all $\lambda\in F(S)$. One can check that
$\widehat{x}$ belongs to $\mathrm{Lsc}(F(S))$. 

The \emph{rank map} $\mathrm{rk}\colon S\to\mathrm{Lsc}(F(S))$ of
$S$ is defined by $\mathrm{rk}(x)=\widehat{x}$ for all $x\in S$. \end{df}

It is easy to check that the rank map preserves addition, order, and suprema of increasing sequences.

The \emph{realification} of $S$, denoted by $S_R$, was introduced in \cite{Rob_cone_2013} as the smallest subsemigroup of $\mathrm{Lsc}(F(S))$ that is closed under suprema of increasing sequences and contains all elements of the form $\tfrac{1}{n}\widehat{x}$ for $x\in S$ and $n\geq 1$. It can be shown that $S_R\cong S\otimes_\Cu [0,\infty]$, thus justifying the 
term ``realification''; see the proof of \autoref{monoidal} 
for the definition of tensor products in $\CatCu$.
Moreover, 
it was proved in \cite[Proposition~3.1.1]{Rob_cone_2013} that $S_R$ is a $\Cu$-semigroup satisfying (O5);
see also \cite[Proposition~7.5.6]{AntPerThi_tensor_2018}.

Given $f,g\in\mathrm{Lsc}(F(S))$, we write $f\lhd g$ if $f\leq(1-\varepsilon)g$ for some $\varepsilon>0$ and if $f$ is continuous at each $\lambda\in F(S)$ satisfying $g(\lambda)<\infty$.
We denote by $L(F(S))$ the subsemigroup of $\mathrm{Lsc}(F(S))$ consisting of those $f\in\mathrm{Lsc}(F(S))$ that can be written as the pointwise supremum of a sequence $(f_n)_{n\in\N}$ in $\mathrm{Lsc}(F(S))$ such that $f_n\lhd f_{n+1}$ for all $n\in\N$.

One has that in fact $S_R=L(F(S))$, as was shown in \cite[Theorem 3.2.1]{Rob_cone_2013}. It was also proved in \cite[Theorem 4.2.2]{Rob_cone_2013} that $L(F(S))$ is inf-semilattice ordered. The semigroup $L(F(S))$ is thought of as the \emph{dual} of $F(S)$,
since $F(L(F(S))=F(S)$, although it is not known whether $L(F(S))=\Lsc(F(S))$.

\begin{pbm}\emph{The problem of realizing functions as ranks.}
\label{probrealizing} 
Let $S$ be a $\Cu$-semigroup satisfying (O5).
The problem of realizing functions on $F(S)$ as ranks of elements in $S$ consists of finding necessary and sufficient conditions for the map $x\mapsto \widehat{x}$ to be a surjection from $S$ to $L(F(S))$.
\end{pbm}

The following notion is crucial to solve the problem of realization of ranks in the stable rank one setting. The motivation for the terminology can be found in \cite{ThiVil_nowscat_2021}.

\begin{df}
An \emph{ideal-quotient} in a C$^*$-algebra $A$ is a quotient of the form $I/J$, where $J\subset I$ are closed, two-sided ideals of $A$.
A C$^*$-algebra is \emph{nowhere scattered} if it has no non-zero elementary\footnote{Recall that a C$^*$-algebra is said to be \emph{elementary} if it is isomorphic to the compact 
operators on some Hilbert space. Equivalently, a \ca\ $A$ is 
elementary if it is simple and there exists a projection $p\in A$
satisfying $pAp\cong \C$.} ideal quotients. 
\end{df}

Nowehere scatteredness can be nicely characterized in terms
of Cuntz semigroups and functionals. To this end we need the lemma below, which is a nice
application of the axioms (O5) and (O6) of independent interest. 
Recall the notation $\infty_s$ from \autoref{nota:infa}.

\begin{lma}
\label{lma:auxiliary}
Let $S$ be a $\Cu$-semigroup satisfying axioms (O5) and (O6), 
let $\lambda\in F(S)$ satisfying 
\begin{enumerate}[{\rm (i)}]
\item $\lambda(S)=\NN$, and 
\item $\lambda(s)=0$ if and only if $s=0$,
\end{enumerate}
and fix $s_0\in S$ with $\lambda(s_0)=1$. 
Then $I=\{s\in S\colon s\leq \infty_{s_0}\}$ is an ideal in $S$, 
and $\lambda$ restricts to an isomorphism 
$I\cong\NN$ of $\Cu$-semigroups.
\end{lma}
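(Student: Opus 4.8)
The first assertion is immediate: by the observation following \autoref{nota:infa}, the set $I=\{s\in S\colon s\le\infty_{s_0}\}$ is precisely the ideal of $S$ generated by $s_0$, so it is an ideal. The content of the lemma is therefore the computation of $I$ together with the identification of $\lambda|_I$. My plan is to prove the structural statement that $I=\{n\cdot s_0\colon n\in\NN\}$, where $n\cdot s_0$ denotes the $n$-fold sum for finite $n$ and $\infty\cdot s_0:=\infty_{s_0}$, and that $\lambda(n\cdot s_0)=n$; granting this, $\lambda|_I$ is manifestly the required bijection onto $\NN$, and checking that it is a $\Cu$-isomorphism is routine. Throughout I will use that $\lambda$ is additive, order-preserving and supremum-preserving with $\lambda(n s_0)=n$ and $\lambda(\infty_{s_0})=\sup_n n=\infty$, and that, since (O5) holds, so does its one-variable form $\widetilde{\mathrm{(O5)}}$.

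The technical engine is the observation that the integrality of $\lambda$ (hypothesis (i)) combined with $\widetilde{\mathrm{(O5)}}$ forces $\ll$-approximating sequences to stabilize. Concretely, I would first show that every $s\in I$ with $\lambda(s)<\infty$ is compact. Writing $s=\sup_m s^{(m)}$ with $s^{(m)}\ll s^{(m+1)}$ as in (O2), the values $\lambda(s^{(m)})$ increase in $\NN$ to $\lambda(s)$, hence are eventually equal to $\lambda(s)$; applying $\widetilde{\mathrm{(O5)}}$ to $s^{(M)}\ll s^{(M+1)}\le s^{(m)}$ for large $M$ and $m\ge M+1$ produces $w$ with $s^{(M)}+w\le s^{(m)}\le s^{(M+1)}+w$, and applying $\lambda$ to the first inequality gives $\lambda(w)=0$, so $w=0$ by (ii) and $s^{(m)}\le s^{(M+1)}$. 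Thus the sequence is eventually constant and $s=s^{(M+1)}$ is compact; in particular $s_0$ is compact. The same argument shows that $s_0$ is an \emph{atom}: if $0\ne t\le s_0$ then $\lambda(t)=1$, and applying $\widetilde{\mathrm{(O5)}}$ to an approximating pair $t^{(m)}\ll t^{(m+1)}\le s_0$ with $\lambda(t^{(m)})=1$ again yields $w=0$ and $s_0\le t^{(m+1)}\le t$, whence $t=s_0$.

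Next I would decompose compact elements into atoms. Given compact $s\in I$ with $\lambda(s)=n$, compactness and $s\le\infty_{s_0}=\sup_k k s_0$ give $s\le k s_0$ for some $k$. Applying (O6) repeatedly to $s\ll s\le s_0+(k-1)s_0$ (each residual piece lies in $I$ and has finite $\lambda$-value, hence is compact, so (O6) reapplies) expresses $s\le p_1+\dots+p_k$ with each $p_i\le s_0$ and each $p_i\le s$; since $s_0$ is an atom, each $p_i\in\{0,s_0\}$. As $\lambda(s)=n\ge 1$, at least one $p_i$ equals $s_0$, so $s_0\le s$. Because compact elements can be complemented (\autoref{rem:compl}) I can write $s=s_0+z$ with $z$ compact and $\lambda(z)=n-1$, and an induction on $n$ gives $s=n s_0$. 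Finally, for $s\in I$ with $\lambda(s)=\infty$ I would note $s\le\infty_{s_0}$ and, for the reverse, fix $n$ and choose an approximating sequence $s^{(m)}\ll s^{(m+1)}\le s$; since $s^{(m)}\ll s^{(m+1)}\le\infty_{s_0}=\sup_k k s_0$, each $s^{(m)}$ lies below some $k s_0$, hence has finite value and equals $\lambda(s^{(m)})s_0$ by the previous step. As $\lambda(s^{(m)})\to\infty$ I may pick $m$ with $s^{(m)}=\lambda(s^{(m)})s_0\ge n s_0$, so $n s_0\le s$ for all $n$, giving $\infty_{s_0}\le s$ and $s=\infty_{s_0}$.

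Assembling these, $I=\{n\cdot s_0\colon n\in\NN\}$ and $\lambda|_I$ is a bijection onto $\NN$; it preserves addition, order and suprema because $\lambda$ is a functional and $(\sup n_k)s_0=\sup(n_k s_0)$, and it preserves and reflects $\ll$ because the compact elements of $I$ are exactly the $n s_0$ with $n<\infty$, matching the compact elements of $\NN$. I expect the main obstacle to be the atom-decomposition step: one must run (O6) so that the leftover pieces remain inside $I$ with finite $\lambda$-value (guaranteeing they are compact, so the recursion can continue), and then combine the resulting inequality $s_0\le s$ with complementation of compact elements to pin down the exact multiple rather than merely a bound.
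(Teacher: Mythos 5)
Your proof is correct and follows the same three-step skeleton as the paper's argument: (1) integrality of $\lambda$ combined with $\widetilde{\mathrm{(O5)}}$ forces any $\ll$-approximating sequence of an element of finite value to stabilize, so such elements are compact; (2) compact elements of $I$ are exact multiples of $s_0$; (3) an element of infinite value dominates $ns_0$ for every $n$ and hence equals $\infty_{s_0}$. Steps (1) and (3) coincide with the paper's almost verbatim; the only genuine divergence is in step (2). There, the paper proves by induction on $m$ that $t\leq ms_0$ forces $t=ks_0$ for some $k\leq m$: a single application of (O6) splits $t$ against $(m-1)s_0+s_0$, the induction hypothesis and the $m=1$ case identify the two pieces, and the resulting sandwich $ks_0\leq t\leq (k+1)s_0$ is resolved by two complementations (\autoref{rem:compl}) together with $\lambda(c+d)=1$, which forces $c=0$ or $d=0$. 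You instead first isolate the atomicity of $s_0$ (your $\widetilde{\mathrm{(O5)}}$-based argument here replaces the paper's complementation proof of the $m=1$ case), then iterate (O6) to write $s\leq p_1+\dots+p_k$ with each $p_i\in\{0,s_0\}$ and $p_i\leq s$, extract $s_0\leq s$, complement, and induct on $n=\lambda(s)$. Your route yields the cleaner conclusion $s=\lambda(s)\,s_0$ directly and avoids the final sandwich dichotomy, at the cost of iterating (O6) $k$ times; you correctly identified and handled the one place this iteration could break, namely that each (O6)-remainder must again be compact for the recursion to continue, which holds because it lies in $I$ with finite $\lambda$-value, so step (1) applies. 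The toolkit is identical in both proofs, so the difference is organizational rather than conceptual.
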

\begin{proof}
Let $s\in S$ satisfy $\lambda(s)<\infty$.
We claim that $s$ is compact. To see this, 
write $s=\sup\limits_{n\in\N} t_n$ with $t_n\ll t_{n+1}$. Since $\lambda(s_0)=\sup\limits_{n\in\N}\lambda(t_n)$ is compact in $\overline{\N}$, 
there exists $n_0\in\N$ such that for all $n\geq n_0$ we have $\lambda(t_n)=1$. 
For such $n\geq n_0$, apply (O5) to $t_n\ll t_{n+1}\leq s$ to find $c\in S$ such that $t_n+c\leq s\leq t_{n+1}+c$. 
Applying $\lambda$ yields $\lambda(c)=0$, which implies $c=0$ by (i). This shows that $s$ is compact. 

Fix $m\in\N$ and $t\in S$. We claim that $t\leq ms_0$ if and only if there exists $k\leq m$ with $t=ks_0$. 
One implication is obvious, so we prove the other one. 
The
inequality $t\leq ms_0$ implies that $\lambda(t)\leq m\lambda(s_0)<\infty$, and thus $t$ is compact by the previous claim.
In order to establish the claim, we may clearly assume that $t\neq 0$ and proceed by induction on $m$. 
Suppose that $m=1$. Use \autoref{rem:compl} to find $t'\in S$ with $t+t'=s_0$. Applying $\lambda$ we get $\lambda(t')=0$, which again by (i) implies that $t'=0$,
showing that $t=s_0$. This proves the case $m=1$ of the induction.

Assume now that $t\leq ms_0=(m-1)s_0+s_0$. Apply (O6) to find elements $t_1, t_2\in S$ such that $t_1\leq (m-1)s_0, t$ and $t_2\leq t, s_0$. Notice that $t_1$ and $t_2$ are compact elements as well, since they 
are dominated by the compact element $t$. 
By the induction assumption, there is $k\leq m-1$ such that $t_1=ks_0$. If $t_2=0$, then $ks_0=t_1\leq t\leq t_1+t_2=ks_0$, and hence $t=ks_0$. If $t_2\neq 0$, then $t_2=s_0$ since $t_2\leq s_0$. Now $t_1=ks_0\leq t\leq t_1+t_2=(k+1)s_0$, and we may find elements $c,d\in S$ such that $ks_0+c=t$ and $t+d=(k+1)s_0$. Putting these equalities together we obtain $ks_0+c+d=(k+1)s_0$, and applying $\lambda$ we get $\lambda(c+d)=1$. Thus one of $c$ or $d$ must be zero, and hence $t$ equals either $ks_0$ or $(k+1)s_0$, as desired. This proves the claim.

Let $t\in S\setminus\{0\}$ satisfy $t\leq \infty_{s_0}$.
Find a $\ll$-increasing sequence $(t_n)_{n\in\N}$ in $S$ with  $t=\sup\limits_{n\in\N} t_n$. Fix $n\in\N$. Since 
\[t_n\ll t =\sup\limits_{m\in\N} ms_0,\]
there exists $k\in\N$ with $t_n\leq m_ns_0$. 
By the second claim, there exists $k_n\leq m_n$ such that 
$t_n=k_ns_0$. Thus we must have either $t=ks_0$ for some $k\in\N$, 
or else $t=\infty_{s_0}$.

Set $I=\{s\in S\colon s\leq \infty_{s_0}\}$, and observe that $I$ is
an ideal in $S$. In particular, $I$ is a $\Cu$-semigroup. The 
paragraph above shows that $I=\{ms_0\in S\colon m\in\NN\}$. 
Furthermore, by applying $\lambda$ we see that $ks_0\leq ms_0$ precisely when $k\leq m$. In other words, this shows that $\lambda$ restricts to 
a Cu-semigroup isomorphism $I\cong \NN$.
\end{proof}

\begin{prop}
Let $A$ be a C$^*$-algebra. Then the following are equivalent:
\be[{\rm (i)}]\item $A$ is \emph{not} nowhere scattered,
\item there exists a functional $\lambda \in F(\Cu(A))$
such that $\lambda(\Cu(A))\cong \overline{\N}$.\ee
\end{prop}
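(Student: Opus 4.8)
The plan is to prove the two implications separately, moving between ideal-quotients of $A$ and ideals/quotients of $\Cu(A)$ via the dictionary of \autoref{thm:CuIdeals} and \autoref{thm:quotients}, and then to invoke \autoref{lma:auxiliary} for the hard direction.

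For (i)$\Rightarrow$(ii), I would fix closed two-sided ideals $J\subseteq I$ of $A$ with $I/J$ a nonzero elementary algebra, say $I/J\cong\K(H)$. The canonical semifinite trace $\Tr$ on $\K(H)$ has associated dimension function $d_{\Tr}$, which is a functional on $\Cu(I/J)$ whose image is $\overline{\N}$ (the rank of a positive compact operator is its range-dimension, lying in $\{0,1,2,\dots\}\cup\{\infty\}$, and every value is attained; recall $\Cu(\K(H))$ for separable $H$ and $\Cu(M_n)$ are all $\overline{\N}$ by \autoref{eg:CuC}). Writing $\pi\colon I\to I/J$ for the quotient map, $\lambda_0:=d_{\Tr}\circ\Cu(\pi)\colon\Cu(I)\to[0,\infty]$ is then a functional with image $\overline{\N}$, being the composite of the surjective Cu-morphism $\Cu(\pi)$ (surjectivity by \autoref{thm:quotients}) with a functional. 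Since $\Cu(I)$ is an ideal of $\Cu(A)$ by \autoref{lma:IdealAIdealCu}, I would extend $\lambda_0$ to a functional $\lambda$ on $\Cu(A)$ by setting it equal to $\infty$ off $\Cu(I)$, using \autoref{prp:extFctl}; its image is still $\overline{\N}$, which is (ii).

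For (ii)$\Rightarrow$(i), start from $\lambda\in F(\Cu(A))$ with $\lambda(\Cu(A))\cong\overline{\N}$. The set $K=\{s\in\Cu(A)\colon\lambda(s)=0\}$ is an ideal of $\Cu(A)$, so $K=\Cu(J)$ for some $J\trianglelefteq A$ by \autoref{thm:CuIdeals}. Then $\lambda$ descends to a \emph{faithful} functional $\bar\lambda$ on $\Cu(A)/K\cong\Cu(A/J)$ (\autoref{thm:quotients}), still with image $\overline{\N}$; the only routine point is that the descended map preserves suprema, which holds because the quotient map is a surjective Cu-morphism. Now $\Cu(A/J)$ satisfies (O5) and (O6) by \autoref{O5} and \autoref{O6}, and $\bar\lambda$ meets the hypotheses of \autoref{lma:auxiliary} (surjectivity onto $\overline{\N}$ and faithfulness). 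That lemma yields an ideal $I_0$ of $\Cu(A/J)$ with $\bar\lambda|_{I_0}\colon I_0\cong\overline{\N}$. Applying \autoref{thm:CuIdeals} once more, $I_0=\Cu(I/J)$ for an ideal-quotient $I/J$ of $A$ (with $J\subseteq I\trianglelefteq A$), and $\Cu(I/J)\cong\overline{\N}$.

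It remains to show $B:=I/J$ is elementary, which is the main obstacle. Since $\overline{\N}$ is a simple Cu-semigroup (\autoref{cor:SimpleCuSmgp}), $B$ is simple by \autoref{thm:CuIdeals}; and since the compact elements of $\overline{\N}$ form $\N$ with cancellative addition and no nonzero element is infinite, $B$ is stably finite. After passing to $B\otimes\K$ (being elementary is invariant under stabilization and restriction to full hereditary subalgebras) I may assume $B$ is stable, so by Brown--Ciuperca (\autoref{rem:CuCAR}(i)) the minimal nonzero compact class $1\in\overline{\N}$ is represented by a projection $e\in B$. A short spectral argument then gives $eBe=\C e$: normalizing $a\in(eBe)_+$ so that $a\leq e$, one has $[a]\leq[e]=1$, so if $\sigma(a)\setminus\{0\}$ had two points I could choose orthogonal nonzero $g_1(a),g_2(a)$ with $g_i(0)=0$, whence $[g_1(a)]+[g_2(a)]=[(g_1+g_2)(a)]\leq[a]\leq1$ by \autoref{lma:orth} and \autoref{cor:FuncCalc}, contradicting that each summand is $\geq 1$; thus $\sigma(a)\subseteq\{0,\mu\}$, and the spectral projection $\chi_{\{\mu\}}(a)\in eBe$, being a nonzero projection dominated by the minimal $e$ (so $\sim_{\mathrm{MvN}}e$ by \autoref{lma:CtzCompPjns} and hence equal to $e$ by stable finiteness), forces $a=\mu e$. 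Therefore $e$ is a minimal projection, a simple C$^*$-algebra with a minimal projection is elementary, and $I/J$ is a nonzero elementary ideal-quotient of $A$, so $A$ is not nowhere scattered. Everything other than this last identification of $B$ as elementary is bookkeeping with the ideal--quotient correspondence.
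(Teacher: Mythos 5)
Your proof is correct and follows essentially the same route as the paper: for (i)$\Rightarrow$(ii) you extend the functional induced by the quotient map onto the elementary ideal-quotient from $\Cu(I)$ to $\Cu(A)$ via \autoref{prp:extFctl}, and for (ii)$\Rightarrow$(i) you pass to the quotient by $K=\lambda^{-1}(0)$, apply \autoref{lma:auxiliary} to the induced faithful functional on $\Cu(A/J)$, and translate the resulting ideal $\cong\overline{\N}$ back to an ideal-quotient of $A$, exactly as in the paper. The only difference is that where the paper cites \cite[Lemma~8.2]{ThiVil_nowscat_2021} (with a footnote sketch) for the fact that $\Cu(B)\cong\overline{\N}$ forces $B$ to be elementary, you supply the argument in full — simplicity and stable finiteness from the order structure of $\overline{\N}$, Brown--Ciuperca to represent the compact element $1$ by a projection $e$, and a spectral argument giving $eBe=\C e$ — which is a legitimate expansion of the same idea rather than a different approach.
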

\begin{proof}
Assume that (i) holds, 
and find closed, two sided ideals $I,J$ in $A$ 
with $J\subseteq I$ such that $I/J$ is elementary.
Then $\Cu(I/J)\cong\NN$ by \autoref{eg:CuC}.
Moreover, by \autoref{thm:quotients}
the quotient map $\pi\colon I\to I/J$ induces a surjective $\Cu$-morphism $\Cu(\pi)\colon\Cu(I)\to \Cu(I/J)\cong\NN$;
in particular, $\Cu(\pi)$ is a functional on $\Cu(I)$. 
Let $\lambda\colon \Cu(A)\to [0,\infty]$ be the functional
obtained by extending
$\Cu(\pi)$ to $\Cu(A)$ as 
in \autoref{prp:extFctl}.
It is then clear from the definition of $\lambda$ that $\lambda(\Cu(A))=\NN$.

Conversely, write $S=\Cu(A)$ and let $\lambda\colon S\to [0,\I]$ be a functional satisfying $\lambda(S)=\NN$. Consider the ideal $K=\lambda^{-1}(0)$. It is easy to check that $\lambda$ induces a functional $\overline{\lambda}\colon S/K\to [0,\I]$ such that $\overline{\lambda}(S/K)=\NN$. We may now apply \autoref{lma:auxiliary} to obtain an ideal $\overline{L}$ of $S/K$ which is isomorphic to $\NN$. Thus, there is an ideal $L$ of $S$ that contains $K$ such that $L/K\cong\NN$. This implies that $A$ has an ideal-quotient whose Cuntz semigroup is isomorphic to $\NN$, and thus this ideal-quotient
is elementary by \cite[Lemma~8.2]{ThiVil_nowscat_2021}.\footnote{Indeed, if a \ca\ $B$ satisfies $\Cu(B)\cong \overline{\N}$, it is not hard to show that $B$ must be elementary. The basic idea is as follows: since $B$ is stably finite, for example by \autoref{thm:Cuntz}, there is a 
projection $p\in B$ which represents the compact element $1\in\overline{\N}$. One can show that $pBp\cong \C$. 
Since $B$ is moreover simple, for example by \autoref{thm:CuIdeals},
it follows that $B$ is elementary.}
Therefore $A$ is not nowhere scattered.
\end{proof}

The key to solve the problem posed in \autoref{probrealizing} in the stable rank one case relies on the following:

\begin{df}[The map $\alpha$]
\label{alpha}
Let $A$ be a C$^*$-algebra with stable rank one. Define
\[
\alpha\colon L(F(\Cu(A))\to \Cu(A)
\]
by setting $\alpha(f)=\sup\{x\in\Cu(A) \colon \widehat{x}\ll f\}$.
\end{df}

It is not at all obvious that the set $\{x\in\Cu(A) \colon \widehat{x}\ll f\}$ has a supremum -- this follows from 
the fact that $A$ is assumed to have stable rank one, and 
is shown in \cite[Theorem 7.2, Proposition 7.3]{AntPerRobThi_stable_2022}. 

\begin{lma}
The map $\alpha$ from \autoref{alpha} preserves order, suprema of increasing sequences, and infima of pairs of elements.
\end{lma}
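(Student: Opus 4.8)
The plan is to verify the three preservation properties separately, in order of increasing difficulty, using throughout that $L:=L(F(\Cu(A)))$ is itself a $\Cu$-semigroup, so that its compact-containment relation $\ll$ obeys axioms (O1) and (O2), and that both $\Cu(A)$ and $L$ are inf-semilattice ordered (by \autoref{Cstarinf-semilattice} and by \cite[Theorem~4.2.2]{Rob_cone_2013}, respectively). I write $D_f=\{x\in\Cu(A)\colon \widehat{x}\ll f\}$, so that $\alpha(f)=\sup D_f$ by \autoref{alpha}; recall that this supremum exists by \cite[Theorem~7.2, Proposition~7.3]{AntPerRobThi_stable_2022}, where it is moreover shown that $D_f$ is upward directed. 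Order-preservation is then immediate: if $f\le g$ in $L$, then $\widehat{x}\ll f\le g$ forces $\widehat{x}\ll g$, since $\ll$ is an auxiliary relation (see \autoref{pgr:aux}); hence $D_f\subseteq D_g$ and $\alpha(f)=\sup D_f\le\sup D_g=\alpha(g)$.

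For suprema, let $(f_k)_{k\in\N}$ be increasing in $L$ with supremum $f$. Monotonicity gives $\sup_k\alpha(f_k)\le\alpha(f)$, so it suffices to bound an arbitrary $x\in D_f$. Given $\widehat{x}\ll f$, the interpolation property of $\ll$ in the $\Cu$-semigroup $L$ (a standard consequence of (O2)) produces $h\in L$ with $\widehat{x}\ll h\ll f=\sup_k f_k$; compact containment then yields $k$ with $h\le f_k$, so $\widehat{x}\ll h\le f_k$ gives $\widehat{x}\ll f_k$ and hence $x\le\alpha(f_k)$. Taking the supremum over $x\in D_f$ shows $\alpha(f)\le\sup_k\alpha(f_k)$.

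For infima, fix $f,g\in L$. The inequality $\alpha(f\wedge g)\le\alpha(f)\wedge\alpha(g)$ follows from order-preservation. For the reverse, the decisive point is the claim that $\widehat{u}\ll f$ and $\widehat{u}\ll g$ together imply $\widehat{u}\ll f\wedge g$, for every $u\in\Cu(A)$. Granting this, take any $u\ll\alpha(f)\wedge\alpha(g)$ (the meet exists since $\Cu(A)$ is inf-semilattice ordered). Then $u\ll\alpha(f)=\sup D_f$; since $D_f$ is directed, $u\le x$ for some $x\in D_f$, and monotonicity of the rank map gives $\widehat{u}\le\widehat{x}\ll f$, so $\widehat{u}\ll f$, and symmetrically $\widehat{u}\ll g$. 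The claim then yields $u\in D_{f\wedge g}$, that is $u\le\alpha(f\wedge g)$. As $\alpha(f)\wedge\alpha(g)=\sup\{u\colon u\ll\alpha(f)\wedge\alpha(g)\}$ by (O2), this gives $\alpha(f)\wedge\alpha(g)\le\alpha(f\wedge g)$, completing the infimum computation modulo the claim.

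The main obstacle is exactly the claim
\[
\widehat{u}\ll f \ \text{ and } \ \widehat{u}\ll g \ \Longrightarrow \ \widehat{u}\ll f\wedge g,
\]
which fails in a general $\Cu$-semigroup and must be proved using the concrete description of $L=L(F(\Cu(A)))$ from \cite{Rob_cone_2013}: there $f\wedge g$ is the pointwise minimum of lower-semicontinuous functions on $F(\Cu(A))$, and compact containment is controlled by the auxiliary relation $\lhd$, so that $\widehat{u}\ll f$ yields an $\varepsilon$-gap $\widehat{u}\le(1-\varepsilon)f$ together with continuity of $\widehat{u}$ at the finiteness points of $f$. Two such conditions, for $f$ and for $g$, combine with a common $\varepsilon$ into the corresponding condition for $\min(f,g)=f\wedge g$, and a final interpolation reconstitutes $\widehat{u}\ll f\wedge g$ from this gap. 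Verifying this interplay between $\wedge$, $\lhd$ and $\ll$ is the technical heart of the argument; the abstract scaffolding above then delivers all three preservation properties at once.
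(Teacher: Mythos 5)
Your treatment of order-preservation and of suprema is correct, and is in substance the same as the paper's: the paper routes these two parts through the set $I_f=\{x\in\Cu(A)\colon \widehat{y}\ll f\text{ for all }y\ll x\}$ and the (unproved there) identity $\alpha(f)=\sup I_f$, but the interpolation argument you give for suprema is exactly the mechanism being used. The genuine problem is in the infima part, and it is not just that you defer the key claim: the justification you sketch for it would fail. You assert that the infimum $f\wedge g$ in $L(F(\Cu(A)))$ ``is the pointwise minimum of lower-semicontinuous functions on $F(\Cu(A))$.'' It is not. Elements of $\Lsc(F(\Cu(A)))$ are required to be \emph{additive} on the cone $F(\Cu(A))$, and the pointwise minimum of two additive functions is only superadditive: $\min(f,g)(\lambda+\mu)\geq \min(f,g)(\lambda)+\min(f,g)(\mu)$, with strict inequality whenever $f$ dominates at $\lambda$ and $g$ dominates at $\mu$. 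So $\min(f,g)$ generally does not lie in $L(F(\Cu(A)))$ at all, and the actual order-theoretic infimum is a different (smaller, infimal-convolution-type) object whose computation is exactly where Edwards' condition enters --- an ingredient the paper explicitly flags as omitted in \autoref{sec:functionals}. Consequently your proposed ``common $\varepsilon$-gap for $\min(f,g)$'' argument, and with it the claim $\widehat{u}\ll f,\ \widehat{u}\ll g\Rightarrow \widehat{u}\ll f\wedge g$, has no proof along the lines you indicate; whether that implication even holds is precisely the technical content you would have to supply, and nothing in your write-up supplies it.

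It is worth seeing how the paper sidesteps this entirely. Instead of analyzing $\ll$ against $f\wedge g$, it uses homogeneity: multiplication by $1-\varepsilon$ is an order-automorphism of $L(F(\Cu(A)))$, so $(1-\varepsilon)f\wedge(1-\varepsilon)g=(1-\varepsilon)(f\wedge g)$ holds for purely order-theoretic reasons, with no pointwise formula for $\wedge$ needed. Given $z\leq \alpha((1-\varepsilon)f)\wedge\alpha((1-\varepsilon)g)$, the identity $\alpha(h)=\sup I_h$ yields $\widehat{z}\leq(1-\varepsilon)f\wedge(1-\varepsilon)g=(1-\varepsilon)(f\wedge g)$, whence $z\leq\alpha(f\wedge g)$; the scalar gap $1-\varepsilon$ is thus manufactured by evaluating $\alpha$ at scaled functions rather than by dissecting compact containment in $L$, and one concludes by letting $\varepsilon\to 0$ and invoking the already-established preservation of suprema. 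If you want to salvage your route, you would need to replace the pointwise-minimum description by the correct one (via Edwards' condition) and then verify your implication against it --- at which point the paper's scaling trick is both shorter and strictly more robust.
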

\begin{proof}
The core of the argument consists of proving that in fact $\alpha(f)=\sup I_f$, where
\[
I_f=\{x\in\Cu(A)\colon \widehat{y}\ll f\text{ for all }y\ll x\}.
\]
We will not prove this, and isntead we will only explain how to 
to prove the lemma using it.
To see that $\alpha$ preserves the order, let $f\leq g$ in $L(F(\Cu(A)))$.
Then $I_f\subseteq I_g$, and thus $\alpha(f)\leq \alpha(g)$.

To check that $\alpha$ preserves suprema, let $(f_n)_{n\in\N}$ be an increasing sequence of elements in $L(F(\Cu(A)))$, and set $f =\sup\limits_{n\in\N} f_n$.
Since, as observed, $\alpha$ is order-preserving, the sequence $(\alpha(f_n))_{n\in\N}$ is increasing in $\Cu(A)$ and thus it has a supremum $x =\sup\limits_{n\in\N} \alpha(f_n)$.
Since $\alpha(f_n)\leq \alpha(f)$ for all ${n\in\N}$, we have $x\leq \alpha(f)$.
Now let $z\in\Cu(A)$ satisfy $\widehat{z}\ll f$.
Using that, by definition, $\alpha(f)$ is the supremum of all such $z$, it suffices to show that $z\leq x$.
As $\widehat{z}\ll f$, we have $\widehat{z}\ll f_n$ for some $n\in\N$, and then $z\in I_{f_n}$. Therefore $z\leq \alpha(f_n)\leq x$.

Finally, to prove that $\alpha$ preserves infima, let $f,g\in L(F(\Cu(A)))$. 
Since $\alpha$ is order-preserving, we get $\alpha(f\wedge g)\leq \alpha(f)\wedge \alpha(g)$. To show the converse inequality, let $0<\varepsilon<1$ and suppose that $z\leq \alpha((1-\varepsilon)f)\wedge \alpha((1-\varepsilon)g)$.
Then 
\[
\widehat{z}\leq (1-\varepsilon)f\wedge (1-\varepsilon)g=(1-\varepsilon)(f\wedge g),
\] 
whence $z\leq \alpha(f\wedge g)$. This implies $\alpha((1-\varepsilon)f)\wedge \alpha((1-\varepsilon)g)\leq\alpha(f\wedge g)$. Finally, let $\varepsilon\to 0$ and use that $\alpha$ preserves suprema of increasing sequences to obtain $\alpha(f)\wedge\alpha(g)\leq \alpha(f\wedge g)$.
\end{proof}

Recall the definition of $\infty_a$ from \autoref{nota:infa}.

\begin{thm}
\label{thm:realization}
Let $A$ be a separable, nowhere scattered C$^*$-algebra of stable rank one. Then, for all $f\in L(F(\Cu(A)))$ we have
\[
f=\widehat{\alpha(f)}.
\] 
\end{thm}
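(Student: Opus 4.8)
The plan is to prove the two inequalities $\widehat{\alpha(f)}\leq f$ and $f\leq\widehat{\alpha(f)}$ separately in $L(F(\Cu(A)))$; the first is formal, whereas the second carries all of the analytic content. Throughout I use that $L(F(\Cu(A)))=\Cu(A)_R$ is itself a $\Cu$-semigroup, that the rank map $\mathrm{rk}$ preserves order and suprema of increasing sequences, and that $\alpha$ preserves order, suprema of increasing sequences, and binary infima.

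For the inequality $\widehat{\alpha(f)}\leq f$, I would work with the description $\alpha(f)=\sup I_f$, where $I_f=\{x\in\Cu(A)\colon \widehat{y}\ll f\text{ for every }y\ll x\}$, recorded in the discussion preceding the statement. The set $I_f$ is order-hereditary and, as part of the proof that the supremum defining $\alpha$ exists, it is upward directed with supremum $\alpha(f)$. Since $A$ is separable, $\Cu(A)$ is countably based, so I may extract an increasing sequence $(x_n)_{n\in\N}$ in $I_f$ with $\sup_n x_n=\alpha(f)$. For a fixed $x\in I_f$, choosing a $\ll$-increasing sequence $y_k\to x$ and using that $\mathrm{rk}$ preserves suprema of increasing sequences gives $\widehat{x}=\sup_k\widehat{y_k}$, where each $\widehat{y_k}\ll f$ and hence $\widehat{y_k}\leq f$; thus $\widehat{x}\leq f$. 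Applying this to each $x_n$ and invoking once more that $\mathrm{rk}$ preserves suprema of increasing sequences yields $\widehat{\alpha(f)}=\sup_n\widehat{x_n}\leq f$.

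For the reverse inequality I would reduce to a realization statement. As $f\in L(F(\Cu(A)))$, write $f=\sup_n f_n$ with $f_n\lhd f_{n+1}$; it then suffices to show $f_n\leq\widehat{\alpha(f)}$ for every $n$. The crucial input is the assertion that
\[
\text{for } g\lhd h \text{ in } L(F(\Cu(A))) \text{ there is } x\in\Cu(A) \text{ with } g\leq\widehat{x}\leq h.
\]
Granting this with $g=f_n$ and $h=f_{n+1}$ produces $x_n\in\Cu(A)$ with $f_n\leq\widehat{x_n}\leq f_{n+1}$. Since $(f_k)_{k\in\N}$ is a $\lhd$-increasing sequence with supremum $f$, we have $f_{n+1}\ll f$ in $L(F(\Cu(A)))$, so $\widehat{x_n}\leq f_{n+1}\ll f$ and therefore $\widehat{x_n}\ll f$. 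By the very definition of $\alpha$ this forces $x_n\leq\alpha(f)$, and since $\mathrm{rk}$ is order-preserving, $f_n\leq\widehat{x_n}\leq\widehat{\alpha(f)}$. Taking the supremum over $n$ gives $f\leq\widehat{\alpha(f)}$.

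The main obstacle is precisely the displayed realization statement: producing an element of $\Cu(A)$ whose rank is sandwiched between $g$ and $h$ when $g\lhd h$. This is where all three hypotheses enter. Stable rank one supplies the inf-semilattice order on $\Cu(A)$ (\autoref{Cstarinf-semilattice}) and weak cancellation (\autoref{wc}), which let local solutions over the ideals determined by the functionals in $F(\Cu(A))$ be assembled coherently; nowhere scatteredness guarantees that no ideal-quotient of $A$ is elementary, so that (by the characterization of nowhere scattered C$^*$-algebras established above) there is no functional $\lambda$ with $\lambda(\Cu(A))\cong\overline{\N}$, and hence the continuous, possibly ``fractional'', values of $g$ can genuinely be attained by positive elements rather than being obstructed by integer-valued ranks; finally, Edwards' condition is the order-theoretic axiom that makes this assembling uniform. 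Because the survey omits Edwards' condition, I would carry out the formal reductions above in full and then invoke \cite{AntPerRobThi_stable_2022} (the construction surrounding \autoref{alpha}) for the realization statement itself, rather than reproducing that argument here.
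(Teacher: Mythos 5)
Your two formal reductions are sound, and they organize the proof differently from the paper. The inequality $\widehat{\alpha(f)}\leq f$ follows as you say from $\alpha(f)=\sup I_f$, granting (from \cite{AntPerRobThi_stable_2022}) that $I_f$ is upward directed, so that separability lets you extract an increasing sequence in $I_f$ with supremum $\alpha(f)$; and your deduction of $f\leq\widehat{\alpha(f)}$ from the sandwich statement is correct, including the step $f_{n+1}\ll f$, which is the Dini-type fact from \cite{Rob_cone_2013} that $\lhd$ implies compact containment in $L(F(\Cu(A)))$ (using compactness of $F(\Cu(A))$). The paper never splits the identity into two inequalities: it proves $f=\widehat{\alpha(f)}$ at once by evaluating both sides on an arbitrary functional, after a reduction that your outline does not contain.

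The gap is in where you send the reader for the displayed realization statement. What \cite[Theorem~7.10]{AntPerRobThi_stable_2022} provides---and what the paper's outline invokes---is the identity $f_0=\widehat{\alpha_I(f_0)}$ for \emph{full} elements $f_0$ of $L(F(\Cu(I)))$. Your sandwich lemma, stated for arbitrary $g\lhd h$ with no fullness hypothesis, is essentially equivalent to \autoref{thm:realization} itself: granting the theorem, $x=\alpha(g)$ satisfies $g=\widehat{x}\leq h$, and conversely your argument derives the theorem from the sandwich. So your citation is, in effect, to the statement being proved. To make it an honest reduction you need precisely the step the paper supplies: form the ideal $\Cu(I)=\{x\in\Cu(A)\colon\widehat{x}\leq\infty_f\}$ (notation as in \autoref{nota:infa}, using \autoref{thm:CuIdeals}), note that $I$ is again separable, nowhere scattered and of stable rank one; write $f=\sup_n \tfrac{1}{k_n}\widehat{x_n}$ with $x_n\in I$ and restrict to a function $f_0$ on $F(\Cu(I))$ with $f(\lambda)=f_0(\lambda|_{\Cu(I)})$; check that $f_0$ is \emph{full}, so that Theorem~7.10 applies and gives $f_0=\widehat{\alpha_I(f_0)}$; then verify $\alpha(f)=\alpha(f_0)$ by observing that the set $L=\{x\in\Cu(A)\colon \widehat{x}\leq(1-\varepsilon)f \text{ for some } \varepsilon>0\}$, whose supremum is $\alpha(f)$, is contained in $\Cu(I)$, and that $\Cu(I)$ is hereditary in $\Cu(A)$, so the suprema computed in $\Cu(I)$ and $\Cu(A)$ agree; finally evaluate on $\lambda\in F(\Cu(A))$. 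Your closing remarks on the roles of stable rank one (\autoref{Cstarinf-semilattice}, \autoref{wc}), nowhere scatteredness, and Edwards' condition are accurate as motivation, but it is this fullness/ideal-restriction argument, not those ingredients, that separates your outline from a complete proof.
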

\begin{proof} (Outline)
The set $\{x\in\Cu(A)\colon \widehat{x}\leq \infty_f\}$ is an ideal of $\Cu(A)$, and thus has the form $\Cu(I)$ for a closed two-sided ideal $I$ of $A$. Note that $I$ is automatically separable, nowhere scattered, and has stable rank one.

Using that $L(F(\Cu(A)))=\Cu(A)_R$, one can choose a sequence 
$(x_n)_{n\in\N}$ in $\Cu(A)$ and $(k_n)_{n\in\N}$ in $\N$ such that $f=\sup\limits_{n\in\N}\frac{\widehat{x_n}}{k_n}$. Notice that $x_n\in I$ for all $n\in\N$. Considering $\frac{\widehat{x_n}}{k_n}\in L(F(\Cu(I)))$, denote by $f_0$ its supremum, so that $f(\lambda)=f_0(\lambda_{|\Cu(I)})$.

One now checks that $f_0$ is full in $L(F(\Cu(I)))$. By letting $\alpha_I\colon L(F(\Cu(I)))\to\Cu(I)$ be the map as in \autoref{alpha} it is possible to show, with considerable effort, that $f_0=\widehat{\alpha_I(f_0)}$; see \cite[Theorem 7.10]{AntPerRobThi_stable_2022}.

Next, we claim that $\alpha(f)=\alpha(f_0)$. To see this, let
\[
L= \big\{ x\in\Cu(A) : \widehat{x}\leq (1-\varepsilon)f\text{ in } L(F(\Cu(A))) \text{ for some }\varepsilon>0 \big\}.
\]
Using the definition of $\alpha$, we have that $\alpha(f)$ is the supremum of $L$ in $\Cu(A)$.
If $x\in\Cu(A)$ and $\varepsilon>0$ satisfy $\widehat{x}\leq(1-\varepsilon)f$ in $L(F(\Cu(A)))$, then $x$ belongs to $\Cu(I)$ and hence $\widehat{x}\leq(1-\varepsilon)f_0$ in $L(F(\Cu(I)))$.
This implies that $L\subseteq\Cu(I)$ and so $\alpha(f_0)$ is the supremum of $L$ in $\Cu(I)$.
Using that $\Cu(I)\subseteq\Cu(A)$ is hereditary, we see that the supremum of $L$ in $\Cu(I)$ and in $\Cu(A)$ agree. Therefore $\alpha(f)=\alpha(f_0)$, as was claimed.

Finally, if $\lambda\in F(\Cu(A))$, we have
\[
\widehat{\alpha(f)}(\lambda)
= \lambda(\alpha(f))
= \lambda|_{\Cu(I)}(\alpha(f_0))
= f_0(\lambda|_{\Cu(I)})
= f(\lambda),
\]
and thus $\widehat{\alpha(f)}=f$ in $L(F(\Cu(A)))$.
\end{proof}

\section{Structure of the category \textbf{Cu}}
\label{sec:Structure}

This section will be devoted to discussing the following result, 
which combines results from a number of papers; see \cite{AntPerThi_tensor_2018, AntPerThi_abstract_2020,AntPerThi_ultraproducts_2020}.

\begin{thm}
\label{thm:cu-structure}
The category $\CatCu$ of abstract Cuntz semigroups is a closed, symmetric, monoidal, bicomplete category.
\end{thm}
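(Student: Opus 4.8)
The plan is to leverage the two auxiliary categories $\mathbf{W}$ and $\mathbf{Q}$ together with the reflector $\gamma\colon\mathbf{W}\to\CatCu$ of \autoref{thm:curefl} and the coreflector $\tau\colon\mathbf{Q}\to\CatCu$ of \autoref{prp:CuCoreflQ}, using them to transport the standard categorical constructions into $\CatCu$. First I would establish \emph{bicompleteness}. Since $\CatCu$ is a full reflective subcategory of $\mathbf{W}$ and $\gamma$, as a left adjoint to the inclusion, preserves colimits, it suffices to verify that $\mathbf{W}$ is cocomplete and then declare the colimit of a small diagram $D$ in $\CatCu$ to be $\gamma$ of the $\mathbf{W}$-colimit of $D$; the coproducts and coequalizers of $\mathbf{W}$-semigroups are built directly, exactly as in the construction of inductive limits in \autoref{thm:Culimits}, which is the special case of a directed diagram. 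Dually, infinite products need not exist in $\mathbf{W}$ since the countability axiom (W1) may fail, and this is precisely what motivates $\mathbf{Q}$: because $\CatCu$ is coreflective in the complete category $\mathbf{Q}$ via the right adjoint $\tau$, every small limit in $\CatCu$ is computed as $\tau$ of the corresponding limit in $\mathbf{Q}$, with preservation of limits by the right adjoint guaranteeing correctness.

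Next I would construct the \emph{symmetric monoidal} structure. The tensor product $S\otimes_{\Cu}T$ should be defined by a universal property: it corepresents the functor sending $U$ to the set of \emph{$\Cu$-bimorphisms} $S\times T\to U$, i.e.\ maps that are generalized $\Cu$-morphisms in each variable and are jointly compatible with $\ll$ and with suprema of increasing sequences. Concretely I would form the algebraic tensor product of $S$ and $T$ as positively ordered monoids, equip it with a natural auxiliary relation making it a $\mathbf{W}$-semigroup, and then set $S\otimes_{\Cu}T=\gamma(S\otimes_{\mathbf{W}}T)$, invoking \autoref{thm:completion} to see that the completion respects the universal property. The unit object is $\NN\cong\Cu(\C)$, for which $\NN\otimes_{\Cu}S\cong S$ naturally. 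Associativity, the unit isomorphisms, and the symmetry $S\otimes_{\Cu}T\cong T\otimes_{\Cu}S$ then follow formally, since each side of every coherence diagram corepresents the same multilinear-bimorphism functor, and uniqueness in the universal property forces the pentagon, triangle and hexagon axioms to commute.

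Finally I would prove \emph{closedness}. The internal hom should be $[T,U]:=\mathbf{Cu}[T,U]$, the set of generalized $\Cu$-morphisms from $T$ to $U$, with pointwise addition and order and with suprema of increasing sequences computed pointwise. The first task is to check that $[T,U]$ is again an object of $\CatCu$: axioms (O3) and (O4) are verified pointwise, (O1) requires that a pointwise supremum of an increasing sequence of generalized $\Cu$-morphisms is again one (using (O4) in $U$), and (O2) — that every generalized $\Cu$-morphism is a supremum of a $\ll$-increasing sequence in $[T,U]$ — is the delicate point. The tensor–hom adjunction $\mathbf{Cu}(S\otimes_{\Cu}T,U)\cong\mathbf{Cu}(S,[T,U])$ is then obtained by identifying both sides with the set of $\Cu$-bimorphisms $S\times T\to U$: a bimorphism $\beta$ corresponds on the right to the assignment $s\mapsto\beta(s,-)$, which one verifies is a genuine $\Cu$-morphism $S\to[T,U]$ (preserving $\ll$, not merely order and suprema), and on the left to the map induced out of the tensor product.

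The main obstacle, I expect, lies in the closedness: pinning down the compact-containment relation $\ll$ inside $[T,U]$ and verifying (O2) there, since $\ll$ on a space of morphisms is not the pointwise relation and must be analysed through the $\ll$-structure of $U$. A secondary difficulty is showing that $\gamma$ is monoidal, i.e.\ that every $\Cu$-bimorphism out of $S\times T$ factors uniquely through $\gamma(S\otimes_{\mathbf{W}}T)$; this compatibility of completion with the tensor product underlies both the well-definedness of $\otimes_{\Cu}$ and the adjunction, and is where the bulk of the technical work resides.
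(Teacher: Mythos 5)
Most of your outline tracks the paper's actual route: the tensor product is indeed $\gamma$ applied to an algebraic tensor product formed in $\mathrm{\textbf{W}}$ (\autoref{monoidal}), the unit is $\NN$, and small limits are computed by applying the coreflector $\tau$ to the limit in $\mathrm{\textbf{Q}}$, exactly as you say (\autoref{prp:CuCoreflQ}, \autoref{thm:Cucomplete}). The genuine gap is in your closedness argument. You take the internal hom to be $\CatCu[T,U]$ itself, with pointwise order and suprema, and propose to verify (O2) for it; you correctly flag this as "the delicate point," but the verification is doomed, because $\CatCu[T,U]$ is in general the \emph{wrong object}, not merely a hard one. The correct internal hom is $\ihom{T,U}=\tau\big(\CatCu[T,U],\prec\big)$: one equips the monoid of generalized $\Cu$-morphisms with the auxiliary relation $f\prec g$ iff $f(s')\ll g(s)$ whenever $s'\ll s$, obtaining a $\mathcal{Q}$-semigroup, and then applies the path coreflection $\tau$ (\autoref{thm:closed}). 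The paper's Razak-algebra example makes the failure concrete: for $\mathbb{P}=[0,\infty]$ one has $(\CatCu[\mathbb{P},\mathbb{P}],\prec)\cong(\mathbb{P},\prec_1)$, and its $\tau$-completion is $[0,\infty)\sqcup(0,\infty]$ --- strictly larger than $[0,\infty]$. So even when the hom-monoid happens to satisfy (O1)--(O4) as a positively ordered monoid (here it is literally $[0,\infty]$, a perfectly good $\Cu$-semigroup), its intrinsic compact containment does not match the relation $\prec$ relevant to the adjunction, and taking $\CatCu[T,U]$ as the internal hom would make the tensor--hom bijection false.

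Once the internal hom is corrected, your bimorphism dictionary survives in amended form, and this is where the machinery you already set up pays off: the adjunction is not proved by showing $s\mapsto\beta(s,-)$ is a $\Cu$-morphism into $\CatCu[T,U]$ (a statement that cannot even be formulated, since $\CatCu[T,U]$ need not be an object of $\CatCu$), but by combining the universal property of the coreflection, $\CatCu\big(S,\tau(X)\big)\cong\mathrm{\textbf{Q}}\big(\iota(S),X\big)$, with an identification of $\mathcal{Q}$-morphisms $S\to(\CatCu[T,U],\prec)$ with the appropriate bimorphisms out of $S\times T$, which in turn correspond to $\Cu$-morphisms out of $S\otimes_{\Cu}T$ by the universal property of $\gamma$ of the $\mathrm{\textbf{W}}$-tensor product. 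In short: where you planned to verify (O2) for the hom-set, the paper instead \emph{completes} it with $\tau$; your own instinct that "$\ll$ on a space of morphisms is not the pointwise relation" was pointing at exactly this, and the resolution is the coreflector, not a direct axiom check. (Your reflective/coreflective-subcategory argument for bicompleteness is sound as a general strategy and consistent with the paper, which proves completeness via $\tau$ and treats colimits through $\gamma$ and $\mathrm{\textbf{W}}$, though it only carries out the directed case in detail.)
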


Below we shall present the constructions that are used in order to prove the above theorem; see \autoref{monoidal}, \autoref{thm:closed}, and \autoref{thm:Cucomplete}. We will also give the relevant definitions of the concepts that appear in its statement.

\begin{df}
\label{pgr:CatQtau}
A \emph{$\mathcal{Q}$-semigroup} is a positively ordered monoid satisfying axioms (O1) and (O4) from \autoref{df:CatCu}, together with an additive auxiliary relation $\prec$ as in \autoref{pgr:aux}.
A \emph{$\mathcal{Q}$-morphism} is a morphism of positively ordered monoids that preserves the auxiliary relation and suprema of increasing sequences.
Given $\mathcal{Q}$-semigroups $S$ and $T$, we denote by $\mathrm{\textbf{Q}}(S,T)$ the set of all $\mathcal{Q}$-morphisms from $S$ to $T$.
\end{df}

Given a $\Cu$-semigroup $S$, then $S$ together with the relation $\ll$ is a $\mathcal{Q}$-semigroup.
Moreover, given two $\Cu$-semigroups $S$ and $T$, a map $\varphi\colon S\to T$ is a $\Cu$-morphism if and only if it is a $\mathcal{Q}$-morphism when considered as a map from $(S,\ll)$ to $(T,\ll)$.
We obtain a functor $\iota\colon\CatCu\to\mathrm{\textbf{Q}}$ that sends a $\Cu$-semigroup $S$ to the $\mathcal{Q}$-semigroup $(S,\ll)$, and embeds \textbf{Cu} as a full subcategory of $\mathrm{\textbf{Q}}$.

\begin{df}[The $\tau$-construction]
Let $S=(S,\prec)$ be a $\mathcal{Q}$-semigroup.
A \emph{path} in $S$ is an order-preserving map $f\colon (-\infty,0]\to S$ such that $f(t)=\sup\limits_{t'<t}f(t')$ for all $t\in(-\infty,0]$, and such that $f(t')\prec f(t)$ whenever $t'<t$.
We denote the set of paths in $S$ by $\Paths(S)$.

Pointwise addition, together with the constant zero path, give $\Paths(S)$ the structure of a commutative monoid.
Given $f,g\in \Paths(S)$, we write $f\precsim g$ if, for every $t<0$, there is $t'<0$ such that $f(t)\prec g(t')$.
Set $f\sim g$ if $f\precsim g$ and $g\precsim f$.
It follows from \cite[Lemma~3.4]{AntPerThi_abstract_2020} that the relation $\precsim$ is reflexive, transitive, and compatible with addition of paths.
We set
\[
\tau(S) := \Paths(S)/\!\sim.
\]
Given $f\in \Paths(S)$, its equivalence class in $\tau(S)$ is denoted by $[f]$.
Equip $\tau(S)$ with an addition and order by setting $[f]+[g]:=[f+g]$ and $[f]\leq [g]$ if $f\precsim g$.
\end{df}

The construction just outlined will be referred to as the \emph{$\tau$-construction}, and it has a number of features that we now describe.

\begin{thm}(\cite[Theorem~3.15]{AntPerThi_abstract_2020}).
\label{thm:tau}
Retaining the notation in \autoref{pgr:CatQtau}, if $S$ is a $\mathcal{Q}$-semigroup, then $\tau(S)$ is a $\Cu$-semigroup.
\end{thm}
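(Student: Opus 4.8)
The plan is to verify directly that $\tau(S)$ satisfies the four axioms of \autoref{df:CatCu}, after disposing of the (mostly formal) fact that it is a positively ordered monoid. Commutativity, associativity and the neutral element are inherited pointwise from $S$, the class $[0]$ of the constant zero path being neutral; positivity $[0]\leq[f]$ holds because $0\prec x$ for all $x\in S$ forces $0\precsim f$. That $\leq$ is genuinely antisymmetric is built into the passage to $\tau(S)=\Paths(S)/\!\sim$, and compatibility of $+$ with $\precsim$ (cited in \autoref{pgr:CatQtau}) makes addition and order well defined on classes.

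The first thing I would isolate is a pair of order-theoretic reformulations that make the axioms transparent. To each path $f$ associate $D_f=\{x\in S:\ x\prec f(t)\ \text{for some}\ t<0\}$. Using only properties (i),(ii) of the auxiliary relation (\autoref{pgr:aux}) and the fact that a path is $\prec$-increasing, one checks that $D_f$ is $\prec$-rounded and upward directed, and that $f\precsim g$ if and only if $D_f\subseteq D_g$; thus $[f]\mapsto D_f$ is an order embedding of $\tau(S)$ into the inclusion-ordered family of such sets. Writing $f(0)=\sup_{t<0}f(t)$ for the endpoint value, the second reformulation I would establish is the characterization of compact containment: $[f]\ll[g]$ in $\tau(S)$ if and only if $f(0)\prec g(t_0)$ for some $t_0<0$. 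The nontrivial direction tests $[f]\ll[g]$ against the truncations of $g$ introduced below; the converse uses transitivity of $\prec$, additivity, and the fact that suprema are computed as unions of the $D$'s.

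Granting that suprema are computed this way, axioms (O2)--(O4) fall out cleanly. For (O2), given $[g]$ fix $c_n\uparrow 0$ and let $g^{[n]}$ be $g$ reparametrized onto $(-\infty,c_n]$; then $D_{g^{[n]}}=\{x:\ x\prec g(u),\ u<c_n\}$, so $\bigcup_n D_{g^{[n]}}=D_g$ gives $\sup_n[g^{[n]}]=[g]$, while $g^{[n]}(0)=g(c_n)\prec g(u)$ for some $u<c_{n+1}$ yields $[g^{[n]}]\ll[g^{[n+1]}]$ by the characterization of $\ll$. For (O3), if $f(0)\prec g(t_0)$ and $f'(0)\prec g'(t_0')$, then additivity of $\prec$ together with additivity of suprema (by (O4) in $S$, giving $(f+f')(0)=f(0)+f'(0)$) yields $(f+f')(0)\prec g(t_0)+g'(t_0')\leq (g+g')(\max\{t_0,t_0'\})$, i.e. $[f+f']\ll[g+g']$. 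For (O4) the inequality $\sup_n([f_n]+[g_n])\leq\sup_n[f_n]+\sup_n[g_n]$ is immediate; for the reverse I would use that the supremum $F$ of $([f_n])$ is realized by a path with $D_F=\bigcup_n D_{f_n}$, so that any $x\prec(F+G)(t)$ can be pushed, via directedness of the $D_{f_n}$, $D_{g_n}$ and the relations $f_n\precsim f_k$, $g_m\precsim g_k$, into some $D_{f_k+g_k}$, giving $F+G\precsim H$ where $[H]=\sup_n([f_n]+[g_n])$.

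The genuine obstacle, and the content of (O1), is to show that an increasing sequence $([f_n])_n$ has a supremum at all: concretely, that the $\prec$-rounded directed set $D=\bigcup_n D_{f_n}$ is itself of the form $D_h$ for some path $h$. I would first extract, by diagonalizing the countable cofinal sequences of the $D_{f_n}$, a single $\prec$-increasing cofinal sequence in $D$, and then build $h$ by a diagonal reparametrization of the $f_n$, taking the relevant pointwise suprema in $S$ (which exist by (O1) for $S$) and interchanging suprema to recover left-continuity via (O4). The delicate point, where real care is needed, is to arrange the reparametrization so that $h$ is simultaneously left-continuous and \emph{strictly} $\prec$-increasing everywhere on $(-\infty,0]$ while still satisfying $D_h=D$; this cannot be achieved by naive stepping and is exactly what forces the diagonal/shift argument. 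Once $h$ is produced, $[h]=\sup_n[f_n]$ is immediate from $D_h=\bigcup_n D_{f_n}$ and the order embedding $[f]\mapsto D_f$.
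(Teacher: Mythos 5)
Your proposal is correct, and at the decisive step it coincides with the paper's own argument: the paper's (outlined) proof of (O1) constructs a strictly increasing sequence $(t_m)$ with $\sup_m t_m=0$ and a supremum path $f$ pinned by $f_n(t_n)=f(\tfrac{-1}{n+1})$, assembled from reparametrized segments of the given paths --- exactly your ``diagonal reparametrization'', and your warning that naive stepping fails is precisely the point, since a $\mathcal{Q}$-semigroup has no interpolation property for $\prec$, so an abstract $\prec$-increasing cofinal sequence $(x_m)$ in $D=\bigcup_n D_{f_n}$ does not by itself yield a path: you must remember witnesses $x_m\prec f_{n_m}(s)$ and glue the corresponding segments of the $f_n$, which is what the paper's pinning condition encodes. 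Where you genuinely depart from the paper is in the bookkeeping: the paper verifies (O2) directly against arbitrary increasing sequences via its (O1) construction and declares (O3)--(O4) routine, whereas you first establish the round-ideal embedding $[f]\mapsto D_f$ (with $f\precsim g$ iff $D_f\subseteq D_g$) and the endpoint characterization $[f]\ll[g]$ iff $f(0)\prec g(t_0)$ for some $t_0<0$; both are correct --- in the forward direction one upgrades $f(t)\prec g(c_N)$ for all $t<0$ to $f(0)\leq g(c_N)\prec g(t_0)$ using left-continuity and property (ii) of \autoref{pgr:aux} --- and this domain-theoretic packaging makes (O2)--(O4) essentially one-line computations, while your shift-reparametrized truncations $g^{[n]}$ are in fact the sound version of the paper's displayed cut-off $f_\varepsilon$, which as literally written is neither order-preserving nor left-continuous and hence not a path. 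Two small corrections: the interchange of iterated suprema of a monotone array, used for left-continuity of $h$, is a general fact about posets admitting suprema of increasing sequences and has nothing to do with (O4), which concerns addition; and mind the logical order, since the backward direction of your $\ll$-characterization needs $D_H=\bigcup_n D_{h_n}$ for the supremum $[H]$ of an arbitrary increasing sequence, which is only available once (O1) is proved, so (O1) must come first --- as you implicitly concede when writing ``granting that suprema are computed this way''.
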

\begin{proof} (Outline)
Let $([f_n])_{n\in\N}$ be an increasing sequence of paths in $S$. An inductive process allows one to construct a strictly increasing sequence $(t_m)_{m\in\N}$ in $(-\infty, 0]$ and a path $f$ in $S$ satisfying
\begin{enumerate}[{\rm (i)}]
\item $\sup\limits_{m\in\N}t_m=0$
\item $f_n(t_m)\prec f_l(t_l)$ whenever $n,m<l$.
\item $f_n(t_n)=f(\frac{-1}{n+1})$ for all $n\geq 1$.
\end{enumerate} 
Then one can verify that $[f]=\sup\limits_{n\in\N} [f_n]$ and thus $\tau(S)$ satisfies (O1).
In order to verify (O2), given $f\in \Paths(S)$ and $\varepsilon>0$, define $f_\varepsilon\colon (-\infty,0]\to S$ by 
\[
f_\varepsilon(t)=\begin{cases}
		f(t), & \text{if } t<-\varepsilon \\
        0, &\text{otherwise}.
       \end{cases}
\]
If follows by construction that $[f]=\sup\limits_{\varepsilon>0}[f_\varepsilon]$. We need to check that $[f_\varepsilon]\ll [f]$.
To this end, let $([g_n])_{n\in\N}$ be an increasing sequence in $\tau(S)$ such that $[f]\leq \sup\limits_{n\in\N}[g_n]$. By the construction outlined at the beginning of the proof, there are a path $g$ in $S$ and an increasing sequence $(t_m)_{m\in\N}$ in $(-\infty,0]$ with supremum  
$0$ such that $[g]=\sup\limits_{n\in\N}[g_n]$ and $g(\frac{-1}{m})=g_m(t_m)$ for all $m\in\N$.

Choose $m_0\geq 1$ such that $-\varepsilon<-\frac{1}{m_0}$. Since $f\precsim g$, there is $r>0$ such that $f(-\frac{1}{m_0})\prec g(r)$. Now choose $m_1$ such that $r<-\frac{1}{m_1+1}$. Then, if $t<-\varepsilon$, we have
\[
f_\varepsilon(t)=f(t)\prec f\big(\tfrac{-1}{m_0}\big)\prec g(r)\prec g\big(\tfrac{-1}{m_1+1}\big)=g_{m_1}(t_{m_1}),
\] 
which shows that $f_\varepsilon\prec g_{m_1}$. This implies that $\tau(S)$ satisfies (O2). Axioms (O3) and (O4) are more routine to verify.
\end{proof}

We define $\varepsilon_S\colon\tau(S)\to S$ by $\varepsilon_S([f])= f(0)$ for $f\in\Paths(S)$.
One can check that $\varepsilon_S$ is a well-defined $\mathcal{Q}$-morphism. Since for a path $f\in\Paths(S)$, we think of $f(0)$ as the endpoint of $f$, we call $\varepsilon_S$ the \emph{endpoint map}.

Given a $\mathcal{Q}$-morphism $\varphi\colon S\to T$, we define $\tau(\varphi)\colon\tau(S)\to\tau(T)$ by $\tau(\varphi)([f])=[\varphi\circ f]$ for $f\in \Paths(S)$.
One can show that $\tau(\varphi)$ is a $\Cu$-morphism.
This defines a covariant functor $\tau\colon\mathrm{\textbf{Q}}\to\CatCu$.
We omit the proof of the following result.

\begin{thm}[{\cite[Theorem~4.12]{AntPerThi_abstract_2020}}]
\label{prp:CuCoreflQ}
The category $\CatCu$ is a full, coreflective subcategory of $\mathrm{\textbf{Q}}$.
The functor $\tau\colon\mathrm{\textbf{Q}}\to\CatCu$ is a right adjoint to the inclusion functor $\iota\colon\CatCu\to\mathrm{\textbf{Q}}$.
Moreover, given a $\mathcal{Q}$-semigroup $S$, the endpoint map $\varepsilon_S\colon\tau(S)\to S$ is a universal $\mathcal{Q}$-morphism, in the sense that 
for every $\Cu$-semigroup $T$, there is a natural bijection
\[
\CatCu\big( T,\tau(S) \big) \cong \mathrm{\textbf{Q}}\big( \iota(T),S \big),
\]
implemented by sending a $\Cu$-morphism $\psi\colon T\to\tau(S)$ to $\varepsilon_S\circ\psi$.
\end{thm}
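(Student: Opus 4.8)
The plan is to establish \autoref{prp:CuCoreflQ} by verifying the adjunction directly through the universal property of the endpoint map, and then deducing the coreflectivity and full subcategory claims as formal consequences. First I would show that $\iota\colon\CatCu\to\mathrm{\textbf{Q}}$ is fully faithful, which gives that $\CatCu$ embeds as a full subcategory: this is essentially the remark made just before \autoref{pgr:CatQtau}, namely that a map between $\Cu$-semigroups is a $\Cu$-morphism precisely when it is a $\mathcal{Q}$-morphism after forgetting down to $(S,\ll)$. The real content is the natural bijection
\[
\CatCu\big(T,\tau(S)\big)\cong\mathrm{\textbf{Q}}\big(\iota(T),S\big),
\]
which exhibits $\tau$ as right adjoint to $\iota$.

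For the bijection, I would define the map in the direction stated, sending a $\Cu$-morphism $\psi\colon T\to\tau(S)$ to the composite $\varepsilon_S\circ\iota(\psi)\colon \iota(T)\to S$, a $\mathcal{Q}$-morphism since $\varepsilon_S$ is one and $\iota(\psi)$ is one. The crux is to construct the inverse: given a $\mathcal{Q}$-morphism $\varphi\colon\iota(T)\to S$, I must produce a $\Cu$-morphism $\widehat{\varphi}\colon T\to\tau(S)$ with $\varepsilon_S\circ\widehat{\varphi}=\varphi$. The natural candidate uses axiom (O2) in $T$: for $t\in T$, pick a $\ll$-increasing sequence with supremum $t$, interpolate it to a path $f_t$ in $T$, and set $\widehat{\varphi}(t)=[\varphi\circ f_t]\in\tau(S)$. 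Here one leverages that $T$, being a $\Cu$-semigroup, admits genuine paths (built from (O1) and (O2)), that $\varphi$ preserves $\ll$ hence sends $\ll$-increasing data to $\prec$-increasing data in $S$, and that $\varphi\circ f_t$ is therefore a path in $S$. I would then check that the class $[\varphi\circ f_t]$ is independent of the chosen path (two cofinal $\ll$-increasing sequences with the same supremum give $\sim$-equivalent paths after applying $\varphi$), that $\widehat{\varphi}$ preserves $0$, addition, order, suprema of increasing sequences, and $\ll$, and finally that $\varepsilon_S([\varphi\circ f_t])=\varphi(f_t(0))=\varphi(t)$, using $f_t(0)=t$.

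Naturality in both variables is then routine: for $\Cu$-morphisms $T'\to T$ and $\mathcal{Q}$-morphisms $S\to S'$ one checks the relevant squares commute, which reduces to the functoriality of $\tau$ on morphisms (already recorded via $\tau(\varphi)([f])=[\varphi\circ f]$) together with the definition of $\varepsilon_S$. Once the adjunction $\iota\dashv\tau$ is in hand, the statement that $\CatCu$ is a coreflective subcategory follows formally: a fully faithful left adjoint inclusion makes its image coreflective, with coreflector $\tau$, and the counit is $\varepsilon_S$.

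The step I expect to be the main obstacle is verifying that $\widehat{\varphi}$ is well defined and genuinely preserves the compact containment relation $\ll$, since this is where the interpolation of sequences into paths and the behaviour of $\prec$ under $\varphi$ interact most delicately. Showing $[\varphi\circ f_t]\ll[\varphi\circ f_s]$ whenever $t\ll s$ in $T$ requires unpacking the characterization of $\ll$ in $\tau(S)$ from the proof of \autoref{thm:tau} — namely that $[f]\ll[g]$ iff some $f(t)\prec g(t_0)$ uniformly — and matching it against the $\prec$-structure transported from $T$ by $\varphi$; I would isolate this as the technical heart and handle the monoid- and order-preservation properties as comparatively mechanical verifications. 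As this is the content of \cite[Theorem~4.12]{AntPerThi_abstract_2020}, I would ultimately cite that reference for the full details and present only the construction of the adjunction bijection in the survey.
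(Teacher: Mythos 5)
The survey itself omits the proof of this theorem, deferring entirely to \cite[Theorem~4.12]{AntPerThi_abstract_2020}, so there is no in-paper argument to compare against; your sketch follows the strategy of that cited source (construct the adjunction bijection via paths and the endpoint map, then deduce coreflectivity formally). The overall plan is sound, but two steps deserve more than the treatment you give them. First, the phrase ``interpolate it to a path $f_t$ in $T$'' hides a real issue: a path in the sense of the $\tau$-construction must satisfy $f(t')\prec f(t)$ for \emph{every} pair $t'<t$, so the naive step-function interpolation of a $\ll$-increasing sequence fails (on any interval where $f$ is constant you would need $t_n\ll t_n$). One needs a dyadic interpolation argument: use (O2) to interpolate $t'\ll t''\ll t$ between any compactly contained pair, index a $\ll$-increasing family by dyadic rationals, and extend to all reals by suprema, checking that strict $\ll$-increase survives the passage to suprema. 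Equivalently, this is the statement that the unit $\eta_T\colon T\to\tau(\iota(T))$ is a well-defined isomorphism; once you have that, your element-wise construction $\widehat{\varphi}(t)=[\varphi\circ f_t]$ is just $\tau(\varphi)\circ\eta_T$, and all the preservation properties you list as ``comparatively mechanical'' follow from the already-established functoriality of $\tau$ rather than needing separate verification.

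Second, and more seriously as a matter of logic: you only verify one composite. Checking $\varepsilon_S([\varphi\circ f_t])=\varphi(t)$ shows that $\psi\mapsto\varepsilon_S\circ\psi$ is \emph{surjective}; a bijection also requires injectivity, i.e.\ that $\psi$ can be recovered from $\varepsilon_S\circ\psi$, which amounts to the triangle identity $[\varepsilon_S\circ(\psi\circ f_t)]=\psi(t)$ in $\tau(S)$. This is not automatic ($\varepsilon_S$ itself is far from injective) and requires an argument of the same flavour as your well-definedness check: given a path $g$ in $\tau(S)$ with endpoint $x=[h]$, one shows $[\varepsilon_S\circ g]=x$ by comparing $\varepsilon_S\circ g$ with $h$ in both directions, using the cut-down classes $[h_u]\ll[h]$ together with the characterization of $\ll$ in $\tau(S)$ from the proof of \autoref{thm:tau} for one inequality, and the fact that each $\varepsilon_S(g(s))$ is dominated by a value of $h$ for the other. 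With these two points repaired, your deduction of coreflectivity from the adjunction, and the naturality checks, are indeed formal.
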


We now define what is meant by a monoidal category; see also \cite{Mac_Categories_1971}.
\begin{df}
A \emph{monoidal category} is a category $\mathbf{C}$ together with a bifunctor $\otimes\colon\mathbf{C}\times\mathbf{C}\to \mathbf{C}$, a unit object $I$, and natural isomorphisms 
\[
(X\otimes Y)\otimes Z\cong X\otimes (Y\otimes Z)\ \ \ \mbox{ and } \ \ \  I\otimes X\cong X\cong X\otimes I,
\]
whenever $X,Y,Z$ are objects in $\mathbf{C}$. (Other coherence axioms are also required.) We say that $\mathbf{C}$ is moreover \emph{symmetric} if $X\otimes Y\cong Y\otimes X$ for any $X,Y\in\mathbf{C}$.
\end{df}

\begin{thm}
\label{monoidal}
The category $\CatCu$ is monoidal.
\end{thm}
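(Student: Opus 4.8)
The plan is to exhibit the monoidal structure through a universal property: I would construct the tensor product as the object representing $\Cu$-bimorphisms, and locate the unit object at $\NN\cong\Cu(\C)$ (see \autoref{eg:CuC}), mirroring the fact that $\C$ is the unit for the minimal tensor product of \cas. By a \emph{$\Cu$-bimorphism} $S\times T\to R$ I mean a map that is additive, order-preserving, and preserves suprema of increasing sequences in each variable separately, and that is compatible with $\ll$ in the sense that $s'\ll s$ and $t'\ll t$ imply $\varphi(s',t')\ll\varphi(s,t)$. The goal is to produce, for each pair $S,T$, an object $S\otimes_\Cu T$ together with a universal bimorphism $\omega\colon S\times T\to S\otimes_\Cu T$ for which precomposition with $\omega$ yields a natural bijection between $\mathbf{Cu}(S\otimes_\Cu T,R)$ and the set of $\Cu$-bimorphisms $S\times T\to R$.

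First I would construct the underlying object. Building the tensor product directly inside $\CatCu$ is awkward, since increasing sequences need not have suprema before completion; the efficient route, as indicated in the introduction, is to form the tensor product in the larger category \textbf{W} (where tensor products exist) and then transport it back via the reflector $\gamma$ of \autoref{thm:curefl} and \autoref{thm:completion}. Concretely, I would take the algebraic tensor product $S\otimes_{\CatPom} T$ of the underlying positively ordered monoids, equip it with the additive auxiliary relation $\prec$ generated by declaring $\sum_i s_i'\otimes t_i'\prec\sum_i s_i\otimes t_i$ whenever $s_i'\ll s_i$ and $t_i'\ll t_i$ for all $i$, and verify that the resulting pair is a \textbf{W}-semigroup. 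One then sets $S\otimes_\Cu T:=\gamma(S\otimes_{\CatPom} T)$, and the simple tensors $s\otimes t$, composed with the canonical map into the completion, provide the universal bimorphism $\omega$. Functoriality of $\otimes_\Cu$ in each variable is then immediate from functoriality of the algebraic tensor product and of $\gamma$, yielding the bifunctor $\otimes\colon\CatCu\times\CatCu\to\CatCu$.

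With the universal property in hand, the coherence data essentially writes itself. Both $(S\otimes T)\otimes R$ and $S\otimes(T\otimes R)$ satisfy the universal property of representing $\Cu$-\emph{trimorphisms} $S\times T\times R\to(-)$, so they are canonically isomorphic; the symmetry $S\otimes T\cong T\otimes S$ comes from the universal bimorphism precomposed with the flip. For the unit, the bimorphism $\NN\times S\to S$ sending $(n,s)$ to $ns$ and extended continuously across $\infty$ (setting $\infty\cdot s=\infty_s$ in the notation of \autoref{nota:infa}, which is legitimate by (O1) and (O4)) is universal among bimorphisms out of $\NN\times S$, giving the natural isomorphism $\NN\otimes S\cong S\cong S\otimes\NN$. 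Because each structural isomorphism is the \emph{unique} morphism induced between representing objects, the pentagon and triangle coherence diagrams commute automatically, by the uniqueness clause of the universal properties.

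The main obstacle, consuming nearly all of the genuine work, is verifying that $S\otimes_{\CatPom} T$ really satisfies the \textbf{W}-axioms: that the generated auxiliary relation is additive, interpolative in the sense of (W4), and admits countable $\prec$-increasing cofinal sets, for only then does $\gamma$ return a bona fide $\Cu$-semigroup whose $\ll$-relation and suprema behave as the universal property demands. A secondary subtlety is the unit isomorphism: one must check that extending $(n,s)\mapsto ns$ across $\infty$ stays a well-defined $\Cu$-bimorphism and that the induced map $\NN\otimes S\to S$ is an order-isomorphism rather than merely surjective, which again reduces to the completion behaving correctly on the $\prec$-generated relation. Once these are settled, symmetry, associativity, and coherence follow formally, and the closedness asserted elsewhere in \autoref{thm:cu-structure} is then obtained by adjunction from the bifunctor just constructed.
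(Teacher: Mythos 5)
Your proposal is correct and follows essentially the same route as the paper: form the algebraic tensor product of the underlying positively ordered monoids, equip it with the auxiliary relation generated by $\ll$ in both factors, verify the \textbf{W}-axioms, and apply the reflector $\gamma$ of \autoref{thm:curefl} to land in $\CatCu$, with unit $\NN$. Your explicit appeal to the universal property for $\Cu$-bimorphisms to obtain coherence is precisely how the cited reference \cite{AntPerThi_tensor_2018} organizes the argument, so this is a faithful (indeed somewhat more detailed) version of the paper's sketch.
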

\begin{proof}
To show that $\CatCu$ is monoidal we need to construct the tensor product of any two $\Cu$-semigroups $S$ and $T$. We only sketch here how to proceed and refer the reader to the material in \cite[Chapter 6]{AntPerThi_tensor_2018}.

Given $\Cu$-semigroups $S$, $T$, we first form the algebraic tensor product $S\odot T$ as positively ordered monoids, based on expressions on the free abelian monoid $\N[S^\times\times T^\times]$  so that if $a'\leq a$ in $S$ and $b'\leq b$ in $T$, one has $a'\odot b'\leq a\odot b$. For $f,g\in \N[S^\times\times T^\times]$, we set $f\preceq g$ provided $g=\sum_{j\in J} a_j\odot b_j$ and $f\leq \sum_{j\in J'}a_j'\odot b_j'$, where $J'\subseteq J$ and $a_j'\ll a_j$, $b_j'\ll b_j$ for each $j\in J'$.

With this structure one can show, after considerable effort, that $(S\odot T,\preceq)$ is a $\W$-semigroup and it follows that $\gamma(S\odot T)$ is the tensor product $S\otimes_\Cu T$ of $S$ and $T$ in the category $\CatCu$, where $\gamma$ is as in the proof of \autoref{thm:curefl}. The unit for the tensor product is $\NN$.
\end{proof}

We now focus on the adjoint of the tensor product.

\begin{df}
We say that a monoidal category $\mathbf{C}$ is \emph{closed} if, for each object $Y\in\mathbf{C}$, the functor $-\otimes Y\colon\mathbf{C}\to\mathbf{C}$ has a right adjoint, that we denote by $\ihom{Y,X}$.
\end{df}

In a closed, monoidal category \textbf{C}, we thus have a natural bijection (in $X$ and~$Z$)
\[
\mathbf{C}(X\otimes Y, Z)\cong \mathbf{C}(X, \ihom{Y,Z}).
\]

Unlike in the category of abelian groups, the adjoint of the tensor
product in \textbf{Cu}
does not merely consist of the usual hom-set in the category. The reason for this is that, for $\Cu$-semigroups $S$ and $T$, the set $\CatCu(S,T)$ may be too small, and not even a $\Cu$-semigroup. Indeed, for any $\Cu$-semigroup $S$ we have $\CatCu(\NN,S)\cong S_c$, so
in particular $\CatCu(\NN,\NN)\cong\N$ is not a Cu-semigroup. Instead, one has to consider the set of generalized $\Cu$-semigroups, as in \autoref{df:CatCu}. Notice that, given $\Cu$-semigroups $S$ and $T$, the set $\CatCu[S,T]$ is a positively ordered monoid, when equipped with pointwise order and addition. It also satisfies axioms (O1) and (O4): the supremum of a sequence of generalized $\Cu$-morphisms is just the pointwise supremum.

Define a binary relation on $\CatCu[S,T]$ by setting $f\prec g$ if, whenever $s'\ll s$ in $S$, we have $f(s')\ll g(s)$ in $T$. With this relation, a generalized $\Cu$-morphism $f\colon S\to T$ is a $\Cu$-morphism precisely when $f\prec f$. It is an easy exercise to verify that $\prec$ is an auxiliary relation for the pointwise order in $\CatCu[S,T]$ which makes the latter into a $\mathcal{Q}$-semigroup, although not necessarily a $\Cu$-semigroup.

For example, write $\mathbb{P}=[0,\infty]$, which is known to be the Cuntz semigroup of the Razak algebra. One can show that $(\mathbf{Cu}[\mathbb{P},\mathbb{P}],\prec)$ is isomorphic to $(\mathbb{P},\prec_1)$, where $\prec_1$ is defined as follows: $a\prec_1 b$ if and only if $a\leq\infty$ and $a\leq b$; see \cite[Examples 4.14 and 5.13]{AntPerThi_abstract_2020}. With this structure, $(\mathbb{P},\prec_1)$ is a $\mathcal{Q}$-semigroup, and the $\tau$-construction applied to it yields $[0,\infty)\sqcup (0,\infty]$ where the compact elements are the ones in the first component. Let us denote these elements by $c_a$, for $a\in [0,\infty)$, and the elements in the second component by $s_a$, for $a\in (0, \infty]$. Addition and order are given by:
\begin{itemize}
\item $c_a+c_b=c_{a+b}$, $s_a+s_b=s_{a+b}$, and $c_a+s_b=s_{a+b}$.
\item $s_a\leq c_b$ if and only if $a\leq b$; $c_a\leq s_b$ if and only if $a<b$.
\end{itemize}

\begin{thm}
\label{thm:closed}
The category $\CatCu$ is closed.
\end{thm}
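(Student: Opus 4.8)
The plan is to produce an explicit internal hom functor and then verify the tensor--hom adjunction by composing natural bijections that are already available in the excerpt. The natural candidate is
\[
\ihom{Y,Z} := \tau\big(\CatCu[Y,Z],\prec\big),
\]
the $\Cu$-semigroup obtained by applying the $\tau$-construction to the $\mathcal{Q}$-semigroup of generalized $\Cu$-morphisms $Y\to Z$ equipped with the auxiliary relation $\prec$ introduced just before the statement. (One first records that $(\CatCu[Y,Z],\prec)$ really is a $\mathcal{Q}$-semigroup, the exercise mentioned above; then $\tau$ of it is a genuine $\Cu$-semigroup by \autoref{thm:tau}.) I would then show that for all $X,Z\in\CatCu$ there is a bijection, natural in $X$ and $Z$,
\[
\CatCu\big(X\otimes_{\Cu} Y, Z\big)\ \cong\ \CatCu\big(X,\ihom{Y,Z}\big),
\]
which is exactly the closedness assertion.

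The bijection would be assembled as a chain of three links. First, the universal property of the tensor product from \autoref{monoidal} identifies $\Cu$-morphisms out of $X\otimes_{\Cu}Y$ with $\Cu$-bimorphisms: writing $\mathrm{BiCu}(X,Y;Z)$ for the set of maps $X\times Y\to Z$ that are generalized $\Cu$-morphisms in each variable and satisfy $\beta(x',y')\ll\beta(x,y)$ whenever $x'\ll x$ and $y'\ll y$, one has $\CatCu(X\otimes_{\Cu}Y,Z)\cong\mathrm{BiCu}(X,Y;Z)$. Second, I would curry: sending $\beta$ to $\Phi_\beta(x)=\beta(x,-)$ should give a bijection between $\mathrm{BiCu}(X,Y;Z)$ and $\mathrm{\textbf{Q}}(\iota(X),\CatCu[Y,Z])$. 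Third, \autoref{prp:CuCoreflQ} provides the adjunction bijection $\mathrm{\textbf{Q}}(\iota(X),\CatCu[Y,Z])\cong\CatCu(X,\tau(\CatCu[Y,Z]))=\CatCu(X,\ihom{Y,Z})$. Composing the three yields the desired natural isomorphism and exhibits $\ihom{Y,-}$ as right adjoint to $-\otimes_{\Cu}Y$.

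The heart of the argument is the currying step, where one matches axioms on both sides. For fixed $x$, the slice $\beta(x,-)$ is a generalized $\Cu$-morphism because $\beta$ preserves addition, zero, order and suprema in the second variable, so $\Phi_\beta(x)\in\CatCu[Y,Z]$. Additivity, monotonicity, and preservation of suprema of $\Phi_\beta$ then follow from the corresponding properties of $\beta$ in the first variable, together with the fact that the order, addition and suprema in $\CatCu[Y,Z]$ are all computed pointwise. The crucial point is that $\Phi_\beta$ preserves the auxiliary relations: $x'\ll x$ must force $\Phi_\beta(x')\prec\Phi_\beta(x)$, which by the definition of $\prec$ means $\beta(x',y')\ll\beta(x,y)$ for all $y'\ll y$---precisely the joint $\ll$-compatibility built into the notion of $\Cu$-bimorphism. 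Running this correspondence backwards recovers $\beta$ from $\Phi$ via $\beta(x,y)=\Phi(x)(y)$ and shows the two assignments are mutually inverse, so currying is the required bijection.

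I expect the main obstacle to be the first link in the chain, namely pinning down the precise universal property of $\otimes_{\Cu}$ in bimorphism terms and matching the definition of $\mathrm{BiCu}$ to the $\W$-semigroup presentation $\gamma(S\odot T)$ from \autoref{monoidal}; this is where the ``considerable effort'' of the tensor-product construction is concentrated, and I would import it wholesale from \cite{AntPerThi_tensor_2018}. The remaining work is bookkeeping: checking naturality of each of the three bijections in $X$ and $Z$, so that $\ihom{Y,-}$ is a genuine functor and the adjunction is natural, and confirming at the outset that $(\CatCu[Y,Z],\prec)$ satisfies (O1), (O4) and that $\prec$ is additive, so that the $\tau$-construction and \autoref{prp:CuCoreflQ} genuinely apply.
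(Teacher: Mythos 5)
Your proposal is correct and follows essentially the same route as the paper: you take the same internal hom $\ihom{Y,Z}=\tau(\CatCu[Y,Z],\prec)$, and your three-link chain (tensor universal property via bimorphisms, currying, and the coreflection adjunction of \autoref{prp:CuCoreflQ}) is precisely the argument behind \cite[Theorem 5.10]{AntPerThi_abstract_2020}, which the paper's proof simply cites. The only difference is that you unfold the details the paper delegates to that reference.
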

\begin{proof}
We define $\ihom{S,T}=\tau(\CatCu[S,T],\preceq)$, which by \autoref{thm:tau} is a $\Cu$-se\-mi\-group, and refer to this as the \emph{internal-hom functor} for $S$ and $T$.

The internal-hom functor is the adjoint of the tensor product as defined in \autoref{monoidal}. In fact, it was shown in  \cite[Theorem 5.10]{AntPerThi_abstract_2020} that there is a natural isomorphism of positively ordered monoids
\[
\CatCu(S,\ihom{T,P})\cong \CatCu(S\otimes_\Cu T,P).\qedhere
\]
\end{proof}

In these notes we will only discuss completeness (and not cocompleteness), as it is what will be used below in the construction of ultraproducts. We now recall the basic notions.

\begin{df}
Let \textbf{C} be a category, let \textbf{I} be a small category, and let $\mathrm{F}\colon \mathrm{\mathbf{I}}\to\mathrm{\textbf{C}}$ be a functor.
A \emph{cone} to $\mathrm{F}$ is a pair $(L,\varphi)$, where $L$ is an object in $\mathrm{\textbf{C}}$ and $\varphi=(\varphi_i)_{i\in \mathrm{\mathbf{I}}}$ is a collection of morphisms $\varphi_i\colon L\to \mathrm{F}(i)$ in $\mathrm{\textbf{C}}$ such that $\mathrm{F}(f)\circ\varphi_i=\varphi_j$ for any morphism $f\colon i\to j$ in $\mathrm{\mathbf{I}}$. 

A \emph{small limit} of $\mathrm{F}$ is a universal cone $(L,\varphi)$, that is, if $(L',\varphi')$ is another cone, there exists a unique $\mathrm{\textbf{C}}$-morphism $\alpha\colon L'\to L$ such that $\varphi_i'=\varphi_i\circ\alpha$ for every $i\in \mathrm{\mathbf{I}}$. This is summarized in the diagrams below:
\[
\xymatrix@R-20pt{
& \mathrm{F}(i) \ar[dd]^-{\mathrm{F}(f)}
& &  &  & \mathrm{F}(i) \ar[dd]^-{\mathrm{F}(f)} \\
L \ar[ur]^-{\varphi_i} \ar[dr]_-{\varphi_j}
&  &   & L' \ar[r]^\alpha \ar@/^1.3pc/[urr]^-{\varphi_i'} \ar@/_1.3pc/[drr]_-{\varphi_j'}
& L \ar[ur]^-{\varphi_i} \ar[dr]_-{\varphi_j}  &   \\
& \mathrm{F}(j), & & & & \mathrm{F}(j). \\
}
\]

If a limit of $\mathrm{F}$ exists, then it is unique up to natural isomorphism, and we denote it by  $(\mathrm{\textbf{C}}\text{-}\!\varprojlim \mathrm{F},\pi)$ or just by $\mathrm{\textbf{C}}\text{-}\!\varprojlim \mathrm{F}$. 

The category $\mathrm{\textbf{C}}$ is said 
to be \emph{complete} if all functors from small categories into 
it have limits. (Cocompleteness is defined dually.)
\end{df}

The following is proved in \cite[Theorem 3.8]{AntPerThi_ultraproducts_2020}.

\begin{thm}
\label{thm:Cucomplete}
The category $\CatCu$ is complete.
\end{thm}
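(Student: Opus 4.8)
The plan is to reduce completeness of $\CatCu$ to that of the larger category $\mathrm{\textbf{Q}}$ of $\mathcal{Q}$-semigroups, and then transfer limits back along the coreflection $\tau$. The starting point is a general categorical principle: since by \autoref{prp:CuCoreflQ} the inclusion $\iota\colon\CatCu\to\mathrm{\textbf{Q}}$ is fully faithful (as $\CatCu$ is a full subcategory) and admits $\tau$ as a right adjoint, for any functor $\mathrm{F}\colon\mathrm{\textbf{I}}\to\CatCu$ from a small category and any $T\in\CatCu$ one has a chain of natural bijections
\[
\CatCu\big(T,\tau(\mathrm{\textbf{Q}}\text{-}\!\varprojlim\,\iota\mathrm{F})\big)\cong\mathrm{\textbf{Q}}\big(\iota T,\mathrm{\textbf{Q}}\text{-}\!\varprojlim\,\iota\mathrm{F}\big)\cong\varprojlim_{i}\mathrm{\textbf{Q}}\big(\iota T,\iota\mathrm{F}(i)\big)\cong\varprojlim_{i}\CatCu\big(T,\mathrm{F}(i)\big),
\]
where the first bijection is the adjunction, the second is the universal property of the limit in $\mathrm{\textbf{Q}}$, and the third is full faithfulness of $\iota$. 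Thus $\tau(\mathrm{\textbf{Q}}\text{-}\!\varprojlim\,\iota\mathrm{F})$ represents the limit functor of $\mathrm{F}$, hence is the limit of $\mathrm{F}$ in $\CatCu$. Consequently it suffices to prove that $\mathrm{\textbf{Q}}$ is complete.

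For the completeness of $\mathrm{\textbf{Q}}$ I would invoke the standard fact that a category has all small limits as soon as it has all small products and all equalizers, and construct both explicitly. Given $\mathcal{Q}$-semigroups $(S_i,\prec_i)_{i\in I}$, the product will be the set-theoretic product $\prod_i S_i$ with componentwise addition and order, and with auxiliary relation $(s_i)\prec(t_i)$ declared to hold exactly when $s_i\prec_i t_i$ for all $i$; the projections are $\mathcal{Q}$-morphisms and a cone from $T$ induces the evident map. For the equalizer of a parallel pair $f,g\colon S\to T$ I would take $E=\{s\in S\colon f(s)=g(s)\}$ with the structure inherited from $S$.

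The routine but necessary verifications are that these objects lie in $\mathrm{\textbf{Q}}$, i.e. that they are positively ordered monoids satisfying (O1) and (O4) with an additive auxiliary relation. For the product these hold componentwise: an increasing sequence has as supremum the componentwise supremum (using (O1) in each factor), and (O4) together with additivity of $\prec$ reduce to the corresponding statements in the $S_i$. For the equalizer the key point is that $E$ is closed under suprema of increasing sequences of $S$: if $(s_n)$ increases in $E$ with supremum $s$ in $S$, then $f(s)=\sup_n f(s_n)=\sup_n g(s_n)=g(s)$ since $\mathcal{Q}$-morphisms preserve such suprema, so $s\in E$ and (O1) holds in $E$, while (O4) and the auxiliary relation restrict without difficulty.

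The step I expect to be the main obstacle—and the reason one cannot argue directly inside $\CatCu$—is axiom (O2). The naive product and equalizer need not satisfy it: an element of the naive equalizer, say, need not be approximable from below by a $\ll$-increasing sequence lying in the equalizer. Passing to $\mathrm{\textbf{Q}}$, where (O2) is dropped, removes precisely this obstruction, and applying $\tau$—which by \autoref{thm:tau} always produces a genuine $\Cu$-semigroup—repairs (O2) while, by the adjunction, preserving the correct universal property. Thus the genuine content is the interplay recorded in \autoref{prp:CuCoreflQ} and \autoref{thm:tau}, with the remaining work being the componentwise checks sketched above.
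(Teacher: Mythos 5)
Your proof is correct and takes essentially the same route as the paper: the paper also reduces to completeness of $\mathrm{\textbf{Q}}$, building the limit there as the submonoid of compatible families in the $\CatPom$-product equipped with the pointwise auxiliary relation (your products-plus-equalizers construction in combined form), and then applies the coreflector $\tau$ from \autoref{prp:CuCoreflQ}. Your explicit adjunction/representability chain merely spells out the transfer step that the paper leaves implicit when it asserts that $\tau$ of the $\mathrm{\textbf{Q}}$-limit, with structure maps $\tau(\pi_i)$, is the limit in $\CatCu$.
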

\begin{proof}
Let F$\colon \mathbf{Cu}\to \mathbf{I}$ be a functor into a small
category \textbf{I}.
The basic strategy consists of completing, via the $\tau$-construction, the limit of the composition 
$\mathbf{Cu}\to \mathbf{I}\hookrightarrow \mathbf{Q}$.
We outline the procedure below.

Denote by $\CatPom$ the category of positively ordered monoids.
Let $(S_i)_{i\in \mathbf{I}}$ be a collection of objects in 
$\CatPom$. The product of this family in $\CatPom$ is given by
\[
\mathrm{\textbf{PoM}}-\prod_{i\in \mathbf{I}}S_i=\{(s_i)_{i\in \mathbf{I}}\colon s_i\in S_i \text{ for all }i\in \mathbf{I}\},
\]
with componentwise addition and order. 
Regarding F as a functor 
$\mathrm{F}\colon \mathrm{\mathbf{I}}\to\CatPom$, set
\begin{align*}
\label{pgr:prodinvlim:eq2}
S:= \Big\{ (s_i)_{i\in \mathrm{\mathbf{I}}}\in\CatPom\text{-}\prod_{i\in \mathrm{\mathbf{I}}} \mathrm{F}(i) \colon  \mathrm{F}(f)(s_i)=s_j \text{ for all } f\colon i\to j \text{ in } \mathrm{\mathbf{I}} \Big\}.
\end{align*}
It is straightforward to verify that $0\in S$ and that $S$ is closed under addition in $\CatPom$-$\prod_{i\in \mathrm{\mathbf{I}}} \mathrm{F}(i)$, hence $S$ is also a positively ordered monoid. 

For each $i\in \mathrm{\mathbf{I}}$, the projection map $\pi_i\colon\CatPom\text{-}\prod_{j\in J} \mathrm{F}(j)\to \mathrm{F}(i)$ restricts to a $\CatPom$-morphism $\pi_i\colon S\to \mathrm{F}(i)$. 
Set $\pi=(\pi_i)_{i\in \mathrm{\mathbf{I}}}$.
It is also straightforward to verify that $(S,\pi)$ 
is the limit of $\mathrm{F}$ in $\CatPom$. If further $\mathrm{F}(i)$ satisfies (O1) and (O4) for each $i\in \mathrm{\mathbf{I}}$, then this is also the case for $S$.

Also, since the range of $\mathrm{F}$ is contained in $\mathrm{\textbf{Q}}$ (in fact, in $\CatCu$), we may define an auxiliary relation $\prec_{\mathrm{pw}}$ on $S$ by stating $(s_i)\prec_\mathrm{pw} (t_i)$ precisely when $s_i\prec t_i$ in $\mathrm{F}(i)$ for each $i\in \mathrm{\mathbf{I}}$.
This construction shows that $(S,\prec_\mathrm{pw})$ is the limit of $\mathrm{F}$ in the category $\mathrm{\textbf{Q}}$. In fact, the 
argument just outlined shows that $\mathrm{\textbf{Q}}$ is complete.

Let $(S,(\pi_i)_{i\in \mathrm{\mathbf{I}}})$ be the limit of $\mathrm{F}$ in $\mathrm{\textbf{Q}}$ as outlined, and let $\tau(S)$ be the $\tau$-completion of $S$.
Set $S_i:=\tau(\mathrm{F}(i))$, which we identify with $\mathrm{F}(i)$, and set 
\[\psi_i=\tau(\pi_i)\colon\tau(S)\to\tau(S_i)\cong S_i.\]
Then $\tau(S)$ together with $(\psi_i)_{i\in \mathrm{\mathbf{I}}}$ is the limit of $\mathrm{F}$ in $\CatCu$, that is:
\[
\CatCu\text{-}\varprojlim \mathrm{F} 
= \tau\left( \mathrm{\textbf{Q}}\text{-}\varprojlim \mathrm{F} \right)
= \tau\left( \CatPom\text{-}\varprojlim \mathrm{F}, \ll_\mathrm{pw} \right).\qedhere
\]
\end{proof}

As an immediate consequence, we have:

\begin{cor}
\label{cor:Cuprod}
The category $\CatCu$ has arbitrary products, arbitrary inverse limits, and finite pullbacks.
In particular, if $(S_i)_{i\in I}$ is a family of $\Cu$-semigroups, then
\[
\mathrm{\textbf{Cu}}\text{-}\prod\limits_{i\in I} S_i
=\tau\left( \mathrm{\textbf{Q}}\text{-}\prod\limits_{i\in I} S_i \right)
=\tau\left( \CatPom\text{-}\prod\limits_{i\in I} S_i, \ll_\mathrm{pw} \right).
\]
\end{cor}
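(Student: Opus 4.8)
The plan is to deduce everything directly from the completeness of $\CatCu$ established in \autoref{thm:Cucomplete}. Recall that a category is complete precisely when it admits limits of all functors out of small categories. Arbitrary products are limits over a discrete index category (the set $I$ regarded as a category with only identity morphisms); arbitrary inverse limits are limits over a directed poset, viewed as a small category; and finite pullbacks are limits over the finite cospan category $\bullet\to\bullet\leftarrow\bullet$. Since each of these indexing categories is small, \autoref{thm:Cucomplete} guarantees at once that all three kinds of limit exist in $\CatCu$.

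For the explicit description of products, I would specialize the construction carried out in the proof of \autoref{thm:Cucomplete} to the discrete category on the set $I$, with the functor $\mathrm{F}$ given by $\mathrm{F}(i)=S_i$. When the indexing category is discrete, its only morphisms are the identities, so the compatibility condition
\[
\mathrm{F}(f)(s_i)=s_j \quad \text{for all } f\colon i\to j \text{ in the index category}
\]
that cuts out the object $S\subseteq\CatPom\text{-}\prod_{i\in I}\mathrm{F}(i)$ in that proof is vacuous. Consequently $S$ is the full product $\CatPom\text{-}\prod_{i\in I} S_i$, equipped with the componentwise auxiliary relation $\ll_\mathrm{pw}$, which is exactly $\mathrm{\textbf{Q}}\text{-}\prod_{i\in I} S_i$. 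Applying the functor $\tau$ and using the identity $\CatCu\text{-}\varprojlim\mathrm{F}=\tau(\mathrm{\textbf{Q}}\text{-}\varprojlim\mathrm{F})$ from \autoref{thm:Cucomplete} then yields the displayed formula.

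The only points that require care, rather than genuine difficulty, are bookkeeping ones. First, one should check that the auxiliary relation $\prec_\mathrm{pw}$ appearing in the proof of \autoref{thm:Cucomplete} coincides with $\ll_\mathrm{pw}$ in the present statement; this holds because the auxiliary relation on each $\Cu$-semigroup $S_i$ is $\ll$. Second, one must confirm that the projection morphisms have the correct target, namely that $\psi_i=\tau(\pi_i)$ lands in $S_i$; this uses the natural identification $\tau(\iota(S_i))\cong S_i$ coming from the coreflection of \autoref{prp:CuCoreflQ}, which is valid since each $S_i$ is already a $\Cu$-semigroup. With these identifications in place, the universal property, transported from $\mathrm{\textbf{Q}}$ (where the product is built componentwise) through the right adjoint $\tau$, gives the product in $\CatCu$, and the cases of inverse limits and finite pullbacks follow the same pattern by choosing the appropriate small indexing category.
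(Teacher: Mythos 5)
Your proposal is correct and matches the paper's intended argument: the paper states \autoref{cor:Cuprod} as an immediate consequence of \autoref{thm:Cucomplete}, and your specialization to the discrete index category (where the compatibility condition cuts out nothing, so the limit object is the full $\CatPom$-product with $\ll_\mathrm{pw}$) is exactly how the displayed formula follows from the identity $\CatCu\text{-}\varprojlim \mathrm{F}=\tau(\mathrm{\textbf{Q}}\text{-}\varprojlim\mathrm{F})$ established there. Your bookkeeping remarks (identifying $\prec_\mathrm{pw}$ with $\ll_\mathrm{pw}$ and $\tau(\iota(S_i))\cong S_i$ via \autoref{prp:CuCoreflQ}) are the same identifications the paper uses implicitly.
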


Since we are mostly interested in the category $\CatCu$,
we will from now on denote the product in $\CatCu$ simply by 
$\prod$, instead of
$\mathrm{\textbf{Cu}}\text{-}\prod$. (The notation should
not be confused with the product of C$^*$-algebras.)

\section{Applications to products and ultraproducts of \texorpdfstring{C$^*$}{C*}-algebras}
\label{sec:prod}

In this section we explore the extent to which the Cuntz semigroup functor preserves products and ultraproducts of C$^*$-algebras. We begin with a simple observation, which is a consequence of the universal property of the product.

\begin{rem}\label{rem:UnivPropProd}
Let $(A_j)_{j\in J}$ be a family of C$^*$-algebras, and 
set $A=\prod_{j\in J}A_j$. For each $j\in J$, the natural projection $\pi_j\colon A\to A_j$ induces a $\Cu$-morphism $\tilde{\pi}_i\colon\Cu(A)\to \Cu(A_j)$. 
Then, by the universal property of the product, there is a unique $\Cu$-morphism
\[
\Phi\colon \Cu(A)\to \prod_{j\in J} \Cu(A_j)
\]
such that $\tilde{\pi}_i=\sigma_i\circ\Phi$ for all $j\in J$, where $\sigma_i\colon\prod_{j\in J}\Cu(A_j)\to\Cu(A_i)$ denotes the natural $\Cu$-morphism associated to the product in the category $\CatCu$. 
\end{rem}

\begin{lma}
\label{prp:PhiInjective}
Let $(A_j)_{j\in J}$ be a family of C$^*$-algebras, and 
set $A=\prod_{j\in J}A_j$. Let $\underline{a}=(a_j)_{j\in J}$ and 
$\underline{b}=(b_j)_{j\in J}\in A_+$. Then $\underline{a}\precsim\underline{b}$ in $A$ if, and only if, 
$\Phi([\underline{a}])\leq\Phi([\underline{b}])$ in $\prod_{j\in J}\Cu(A_j)$.
\end{lma}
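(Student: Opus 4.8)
The forward implication is immediate: if $\underline a\precsim\underline b$ in $A$ then $[\underline a]\le[\underline b]$ in $\Cu(A)$, and since $\Phi$ is a $\Cu$-morphism (in particular order-preserving) we get $\Phi([\underline a])\le\Phi([\underline b])$. So the content is the converse, and the plan is to deduce $\underline a\precsim\underline b$ from $\Phi([\underline a])\le\Phi([\underline b])$ by verifying the cut-down criterion $(\underline a-\varepsilon)_+\precsim\underline b$ for every $\varepsilon>0$, which suffices by R{\o}rdam's lemma (\autoref{thm:Rordam}).

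First I would extract a \emph{uniform} comparison from the hypothesis. Fix $\varepsilon>0$. Since $[(\underline a-\varepsilon)_+]\ll[\underline a]$ in $\Cu(A)$ by \autoref{rem:CutDownWayBelow}, and $\Phi$ preserves $\ll$, suprema, and order, applying it gives $\Phi([(\underline a-\varepsilon)_+])\ll\Phi([\underline a])\le\Phi([\underline b])=\sup_n\Phi([(\underline b-\tfrac1n)_+])$, the last equality coming from \autoref{rem:CutDownsConverge}. By the definition of compact containment there is $n$ with $\Phi([(\underline a-\varepsilon)_+])\le\Phi([(\underline b-\delta)_+])$, where $\delta=\tfrac1n$. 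Composing with the $j$-th projection $\sigma_j$ of the product (a $\Cu$-morphism with $\sigma_j\circ\Phi=\Cu(\pi_j)$; see \autoref{rem:UnivPropProd}), and using that $\Cu(\pi_j)$ sends $[(\underline a-\varepsilon)_+]$ to $[(a_j-\varepsilon)_+]$, I obtain the key uniform estimate $(a_j-\varepsilon)_+\precsim(b_j-\delta)_+$ \emph{for all $j$ with one and the same $\delta$}.

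The main obstacle is that pointwise Cuntz subequivalence, even with a uniform parameter $\delta$, does not obviously yield subequivalence in the $\ell^\infty$-product $A$, since the elements implementing it could have norms blowing up in $j$. The plan is to produce uniformly bounded implementers and then relocate them into a suitable hereditary subalgebra. Applying part~(iv) of R{\o}rdam's lemma to each $(a_j-\varepsilon)_+\precsim(b_j-\delta)_+$ (with cut parameter $\varepsilon$) yields $x_j\in A_j$ with $(a_j-2\varepsilon)_+=x_j^*x_j$ and $x_jx_j^*\in A_{j,(b_j-\delta)_+}$; crucially $\|x_j\|^2=\|(a_j-2\varepsilon)_+\|\le\|\underline a\|$, so $\underline x=(x_j)_j$ is a genuine element of $A$ with $\underline x^*\underline x=(\underline a-2\varepsilon)_+$. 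It remains to control $\underline x\,\underline x^*=(x_jx_j^*)_j$. Here the uniformity of $\delta$ is decisive: choosing a single continuous function $e$ with $e(0)=0$ and $e\equiv1$ on $[\delta,\infty)$, the element $e(b_j)$ acts as a unit on $A_{j,(b_j-\delta)_+}$, so $x_jx_j^*=e(b_j)\,x_jx_j^*\,e(b_j)$ for every $j$; hence $\underline x\,\underline x^*=e(\underline b)\,\underline x\,\underline x^*\,e(\underline b)\in A_{e(\underline b)}$, where $e(\underline b)=(e(b_j))_j\in A$ is well defined because $e$ is the same in every coordinate. Since $e(\underline b)\precsim\underline b$ by \autoref{cor:FuncCalc}(i), \autoref{prop:InHerSubalg} gives $\underline x\,\underline x^*\precsim e(\underline b)\precsim\underline b$, and therefore $(\underline a-2\varepsilon)_+=\underline x^*\underline x\sim\underline x\,\underline x^*\precsim\underline b$ by \autoref{cor:FuncCalc}(iv).

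As $\varepsilon>0$ was arbitrary, $(\underline a-\varepsilon')_+\precsim\underline b$ for all $\varepsilon'>0$, and \autoref{thm:Rordam} yields $\underline a\precsim\underline b$, completing the converse. The remaining points are routine bookkeeping: functional calculus and cut-downs in $A=\prod_jA_j$ are computed coordinatewise, and $e(b_j)$ restricts to the unit on $A_{j,(b_j-\delta)_+}$ because $e(b_j)\,(b_j-\delta)_+^{1/m}=(b_j-\delta)_+^{1/m}$ for all $m$.
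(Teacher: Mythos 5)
Your proof is correct, and its second half — the genuinely hard direction — is essentially the paper's argument: from a uniform estimate $(a_j-\varepsilon)_+\precsim(b_j-\delta)_+$ (one $\delta$ for all $j$) you extract implementers $x_j$ via R{\o}rdam's lemma (\autoref{thm:Rordam}), observe that the bound $\|x_j\|^2\leq\|\underline{a}\|$ makes $\underline{x}$ a genuine element of the product, and relocate $\underline{x}\,\underline{x}^*$ into a hereditary subalgebra of $\underline{b}$ using a single cut-off function applied coordinatewise (your $e$ is exactly the paper's $f_\delta$). Where you genuinely diverge is in how the uniform $\delta$ is obtained. The paper unpacks the concrete construction of $\prod_{j}\Cu(A_j)$ from \autoref{thm:Cucomplete}: elements are classes of $\ll_{\mathrm{pw}}$-increasing paths in the $\CatPom$-product, $\Phi([\underline{a}])$ is represented by $t\mapsto([(a_j+t)_+])_j$, and the order in the $\tau$-construction translates verbatim into the condition $(\ast)$ that for every $\varepsilon>0$ there is $\delta>0$ with $(a_j-\varepsilon)_+\precsim(b_j-\delta)_+$ for all $j$; in fact this shows \emph{both} sides of the lemma are equivalent to $(\ast)$, giving an intrinsic description of the order that the path machinery is invoked for again in the surjectivity argument of \autoref{prop:stable}. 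You instead derive the one implication you need purely abstractly: since $\Phi$ is a $\Cu$-morphism it preserves $\ll$ and suprema, so $\Phi([(\underline{a}-\varepsilon)_+])\ll\Phi([\underline{b}])=\sup_n\Phi([(\underline{b}-\tfrac1n)_+])$, compact containment yields the uniform $\delta=\tfrac1n$, and composing with the coordinate morphisms $\sigma_j$ gives $(\ast)$. Your route is cleaner in that it never touches the path picture and would work for any $\Cu$-morphism into any product compatible with the projections; what the paper's computation buys in exchange is the two-way characterization. The remaining discrepancies (your cut parameter $2\varepsilon$ versus the paper's $\varepsilon$ obtained via $\varepsilon/2$; concluding $\underline{x}\,\underline{x}^*\in A_{e(\underline{b})}$ and chaining $e(\underline{b})\precsim\underline{b}$ rather than landing directly in $A_{\underline{b}}$) are immaterial.
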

\begin{proof}
We show that the conditions in the statement are equivalent to: 
\be 
\item[($\ast$)] For every $\varepsilon>0$ there exists $\delta>0$ such that $(a_j-\varepsilon)_+\precsim(b_j-\delta)_+$ in $A_j$ for every $j\in J$.\ee

To see this, set $\varphi_t(\underline{a}) = ([(a_j+t)_+])_j$, and $\varphi_t(\underline{b}) = ([(b_j+t)_+])_j$, for any $t\in(-\infty,0]$, whence $\Phi([\underline{a}]) = [ \varphi_t(\underline{a})_{t\in(-\infty,0]} ]$,  and $\Phi([\underline{b}]) = [ \varphi_t(\underline{b})_{t\in(-\infty,0]} ]$.

As shown in \autoref{thm:Cucomplete}, we have 
\[\prod_{j\in J} \Cu(A_j)=\tau\Big(\CatPom\text{-}\prod_{j\in J}\Cu(A_j),\ll_\mathrm{pw}\Big),\] 
and thus $\Phi([\underline{a}])\leq\Phi([\underline{b}])$ if and only if for every $t<0$ there exists $t'<0$ such that $\varphi_t(\underline{a}) \ll_\mathrm{pw} \varphi_{t'}(\underline{b})$. That is, given $\varepsilon>0$ there exists $\delta>0$ such that $\varphi_{-\varepsilon}(\underline{a}) \ll_\mathrm{pw} \varphi_{-\delta}(\underline{b})$. Thus the second condition in the statement is equivalent to (*).

R{\o}rdam's lemma (\autoref{thm:Rordam}) shows that the first condition of the statement implies $(*)$. For the converse, assume $(*)$ and let $\varepsilon>0$.
Again by R{\o}rdam's lemma, we need to find $\underline{s}$ in $A$ such that $\underline{s}\underline{s}^*=(\underline{a}-\varepsilon)_+$ and $\underline{s}^*\underline{s} A_{\underline{b}}$.
By assumption, for each $j\in J$ there is $\delta>0$ such that $(a_j-\tfrac{\varepsilon}{2})_+\precsim (b_j-\delta)_+$ in $A_j$.
By R{\o}rdam's lemma applied to $(a_j-\tfrac{\varepsilon}{2})_+\precsim (b_j-\delta)_+$ and $\tfrac{\varepsilon}{2}$, we obtain $s_j\in A_j$ such that
\[
(a_j-\varepsilon)_+ 
= \left( (a_j-\tfrac{\varepsilon}{2})_+  -\tfrac{\varepsilon}{2}\right)_+
= s_js_j^*, \text{ and } 
s_j^*s_j \in (A_j)_{(b_j-\delta)_+}.
\]

Since $\| s_j \| \leq \| a_j \|^{\frac{1}{2}}$ for all $j\in J$, we see that $(s_j)_{j\in J}$ is bounded, and thus $\underline{s}:=(s_j)_{j\in J}\in A$ satisfies $\underline{s}\underline{s}^*=(\underline{a}-\epsilon)_+$.
Now let $f_\delta\colon\R\to[0,1]$ be continuous with $f(t)=0$ for $t\leq 0$ and $f(t)=1$ for $t\geq\delta$. Then $f_\delta(b_j)s_j^*s_jf_\delta(b_j)=s_j^*s_j$ for each $j$, and therefore $f_\delta(\underline{b})\underline{s}^*\underline{s}f_\delta(\underline{b})=\underline{s}^*\underline{s}$, which implies that $\underline{s}^*\underline{s}$ belongs to the hereditary algebra generated by $\underline{b}$.
\end{proof}

\begin{prop}
\label{prop:stable}
Let  $(A_j)_{j\in J}$ be a family of \emph{stable} C$^*$-algebras. Then the canonical map $\Phi$ from \autoref{rem:UnivPropProd} is a $\Cu$-isomorphism.
\end{prop}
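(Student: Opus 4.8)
The plan is to separate injectivity from surjectivity. Injectivity is already in hand: \autoref{prp:PhiInjective} shows that $\underline{a}\precsim\underline{b}$ in $A$ is equivalent to $\Phi([\underline{a}])\leq\Phi([\underline{b}])$, so $\Phi$ is an order-embedding and in particular injective. It therefore remains to prove surjectivity, and this is where stability of the factors $A_j$ is essential. To set up, recall from \autoref{cor:Cuprod} that
\[
\prod_{j\in J}\Cu(A_j)=\tau\Big(\CatPom\text{-}\prod_{j\in J}\Cu(A_j),\ll_{\mathrm{pw}}\Big),
\]
so a general element of the product is the class $[g]$ of a path $g$ in the $\mathcal{Q}$-semigroup $S=(\CatPom\text{-}\prod_j\Cu(A_j),\ll_{\mathrm{pw}})$. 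Restricting $g$ to a sequence $t_n\uparrow 0$ yields a cofinal $\ll_{\mathrm{pw}}$-increasing sequence $\mathbf{x}^{(n)}=(x_j^{(n)})_{j\in J}$, which amounts to the data of a $\ll$-increasing sequence $x_j^{(1)}\ll x_j^{(2)}\ll\cdots$ in each $\Cu(A_j)$. I want to manufacture a single bounded family $\underline{a}=(a_j)_{j\in J}\in A_+$ whose image $\Phi([\underline{a}])$, which by the computation in the proof of \autoref{prp:PhiInjective} is the class of the path $t\mapsto([(a_j+t)_+])_j$, agrees with $[g]$.

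The construction proceeds coordinatewise, and stability enters in two ways. First, since $A_j\cong A_j\otimes\K$, every class in $\Cu(A_j)$ has a representative in $(A_j)_+$ itself, so the family I build genuinely lies in $A=\prod_jA_j$ rather than in $\prod_j(A_j\otimes\K)$. Second, stable C$^*$-algebras have weak stable rank one, so \autoref{thm:StableUnitImpl} is available. Using the realization machinery behind \autoref{thm:suprema} together with \autoref{rem:ReplaceIncrSeq}, for each $j$ I would produce a positive contraction $b_j\in(A_j)_+$ and a null sequence $\delta_1^{(j)}>\delta_2^{(j)}>\cdots\to 0$ with $[(b_j-\delta_n^{(j)})_+]=x_j^{(n)}$ for all $n$; after rescaling (\autoref{cor:FuncCalc}) I may assume $\|b_j\|\leq 1$. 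The thresholds $\delta_n^{(j)}$ depend on $j$, which is exactly what obstructs assembling a coherent element of the product, so I would remove this dependence by a functional-calculus reparametrization: choose a strictly increasing homeomorphism $\varphi_j\colon[0,\|b_j\|]\to[0,1]$ with $\varphi_j(0)=0$ and $\varphi_j(\delta_n^{(j)})=2^{-n}$, and set $a_j=\varphi_j(b_j)$. Then $(a_j-2^{-n})_+$ and $(b_j-\delta_n^{(j)})_+$ are functions of $b_j$ with the same open support on $\spec(b_j)$, so \autoref{prop:CtzCompCX} gives $[(a_j-2^{-n})_+]=x_j^{(n)}$, now at the $j$-independent levels $2^{-n}$.

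With $\underline{a}=(a_j)_j\in A_+$ a contraction, the verification that $\Phi([\underline{a}])=[g]$ reduces to comparing the path $h\colon t\mapsto([(a_j+t)_+])_j$ with $g$, that is, with the cofinal sequence $(\mathbf{x}^{(n)})_n$. By \autoref{prop:WayBelowCutDown}, $[(a_j-\delta)_+]\ll[(a_j-\delta')_+]$ whenever $\delta'<\delta$, and since the cut-down thresholds $2^{-n}$ are uniform in $j$ this yields, for every $s<0$, a single index $n$ with $[(a_j+s)_+]\ll x_j^{(n)}$ for all $j$ (giving $h\precsim g$), and for every $n$ a single $s<0$ with $x_j^{(n)}\ll[(a_j+s)_+]$ for all $j$ (giving $g\precsim h$). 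Hence $h\sim g$ in $\Paths(S)$, so $\Phi([\underline{a}])=[g]$ and $\Phi$ is surjective.

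I expect the main obstacle to be precisely the uniformity isolated above. The existence of a representative realizing a $\ll$-increasing sequence as the cut-downs $[(b-\delta_n)_+]$ of a single element is standard, but the content of the argument is that the thresholds can be normalized to be \emph{identical across all $j$}, so that the path comparisons in the $\tau$-picture of the product can be carried out with a common index; this is what the coordinatewise reparametrization, enabled by stability, achieves. A secondary and routine point is that the reparametrized family remains a norm-bounded element of $A$, which is immediate once each $a_j$ is a contraction.
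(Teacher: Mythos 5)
Your plan founders on the step you yourself flag as the crux: the claim that for each $j$ there is a \emph{single} element $b_j\in(A_j)_+$ and levels $\delta_n^{(j)}\downarrow 0$ with $[(b_j-\delta_n^{(j)})_+]=x_j^{(n)}$ \emph{exactly} for all $n$. Nothing you cite delivers this. The machinery of \autoref{thm:suprema} and \autoref{prop:CutDownWayBelow} only gives mutual cofinality between the prescribed sequence and the cut-down family of a realizing element (for each $n$ there is $\delta$ with $x_j^{(n)}\leq[(b_j-\delta)_+]$, and conversely each cut-down class is dominated by some $x_j^{(m)}$), and \autoref{rem:ReplaceIncrSeq} explicitly warns that one cannot in general arrange Cuntz \emph{equivalence} with the given terms. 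With only $j$-dependent cofinality instead of exact equality, your reparametrization trick has nothing to normalize, and the uniform-in-$j$ quantifiers needed to compare the path $t\mapsto([(a_j+t)_+])_j$ with $g$ in the $\tau$-picture collapse. The paper sidesteps exact realization entirely: since $\Phi$ is a sup-preserving order-embedding and every element of $\prod_j\Cu(A_j)$ is the supremum of a $\ll$-increasing sequence, it suffices to show the image is \emph{order-dense}. Given $x\ll y$, one picks representing paths, a $t<0$ with the time-$0$ slice $(x_{0,j})_j$ pointwise way below $([a_j])_j$, and then invokes a functional-calculus lemma (\cite[Lemma 5.8]{AntPerThi_ultraproducts_2020}) producing a \emph{contraction} $b_j$ with $[b_j]=[a_j]$ and $x_{0,j}\ll[(b_j-\tfrac{1}{2})_+]$: one uniform level, with $\ll$ rather than equality. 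This yields $x\ll\Phi([\underline{b}])\ll y$, and surjectivity follows by interpolating along a $\ll$-increasing sequence for $y$ and taking suprema. So your instinct that uniformity of cut-down levels across $j$ is the heart of the matter is exactly right; the lesson is that it need only be engineered at a single level and up to compact containment, which is provable, rather than along the whole sequence exactly, which is not.

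There is a second, independent gap: $A=\prod_jA_j$ is \emph{not} stable even when every $A_j$ is, and $\Cu(A)$ is built from $(A\otimes\K)_+$, whereas both \autoref{prp:PhiInjective} and your surjectivity construction deal only with classes of elements of $A_+$. To upgrade the order-embedding (and hence the isomorphism) to all of $\Cu(A)$ you must know that every class in $\Cu(A)$ is represented in $A_+$, with comparison computed in $A$. The paper supplies this by showing that a product of C$^*$-algebras with property (S) again has property (S), and that property (S) forces Cuntz comparison in $A$ and in $A\otimes\K$ to agree; your proposal is silent on this point, and your remark that stability lets you choose representatives in $(A_j)_+$ addresses the coordinates but not the product.
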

\begin{proof}
Recall first that a C$^*$-algebra $A$ is said to have property (S) if if for every $a\in A_+$ and every $\varepsilon>0$ there exist $b\in A_+$ and $x\in A$ such that $a=x^*x$, $b=xx^*$ and $\|ab\|\leq\varepsilon$. It is known that any stable C$^*$-algebra has property (S), and the converse holds in the separable case; see \cite{HjeRor_stability_1998}.

Set $A=\prod_{j\in J}A_j$.
We claim that if all $A_j$ are stable, then $A$ has property (S). To show this, let $a=(a_j)_{j\in J}\in A_+$ and $\varepsilon>0$, and use that each $A_j$ has property (S) to find $b_j\in (A_j)_+$ and $x_j\in A_j$ such that 
\[a_j=x_j^*x_j, \ \ b_j=x_jx_j^*, \ \ \mbox{ and } \ \ \|a_jb_j\|\leq\varepsilon.\]
Set $b:=(b_j)_{j\in J}$ and $x:=(x_j)_{j\in J}$. Note that $\|x\|=\sup\limits_i\|x_i\|<\infty$ since $\|x_j\|=\|a_j\|^{\frac{1}{2}} \leq \|a\|^{\frac{1}{2}}$ for each $j$. Similarly, $\|b\|<\infty$. Hence $b,x$ belong to $A$, so $a=x^*x$, $b=xx^*$ and $\|ab\|\leq\varepsilon$.

Since $A$ has property (S), it is possible to compute its Cuntz semigroup by just looking at Cuntz classes of its positive elements (withouth need to go to matrices). The basic idea is that if $a$ is a positive element in the stabilisation of $A$, then one may choose a separable subalgebra of $A$ with property (S) that contains $a$, and such subalgebra will then be stable. This is used to show that Cuntz equivalence on $A$ and on $A\otimes\K$ agree.

We now prove that the natural map $\Phi$ from
\autoref{rem:UnivPropProd} is an isomorphism. We already know from \autoref{prp:PhiInjective} that $\Phi$ is an order-embedding.  Therefore, to see it is surjective it is enough to verify, using that $\Cu(A)$ has suprema of increasing sequences preserved by $\Phi$,  that the image of $\Phi$ is order-dense.

Let $x,y\in \prod_{j\in J}\Cu(A_j)$ satisfy $x\ll y$.
We will find $\underline{b}\in A_+$ such that $x\ll\Phi([\underline{b}])\ll y$.
Choose $\ll_\mathrm{pw}$-increasing paths 
\[(\vect{x}_t)_{t\in(-\infty,0]}, (\vect{y}_t)_{t\in(-\infty,0]}\in \CatPom\text{-}\prod_{j\in J}\Cu(A_j)\] representing $x$ and $y$, respectively. By \cite[Lemma~3.16]{AntPerThi_abstract_2020}, we may choose $t<0$ such that $\vect{x}_0\ll_\mathrm{pw}\vect{y}_t$.

For each $j\in J$, choose $x_{0,j}\in\Cu(A_j)$ such that $\vect{x}_0=(x_{0,j})_{j\in J}$,
and choose $a_j\in (A_j)_+$ such that $\vect{y}_t = ([a_j])_{j\in J}$.
Then $x_{0,j}\ll[a_j]$.
Using functional calculus (see \cite[Lemma 5.8]{AntPerThi_ultraproducts_2020}), one can find a contraction $b_j\in (A_j)_+$ such that $x_{0,j} \ll [(b_j-\frac{1}{2})_+]$, and $[b_j]=[a_j]$. Set $\underline{b}:=(b_j)_{j\in J}$, which is a contraction in $A_+$.
We have $\vect{x}_0
\ll_\mathrm{pw} ([(b_j-\frac{1}{2})_+])_{j\in J}
= \varphi_{-\frac{1}{2}}(\underline{b})$, and 
\[\varphi_0(\underline{b})
= ([b_j])_{j\in J}
= ([a_j])_{j\in J}
= \vect{y}_t
\ll_\mathrm{pw} \vect{y}_{\frac{t}{2}}.\]
Therefore $x =[(\vect{x}_t)_{t\leq 0}]
\ll [(\varphi_t(\underline{b}))_{t\leq 0}]
= \Phi([\underline{b}])
\ll [(\vect{y}_t)_{t\leq 0}] = y.$ This shows that $\underline{b}$ has the desired properties.
\end{proof}

\begin{eg}
The above result does not hold if the algebras are not stable. 
Set $A_j:=\C$ for each $j\in\N$, and set $A:=\prod_{j\in \N} A_j \cong \ell^\infty(\N)$.
We have $\Cu(A_j)\cong\overline{\N}$ for each $j$, and the product $\prod_{j\in \N}\NN$ is defined as the equivalence classes of $\ll_\mathrm{pw}$-increasing paths $(-\infty,0]\to \CatPom\text{-}\prod_{j\in \N}\NN$, so that $\prod_{j\in \N}\NN$ may be identified with equivalence classes of componentwise increasing paths $(-\infty,0)\to \CatPom\text{-}\prod_{j\in \N}\NN$.
In particular, compact elements in $\prod_{j\in \N}\NN$ naturally corresponds to functions $\N\to\N$. 

We claim that the natural order-embedding $\Phi\colon \Cu(A) \to \prod_{j\in \N}\NN$ is not surjective. To see this, note that $\Phi$ maps the Cuntz class of the unit of $A$ to the compact element in $\prod_{j\in \N}\NN$ corresponding to the function $f\colon \N\to\N$ with $f(j)=1$ for all $j$. 
Let $x$ be the compact element in $\prod_{j\in \N}\NN$ that corresponds to the function $g\colon\N\to\N$ with $g(j)=j$ for all $j$.
Since $g\nleq n f$ for every $n\in\N$, we have $x\nleq \infty_{ \Phi([1_A])}$.
In particular, $\Phi([1_A])$ is not full, and $\Cu(A)$ is isomorphic to a proper ideal of $\prod_{j\in \N}\Cu(A_j)$; see \autoref{nota:infa} and the comments after it. 
\end{eg}

In order to capture the Cuntz semigroup of the product of C$^*$-algebras in the non-stable case, we need to keep track of the position of the algebra and to this end the invariant needs to be modified.

\begin{df}
A \emph{scale} for a $\Cu$-semigroup $S$ is a downward hereditary subset $\Sigma\subseteq S$ that is closed under suprema of increasing sequences in $S$, and that generates $S$ as an ideal. The pair $(S,\Sigma)$ will be referred to as a \emph{scaled $\Cu$-semigroup}.

Given scaled $\Cu$-semigroups $(S,\Sigma)$ and $(T,\Theta)$, a \emph{scaled $\Cu$-morphism} is a $\Cu$-morphism $\alpha\colon S\to T$ satisfying $\alpha(\Sigma)\subseteq\Theta$.
We let $\CatCu_\mathrm{sc}$ denote the category of scaled $\Cu$-semigroups and scaled $\Cu$-morphisms.
\end{df}

For a C$^*$-algebra $A$, note that
\[
\Sigma_A := \big\{ x\in\Cu(A) \colon \text{there exists } a\in A_+ \text{ such that } x\leq [a] \big\}
\]
is a scale for $\Cu(A)$.
We call $\Cu_\mathrm{sc}(A):=(\Cu(A),\Sigma_A)$ the \emph{scaled Cuntz semigroup} of $A$.
Given a homomorphism $\varphi\colon A\to B$ of \ca s, the induced $\Cu$-morphism $\Cu(\varphi)\colon\Cu(A)\to\Cu(B)$ maps $\Sigma_A$ into $\Sigma_B$, and thus $\Cu_\mathrm{sc}$ defines a functor from the category \textbf{C$^*$} of C$^*$-algebras to $\CatCu_\mathrm{sc}$. The following is \cite[Theorem 4.6]{AntPerThi_ultraproducts_2020}.

\begin{thm}
\label{pgr:scaledcu}
The category $\CatCu_\mathrm{sc}$ is complete.
\end{thm}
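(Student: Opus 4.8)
The plan is to compute the limit by first forgetting the scale, using the completeness of $\CatCu$ already established in \autoref{thm:Cucomplete}, and then reinstating the scale as the \emph{ideal it generates}. Let $U\colon\CatCu_\mathrm{sc}\to\CatCu$ be the forgetful functor $(S,\Sigma)\mapsto S$, fix a small category $\mathbf{I}$ and a functor $\mathrm{F}\colon\mathbf{I}\to\CatCu_\mathrm{sc}$, and write $\mathrm{F}(i)=(S_i,\Sigma_i)$. First I would apply \autoref{thm:Cucomplete} to obtain a limit $(L,(\psi_i)_{i\in\mathbf{I}})$ of $U\circ\mathrm{F}$ in $\CatCu$, where the $\psi_i\colon L\to S_i$ are Cu-morphisms with $\mathrm{F}(f)\circ\psi_i=\psi_j$ for every $f\colon i\to j$. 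On $L$ I define the componentwise scale
\[
\Sigma_L := \{x\in L : \psi_i(x)\in\Sigma_i \text{ for all } i\in\mathbf{I}\}.
\]
Because each $\psi_i$ preserves order and suprema of increasing sequences, and each $\Sigma_i$ is downward hereditary and closed under such suprema, it is routine to check that $\Sigma_L$ is downward hereditary and closed under suprema of increasing sequences in $L$.

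The point where care is needed is that $\Sigma_L$ need \emph{not} generate $L$ as an ideal: already for $L=\prod_{j}\Cu(\C)\cong\prod_j\NN$ the componentwise scale generates only the proper ideal corresponding to $\Cu(\ell^\infty(\N))$, exactly as in the example of $\ell^\infty(\N)$ discussed before this theorem. Consequently the correct underlying object of the limit is the ideal $I_L\subseteq L$ generated by $\Sigma_L$ (in the sense of \autoref{df:IdealCu}). As an ideal of a Cu-semigroup, $I_L$ is itself a Cu-semigroup, and $\Sigma_L\subseteq I_L$ is downward hereditary, closed under suprema, and generates $I_L$ \emph{tautologically}, so $(I_L,\Sigma_L)$ is a scaled Cu-semigroup. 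The corestrictions $\psi_i|_{I_L}\colon I_L\to S_i$ are scaled Cu-morphisms, since $\psi_i(\Sigma_L)\subseteq\Sigma_i$ holds by the very definition of $\Sigma_L$, and they still form a cone over $\mathrm{F}$.

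For the universal property, let $(L',\Sigma')$ be a scaled Cu-semigroup equipped with scaled morphisms $\varphi_i'\colon L'\to S_i$ forming a cone. The underlying Cu-morphisms $\varphi_i'$ produce, by the universal property in $\CatCu$, a unique Cu-morphism $\alpha\colon L'\to L$ with $\psi_i\circ\alpha=\varphi_i'$. For $s\in\Sigma'$ one has $\psi_i(\alpha(s))=\varphi_i'(s)\in\Sigma_i$ for all $i$, hence $\alpha(\Sigma')\subseteq\Sigma_L\subseteq I_L$. Since the preimage of an ideal under a Cu-morphism is again an ideal, $\alpha^{-1}(I_L)$ is an ideal of $L'$ containing $\Sigma'$; as $\Sigma'$ generates $L'$, this forces $\alpha^{-1}(I_L)=L'$, i.e.\ $\alpha(L')\subseteq I_L$. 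Thus $\alpha$ corestricts to a scaled Cu-morphism $L'\to I_L$ factoring the cone, and it is unique because the ideal inclusion $I_L\hookrightarrow L$ is a Cu-morphism (compact containment in an ideal agrees with that in the ambient semigroup, cf.\ the order-embedding in \autoref{lma:IdealAIdealCu}), so any competing factorization has the same underlying map into $L$, which is unique by \autoref{thm:Cucomplete}.

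The conceptual crux — and the reason the invariant must be enlarged by a scale at all — is precisely this passage from the raw $\CatCu$-limit $L$ to the generated ideal $I_L$; once it is identified, generation is automatic and the genuine content reduces to the (routine) verifications that $\Sigma_L$ is downward hereditary and sup-closed and that the mediating morphism corestricts while respecting scales. I expect the main technical obstacle to lie not in these steps but in pinning down that the inclusion $I_L\hookrightarrow L$ is a Cu-morphism (so that uniqueness transfers) and in keeping the two notions of ideal — inside $I_L$ and inside $L$ — aligned. As an immediate consequence, $\CatCu_\mathrm{sc}$ will have all small products, finite pullbacks and inverse limits, each computed as the $\Sigma$-generated ideal sitting inside the corresponding $\CatCu$-limit, which in particular recovers $\Cu_\mathrm{sc}(\ell^\infty(\N))$ as the scaled product of the $\Cu_\mathrm{sc}(\C)$.
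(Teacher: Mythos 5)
Your proposal is correct and takes essentially the same route as the paper: there, too, one forms the $\CatCu$-limit of the underlying functor (as $\tau$ applied to the $\CatPom$-limit), equips it with the componentwise scale, and takes the underlying semigroup of the scaled limit to be the ideal generated by that scale, with cone maps the restricted $\psi_i$. Your intrinsic scale $\Sigma_L=\{x : \psi_i(x)\in\Sigma_i \text{ for all } i\}$ coincides with the paper's path-wise scale $\Sigma$ (by downward heredity and sup-closedness of each $\Sigma_i$), and your verification of the universal property --- which the paper explicitly omits, including the key point that compact containment in an ideal agrees with that in the ambient $\Cu$-semigroup, so that $I_L\hookrightarrow L$ is a $\Cu$-morphism and uniqueness transfers from \autoref{thm:Cucomplete} --- is sound.
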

\begin{proof}
Let $\mathrm{\mathbf{I}}$ be a small category, and let $\mathrm{F}\colon \mathrm{\mathbf{I}}\to\CatCu_\mathrm{sc}$ be a functor, written 
$i\mapsto \mathrm{F}(i)=(S_i,\Sigma_i)$.
Considering the underlying functor $\mathrm{\mathbf{I}}\to\CatPom$ given by $i\mapsto S_i$, we let $(S,(\pi_i)_{i\in \mathrm{\mathbf{I}}})$ be the limit of $\mathrm{F}$ in $\CatPom$ with $S$ as in the proof of \autoref{thm:Cucomplete}. We have that
\[
\Sigma_0 := S \cap \prod_{i\in \mathbf{I}} \Sigma_i
= \Big\{ (s_i)_{i\in \mathrm{\mathbf{I}}}\in\prod_{i\in \mathrm{\mathbf{I}}} \Sigma_i \colon  \mathrm{F}(f)(s_i)=s_j \text{ for all } f\colon i\to j \text{ in }\mathrm{\mathbf{I}} \Big\}.
\]
Then $\Sigma_0$ is a downward hereditary subset of $S$ satisfying $\pi_i(\Sigma_0)\subseteq \Sigma_i$ for all $i\in \mathrm{\mathbf{I}}$.

Composing with the forgetful functor (that forgets the scaled structure), we get as limit
$\tau( S, \ll_\mathrm{pw} )$ together with maps 
\[
\psi_i:=\tau(\pi_i)\colon\tau(S,\ll_\mathrm{pw})\to\tau(S_i,\ll)\cong S_i
\]
for all $i\in \mathbf{I}$; see the proof of \autoref{thm:Cucomplete}.
Set
\[
\Sigma := \Big\{ [(\vect{x}_t)_{t\leq 0}] \in \tau( S, \ll_\mathrm{pw} ) : \vect{x}_t \in \Sigma_0 \text{ for all } t<0 \Big\}.
\]
Then $\Sigma$ is a downward hereditary subset of $\tau( S, \ll_\mathrm{pw} )$ that is closed under passing to suprema of increasing sequences.
Let $\langle \Sigma \rangle$ denote the ideal of $\tau( S, \ll_\mathrm{pw} )$ generated by $\Sigma$.
Then $(\langle\Sigma\rangle,\Sigma)$ is a scaled $\Cu$-semigroup.
Moreover, for each $i\in \mathbf{I}$ we have $\psi_i(\Sigma)\subseteq\Sigma_i$, which shows that $\psi_i\colon(\langle\Sigma\rangle,\Sigma)\to(S_i,\Sigma_i)$ is a scaled $\Cu$-morphism.
One can show that this defines a limit for $\mathrm{F}$ in $\CatCu_\mathrm{sc}$. We omit the details.
\end{proof}

\begin{thm}
\label{thm:Cupreserves}
The scaled Cuntz semigroup functor preserves products.
\end{thm}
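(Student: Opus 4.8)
The plan is to realize $\Cu_{\mathrm{sc}}(A)$, where $A=\prod_{j\in J}A_j$, as the product $\prod_{j\in J}\Cu_{\mathrm{sc}}(A_j)$ in $\CatCu_{\mathrm{sc}}$ through the canonical comparison map. First I would record the explicit shape of this product. By \autoref{pgr:scaledcu} together with \autoref{cor:Cuprod}, its underlying $\Cu$-semigroup is the $\CatCu$-product $\prod_{j\in J}\Cu(A_j)=\tau\big(\CatPom\text{-}\prod_{j\in J}\Cu(A_j),\ll_{\mathrm{pw}}\big)$, and its scale is $(\langle\Sigma\rangle,\Sigma)$, where $\Sigma$ consists of the classes of those $\ll_{\mathrm{pw}}$-increasing paths whose values lie in $\Sigma_0=\prod_{j\in J}\Sigma_{A_j}$, and $\langle\Sigma\rangle$ is the ideal generated by $\Sigma$. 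The projections $\pi_j\colon A\to A_j$ induce scaled $\Cu$-morphisms $\Cu_{\mathrm{sc}}(\pi_j)$ (they map $\Sigma_A$ into $\Sigma_{A_j}$ because $\pi_j(A_+)\subseteq (A_j)_+$ and the scales are downward hereditary), so the universal property produces a unique scaled morphism $\Psi\colon\Cu_{\mathrm{sc}}(A)\to(\langle\Sigma\rangle,\Sigma)$. By the uniqueness in \autoref{rem:UnivPropProd}, the underlying $\Cu$-morphism of $\Psi$ is exactly $\Phi$. Thus everything reduces to proving that $\Phi$ is an order-embedding with image $\langle\Sigma\rangle$ and that $\Phi(\Sigma_A)=\Sigma$.

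Next I would prove that $\Phi$ is an order-embedding. Applying \autoref{prp:PhiInjective} to the family $(M_k(A_j))_{j\in J}$ — and using the natural identifications $M_k(A)=\prod_{j}M_k(A_j)$ and $\Cu(M_k(A_j))\cong\Cu(A_j)$, under which the comparison map is again $\Phi$ — shows that $\Phi$ is an order-embedding when restricted to $\W(A)=M_\infty(A)_+/\!\sim$. To pass to all of $\Cu(A)$, recall from \autoref{thm:completion} (equivalently, axiom (O2)) that every element of $\Cu(A)$ is the supremum of a $\ll$-increasing sequence of elements of $\W(A)$. If $\Phi(x)\leq\Phi(y)$, write $x=\sup_n w_n$ and $y=\sup_m v_m$ with $w_n,v_m\in\W(A)$ and $w_n\ll w_{n+1}$; since $\Phi$ preserves $\ll$ and suprema, $\Phi(w_n)\ll\Phi(y)=\sup_m\Phi(v_m)$, so $\Phi(w_n)\leq\Phi(v_m)$ for some $m$, whence $w_n\leq v_m\leq y$ by the embedding on $\W(A)$; taking suprema gives $x\leq y$.

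The heart of the argument is to show $\Phi(\Sigma_A)=\Sigma$ and that $\Phi$ is onto $\langle\Sigma\rangle$, and here I would adapt the order-density argument of \autoref{prop:stable}. Given $y\in\Sigma$ and $x\ll y$, one chooses representing paths and, as there, a parameter with $\vect{x}_0\ll_{\mathrm{pw}}\vect{y}_t$. The decisive difference from the stable case is that the components $(\vect{y}_t)_j$ now lie in the scales $\Sigma_{A_j}$; the key lemma is that for every C$^*$-algebra $\Sigma_{A_j}=\{[a]\colon a\in (A_j)_+\}$, so each $(\vect{y}_t)_j$ is realized by some $a_j\in (A_j)_+$. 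One then selects contraction representatives $b_j$ with $[b_j]=(\vect{y}_t)_j$ and $(\vect{x}_0)_j\ll [(b_j-\tfrac12)_+]$ exactly as in \autoref{prop:stable}; because the $b_j$ are contractions, $\underline b=(b_j)_{j}$ is a uniformly bounded family, hence $\underline b\in A_+$, and $\Phi([\underline b])\in\Sigma$ satisfies $x\leq\Phi([\underline b])\leq y$. Running this along a $\ll$-increasing sequence exhausting $y$ produces an increasing sequence of scale elements whose $\Phi$-images converge to $y$; since $\Sigma_A$ is closed under suprema of increasing sequences, their supremum lies in $\Sigma_A$ and maps to $y$. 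This gives $\Phi(\Sigma_A)=\Sigma$, and surjectivity onto $\langle\Sigma\rangle$ follows because $\Phi$ is a suprema-preserving order-embedding whose image contains the generating scale $\Sigma$. Naturality in the family $(A_j)_{j\in J}$ is routine.

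The hard part will be the uniform control in the previous paragraph: it is exactly the failure of this uniformity outside the scale that makes $\Phi$ non-surjective in general (cf. the example following \autoref{prop:stable}). Two points require care. First, the representability lemma $\Sigma_{A_j}=\{[a]\colon a\in (A_j)_+\}$: its cut-down version is immediate from \autoref{thm:StableUnitImpl} applied in the stable algebra $A_j\otimes\K$ (unitary conjugates of $(e-\varepsilon)_+$ land in $\overline{cA_jc}\subseteq A_j$ whenever $e\precsim c$ with $c\in (A_j)_+$), but upgrading the representable cut-downs to a single positive representative of the class is the delicate step. Second, one must ensure the contraction representatives can be chosen with a single uniform cut-off $\tfrac12$ across all $j\in J$ simultaneously, which is what keeps $\underline b$ inside $A=\prod_{j\in J}A_j$; this is where the exact matching $[b_j]=(\vect{y}_t)_j$ (rather than a mere domination) is essential, since it permits the individual rescalings of the $b_j$ used in \autoref{prop:stable} without disturbing membership in the path representing $y$.
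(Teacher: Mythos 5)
Your architecture is the same as the paper's: describe the concrete scaled product via \autoref{pgr:scaledcu} and \autoref{cor:Cuprod}, reduce everything to showing that the comparison map $\Phi$ of \autoref{rem:UnivPropProd} is an order-embedding whose image is the ideal generated by $\Sigma$ and which matches the scales, and prove the image statement by the order-density argument of \autoref{prop:stable}. Two of your elaborations are genuine improvements on the paper's outline: the matrix-amplification step (since \autoref{prp:PhiInjective} is literally stated only for elements of $A_+$, your reduction via $M_k(A)=\prod_j M_k(A_j)$ together with (O2)-density of $\W(A)$ is exactly what is needed to make the paper's citation of that lemma honest), and your explicit verification that the projections induce scaled morphisms.

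The one genuine flaw is in the density step: you assert that the exact matching $[b_j]=(\vect{y}_t)_j$ is essential and therefore rest the proof on the unproved lemma that every element of $\Sigma_{A_j}$ equals $[a]$ for a single $a\in(A_j)_+$. That lemma may well be true, but you neither prove it nor need it; a cut-down sandwich suffices, so the ``delicate step'' you flag can be bypassed entirely. Given $x\ll y$ with $y\in\Sigma$, pick $t$ with $\vect{x}_0\ll_\mathrm{pw}\vect{y}_t$ and scale witnesses $a_j\in(A_j)_+$ with $(\vect{y}_t)_j\leq[a_j]$. Represent $(\vect{y}_t)_j=[d_j]$ with $d_j\in(A_j\otimes\K)_+$, so $d_j\precsim a_j$; by \autoref{prop:WayBelowCutDown} there is $\varepsilon_j>0$ with $(\vect{x}_0)_j\leq[(d_j-\varepsilon_j)_+]$, and R{\o}rdam's lemma (\autoref{thm:Rordam}) realizes $(d_j-\varepsilon_j)_+$, up to Cuntz equivalence, by an element $b_j'$ of the corner $a_j(A_j\otimes\K)a_j\subseteq A_j$. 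This gives the sandwich $(\vect{x}_0)_j\leq[b_j']\leq(\vect{y}_t)_j$ with $b_j'$ honestly in $(A_j)_+$, using only the immediate cut-down version of representability that you yourself observe is easy. The functional-calculus normalization cited in \autoref{prop:stable} then replaces $b_j'$ by a contraction $b_j$ with $[b_j]=[b_j']$ and $(\vect{x}_0)_j\ll[(b_j-\tfrac12)_+]$, so that $\underline{b}=(b_j)_{j\in J}\in A_+$ satisfies $x\ll\Phi([\underline{b}])\leq y$, and $\Phi([\underline{b}])\in\Sigma$ is automatic because the path values $\varphi_s(\underline{b})=([(b_j-|s|)_+])_j$ lie in $\prod_j\Sigma_{A_j}$. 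Relatedly, your closing assertion that surjectivity onto $\langle\Sigma\rangle$ ``follows because the image contains the generating scale'' is too quick: an ideal is also downward hereditary, and downward closedness of the image is not abstract nonsense; one must rerun the same density argument for $z\in\langle\Sigma\rangle$, using the membership criterion $\vect{z}_t\ll_\mathrm{pw}\sigma^{(1)}+\cdots+\sigma^{(N)}$ and the identification $M_N(A)=\prod_j M_N(A_j)$, the point being that the ideal criterion provides a matrix size $N$ that is uniform in $j$.
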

\begin{proof}
We first show how to construct the product in the category $\CatCu_\mathrm{sc}$. This uses as an ingredient the proof of \autoref{pgr:scaledcu}.

Let  $(S_j)_{j\in J}$ be a family of $\Cu$-semigroups and let $(S,\Sigma)$ be their scaled product in $\CatCu_\mathrm{sc}$. To get a concrete description of this object, we first take the set-theoretic product $\prod_{j\in J}\Sigma_j$, which is a downward hereditary subset of $\CatPom\text{-}\prod_{j\in J} S_j$, and set
\[
\Sigma = \Big\{ [(\vect{x}_t)_{t\leq 0}] \in \prod_{j\in J} S_j : \vect{x}_t \in \prod_{j \in J}\Sigma_j \text{ for every } t<0 \Big\}.
\]
Then $S$ is the ideal of $\prod_{j\in J} S_j$ generated by $\Sigma$.
Given $[(\vect{x}_t)_{t\leq 0}] \in \prod_{j\in J} S_j$, we have $[(\vect{x}_t)_{t\leq 0}]\in S$ if and only if for every $t<0$ there exist $\sigma^{(1)},\ldots,\sigma^{(N)}\in\prod_{j\in J} \Sigma_j$ such that $\vect{x}_t\ll_\mathrm{pw}\sigma^{(1)}+\ldots+\sigma^{(N)}$.

If now $(A_j)_{j\in J}$ is a family of C$^*$-algebras, we use again $(S,\Sigma)$ to denote the scaled product of $(\Cu_\mathrm{sc}(A_j))_{j\in J}$ with $\Sigma\subseteq S\subseteq \prod_{j\in J}\Cu(A_j)$ as defined above.
Set $A=\prod_{j\in J}A_j$.
Then the map $\Phi\colon \Cu(A)\to \prod_{j\in J} \Cu(A_j)$ defined in
\autoref{rem:UnivPropProd}
is an order-embedding by \autoref{prp:PhiInjective}. Using the strategy in the proof of \autoref{prop:stable} with additional care, one can show that the image of $\Phi$ is $S$ and moreover it identifies the scale of $\Cu(A)$ with $\Sigma$:
\[
\Cu_\mathrm{sc}(A) = (\Cu(A), \Sigma_{A}) \cong \prod_{j\in J} (\Cu(A_j),\Sigma_{A_j}) = (S,\Sigma).\qedhere
\]
\end{proof}

To close this section, we turn our attention to ultraproducts. First, we give a categorical definition.

\begin{df}
\label{pgr:ultraprCat}
Let $\mathrm{\mathbf{C}}$ be a category that has products and inductive limits, let $J$ be a set, let $\filter$ be an ultrafilter on $J$, and let $(X_j)_{j\in J}$ be a family of objects in $\mathrm{\mathbf{C}}$.
Given a subset $K\subseteq J$ and $j\in K$, we write
$\pi_{j,K}\colon \prod_{k\in K} X_k\to X_j$ for the canonical
quotient map.
Given subsets $G\subseteq K\subseteq J$, the universal property of the product gives a morphism
\[
\varphi_{G,K}\colon\prod_{j\in K}X_j \to \prod_{j\in G}X_j,
\]
such that $\pi_{j,K}=\pi_{j,G}\circ\varphi_{G,K}$ for each $j\in G$.

Ordering the elements of $\filter$ by reversed inclusion, we have that $\filter$ is upward directed, and thus the objects $\prod_{j\in K}X_j$ for $K\in\filter$, and morphisms $\varphi_{G,K}$ for $K,G\in\filter$ with $K\supseteq G$, define an inductive system indexed over $\filter$. The inductive limit of this system is called the (categorical) \emph{ultraproduct} of $(X_j)_{j\in J}$ along $\filter$:
\[
\prod_{\filter} X_j := \varinjlim_{K\in\filter} \prod_{j\in K}X_j.
\]
We let $\pi_\filter\colon \prod_{j\in J}X_j \to \prod_\filter X_j$ denote the natural morphism to the inductive limit.
%
\end{df}


Let $\mathrm{\mathbf{C}}$ and $\mathrm{\mathbf{D}}$ be categories with products and inductive limits, and let $\mathrm{F}\colon\mathrm{\mathbf{C}}\to\mathrm{\mathbf{D}}$ be a functor that preserves inductive limits and products. 
Then for any set $J$, any ultrafilter $\filter$ on $J$, and any family $(X_j)_{j\in J}$ of objects in $\mathrm{\mathbf{C}}$, there is a natural isomorphism
\[
\Phi_\filter\colon \mathrm{F}\Big(\prod_\filter X_j\Big) \to \prod_\filter \mathrm{F}(X_j).
\]

Let $(A_j)_{j\in J}$ be a family of C$^*$-algebras and let $\filter$ be an ultrafilter on $J$. Set $A=\prod_{\mathcal{U}}A_j$. The discussion above, combined with the properties of the categories $\CatCu$ and $\CatCu_\mathrm{sc}$, as well as the functors $\Cu$ and $\Cu_{\mathrm{sc}}$, yield natural (scaled) $\CatCu$-morphisms
\[
\Phi_\filter\colon \Cu(A) \to \prod_\filter \Cu(A_j) \text{ and }
\Phi_{\filter,\mathrm{sc}}\colon \Cu_\mathrm{sc}(A) \to \prod_\filter \Cu_\mathrm{sc}(A_j).
\]
Applying \autoref{prop:stable} and \autoref{thm:Cupreserves}, we obtain the following results:

\begin{prop}
\label{prp:CuUltraprodStable}
Given an ultrafilter $\filter$ on a set $J$ and a family $(A_j)_{j\in J}$ of stable C$^*$-algebras,
the map $\Phi_\filter\colon \Cu\big(\prod_\filter A_j\big) \to \prod_\filter \Cu(A_j)$ is an isomorphism.
\end{prop}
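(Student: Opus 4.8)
The plan is to realize $\Phi_\filter$ as an inductive limit of product-comparison maps and to invoke \autoref{prop:stable} levelwise. Recall from \autoref{pgr:ultraprCat} that the ultraproduct is by definition the inductive limit
\[
\prod_\filter A_j = \varinjlim_{K\in\filter}\prod_{j\in K}A_j,
\]
taken over $\filter$ ordered by reverse inclusion, with connecting morphisms the canonical coordinate projections $\varphi_{G,K}$. The crucial structural inputs are that the functor $\Cu$ preserves arbitrary directed limits (the strengthening of \autoref{thm:Culimits} established in \cite{AntPerThi_tensor_2018}) and that, although $\Cu$ fails to preserve products in general, it does preserve products of \emph{stable} C$^*$-algebras by \autoref{prop:stable}. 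Since every subfamily $(A_j)_{j\in K}$ still consists of stable algebras, all the products appearing in the system above fall within the scope of \autoref{prop:stable}.

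First I would apply $\Cu$ to the defining inductive limit and use preservation of directed limits to obtain a natural isomorphism
\[
\Cu\Big(\prod_\filter A_j\Big)\cong \varinjlim_{K\in\filter}\Cu\Big(\prod_{j\in K}A_j\Big).
\]
Next, for each $K\in\filter$ the canonical $\Cu$-morphism $\Cu(\prod_{j\in K}A_j)\to\prod_{j\in K}\Cu(A_j)$ provided by the universal property of the product (as in \autoref{rem:UnivPropProd}) is an isomorphism by \autoref{prop:stable}. These isomorphisms are compatible with the connecting maps $\varphi_{G,K}$ on both sides, so they constitute an isomorphism of inductive systems indexed by $\filter$; passing to the inductive limit yields
\[
\varinjlim_{K\in\filter}\Cu\Big(\prod_{j\in K}A_j\Big)\cong \varinjlim_{K\in\filter}\prod_{j\in K}\Cu(A_j)=\prod_\filter\Cu(A_j),
\]
where the last equality is the definition of the categorical ultraproduct in $\CatCu$. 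Composing the two displayed isomorphisms gives the result, and one checks that the composite is precisely the map $\Phi_\filter$ described before the statement.

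The main obstacle — and the step to treat with care — is the compatibility bookkeeping: one must verify that the comparison map $\Phi_\filter$ constructed via the universal properties coincides with the above composite of natural isomorphisms, i.e.\ that the level-$K$ isomorphisms of \autoref{prop:stable} genuinely commute with the coordinate-projection connecting maps $\varphi_{G,K}$. This is a diagram chase relying only on naturality of the canonical map $\Cu(\prod_{j\in K}A_j)\to\prod_{j\in K}\Cu(A_j)$ in the index set $K$, together with the fact that forming inductive limits sends a levelwise isomorphism of directed systems to an isomorphism. No genuinely new estimate is needed beyond \autoref{prop:stable}; the essential point is simply that, on the subcategory of stable C$^*$-algebras, $\Cu$ preserves both products and directed limits, hence ultraproducts.
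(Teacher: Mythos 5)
Your proposal is correct and is essentially the paper's own argument: the paper deduces \autoref{prp:CuUltraprodStable} from the general categorical fact that a functor preserving products and inductive limits preserves categorical ultraproducts, combined with \autoref{prop:stable} and the preservation of arbitrary directed limits from \cite{AntPerThi_tensor_2018}. Your levelwise application of \autoref{prop:stable} to the system $\big(\prod_{j\in K}A_j\big)_{K\in\filter}$ and the naturality check is exactly this argument written out in detail, including the correct observation that one needs preservation of (possibly uncountable) directed limits rather than only \autoref{thm:Culimits}.
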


\begin{thm}
\label{thm:CuUltraprod}
The scaled Cuntz semigroup functor preserves ultraproducts. In other words, given an ultrafilter $\filter$ on a set $J$ and a family $(A_j)_{j\in J}$ of C$^*$-algebras, the map $\Phi_{\filter,\mathrm{sc}}\colon
\Cu_\mathrm{sc}\big(\prod_\filter A_j\big) \to \prod_\filter \Cu_\mathrm{sc}(A_j)$ is an isomorphism.
\end{thm}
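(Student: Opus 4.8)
The plan is to derive the theorem from the general categorical principle recorded just before the statement: a functor between categories with products and inductive limits that preserves both also preserves ultraproducts, since by \autoref{pgr:ultraprCat} the ultraproduct is \emph{by definition} the inductive limit (over $\filter$, ordered by reverse inclusion) of the products $\prod_{j\in K}A_j$ for $K\in\filter$. Both $\mathbf{C^*}$ and $\CatCu_\mathrm{sc}$ admit products and inductive limits (for the latter, products and inverse limits come from \autoref{pgr:scaledcu}, and one checks separately that inductive limits exist), so the categorical ultraproducts appearing in the statement are well defined; moreover the categorical ultraproduct of the $A_j$ in $\mathbf{C^*}$ coincides with the usual C$^*$-algebraic ultraproduct, because the norm in the inductive limit of $\big(\prod_{j\in K}A_j\big)_{K\in\filter}$ of a class $(a_j)_j$ equals $\inf_{K\in\filter}\sup_{j\in K}\|a_j\|=\lim_\filter\|a_j\|$. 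Thus it suffices to prove that $\Cu_\mathrm{sc}\colon\mathbf{C^*}\to\CatCu_\mathrm{sc}$ preserves products and inductive limits, and then to check that the isomorphism produced by the general principle is exactly the canonical comparison map $\Phi_{\filter,\mathrm{sc}}$ appearing in the statement.

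Preservation of products is precisely \autoref{thm:Cupreserves}, so the remaining ingredient is that $\Cu_\mathrm{sc}$ preserves inductive limits. At the level of the underlying $\Cu$-semigroups this is \autoref{thm:Culimits}, so the only new thing to verify is that the scale is compatible with inductive limits, that is, that for $A=\varinjlim(A_n,\phi_n)$ the scale $\Sigma_A$ agrees with the inductive-limit scale of $(\Sigma_{A_n})_n$ inside $\Cu(A)=\varinjlim\Cu(A_n)$. One inclusion is functoriality: each $\Cu_\mathrm{sc}(\phi_{n,\infty})$ sends $\Sigma_{A_n}$ into $\Sigma_A$. For the other, given $x\le[a]$ with $a\in A_+$, approximate $a$ by $\phi_{n,\infty}(b_n)$ with $b_n\in(A_n)_+$; then for each $\varepsilon>0$, \autoref{thm:Rordam} gives $(a-\varepsilon)_+\precsim\phi_{n,\infty}(b_n)=\Cu(\phi_{n,\infty})([b_n])$ for $n$ large, with $[b_n]\in\Sigma_{A_n}$, and since $[a]=\sup_\varepsilon[(a-\varepsilon)_+]$ one concludes that $\Sigma_A$ lies in the scale generated by the images of the $\Sigma_{A_n}$. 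This identifies $\Cu_\mathrm{sc}(A)$ with the inductive limit of $(\Cu_\mathrm{sc}(A_n))_n$ in $\CatCu_\mathrm{sc}$.

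With these two facts in hand, the theorem follows by assembling the natural isomorphisms
\begin{align*}
\Cu_\mathrm{sc}\Big(\prod_\filter A_j\Big)
&=\Cu_\mathrm{sc}\Big(\varinjlim_{K\in\filter}\prod_{j\in K}A_j\Big)
\cong\varinjlim_{K\in\filter}\Cu_\mathrm{sc}\Big(\prod_{j\in K}A_j\Big)\\
&\cong\varinjlim_{K\in\filter}\prod_{j\in K}\Cu_\mathrm{sc}(A_j)
=\prod_\filter\Cu_\mathrm{sc}(A_j),
\end{align*}
where the first isomorphism uses preservation of inductive limits, the second uses \autoref{thm:Cupreserves} applied to each $K\in\filter$, and the last equality is the definition of the ultraproduct in $\CatCu_\mathrm{sc}$. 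A routine naturality check, tracking the coordinate projections through the construction, shows that this composite is the map $\Phi_{\filter,\mathrm{sc}}$. The same argument with \autoref{prop:stable} in place of \autoref{thm:Cupreserves}, applied to the unscaled functor, simultaneously yields \autoref{prp:CuUltraprodStable}.

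I anticipate the main obstacle to be the scale bookkeeping: verifying cleanly that $\Cu_\mathrm{sc}$ preserves inductive limits, so that the underlying object of the ultraproduct is the correct sub-object of the unscaled ultraproduct $\prod_\filter\Cu(A_j)$ cut out by the scale, rather than the whole product, which by the example following \autoref{prop:stable} is strictly larger in the non-stable case; and confirming that the comparison map furnished by the general principle is genuinely $\Phi_{\filter,\mathrm{sc}}$. Once the interaction between the scale, products, and inductive limits is pinned down, the rest is the formal manipulation of (co)limits displayed above.
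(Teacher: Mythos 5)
Your proposal takes essentially the same route as the paper: the theorem there is likewise obtained by combining the general categorical principle stated just before it (a functor preserving products and inductive limits preserves ultraproducts, since by \autoref{pgr:ultraprCat} the ultraproduct is the inductive limit over $\filter$ of the products $\prod_{j\in K}A_j$) with \autoref{thm:Cupreserves} for products and the continuity of $\Cu_\mathrm{sc}$ with respect to directed limits. The details you fill in---the identification of the categorical ultraproduct in $\mathbf{C^*}$ with the usual C$^*$-algebraic one, and the scale bookkeeping showing $\Cu_\mathrm{sc}$ preserves inductive limits---are precisely the verifications the paper leaves implicit, and your treatment of them is correct.
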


A different view on ultraproducts, more akin to the usual construction for C$^*$-algebras, can also be given in this setting. We give the statement without proof, and refer the reader to \cite[Section 7]{AntPerThi_ultraproducts_2020} for further details.

\begin{thm}
\label{pgr:diffultra}
Let $\filter$ be an ultrafilter on a set $J$, and let $(S_j)_{j\in J}$ be a family of $\Cu$-semigroups.
Set
\[
\mathrm{c}_\filter\big( (S_j)_j \big) := \Big\{ [(\vect{x}_t)_{t\leq 0}] \in \prod_{j\in J} S_j\colon \supp( \vect{x}_t ) \notin\filter \text{ for each } t<0  \Big\}.
\]
Then $\mathrm{c}_\filter\big( (S_j)_{j\in J} \big)$ is an ideal in $\prod_{j\in J} S_j$, and there is a natural isomorphism 
\[
\prod_{\filter} S_j \cong \Big(\prod_{j\in J}S_j\Big) / \mathrm{c}_\filter((S_j)_{j\in J}).
\]

%
%


If now $(A_j)_{j\in J}$ is a family of C$^*$-algebras and $(S,\Sigma)$ is the scaled product of $(\Cu_\mathrm{sc}(A_j))_{j\in J}$ with $\Sigma\subseteq S\subseteq \prod_{j\in J}\Cu(A_j)$ as in \autoref{pgr:scaledcu}, we have a natural isomorphism 
\[\Cu\Big( \prod_\filter A_j \Big) \cong S/\left( S\cap \mathrm{c}_\filter\big((\Cu(A_j))_{j\in J}\big) \right).\]
In particular, the map $\Phi_\filter$ from \autoref{pgr:ultraprCat} is an order-embedding that identifies $\Cu\big(\prod_\filter A_j\big)$ with the image of $S$ under the map $\pi_\filter$ from \autoref{pgr:diffultra}.
\end{thm}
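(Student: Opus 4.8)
The plan is to work throughout with the two concrete models supplied by the preceding results: by \autoref{cor:Cuprod} we have $\prod_{j\in J}S_j=\tau(P,\ll_\mathrm{pw})$ with $P=\CatPom\text{-}\prod_{j\in J}S_j$, so elements are classes $[(\vect{x}_t)_{t\leq 0}]$ of $\ll_\mathrm{pw}$-increasing paths, while by \autoref{pgr:ultraprCat} we have $\prod_\filter S_j=\varinjlim_{K\in\filter}\prod_{j\in K}S_j$, the inductive limit over $\filter$ ordered by reverse inclusion. Here $J\in\filter$ is the least element, so $\prod_{j\in J}S_j$ is the initial term of the system and $\pi_\filter$ is the canonical map from it into the limit. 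I first verify that $\mathrm{c}_\filter$ is an ideal. The defining condition is invariant under $\precsim$: if $\vect{a}_j\ll\vect{b}_j$ for all $j$ then $\supp(\vect{a})\subseteq\supp(\vect{b})$, and a subset of a set not in $\filter$ is again not in $\filter$; this simultaneously shows that the condition descends to classes and that $\mathrm{c}_\filter$ is hereditary. It is a submonoid because $\supp(\vect{a}+\vect{b})=\supp(\vect{a})\cup\supp(\vect{b})$ and, since $\filter$ is an \emph{ultra}filter, the union of two sets not in $\filter$ is again not in $\filter$ (their complements lie in $\filter$, hence so does their intersection). Finally, closure under suprema of increasing sequences follows from the explicit description of suprema in the $\tau$-construction in the proof of \autoref{thm:tau}: each value of the supremum path is bounded by a value $f_n(t_n)$ of one of the approximating paths, whose support is not in $\filter$.

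Next I would build the isomorphism from the universal property of the inductive limit. For each $K\in\filter$ define $q_K\colon\prod_{j\in K}S_j\to(\prod_{j\in J}S_j)/\mathrm{c}_\filter$ by extending a path by zero outside $K$ and passing to the quotient. For $G\subseteq K$ in $\filter$, the two extensions of a path and of its restriction to $G$ differ only on $K\setminus G\subseteq G^c\notin\filter$, so they agree modulo $\mathrm{c}_\filter$; hence $(q_K)_{K\in\filter}$ is a cocone and yields a $\Cu$-morphism $\Theta\colon\prod_\filter S_j\to(\prod_{j\in J}S_j)/\mathrm{c}_\filter$. In the other direction, $\pi_\filter$ is a $\Cu$-morphism out of the initial term, and I would show it descends to a $\Cu$-morphism $\Psi\colon(\prod_{j\in J}S_j)/\mathrm{c}_\filter\to\prod_\filter S_j$. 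Every class in the limit is represented in some $\prod_{j\in K}S_j$ and, extended by zero, is hit by $\pi_\filter$, so $\pi_\filter$ is surjective. Granting the order equivalence of the next paragraph, which shows $\Psi$ to be a well-defined order-embedding, $\Psi$ is then a surjective order-embedding and hence a $\Cu$-isomorphism, with inverse $\Theta$.

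The crux is the equivalence, for $x,y\in\prod_{j\in J}S_j$,
\[
\pi_\filter(x)\leq\pi_\filter(y)\quad\Longleftrightarrow\quad x\leq_{\mathrm{c}_\filter}y .
\]
Using the description of the order in an inductive limit of $\Cu$-semigroups (\autoref{thm:Culimits} and its extension to directed systems), the left-hand side says that for every $x'\ll x$ there is $K\in\filter$ with $\varphi_{K,J}(x')\leq\varphi_{K,J}(y)$, i.e. $x'$ and $y$ compare after restriction to $K$. The implication $\Leftarrow$ is the ultrafilter heart of the argument at the level of single elements of $P$: if $\vect{a}\ll_\mathrm{pw}\vect{b}+\vect{c}$ with $\supp(\vect{c})\notin\filter$, then on the zero-set $K=\{j:\vect{c}_j=0\}$, which lies in $\filter$ since $\filter$ is an ultrafilter, one has $\vect{a}|_K\leq\vect{b}|_K$ exactly; packaging these comparisons along the defining paths gives the colimit inequality. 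The reverse implication is where I expect the main difficulty: from the family of sets $K_{x'}\in\filter$, one for each $x'\ll x$, I must manufacture a \emph{single} correction $c\in\mathrm{c}_\filter$ with $x\leq y+c$. The natural choice is to let $c$ be, at each level $t$ of a path representing $x$, the part of $\vect{x}_t$ living off the corresponding set in $\filter$; the support of this $t$-th component is contained in a complement of an $\filter$-set and is therefore not in $\filter$, so each individual level is admissible, but one has to assemble these level-wise corrections into a genuine $\ll_\mathrm{pw}$-increasing path and check that the resulting global inequality $x\leq y+c$ holds in $\tau(P,\ll_\mathrm{pw})$. This bookkeeping -- monotonicity of the chosen sets along the path and compatibility with the completion -- is the technical obstacle; everything else is formal.

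Finally, for the $C^*$-algebraic consequences I would transport the general statement through the scaled machinery. By \autoref{thm:CuUltraprod} the scaled functor preserves ultraproducts, so $\Cu_\mathrm{sc}(\prod_\filter A_j)\cong\prod_\filter\Cu_\mathrm{sc}(A_j)$; combining the concrete description of the scaled product from the proof of \autoref{thm:Cupreserves} (whose underlying $\Cu$-semigroup is the ideal $S$ generated by $\Sigma$ inside $\prod_{j\in J}\Cu(A_j)$) with the isomorphism just established, the scaled ultraproduct is identified with $S/(S\cap\mathrm{c}_\filter((\Cu(A_j))_{j\in J}))$, giving the displayed formula for $\Cu(\prod_\filter A_j)$. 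That $\Phi_\filter$ is an order-embedding onto the image of $S$ under $\pi_\filter$ then follows from \autoref{prp:PhiInjective}, which already shows the product map $\Phi$ is an order-embedding, together with the realization of the ultraproduct as a quotient of $\Cu(\prod_{j\in J}A_j)$ along $\filter$.
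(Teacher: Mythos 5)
Your peripheral verifications are sound and match what is actually needed: the ideal property of $\mathrm{c}_\filter$ (where you correctly isolate the only genuinely \emph{ultra} step, namely that the complement-closure makes sets outside $\filter$ closed under finite unions, and correctly use the equality $f(\tfrac{-1}{n+1})=f_n(t_n)$ from the proof of \autoref{thm:tau} for closure under suprema), the cocone compatibility of the maps $q_K$, and the single-element mechanism $\vect{a}\ll_{\mathrm{pw}}\vect{b}+\vect{c}$ with $\supp(\vect{c})\notin\filter$ forcing $\vect{a}|_K\leq\vect{b}|_K$ on $K=\supp(\vect{c})^c\in\filter$. But the proof is incomplete at its crux: you leave the direction ``$\pi_\filter(x)\leq\pi_\filter(y)$ implies $x\leq_{\mathrm{c}_\filter}y$'' as an acknowledged unresolved obstacle, and your surjectivity argument rests on the claim that every class in the limit is represented in some $\prod_{j\in K}S_j$, which is false for general $\Cu$-inductive limits --- elements of the limit are in general only \emph{suprema} of images of the structure maps (compare \autoref{eg:CuCAR}). (The paper itself omits the proof, deferring to \cite[Section~7]{AntPerThi_ultraproducts_2020}, so you are being judged on the merits of the sketch.)

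Both defects are repairable, and in fact the machinery you already set up closes the gap without the hard direct argument. Once $\Theta$ (from your cocone, via the universal property of the colimit in \autoref{pgr:ultraprCat}) and $\Psi$ (from $\pi_\filter$ killing $\mathrm{c}_\filter$, via the universal property of the $\Cu$-quotient) are well-defined $\Cu$-morphisms, they are mutually inverse by pure category theory: writing $\bar{q}$ for the (surjective) quotient map, one has $\Theta\circ\Psi\circ\bar{q}=\Theta\circ\pi_\filter=q_J=\bar{q}$, so $\Theta\circ\Psi=\mathrm{id}$; and $\Psi\circ\Theta\circ\iota_K=\pi_\filter\circ\mathrm{ext}_K=\iota_K$ because restriction after extension-by-zero is the identity, so $\Psi\circ\Theta=\mathrm{id}$ by the uniqueness clause of the colimit's universal property. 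Mutually inverse $\Cu$-morphisms are order-isomorphisms, so your ``hard'' implication and the surjectivity of $\pi_\filter$ come for free. Alternatively, your direct plan does go through: the bookkeeping you worry about is resolved by closure of $\filter$ under \emph{finite} intersections --- replace the sets $K_{x_n'}$ attached to a $\ll$-increasing sequence $(x_n')$ with supremum $x$ by $L_n=K_{x_1'}\cap\cdots\cap K_{x_n'}\in\filter$, note the exact componentwise decomposition $\vect{a}=\vect{a}|_{L_n}+\vect{a}|_{L_n^c}$ in the $\CatPom$-product, and check that the levels $\vect{c}_n:=\vect{x}_{t_{n+1}}|_{L_n^c}$ are $\ll_{\mathrm{pw}}$-increasing (the supports grow since $L_{n+1}\subseteq L_n$, and $0\ll$ everything), yielding a single $c\in\mathrm{c}_\filter$ with $x\leq y+c$. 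Finally, for the C$^*$-statement your sketch skips one point: identifying $\pi_\filter(S)$ with $S/(S\cap\mathrm{c}_\filter)$ requires trading a correction $c\in\mathrm{c}_\filter$ in $x\leq y+c$ (with $x,y\in S$) for one lying in $S\cap\mathrm{c}_\filter$; this follows by applying (O6) --- valid in the product since each $\Cu(A_j)$ satisfies it by \autoref{O6} and the axiom passes to the product construction --- to $x'\ll x\leq y+c$, producing $f\leq x,c$, hence $f\in S\cap\mathrm{c}_\filter$ with $x'\leq y+f$, and then taking suprema over $x'\ll x$.
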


The computation of the Cuntz semigroup of ultraproducts of C$^*$-algebras in terms of the semigroups of the individual algebras (by means of the scaled product as described above) will be important in studying when the completion of the so-called limit traces is dense in the trace simplex of an ultraproduct; see \cite{AntPerRobThi_traces_2022}.

\section{Outlook}

In this final section, we give a sample of problems in
the area of Cuntz semigroups which we think are likely to guide 
the research in this field in the upcoming years. Some of the 
problems are very difficult and should be regarded as long-term 
goals, while other ones are more tangible. Also, some of 
the problems are stated in rather vague terms, while other ones 
are fairly concrete. Some of the problems below were 
suggested in the Cuntz semigroup workshop which took place in
September 2022 in Kiel, Germany.

As explained in \autoref{cor:ABnotiso}, Toms used the Cuntz semigroup
to distinguish two simple, separable, unital, nuclear C$^*$-algebras
with identical Elliott invariants, thus providing a counterexample
to the original formulation of the Elliott conjecture. The following
is thus a natural task:

\begin{pbm}\label{pbm:Cu(Toms)}
For the non $\mathcal{Z}$-stable C$^*$-algebra $A$ constructed by
Toms (see \autoref{cor:ABnotiso}), compute $\Cu(A)$.
\end{pbm}

Since Toms' construction is based on that of Villadsen, one should also attempt the following:

\begin{pbm}\label{pbm:Cu(Villadsen)}
For a Villadsen algebra $A$ of the first 
type as in \cite{Vil_simple_1998}, compute $\Cu(A)$.
\end{pbm}

Some Villadsen algebras of the first type are $\mathcal{Z}$-stable, and 
in those cases the computation of the Cuntz semigroup is given
by \autoref{thm:CuAtimesZ}. Both \autoref{pbm:Cu(Toms)} and 
\autoref{pbm:Cu(Villadsen)} are rather vague, and one 
concrete problem one should attempt is the computation of 
the dimensions of these Cuntz semigroups, in the sense 
of \cite{ThiVil_covering_2022, ThiVil_coveringII_2021}.

It was shown in \cite{TomWin_strongly_2007} that any two unital homomorphisms from 
$\mathcal{Z}$ into a $\mathcal{Z}$-stable C$^*$-algebra are 
approximately unitarily equivalent, and in particular they must
agree at the level of the functor Cu. It would be interesting to find 
an explicit example showing that this fails in the non 
$\mathcal{Z}$-stable case. Therefore, we suggest:

\begin{pbm}
Let $A$ be a non $\mathcal{Z}$-stable C$^*$-algebra as in either 
\autoref{pbm:Cu(Toms)} or \autoref{pbm:Cu(Villadsen)}. Are there 
two distinct Cu-morphisms $\Cu(\mathcal{Z})\to \Cu(A)$ preserving
unit classes? 
\end{pbm}

Another class of C$^*$-algebras for which it would be very interesting
to compute the Cuntz semigroup is that of reduced group C$^*$-algebras, 
particularly for C$^*$-simple groups. While obtaining an explicit 
computation may be out of reach, it should be possible to obtain
some structural information. As a first step, we propose
the following problem, which is a Cuntz semigroup version
of Blackadar's fundamental comparison property
\cite{Bla_comparison_1988} for projections in $C^*_\mathrm{r}(\mathbb{F}_n)$:

\begin{pbm} 
Compute $\Cu(C^*_\mathrm{r}(\mathbb{F}_n))$ for $n\geq 2$, or at least determine
whether it is almost unperforated. 
\end{pbm}

As it turns out, it is not easy to find examples of stably finite
C$^*$-algebras whose Cuntz semigroups fail to be weakly cancellative.
There exist commutative examples, but we do not know if simple
ones exist as well.

\begin{qst}
Does there exist a simple, stably finite C$^*$-algebra whose 
Cuntz semigroup does not have weak cancellation?
\end{qst}

If a C$^*$-algebra as above exists, then its stable rank will 
necessarily be greater than 1 by \autoref{wc}.

A $\Cu$-semigroup is said to be \emph{almost divisible} 
if, given $x,x'\in S$ with $x'\ll x$, then for all $n\in\N$ there is $y\in S$ such that $ny\leq x$ and $x'\leq (n+1)y$. If $A$ is a $\mathcal{Z}$-stable C$^*$-algebra, then $\Cu(A)$ is almost divisible, as was shown by R\o rdam in \cite{Ror_stable_2004}; this is proved
very similar to \autoref{thm:RordamStrComp}. 
The question above is related to the following algebraic question, raised in \cite{AntPerThi_tensor_2018}:

\begin{qst}
Under what additional axioms (besides (O5) and (O6)) is a simple $\Cu$-semigroup which is also almost unperforated and almost divisible necessarily weakly cancellative?
\end{qst}

We now turn to connections to classification. 
As mentioned in the introduction, the Cuntz semigroup has been
successfully used to classify interesting classes of nonsimple
C$^*$-algebras. The latest and most general result in this direction is 
due to Leonel Robert \cite{Rob_classification_2012}, and there is
reason to believe that the class of C$^*$-algebras that Robert 
considered is the largest that can be classified solely in
terms of $\Cu$. 

\begin{qst} 
Are there variations of the Cuntz semigroup that can be used to 
classify larger classes of non-simple C$^*$-algebras?
\end{qst}

The question above is very vague, but it is motivated by the 
invariant $\Cu^\sim$ considered by Robert in \cite{Rob_classification_2012} and by Robert-Santiago 
in \cite{RobSan_revised_2021}, which is necessary to obtain 
classification in the non-unital setting. Another direction in
which this question can be interpreted is by trying to incorporate
K$_1$-information into the invariant. Some steps in this direction
have been made by Cantier in \cite{Can_unitary_2021} (see also \cite{Can_ideal_2021}).

As we saw in \autoref{thm:CuAtimesZ}, 
in the simple, separable, nuclear, finite $\mathcal{Z}$-stable setting, the 
Elliott invariant $\Ell$ contains the same information as 
the pair $(\Cu, \mathrm{K}_1)$. In particular, two simple,
separable, unital, nuclear, finite $\mathcal{Z}$-stable C$^*$-algebras $A$ 
and $B$
satisfying the UCT are isomorphic if and only if $\Cu(A)\cong \Cu(B)$
and K$_1(A)\cong$ K$_1(B)$. 
The modern approach to 
classification focuses on the classification of homomorphisms,
and it 
would thus be interesting to know to what extent the Cuntz semigroup can 
be used in this setting (for some results in this direction, see \cite{Can_class_2022}). Since the invariant needed to classify
homomorphisms is larger than $\Ell$, in particular including
algebraic K$_1$-data, the following seems like a natural question:

\begin{qst}
To what extent can the algebraic K$_1$-group be 
recovered from the Cuntz semigroup?
\end{qst}

The Cuntz semigroup is expected to be useful for classification 
also beyond the $\mathcal{Z}$-stable setting. A positive answer
to the following question would be a significant breakthrough in
the area.

\begin{qst} 
Can one use $(\Cu(A), \KK_1(A))$ to classify a class of 
simple, nuclear C$^*$-algebras bigger than the one in
\autoref{thm:classification}?\end{qst}

The study of group actions on C$^*$-algebras is a very fruitful one.
Some of the most recent research in the area has shown that 
Cuntz semigroup techniques are extremely powerful in this setting (as proved in 
\cite{GarGefKraNar_classifiability_2022, GarGefNarVac_dynamical_2022,
GarGefKraNarVac_tracial_2022,
BosPerZacWu_aemeasure_2023, BosPerZacWu_aedyn_2023}),
thus suggesting that the theory of group
actions on Cuntz semigroups (as developed in \cite{BosPerZacWu_dynamicalCuntz_2022, BosPerZacWu_typesemigroup_2023}) may lead to interesting constructions:

\begin{pbm}
Develop the theory of crossed products and Rokhlin properties for group actions on $\Cu$-semigroups.
\end{pbm}

For actions of compact groups, an equivariant version of the 
Cuntz semigroup, resembling equivariant K-theory, has been 
studied in \cite{GarSan_equivariant_2017}. For actions
with the Rokhlin property, the induced dynamical system on 
$\Cu$ has been explored in \cite{GarSan_equivariant_2016, Gar_crossed_2017,Gar_compact_2018}.  

KK-theory is a bivariant joint generalization of K-theory 
and K-homology: for two C$^*$-algebras $A$ and $B$, the 
KK-group KK$(A,B)$ is a natural homotopy equivalence class of 
$(A,B)$-Hilbert bimodules, and it behaves as K-homology in the 
first coordinate and as K-theory in the second. KK-theory provides
a strong link between operator algebras, noncommutative
geometry and index theory. 
 
\begin{pbm}
Use the bivariant version of $\Cu$ introduced in \cite{AntPerThi_abstract_2020,AntPerThi_abstractII_2020} to establish more connections 
 with noncommutative geometry.
\end{pbm}


\end{document}